\newtheorem {theorem}{Theorem}[section]
\newtheorem {corollary}[theorem]{Corollary}
\newtheorem {proposition}[theorem]{Proposition}
\newtheorem {lemma}[theorem]{Lemma}
\theoremstyle{definition}
\newtheorem {definition}[theorem]{Definition}
\newtheorem {example}[theorem]{Example}
\newtheorem {remark}[theorem]{Remark}
\newtheorem {convention}[theorem]{Convention}
\newtheorem {notation}[theorem]{Notation}
\numberwithin{equation}{section}%
\newcommand{\co}{\colon\thinspace}%
\newcommand{\mc}[1]{\ensuremath{\mathcal{#1}}}%
\newcommand{\toh}[1]{\ensuremath{\stackrel{#1}{\rightarrow}}}%
\newcommand{\colim}{\operatorname*{colim}}%
\newcommand{\holim}{\operatorname*{holim\,}}%
\newcommand{\hofib}{\operatorname*{hofib\,}}%
\newcommand{\hocolim}{\operatorname*{hocolim\,}}%
\newcommand{\free}{\,\_\!\_\,}%
\newcommand{\id}{\operatorname{id}}%
\newcommand{\Id}{\operatorname{Id}}%
\newcommand{\LKan}{\operatorname*{LKan}}%
\newcommand{\x}{\operatorname*{\times}}%
\newcommand{\cd}{\ensuremath{\mathrm{cd}}}%
\newcommand{\ul}[1]{\underline{#1}}%
\newcommand{\wh}[1]{\widehat{#1}}%
\newcommand{\ev}{\ensuremath{\mathrm{ev}}}%
\newcommand{\Ev}{\ensuremath{\mathrm{Ev}}}%
\newcommand{\LEv}{\ensuremath{\mathcal{L}\mathrm{Ev}}}%
\newcommand{\cyl}{\ensuremath{\mathrm{Cyl}}}%
\newcommand{\cross}{\ensuremath{\mathrm{cr}}}%
\newcommand{\Lcross}{\ensuremath{\mathcal{L}}}%
\newcommand{\hocr}{\ensuremath{\mathrm{hocr}}}%
\newcommand{\ob}{\ensuremath{\mathrm{Ob}}}%
\newcommand{\tr}{\ensuremath{\mathrm{tr}}}%
\newcommand{\Sp}{\ensuremath{\mathrm{Sp}}}%
\newcommand{\Fun}{\ensuremath{\mathrm{Fun}}}%
\newcommand{\Sfin}{\ensuremath{\mc{S}^{\mathrm{fin}}}}%
\newcommand{\Ufin}{\ensuremath{\mc{U}^{\mathrm{fin}}}}%
\renewcommand{\hom}{\ensuremath{\mathrm{hom}}}
\newcommand{\dgrm}[1]{\ensuremath{\smash{\underset{\widetilde{\hphantom{#1}}}{#1}} \mathstrut}}%
\newcommand{\proj}{\ensuremath{\mathrm{proj}}}%
\newcommand{\fib}{\ensuremath{\mathrm{fib}}}%
\newcommand{\fibr}{\ensuremath{\mathrm{Fibr}}}%
\newcommand{\hf}{\ensuremath{\mathrm{hf}}}%
\newcommand{\diagram}[2]{ \begin{align} 
                          \xymatrix{#1} 
                          \label{#2} \end{align}}%
\newcommand{\diagr}[1]{ \begin{equation*} \xymatrix{#1} \end{equation*}}%
\newcommand{\cotensor}[2]{{\bf hom}({#1},{#2})}%
\let\join=\star
\newcommand{\Vhom}{\mathcal{V}}%
\begin{document}

\title  [Calculus of functors and model categories II]{Calculus of functors and model categories II}
\author{Georg Biedermann}
\author{Oliver R\"ondigs}

\address{Institut f\"ur Mathematik, Universit\"at Osnabr\"uck, D-49069 Osnabr\"uck, Germany}
\email{gbiederm@uni-osnabrueck.de}

\address{Institut f\"ur Mathematik, Universit\"at Osnabr\"uck, D-49069 Osnabr\"uck, Germany}
\email{oroendig@uni-osnabrueck.de}

\begin{abstract}
This is a continuation, completion, and generalization of our previous joint work \cite{BCR:calc} with Boris Chorny.
We supply model structures and Quillen equivalences underlying Goodwillie's constructions on the homotopy level for functors between simplicial model categories satisfying mild hypotheses. 
\end{abstract}

\maketitle

\tableofcontents

\section{Introduction}

In a series of three papers \cite{Goo:calc1}, \cite{Goo:calc2}, and \cite{Goo:calc3}, Tom Goodwillie developed a method of
analyzing homotopy functors between various categories of
topological spaces and spectra, with concrete applications
towards Waldhausen's algebraic $K$-theory of spaces. 
The resulting theory has been extended to other situations
(such as categories of chain complexes and Weiss's orthogonal calculus), 
and successfully applied in
many different areas of algebraic and geometric topology, including 
chromatic stable homotopy theory, nonrealization results for 
unstable modules over the Steenrod algebra, and computations
of unstable homotopy groups of spheres. An overview addressing
several of these topics can be found in \cite{Kuhn:overview}.

The basic intuition behind Goodwillie's approach comes from 
classical calculus, which studies smooth functions via linear
or polynomial approximations. Suppose that $F$ is an endofunctor
on the category of pointed topological spaces preserving weak
homotopy equivalences, such as $\Omega \Sigma$. 
A priori, the homotopy groups of values of $F$ are not computable.
However, suppose that $F$ is {\em excisive\/} in the sense that
$F$ sends homotopy pushout squares (such as a Mayer-Vietoris square)
to homotopy pullback
squares (leading to long exact sequences of homotopy groups).
Then the homotopy groups of values of $F$ form a homology
theory. Homology theories are in principle computable for many spaces,
and can be studied via their representing spectra. Goodwillie
associates to a homotopy functor $F$ a tower of endofunctors
\begin{equation}\label{eq:goodwillie-tower} 
  F \to\dotsm\to P_{n+1}F\to P_{n}F\to\dotsm \to P_1F\to P_0F
\end{equation}
in which the functor $P_nF$ satisfies an $n$-th order excision 
and the map $F\to P_nF$ is initial up to weak equivalence among all 
maps from $F$ with $n$-excisive target. For instance, the functor $P_0F$ 
is weakly equivalent to the constant functor at $F(\ast)$, the functor 
$P_1F$ is ``the best'' excisive approximation to $F$, 
and the homotopy groups of the
homotopy fiber of $P_1F\to P_0F$
are a reduced homology theory, represented by a spectrum $\partial^1 F$.
More generally (and very surprisingly) Goodwillie determines the 
``homogeneous'' fibers of this tower in the sense that, for any $n\geq 1$, 
there is a spectrum $\partial^nF$ with $\Sigma_n$-action and a 
natural weak 
equivalence
\begin{equation}\label{intro:Dn}
  D_nF(X)=\hofib\bigl(P_nF(X)\to P_{n-1}F(X)\bigr)\simeq\Omega^{\infty}(\partial^n F\wedge X^{\wedge n})_{h\Sigma_n} 
\end{equation}
where $\Sigma_n$ acts by permutation on $X^{\wedge n}$. In
other words, the layers of the Goodwillie tower~(\ref{eq:goodwillie-tower}) 
are governed by spectra, whence the associated spectral sequence computes 
unstable homotopy groups from stable input. 
We cannot resist to point out the similarity with the formula for the $n$-th homogeneous term in Taylor series of a smooth function $f$ around zero:
\[ \frac{f^{(n)}(0)\, x^n}{n!} \]
Indeed, Goodwillie calls the $\Sigma_n$-spectrum $\partial^nF$ the $n$-th derivative of $F$ at $\ast$, and produces a corresponding derivative 
at any pointed topological space. 
The third and foundational paper \cite{Goo:calc3} contains this classification
of homogeneous functors, which is basically subsumed by a commutative diagram of equivalences of homotopy categories:

\begin{equation}\label{eq:diagram-of-equivalences-homotopy-cats}
  \xymatrix{ 
    \mathrm{Ho}
    \Bigl(\overset{{n\text{-homogeneous}}}{\scriptstyle{\text{spectrum-valued functors}}} \Bigr)
    \ar[r]^-{\hocr_n}     \ar[d]_{\Omega^{\infty}} & 
    \mathrm{Ho}
    \Bigl(\overset{{\text{symmetric multilinear}}}{\scriptstyle{\text{spectrum-valued functors}}} \Bigr)
    \ar[d]^{\Omega^{\infty}}   \\ 
    \mathrm{Ho}
    \Bigl(\overset{{n\text{-homogeneous}}}{\scriptstyle{\text{space-valued functors}}} \Bigr)
    \ar[r]^-{\hocr_n} & 
    \mathrm{Ho}
    \Bigl(\overset{{\text{symmetric multilinear}}}{\scriptstyle{\text{space-valued functors}}} \Bigr)
    }
\end{equation}

The right hand side of this diagram consists of symmetric functors in $n$ variables which are linear in each variable, on the left hand side are $n$-homogeneous functors. The horizontal functor is the {\it $n$-th homotopy cross effect}, which Goodwillie simply denotes $\cross_n$. The classification concludes via 
evaluation on the $n$-tuple $(S^0,\dotsc,S^0)$ of zero spheres,
which constitutes an equivalence from the homotopy category
of symmetric multilinear spectrum-valued homotopy functors
to the stable homotopy category of spectra with $\Sigma_n$-action.
Whereas Goodwillie's original proofs
of the basic theorems usually involve certain connectivity
assumptions, his more recent proofs in \cite{Goo:calc3} consist of
clever diagram manipulations which apply in great generality
(see also~\cite{Rezk:streamline}).
In fact, this generality is one reason ``for reworking this
whole theory in the context of closed model categories'' \cite[p.~655]{Goo:calc3}
which is the present goal. This goal has been
addressed already in \cite{Kuhn:overview}, \cite{BCR:calc},
\cite{Lurie:higher-algebra}, \cite{Barnes-Oman} (for orthogonal calculus),
and most recently in \cite{Pereira}. 

Our ``reworking'' obtained in this article provides not only
a lift of the above classification from the level of homotopy 
categories to the level of model categories, 
but also a generalization of Goodwillie's calculus to more general model
categories. In order to describe this more precisely, let $\mc{S}$ be the 
category of pointed simplicial sets. Given a pointed simplicial
model category $\mc{D}$, let $\Sp(\mc{D})$ denote the stable
model category of spectra in $\mc{D}$. Finally, let
$\mc{F}$ be the category of pointed simplicial
functors from $\mc{C}$ to $\mc{D}$. Here $\mc{C}$ is a
small full subcategory of a pointed simplicial model
category $\mc{B}$, and both $\mc{D}$ and $\mc{B}$ are required
to satisfy further, but not too restrictive, conditions, which
are given in Conventions~\ref{conv3},~\ref{conv:hf}, ~\ref{conv:n-exc}, and~\ref{conv:n-homog}. 
We explicitly describe a sequence 
\[ \mc{F}_{\mathrm{hf}} \rightarrow \dotsm \rightarrow
\mc{F}_{(n+1)\text{-}\mathrm{exc}} \rightarrow \mc{F}_{n\text{-}\mathrm{exc}} \rightarrow \dotsm \rightarrow
\mc{F}_{1\text{-}\mathrm{exc}} \rightarrow \mc{F}_{0\text{-}\mathrm{exc}}\simeq \ast \]
of left Bousfield localizations of a homotopy functor model
structure on $\mc{F}$, such that the respective fibrant
replacements are $n$-excisive approximations. This lifts Goodwillie's
tower~(\ref{eq:goodwillie-tower}). Furthermore,
right Bousfield localization supplies 
``fiber sequences''
\[ \mc{F}_{n\text{-}\mathrm{hom}} \rightarrow \mc{F}_{n\text{-}\mathrm{exc}} \rightarrow 
\mc{F}_{(n-1)\text{-}\mathrm{exc}} \]
of model structures for every $n$, such that cofibrant replacement
in $\mc{F}_{n\text{-}\mathrm{hom}}$ yields an $n$-reduced approximation. In order to 
complete the description of the layers of the tower, 
we supply model structures that promotes Goodwillie's 
diagram~(\ref{eq:diagram-of-equivalences-homotopy-cats}) 
to a commuting diagram of Quillen equivalences.
In the special case where $\mc{C}$ is the category of finite pointed 
simplicial sets, the model structure for symmetric multilinear 
functors is shown to be Quillen equivalent
to the stable model category of spectra in 
$\mc{D}$ with a $\Sigma_n$-action, thus also lifting
Goodwillie's derivative to the level of model categories.

This article applies to the target $\mc{D}=\mc{S}$, the category of all pointed simplicial sets, with the full subcategory of finite pointed simplicial sets $\mc{C}=\Sfin$ as source. Already in this case, our results extend \cite{BCR:calc}, 
which constructed homogeneous model structures only for spectrum-valued
functors. Moreover, we now cover
the important variation given by the relative setting, where one considers simplicial sets retractive over a fixed simplicial set $K$. Other possible
applications, to be investigated elsewhere, are equivariant homotopy 
theory, and homotopy theory of simplicial sheaves with respect
to suitable Grothendieck topologies. 

The category of finite pointed CW complexes does not satisfy the conditions we impose on the source category for the construction of the homotopy functor model structure. However, all finite CW complexes are both fibrant and cofibrant and, thus, all simplicial functors defined on finite CW complexes are automatically homotopy functors. In this case, the homotopy functor model structure is not needed and one can perform all the constructions in this article by using the projective model structure or the cross effect model structure directly. Shortly, finite pointed CW complexes as source category \mc{C} and all pointed topological spaces as target category \mc{D} qualify as further examples.

Jacob Lurie describes in \cite{Lurie:higher-algebra}, among many other things, the $n$-excisive part and derivatives in terms of $\infty$-categories for functors between certain $\infty$-categories. Our model structures provide an alternative approach, which should be useful for the practitioners of ``calculus'', since
it upgrades Goodwillie's results on spaces and their generalizations 
obtained in \cite{Pereira} from the
level of homotopy categories to the level of model categories.

As a guideline to the reader, we offer a 
short summary of the contents of this article, followed by
diagrams indicating the various model structures.
\begin{description}
\item[Section 2] supplies the necessary prerequisites on enriched
  category theory, in particular for functors of several variables,
  and discusses differences between the unpointed simplicial and
  the pointed simplicial case. Theorem~\ref{thm:proj-model-str.}
  provides projective model structures on functor categories as
  a starting ground.
\item[Section 3] introduces symmetric functors and the categorical
  cross effect. Theorem~\ref{thm:cross-model} supplies the cross
  effect model structure, a modification
  of the projective model structure with more cofibrations, such that
  the cross effect becomes a right Quillen functor, with Goodwillie's
  homotopy cross effect as its right derived functor 
  (Proposition~\ref{prop:cross-right-obj}).
\item[Section 4] develops homotopy functor model structures based 
  on mild hypotheses~\ref{conv:hf}, refining and extending 
  the results of \cite{DRO:enriched}. Both the projective and
  the cross effect model structure may serve as a basis, as
  explained in Theorem~\ref{thm:homotopy-model}. 
\item[Section 5] contains the description of
  the Goodwillie tower~(\ref{eq:goodwillie-tower}) on the level
  of model categories in the
  following sense: Theorem~\ref{thm:n-exc-model-str-exists} supplies, 
  for every natural number $n$, an
  $n$-excisive model structure as a suitable left Bousfield
  localization of the homotopy functor model structures
  given in Section 4. Furthermore, multilinear model structures
  are introduced in Theorem~\ref{thm:multilinear-model}, 
  in order to model Goodwillie's classification
  of homogeneous functors. Theorem~\ref{coefficient-spectra-equivalence}
  shows that, up to Quillen equivalence, 
  symmetric multilinear functors on finite pointed
  simplicial sets are just spectra with a symmetric group action.
\item[Section 6] introduces homogeneous model structures 
  in Theorem~\ref{thm:n-hom-model-exists} and completes
  the classification of homogeneous functors via
  diagram~(\ref{equivalences}). With Lemma~\ref{n-hom cof} 
  we supply a characterization of homogeneous cofibrations.
\end{description}

The plethora of model structures on $\mc{F} = \Fun(\mc{C},\mc{D})$ can be organized into the following schematic diagram of left Quillen identity functors, where an arrow pointing to the right represents left Bousfield localization (keep cofibrations, add weak equivalences), a downward arrow displays a special Quillen equivalence (add cofibrations, keep equivalences) and the single upward arrow is a right Bousfield localization (add equivalences, keep fibrations):
\diagr{ \mc{F}_{\rm proj} \ar[r]\ar[d]_{\sim}  & \mc{F}_{\rm hf} \ar[r]\ar[d]_{\sim}  & \mc{F}_{n\text{-exc}} \ar[r]\ar[d]_{\sim}   & \mc{F}_{(n-1)\text{-exc}} \ar[d]_{\sim}  \\
      \mc{F}_{\rm cr} \ar[r] & \mc{F}_{\text{hf-cr}} \ar[r] & \mc{F}_{n\text{-exc-cr}} \ar[r] & \mc{F}_{(n-1)\text{-exc-cr}} \\
                      & & \mc{F}_{n\text{-hom}} \ar[u] &   }


\begin{notation}\label{spaces}
The category of simplicial sets (unpointed spaces) is denoted $\mc{U}$, and the category of pointed simplicial sets (spaces) is denoted $\mc{S}$. The corresponding full subcategories of finite (pointed) simplicial sets are denoted $\Ufin$ and $\Sfin$, respectively.
Left adjoints are always on top or to the left.
A terminal object in a category \mc{C} will be denoted $\ast_{\mc{C}}$ or simply $\ast$.
\end{notation}

{\bf Acknowledgements:} 
We would like to thank Bill Dwyer and Andr\'{e} Joyal for many encouraging and enlightning discussions. We thank Lu\'{i}s Alexandre Pereira and the referee for their detailed and helpful comments.

\section{Enriched functors}\label{sec:prel-enrich-funct}

This section recalls the necessary prerequisites on enriched
category theory, in particular for functors of several variables.
Special emphasis is given on 
the differences between the unpointed simplicial and
the pointed simplicial case in Section~\ref{sec:smash-prod-prod}.

\subsection{Preliminaries on enriched functors}
References for enriched category theory
are \cite{Bor:2}, \cite{Eilen-Kelly}, and \cite{Kelly}. 
This section mainly presents notation.

\begin{convention}\label{conv1}
Let $(\mc{V},\otimes,I)$ be a closed symmetric monoidal category.
\end{convention}

The two main examples of closed symmetric monoidal categories are mentioned
in Notation~\ref{spaces}: $(\mc{U},\times,\ast)$, the category of 
simplicial sets equipped with the categorical product, and $(\mc{S},
\wedge,S^0)$ the category of pointed simplicial sets equipped with the smash product.

\begin{notation}\label{not:functor-cat}
The $\mc{V}$-object of morphisms from $A$ to $B$ in any given
$\mc{V}$-category $\mc{C}$ is denoted $\Vhom_{\mc{C}}(A,B)$. 
The category 
of \mc{V}-functors from a small \mc{V}-category \mc{C}
to another \mc{V}-category \mc{D} is again a \mc{V}-category,
denoted by $\Fun_{\mc{V}}(\mc{C},\mc{D})$ or simply
$\Fun(\mc{C},\mc{D}) =\mc{D}^{\mc{C}}$ if no confusion may arise.
One example is the functor category $\mc{D}^G$, where $G$ is
a monoid, considered as a (discrete) 
\mc{V}-category with a single object.
\end{notation}

\begin{definition}\label{def:product-cat}
Given \mc{V}-categories $\mc{C}_i$ for $i=1,\dotsc,n$, the {\it monoidal product category} $\mc{C}_1\otimes\dotsm\otimes\mc{C}_n$ has as objects ordered $n$-tuples $(K_1,\dotsc,K_n)$ of objects $K_i$ in $\mc{C}_i$, and as $\mc{V}$-object of morphisms from $\ul{K}=(K_1,\dotsc,K_n)$ to $\ul{L}=(L_1,\dotsc,L_n)$ the
$n$-fold monoidal product
    \[ \Vhom_{\mc{C}_1\otimes\dotsm\otimes\mc{C}_n}(\ul{K},\ul{L})
        :=\bigotimes_{i=1}^n\Vhom_{\mc{C}}(K_i,L_{i}).\]
Composition and units are readily introduced, giving $\mc{C}_1\otimes\dotsm\otimes\mc{C}_n$ a $\mc{V}$-category structure. 
\end{definition}

Of course it suffices to give Definition~\ref{def:product-cat}
for two factors. The general case is presented in view of
discussing enriched functors in several variables. 

\begin{example}
In the case where the closed symmetric monoidal base category
is $(\mc{U},\times,\ast)$, the
underlying category of a monoidal product category coincides
with the ordinary product category; an observation which could be
abbreviated as $\mc{C}\otimes_{\mc{U}}\mc{D} \cong \mc{C}\times \mc{D}$. 
This is different in the case of
$(\mc{S},\wedge,S^0)$. For example, any object of the form
$(K,\ast)$ or $(\ast,L)$ in $\mc{S}\wedge \mc{S}$ is a zero object.
\end{example}

\begin{definition}\label{some (co-)tensors}
Let \mc{C} and \mc{D} be \mc{V}-categories, with \mc{C} small. Recall that if $\mc{D}$ is tensored over $\mc{V}$, there is a \mc{V}-functor
\[ \Fun(\mc{C},\mc{V})\otimes\mc{D} \to \Fun(\mc{C},\mc{D})\]
sending $(D,\dgrm{X})$ to the \mc{V}-functor 
    \[\dgrm{X}\otimes D\co C\mapsto \dgrm{X}(C)\otimes D.\]
For fixed $D$ in \mc{D} the \mc{V}-functor $\dgrm{X}\mapsto \dgrm{X}\otimes D$ has a right adjoint $\dgrm{Y}\mapsto \dgrm{Y}^D$, where $\bigl(\dgrm{Y}^D\bigr)(C)=\Vhom_{\mc{D}}\bigl(D,\dgrm{Y}(C)\bigr)$.
For fixed $\dgrm{X}\in\Fun(\mc{C},\mc{V})$ the functor $D\mapsto \dgrm{X}\otimes D$ has a right adjoint 
\[ \dgrm{Y}\mapsto \cotensor{\dgrm{X}}{\dgrm{Y}} = 
        \int_{C}\bigl(\dgrm{Y}(C)\bigr)^{\dgrm{X}(C)} \in\mc{D}\]
if $\mc{D}$ is also cotensored over $\mc{V}$. Slightly adapting these 
definitions supplies \mc{V}-functors
\[ \Fun(\mc{C},\mc{D})\otimes\mc{V} \to \Fun(\mc{C},\mc{D})\]
and 
\[ \mc{V}^{\mathrm{op}}\otimes\Fun(\mc{C},\mc{D}) \to \Fun(\mc{C},\mc{D})\]
giving the functor category $\Fun(\mc{C},\mc{D})$ the usual structure of a tensored and cotensored \mc{V}-category. 
\end{definition}

\begin{notation}\label{not:representable}
The covariant \mc{V}-functor represented by the object
$C\in \mc{C}$ is denoted
    \[ R^C=R^C_{\mc{C}}=\mc{V}_{\mc{C}}(C,\free)\co\mc{C}\to\mc{V}.\]
\end{notation}
 
\begin{lemma}[Yoneda]\label{twisted Yoneda}
Let $C$ be an object in \mc{C} and \dgrm{Y} in $\Fun(\mc{C},\mc{D})$.
The $\mc{V}$-natural transformation
\[ \Bigl\lbrace\dgrm{Y}(C) \to \dgrm{Y}(D)^{R^C(D)}\Bigr\rbrace_{D\in \mc{C}} \] 
induces an isomorphism:
   \[ \dgrm{Y}(C) \cong\cotensor{R^C}{\dgrm{Y}}.\] 
\end{lemma}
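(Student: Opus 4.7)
The plan is to follow the standard enriched Yoneda argument, breaking it into the usual three steps. First, I would identify the given $\mc{V}$-natural transformation: the component at $D$ is the adjoint, under the tensor-cotensor adjunction of Definition~\ref{some (co-)tensors}, of the action map
\[ R^C(D) \otimes \dgrm{Y}(C) = \Vhom_{\mc{C}}(C,D) \otimes \dgrm{Y}(C) \longrightarrow \dgrm{Y}(D) \]
that witnesses $\dgrm{Y}$ as a $\mc{V}$-functor. By the universal property of the end, these components assemble into the stated map $\dgrm{Y}(C) \to \cotensor{R^C}{\dgrm{Y}}$, since dinaturality in $D$ follows from the associativity axiom for the $\mc{V}$-action of $\dgrm{Y}$.

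Second, I would construct the candidate inverse as the composite
\[ \cotensor{R^C}{\dgrm{Y}} \longrightarrow \dgrm{Y}(C)^{R^C(C)} \longrightarrow \dgrm{Y}(C)^{I} \cong \dgrm{Y}(C), \]
where the first arrow is the $C$-indexed projection from the end, the second is induced by the unit $I \to \Vhom_{\mc{C}}(C,C) = R^C(C)$ picking out $\id_C$, and the last is the canonical isomorphism coming from $(\mc{V},\otimes,I)$ being closed symmetric monoidal.

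Third, I would verify that the two composites are identities. The composite $\dgrm{Y}(C) \to \cotensor{R^C}{\dgrm{Y}} \to \dgrm{Y}(C)$ is the identity by the unit axiom for the enriched functor $\dgrm{Y}$: evaluating the action map at $\id_C \in \Vhom_{\mc{C}}(C,C)$ recovers the identity of $\dgrm{Y}(C)$. The reverse composite $\cotensor{R^C}{\dgrm{Y}} \to \dgrm{Y}(C) \to \cotensor{R^C}{\dgrm{Y}}$ is then checked projection by projection, i.e.\ it suffices to match with $\pi_D$ for every $D \in \mc{C}$; this uses precisely the equalizer condition defining the end, which forces every global section to be reconstructible from its $C$-component by postcomposing with $R^C(C) \to R^C(D)$ induced by varying $f \co C \to D$.

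The last step is the only delicate part: it amounts to producing a commutative diagram relating the projection $\pi_D$, the adjoint form of the $\mc{V}$-action of $\dgrm{Y}$, and the dinaturality square of the end. Nothing is genuinely deep here, but one must keep track of the tensor-cotensor adjunctions carefully, since (as emphasized in Section~\ref{sec:smash-prod-prod}) the behaviour of $\otimes = \wedge$ in the pointed setting $(\mc{S},\wedge,S^0)$ differs from the cartesian case and the zero objects could otherwise cause bookkeeping errors.
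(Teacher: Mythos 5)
The paper states this lemma without proof, treating it as the standard enriched Yoneda lemma available from the cited references (Kelly, Borceux, Eilenberg--Kelly). Your argument is the standard proof and is correct: the forward map is adjoint to the $\mc{V}$-action $R^C(D)\otimes\dgrm{Y}(C)\to\dgrm{Y}(D)$ and lands in the end by associativity; the inverse is extraction of the $C$-component followed by restriction along the unit $I\to R^C(C)$; one round-trip is the identity by the unit axiom of $\dgrm{Y}$, and the other is checked against each projection $\pi_D$ using the $\mc{V}$-naturality (dinaturality) constraint of the end together with the fact that precomposition with $\id_C$ is the identity. The only thing worth tightening in your write-up is the phrase about ``varying $f\co C\to D$'': in the enriched setting there is no underlying set of morphisms to quantify over, so the reconstruction should be stated internally as the equality $\tilde\alpha_D = \mathrm{act}_D\circ(R^C(D)\otimes a)$ obtained by pairing the dinaturality square for $\tilde\alpha$ at $(C,D)$ with the unit $I\to R^C(C)$; this is what you mean, but the external language could mislead a reader into thinking a Set-level argument suffices.
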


\begin{definition}\label{objectwise tensor product}
  Let \mc{C} be a small \mc{V}-category.
  The {\it objectwise tensor product} of $\dgrm{X}$ and $\dgrm{Y}$ in 
  $\Fun(\mc{C},\mc{V})$, denoted by $\dgrm{X}\otimes\dgrm{Y}$, is 
  given by the equation
  \[ (\dgrm{X}\otimes\dgrm{Y})(C):=\dgrm{X}(C)\otimes\dgrm{Y}(C).\]
\end{definition}

\subsection{The projective model structure}
For terminology concerning model categories, consider Hirschhorn \cite{Hir:loc} or Hovey \cite{Hov:model}. A (co)fibration which is also a weak equivalence will be called an {\em acyclic\/} (co)fibration. 

\begin{convention}\label{conv:monoidal-model-cat}
  Let $\mc{V}$ be a symmetric monoidal model category.
\end{convention}

Again the main examples are (pointed) simplicial sets,
with monomorphisms as cofibrations and maps inducing homotopy
equivalences after geometric realization as weak equivalences.
As explained in the references mentioned, a \mc{V}-model category 
is tensored and cotensored over \mc{V}, and the compatibility of the 
model structures with the enrichment is expressed using the following 
definition.

\begin{definition}\label{def:pushout product}
  Let $f\co A\to B$ and $g\co C\to D$ be two maps in \mc{V}. The map
  \[ f\,\square\, g\co (A\otimes D)\cup_{(A\otimes C)}B\otimes C\to B\otimes D \]
  induced by $f\otimes D$ and $B\otimes g$ is called 
  the {\it pushout product} of $f$ and $g$. The analogous construction 
  where $g$ is a map in a tensored \mc{V}-category \mc{D} with pushouts
  yields a map $f\,\square\, g$ is a map in \mc{D}. 
\end{definition}

Hirschhorn calls it the {\it pushout corner map} in \cite[9.3.5.(2)]{Hir:loc}.
In order to equip the category $\Fun(\mc{C},\mc{D})$ of \mc{V}-functors from a
small \mc{V}-category \mc{C} to a \mc{V}-model category \mc{D} with the projective 
model structure (whose weak equivalences and fibrations are defined objectwise),
certain assumptions on \mc{D} are necessary.

\begin{definition}\label{V-monoid-axiom}
Let $\mc{V}$ be a monoidal model category and let
\mc{D} be a $\mc{V}$-model category. The \mc{V}-model category \mc{D} {\it satisfies the $\mc{V}$-monoid axiom} if the following property holds: Let $\mathrm{acof}_\mc{D}$ be the class of acyclic cofibrations in  $\mc{D}$. Let $\mathcal{E}_{\mc{D}}$ be the class of relative cell complexes in \mc{D} generated by the class of morphisms
 \[ \{j\otimes A\,|\,j \in\mathrm{acof}_\mc{D},A\in\ob\mc{V}\}. \]
Then every morphism in $\mathcal{E}_{\mc{D}}$ is a weak equivalence.
\end{definition}

\begin{definition}\label{V-left-proper}
Let $\mc{V}$ be a monoidal model category, and let
\mc{D} be a $\mc{V}$-model category. Let $\mathcal{F}_\mc{D}$ be the class 
of relative cell complexes in \mc{D} generated by the class of morphisms
 \[ \{i\otimes A\,\vert \,i \in\mathrm{cof}_\mc{D},A\in\ob\mc{V}\}. \]
The \mc{V}-model category $\mc{D}$ is \mc{V}-{\em left proper\/}
if weak equivalences in $\mc{D}$ are closed under cobase change
along morphisms in $\mathcal{F}_\mc{D}$.
\end{definition}

\begin{remark}
If all objects in \mc{V} are cofibrant, the \mc{V}-monoid axiom holds
automatically in any \mc{V}-model category \mc{D}. Furthermore, in that
case, \mc{V}-left properness is equivalent to left properness. This holds in particular for the cases $\mc{V}=\mc{S}$ or \mc{U}.
\end{remark}

\begin{theorem}\label{thm:proj-model-str.}
Let \mc{D} be a bicomplete \mc{V}-model category which is cofibrantly generated.
If the $\mc{V}$-monoid axiom holds in \mc{D}, the category 
\[ \Fun_\mc{V}(\mc{C},\mc{D}) \]
of $\mc{V}$-functors from a small \mc{V}-category \mc{C} to \mc{D} 
carries a cofibrantly generated model structure, where the weak
equivalences and fibrations are defined objectwise. If the model structure on \mc{D} is right proper, so is the projective model structure. If the model structure on \mc{D} is \mc{V}-left proper, the projective model structure is left proper.
\end{theorem}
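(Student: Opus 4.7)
The plan is to deduce this from Kan's transfer/recognition theorem for cofibrantly generated model categories. Let $I_\mc{D}$ and $J_\mc{D}$ be generating sets of cofibrations and acyclic cofibrations for $\mc{D}$. For each $C\in\mc{C}$, evaluation $\ev_C\co\Fun_\mc{V}(\mc{C},\mc{D})\to\mc{D}$ has a $\mc{V}$-left adjoint sending $A\in\mc{D}$ to the tensored representable $R^C\otimes A$; this is an instance of Yoneda (Lemma~\ref{twisted Yoneda}) together with the tensor construction of Definition~\ref{some (co-)tensors}. I would take
\[ I_{\Fun} \;=\; \{R^C\otimes i \mid C\in\ob\mc{C},\; i\in I_\mc{D}\}, \qquad J_{\Fun} \;=\; \{R^C\otimes j \mid C\in\ob\mc{C},\; j\in J_\mc{D}\} \]
as generating sets, and declare objectwise weak equivalences as the weak equivalences. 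By adjunction a map $f$ has the RLP against $I_{\Fun}$ (resp.\ $J_{\Fun}$) if and only if each $\ev_C(f)$ is an acyclic fibration (resp.\ fibration) in $\mc{D}$, which pins down the anticipated classes of fibrations and trivial fibrations.

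Next I would verify the hypotheses of Kan's recognition theorem. Limits and colimits in $\Fun_\mc{V}(\mc{C},\mc{D})$ are computed objectwise, so $\Fun_\mc{V}(\mc{C},\mc{D})$ is bicomplete and the objectwise weak equivalences satisfy two-of-three and are closed under retracts. Smallness of the domains of $I_{\Fun}$ and $J_{\Fun}$ is inherited from smallness of the domains of $I_\mc{D}$ and $J_\mc{D}$, because filtered colimits commute with evaluation. The crucial step, and the main obstacle, is to show that every relative $J_{\Fun}$-cell complex is an objectwise weak equivalence. Here I use the $\mc{V}$-monoid axiom: a pushout of a map $R^C\otimes j\in J_{\Fun}$ evaluates at $C'\in\mc{C}$ to a pushout along $\Vhom_\mc{C}(C,C')\otimes j$, which is precisely a map of the form $V\otimes j$ with $j\in\mathrm{acof}_\mc{D}$ and $V\in\ob\mc{V}$. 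Since evaluation preserves colimits, a transfinite $J_{\Fun}$-cell complex evaluates at each $C'$ to a transfinite composite of cobase changes along such maps, hence lies in $\mathcal{E}_\mc{D}$ and is a weak equivalence by hypothesis. The inclusion $J_{\Fun}\text{-cof}\subseteq I_{\Fun}\text{-cof}$ follows because $J_\mc{D}\subset I_\mc{D}\text{-cof}$ and $R^C\otimes(-)$ is a left adjoint, so Kan's criterion is met.

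For right properness, suppose $X\to Y$ is an objectwise weak equivalence and $Z\to Y$ is an objectwise fibration. Pullbacks in $\Fun_\mc{V}(\mc{C},\mc{D})$ are computed objectwise, so the base change is evaluated at each $C$ to a base change of a weak equivalence along a fibration in $\mc{D}$, which is a weak equivalence by right properness of $\mc{D}$. For left properness under the $\mc{V}$-left properness hypothesis, I would argue that every projective cofibration $f$ is a retract of a relative $I_{\Fun}$-cell complex $f'$, and that $\ev_C(f')$ is a relative cell complex built from maps of the form $V\otimes i$ with $i\in\mathrm{cof}_\mc{D}$, hence a map in $\mathcal{F}_\mc{D}$. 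A pushout of an objectwise weak equivalence along $f'$ is therefore objectwise a cobase change of a weak equivalence along a map in $\mathcal{F}_\mc{D}$, and hence still a weak equivalence by $\mc{V}$-left properness of $\mc{D}$; closure of weak equivalences under retracts transfers this to $f$.

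The main technical obstacle is really the $J_{\Fun}$-cell verification: without the $\mc{V}$-monoid axiom the generating acyclic cofibrations of $\Fun_\mc{V}(\mc{C},\mc{D})$ need not produce weak equivalences under pushout, and the argument collapses. Everything else is a routine objectwise translation, using that $R^C\otimes(-)$ is left adjoint to $\ev_C$ and that (co)limits are computed objectwise in the functor category.
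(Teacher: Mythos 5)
Your proposal is correct and takes essentially the same route as the paper. The paper's proof simply cites \cite[Theorem~4.4, Cor.~4.8]{DRO:enriched}, but then records exactly the generating sets $I^{\mathrm{proj}}_{\Fun}$ and $J^{\mathrm{proj}}_{\Fun}$ that you construct, and the DRO argument is precisely Kan's recognition theorem with the $\mc{V}$-monoid axiom supplying the ``relative $J_{\Fun}$-cell complexes are weak equivalences'' step via the objectwise identification $(R^C\otimes j)(C') = \Vhom_{\mc{C}}(C,C')\otimes j$. One small point worth tightening in the left-properness step: when you write ``every projective cofibration $f$ is a retract of a relative $I_{\Fun}$-cell complex $f'$,'' you should use the retract coming from the small object argument applied to $f$ itself, so that $f'$ has the \emph{same domain} as $f$ and the retract diagram is the identity on domains. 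With that choice, pushing the weak equivalence $g$ out along $f'$ gives a weak equivalence (by Case~1, i.e.\ $\mc{V}$-left properness applied objectwise), and the induced map $g'$ is then a genuine retract of that weak equivalence; a retract along a map that moves the domain would require one to know that a pushout of a weak equivalence along an arbitrary map is again a weak equivalence, which is not available.
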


\begin{proof} This follows by adapting the proof of \cite[Theorem 4.4]{DRO:enriched}. Left properness is shown as in the proof of \cite[Cor.~4.8]{DRO:enriched}. 
  Following standard terminology, this model structure will be referred to as the {\em projective\/} model structure. 
For future reference, generating sets for 
cofibrations and acyclic cofibrations in the
projective model structure are constructed as follows: 
Tensoring the functor $R^K$ with an object $E\in \ob(\mc{D})$
yields a \mc{V}-functor 
\[ R^K\otimes E \colon \mc{C}\to \mc{D},\quad 
  L\mapsto \mc{V}_\mc{C}(K,L)\otimes E.\]
The \mc{V}-Yoneda lemma~\ref{twisted Yoneda} implies that any \mc{V}-functor 
$\dgrm{X}\colon \mc{C}\to \mc{D}$ is naturally isomorphic to
the coend
\[ \int_{K\in \mc{C}} R^K\otimes \dgrm{X}(K).\]
Given generating sets $I_\mc{D}$ and
$J_\mc{D}$ for the model structure on $\mc{D}$, the sets 
\begin{align}\begin{split}
    I^{\mathrm{proj}}_{\Fun_\mc{V}(\mc{C},\mc{D})}&:=\{R^K\otimes i\,|\,K\in\ob(\mc{C}),i\in I_\mc{D}\}  \\
    J^{\mathrm{proj}}_{\Fun_\mc{V}(\mc{C},\mc{D})}&:=\{R^K\otimes j\,|\,K\in\ob(\mc{C}),j\in J_\mc{D}\} 
\end{split}
\end{align}
are generating (acyclic) cofibrations for the projective model
structure.
\end{proof}

\subsection{Enriched functors in several variables}

A \mc{V}-functor in several variables is simply a
\mc{V}-functor
\[ \mc{C}_1\otimes\dotsm\otimes\mc{C}_n \to \mc{D}\]
where \mc{D} and $\mc{C}_i$ for $i=1,\dotsc,n$ are 
\mc{V}-categories. In order to translate between
\mc{V}-functors in several variables and in a single
variable, let
$\mc{I}_{\mc{V}}$ denote
the \mc{V}-category given by the full subcategory of \mc{V} containing as its single object the unit $I$. In other words, it is a \mc{V}-category with one object, also denoted $I$, and endomorphism object $I$. 
For every \mc{V}-category \mc{A} there are canonical unit isomorphisms
  $$ \mc{I}_{\mc{V}}\otimes \mc{A} \stackrel{\cong}{\longleftarrow} \mc{A}\stackrel{\cong}{\longrightarrow}\mc{A}\otimes\mc{I}_{\mc{V}} $$
of \mc{V}-categories.

For any object $B$ in a \mc{V}-category \mc{B}, there is a \mc{V}-functor
  \[ i_B\co\mc{A}\otimes\mc{I}_{\mc{V}}\to\mc{A}\otimes\mc{B} \]
which is given on objects by $(A,I)\mapsto (A,B)$ and on morphisms by 
  \[\mc{V}_{\mc{A}}(A_1,A_2)\otimes I\to\mc{V}_{\mc{A}}(A_1,A_2)\otimes\mc{V}_{\mc{B}}(B,B),\]
where $I\to\mc{V}_{\mc{B}}(B,B)$ is the canonical unit map.
Of course, there is an analogous functor $i_A\co\mc{I}_{\mc{V}}\otimes \mc{B}\to\mc{A}\otimes\mc{B}$ for every $A$ in \mc{A}.
Given a \mc{V}-functor $G\co\mc{A}\otimes\mc{B}\to\mc{D}$, every object $B$ 
in \mc{B} defines the {\it partial functor} $G_B\co\mc{A}\to\mc{D}$ by composition:
\diagr{ \mc{A} \ar[r]^-{\gamma_{\mc{A}}}_-{\cong} & \mc{A}\otimes \mc{I}_{\mc{V}} \ar[r]^-{i_B} & \mc{A}\otimes\mc{B} \ar[r]^-{G} & \mc{D} }
The functor $G$ is uniquely determined by all its partial functors $G_A$ and $G_B$: 

\begin{proposition}[Prop. 4.2, \cite{Eilen-Kelly}] 
\label{EilKel} 
Suppose that, for all objects $A$ of $\mc{A}$ and $B$ of \mc{B}, there are
\mc{V}-functors $G_A\co\mc{B}\to\mc{D}$ and $G_B\co\mc{A}\to\mc{D}$ with the property $G_A(B)=G_B(A)=:G(A,B)$. Then there exists a unique \mc{V}-functor $G\co\mc{A}\otimes\mc{B}\to\mc{D}$ with $\{G_A\}$ and $\{G_B\}$ as partial functors if and only if the following diagram commutes:
\diagr{ \mc{V}_{\mc{A}}(A,A')\otimes\mc{V}_{\mc{B}}(B,B') \ar[rr]^-{G_{B'}\otimes G_A} \ar[dd]_-{\mathrm{switch}} && \mc{V}_{\mc{D}}\bigl(G(A,B'),G(A',B')\!\bigr)\otimes\mc{V}_{\mc{D}}\bigl(G(A,B),G(A,B')\!\bigr) \ar[d]^-{\mathrm{composition}} \\
    && \mc{V}_{\mc{C}}\bigl(G(A,B),G(A',B')\bigr) \\
        \mc{V}_{\mc{B}}(B,B')\otimes\mc{V}_{\mc{A}}(A,A')\ar[rr]^-{G_{A'}\otimes G_{B}} && \mc{V}_{\mc{D}}\bigl(G(A',B),G(A',B')\!\bigr)\otimes\mc{V}_{\mc{D}}\bigl(G(A,B),G(A',B)\!\bigr) \ar[u]_-{\mathrm{composition}}  }
\end{proposition}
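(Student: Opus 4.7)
The plan is to use the explicit construction of the monoidal product category $\mc{A} \otimes \mc{B}$ (Definition~\ref{def:product-cat}) to reconstruct $G$ from the data $\{G_A\},\{G_B\}$, with the displayed square providing exactly the compatibility needed to make the construction both well-defined and functorial.

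\emph{Necessity.} Assume $G$ exists. By the definition of composition in $\mc{A}\otimes\mc{B}$, a generalised element $f\otimes g$ of $\mc{V}_{\mc{A}}(A,A')\otimes\mc{V}_{\mc{B}}(B,B')$ decomposes in two ways, obtained by inserting identities and pre-composing with the switch isomorphism of $\mc{V}$. Applying the $\mc{V}$-functor $G$ and using the identifications $G\circ i_B\circ\gamma_{\mc{A}}=G_B$ and $G\circ i_A\circ\gamma_{\mc{B}}^{-1}=G_A$, the two resulting composites in $\mc{V}_{\mc{D}}\bigl(G(A,B),G(A',B')\bigr)$ are precisely the two legs of the displayed square; they coincide because both equal $G$ applied to $f\otimes g$.

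\emph{Sufficiency.} Set $G(A,B):=G_A(B)=G_B(A)$ on objects. On hom-objects define
\[ G_{(A,B),(A',B')}\co \mc{V}_{\mc{A}}(A,A')\otimes\mc{V}_{\mc{B}}(B,B')\to\mc{V}_{\mc{D}}\bigl(G(A,B),G(A',B')\bigr) \]
to be the common value of the two composites in the square (equal by hypothesis). By construction the restrictions along $i_B\circ\gamma_{\mc{A}}$ and $i_A\circ\gamma_{\mc{B}}^{-1}$ recover $G_B$ and $G_A$, so uniqueness is automatic and only the two $\mc{V}$-functor axioms for $G$ require checking. The unit axiom is immediate: precomposing either leg of the square with the unit $I\to\mc{V}_{\mc{A}}(A,A)\otimes\mc{V}_{\mc{B}}(B,B)$ reduces, via the unit axioms for $G_A$ and $G_B$ in their respective variables, to the composition of two identities on $G(A,B)$, which is the identity.

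The composition axiom is the main obstacle. One must show that for composable pairs $(A,B)\to(A',B')\to(A'',B'')$, the two maps
\[ G\circ\mathrm{comp}_{\mc{A}\otimes\mc{B}}\quad\text{and}\quad \mathrm{comp}_{\mc{D}}\circ(G\otimes G) \]
agree on the four-fold tensor $\mc{V}_{\mc{A}}(A',A'')\otimes\mc{V}_{\mc{B}}(B',B'')\otimes\mc{V}_{\mc{A}}(A,A')\otimes\mc{V}_{\mc{B}}(B,B')$. The strategy is to expand both sides into composites of four arrows in $\mc{D}$ indexed by the two $\mc{A}$-factors and the two $\mc{B}$-factors, use the $\mc{V}$-functoriality of the partial functors $G_A,G_{A'},G_{A''},G_B,G_{B'},G_{B''}$ to collapse the purely-$\mc{A}$ and purely-$\mc{B}$ subcomposites, and then invoke the given commuting square at the two ``crossed'' middle terms in order to swap an $\mc{A}$-action past a $\mc{B}$-action (or vice versa). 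The remaining subregions commute by associativity of composition in $\mc{D}$, naturality of the switch in $\mc{V}$, and the definition of composition in $\mc{A}\otimes\mc{B}$. The manipulation is essentially forced; the work lies entirely in organizing the diagram so that each subregion has a clear justification.
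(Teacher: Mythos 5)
The paper does not supply a proof of this statement; it is cited directly from Eilenberg--Kelly (their Proposition 4.2). Your proof sketch reproduces, in outline, the classical argument from that reference: necessity follows by decomposing $f\otimes g$ as $(f\otimes 1)\circ(1\otimes g) = (1\otimes g)\circ(f\otimes 1)$ and applying $G$; sufficiency defines $G$ on hom-objects as the common value of the two legs, and the functoriality check reduces, via the $\mc{V}$-functoriality of the partial functors and the middle-interchange switch built into composition in $\mc{A}\otimes\mc{B}$, to a single application of the hypothesized square (with $B'$ and $B''$ playing the roles of $B$ and $B'$) to commute a ``$B$-step'' past an ``$A$-step.'' Your treatment of the composition axiom stops at the level of a plan rather than a completed diagram, but the plan is correct and the remaining verification is the routine (if tedious) diagram chase you describe, using associativity in $\mc{D}$ and naturality of the symmetry. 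So the proposal is sound and matches the approach of the cited source.
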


In other words, a \mc{V}-functor from a monoidal product category is
essentially a functor in $n$ variables which is componentwise enriched over \mc{V}. The analogous result for \mc{V}-natural transformations
will be used as well.

\begin{proposition}[Prop. 4.12, \cite{Eilen-Kelly}]\label{EiKel2}
  Let \mc{V} be a symmetric monoidal category and $T,S\co\mc{A}\otimes\mc{B}\to\mc{D}$ be two \mc{V}-functors. For all objects $A$ in \mc{A} and $B$ in \mc{B} let
  \[ \alpha_{A,B}\co S(A,B)\to T(A,B) \]
  be a map in the underlying category of \mc{D}. The maps $\alpha_{A,B}$ are the components of a \mc{V}-natural transformation $\alpha\co S\to T$ if and only if, for each $A$, the map $\alpha_{A,B}$ is the $B$-component of a \mc{V}-natural transformation $\alpha_A\co S_A\to T_A$ and, for each $B$, the map $\alpha_{A,B}$ is the $A$-component of a \mc{V}-natural transformation $\alpha_B\co S_B\to T_B$.
\end{proposition}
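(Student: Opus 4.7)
The plan is to handle the two directions separately, with the forward direction being essentially formal and the backward direction using Proposition~\ref{EilKel} to reduce $\mc{V}$-naturality on $\mc{A}\otimes\mc{B}$ to the two partial $\mc{V}$-naturalities.

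For the ``only if'' direction, suppose $\alpha\co S\to T$ is a $\mc{V}$-natural transformation. Fix $A\in\mc{A}$. Precomposing $\alpha$ with the $\mc{V}$-functor $i_A\co\mc{I}_{\mc{V}}\otimes\mc{B}\to\mc{A}\otimes\mc{B}$ and using the canonical isomorphism $\mc{I}_{\mc{V}}\otimes\mc{B}\cong\mc{B}$ produces a $\mc{V}$-natural transformation whose $B$-component is $\alpha_{A,B}$; this is, by definition, the required $\alpha_A\co S_A\to T_A$. The analogous construction using $i_B$ delivers $\alpha_B\co S_B\to T_B$. So the forward direction is immediate from the definition of the partial functors as composites with $i_A$ and $i_B$.

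For the ``if'' direction, the task is to verify that the $\mc{V}$-naturality square
\[ \xymatrix{
   \mc{V}_{\mc{A}}(A,A')\otimes\mc{V}_{\mc{B}}(B,B') \ar[r]^-{S} \ar[d]_-{T}
    & \mc{V}_{\mc{D}}\bigl(S(A,B),S(A',B')\bigr) \ar[d]^-{(\alpha_{A',B'})_*} \\
   \mc{V}_{\mc{D}}\bigl(T(A,B),T(A',B')\bigr) \ar[r]_-{(\alpha_{A,B})^*}
    & \mc{V}_{\mc{D}}\bigl(S(A,B),T(A',B')\bigr)
} \]
commutes for every pair $(A,A')$ and $(B,B')$. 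Here the top and left arrows are the structure maps of the $\mc{V}$-functors $S$ and $T$ on the monoidal product category. By Proposition~\ref{EilKel}, these structure maps factor (in two symmetric ways) through the composition in $\mc{V}_{\mc{D}}$ of the structure maps of the partial functors at $(A',B)$ and at $(A,B')$. Inserting these decompositions splits the square into two subdiagrams: one whose commutativity is exactly the $\mc{V}$-naturality of $\alpha_B\co S_B\to T_B$ evaluated on $\mc{V}_{\mc{A}}(A,A')$ (composed with $S$ or $T$ in the other variable), and one whose commutativity is the $\mc{V}$-naturality of $\alpha_{A'}\co S_{A'}\to T_{A'}$ evaluated on $\mc{V}_{\mc{B}}(B,B')$. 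Pasting these together, while using associativity of composition in $\mc{V}_{\mc{D}}$, yields the desired square. The symmetric factorization through $(A,B')$ gives an equivalent decomposition, and the two agree precisely because $S$ and $T$ are $\mc{V}$-functors on $\mc{A}\otimes\mc{B}$, i.e.\ already satisfy the compatibility diagram of Proposition~\ref{EilKel}.

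The main obstacle is purely bookkeeping: untangling the two-variable structure maps of $S$ and $T$, keeping the symmetry $\mathrm{switch}\co\mc{V}_{\mc{B}}(B,B')\otimes\mc{V}_{\mc{A}}(A,A')\to\mc{V}_{\mc{A}}(A,A')\otimes\mc{V}_{\mc{B}}(B,B')$ in the correct place, and checking that the two ways to interpolate through $(A',B)$ or $(A,B')$ agree. Once this is organized, the proof is a direct diagram chase and no further hypotheses on $\mc{V}$, $\mc{A}$, $\mc{B}$, or $\mc{D}$ are needed beyond those already in force.
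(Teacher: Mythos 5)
The paper does not actually give a proof of this statement: Proposition~\ref{EiKel2} is imported verbatim as Proposition~4.12 of Eilenberg--Kelly \cite{Eilen-Kelly}, with no argument supplied in the text. So there is no in-paper proof to compare against. That said, your argument is correct and is essentially the standard Eilenberg--Kelly argument. The forward direction is, as you say, immediate from the definition of partial functors via precomposition with $i_A$ (resp.\ $i_B$). For the converse, the key observation is exactly the one you use: the two-variable structure maps of $S$ and $T$ on $\mc{V}_{\mc{A}}(A,A')\otimes\mc{V}_{\mc{B}}(B,B')$ factor through composition at the intermediate object $(A',B)$ (or, symmetrically, $(A,B')$), and stringing together the $\mc{V}$-naturality of $\alpha_{A'}$ with that of $\alpha_B$ (or $\alpha_{B'}$ with $\alpha_A$), using $\mc{V}$-associativity of composition to move the post/pre-composition operator past the middle, closes the square. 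One small remark: the parenthetical appeal to the compatibility diagram of Proposition~\ref{EilKel} is not actually needed for the ``if'' direction --- a single choice of intermediate object suffices --- though it does explain why the two symmetric chases must agree.
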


Recall that a terminal object in a category \mc{C} is denoted $\ast_{\mc{C}}$ or simply $\ast$.

\begin{definition}\label{def:(multi-)red}
Suppose that the categories \mc{C} and \mc{D} admit a terminal object.
A functor $F\co\mc{C}\to\mc{D}$ is called {\it reduced} if $F(\ast_{\mc{C}})\cong\ast_{\mc{D}}$.
A functor $F$ to $\mc{D}$ in $n$ variables is called {\it multireduced} if
    $$ F(K_1,\dotsc,K_n)\cong \ast_{\mc{D}} $$
whenever $K_i$ is a terminal object for at least one $i\in \{1,\dotsc,n\}$.
\end{definition}

\begin{remark}\label{rem:multireduced}
Suppose that \mc{V} and the categories $\mc{C}_i$ are pointed categories.
Then every representable \mc{V}-functor $\mc{C}_1\otimes\dotsm\otimes \mc{C}_n\to \mc{V}$ is multireduced. Hence if \mc{D} is a cocomplete \mc{V}-category,
every object in $\Fun(\mc{C}_1\otimes\dotsm\otimes\mc{C}_n,\mc{D})$ is multireduced as a colimit of representable functors by the \mc{V}-Yoneda lemma~\ref{twisted Yoneda}.
\end{remark}

\subsection{Smash product and product categories}
\label{sec:smash-prod-prod}
This section discusses monoidal product categories in the special cases
of unpointed and pointed simplicial sets. Since the functor
$u\co\mc{S}\to\mc{U}$
forgetting the base point is lax symmetric monoidal, every
$\mc{S}$-category $\mc{C}$ has  an associated $\mc{U}$-category $u\mc{C}$
by simply forgetting base points in all morphism objects.

\begin{lemma}\label{lem:su}
  Let $\mc{C}$ and $\mc{D}$ be $\mc{S}$-categories. Then
  $u\mc{C}\times u\mc{D}$ is an $\mc{S}$-category in a natural way. 
\end{lemma}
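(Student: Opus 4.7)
The plan is to exhibit an explicit $\mc{S}$-enrichment on the category $u\mc{C}\times u\mc{D}$. The objects stay the same, namely pairs $(A,B)$, and for the morphism object I will take
\[ \Vhom_{u\mc{C}\times u\mc{D}}((A,B),(A',B')) := \Vhom_\mc{C}(A,A')\times \Vhom_\mc{D}(B,B'), \]
regarded as a pointed simplicial set with basepoint the pair of zero morphisms $(\ast_\mc{C},\ast_\mc{D})$ coming from the $\mc{S}$-enrichments of $\mc{C}$ and $\mc{D}$. The unit $S^0\to \Vhom_{u\mc{C}\times u\mc{D}}((A,B),(A,B))$ sends the non-basepoint to $(\mathrm{id}_A,\mathrm{id}_B)$ and is thus automatically pointed.

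The content of the lemma is that composition, which is forced to be coordinate-wise by Proposition~\ref{EilKel} applied to the two projections, factors through the smash product. I would check this by observing that composition in the $\mc{S}$-categories $\mc{C}$ and $\mc{D}$ is itself defined on the smash, so postcomposing or precomposing with $\ast_\mc{C}$ or $\ast_\mc{D}$ in either coordinate produces a basepoint. Hence any element of the form $((\ast_\mc{C},\ast_\mc{D}),(\phi,\psi))$ or $((\phi,\psi),(\ast_\mc{C},\ast_\mc{D}))$ in the product of two morphism objects is sent to the basepoint under coordinate-wise composition, which is exactly the condition for the map $X\times Y\to Z$ to extend through $X\wedge Y\to Z$.

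Associativity, unitality, and the remaining axioms for an $\mc{S}$-enriched category are then inherited from the two factors since everything happens coordinate-wise. To verify naturality in the pair $(\mc{C},\mc{D})$, a pair of $\mc{S}$-functors induces the evident functor on $u\mc{C}\times u\mc{D}$ whose effect on morphism objects is the product of the two pointed maps and is therefore pointed.

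I do not expect a substantial obstacle: the only conceptual point is that the canonical map $X\times Y\to X\wedge Y$ of pointed simplicial sets allows a coordinate-wise composition built from two pointed compositions to factor through the smash, which in turn uses exactly that the two factor enrichments already satisfy this property.
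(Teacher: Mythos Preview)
Your proposal is correct and follows essentially the same route as the paper: define the hom objects as the product with basepoint the pair of zero morphisms, and observe that coordinate-wise composition sends the wedge $X\vee Y\subset X\times Y$ to the basepoint because composing with $\ast$ in each factor yields $\ast$, so it descends to the smash. The appeal to Proposition~\ref{EilKel} is unnecessary (composition in the product $\mc{U}$-category is coordinate-wise by definition), but harmless.
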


\begin{proof}
  Given objects $\ul{K}=(K_1,K_2),\ul{L}=(L_1,L_2),\ul{M}=(M_1,M_2)\in 
  u\mc{C}\times u\mc{D}$, the 
  simplicial set 
  \[ \mc{U}_{u\mc{C}\times u\mc{D}}\bigl((K_1,K_2),(L_1,L_2)\bigr) =
  u\mc{S}_{\mc{C}}(K_1,L_1)\times u\mc{S}_{\mc{D}}(K_2,L_2) \]
  is naturally a pointed simplicial set.
  The \mc{S}-composition is induced by the \mc{U}-composition map
  \[ \mc{S}(\ul{L},\ul{M})\times \mc{S}(\ul{K},\ul{L}) \rightarrow 
  \mc{S}(\ul{K},\ul{L}) \]
  since if $\ul{f} =\ast$ or $\ul{g}=\ast$, then
  $f_i\circ g_i =\ast$ for all $1\leq i \leq n$.
  The unit 
  $$S^0\to u\mc{S}_{u\mc{C}\times u\mc{D}}(\ul{K},\ul{K})$$ 
  is induced by the diagonal.
  Associativity and unitality of the $\mc{U}$-composition
  imply associativity and unitality for the $\mc{S}$-composition.
\end{proof}

\begin{notation}\label{not:su}
  Let $\mc{C}$ and $\mc{D}$ be $\mc{S}$-categories. The 
  $\mc{S}$-category from Lemma~\ref{lem:su} is denoted
  $\mc{C}\times \mc{D}$. 
\end{notation}

\begin{lemma}\label{lem:bitensored}
  Let $\mc{C}$ and $\mc{D}$ be $\mc{S}$-model categories. Then 
  $\mc{C}\times \mc{D}$ is an $\mc{S}$-model category with
  the componentwise model structure.
\end{lemma}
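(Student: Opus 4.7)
The plan is to verify that the componentwise structure gives an $\mc{S}$-model category by checking (i) the underlying product model structure, (ii) the tensor/cotensor over $\mc{S}$, and (iii) the pushout-product axiom. For (i), the product of two model categories is a model category with weak equivalences, cofibrations, and fibrations defined componentwise; this is standard (see \cite[Example 1.1.6]{Hov:model}). Limits and colimits in $\mc{C}\times\mc{D}$ are formed componentwise, so bicompleteness transfers from each factor.

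For (ii), I define the tensor and cotensor componentwise: for $\ul{C}=(C_1,C_2)$ in $\mc{C}\times\mc{D}$ and $K\in\mc{S}$, set $\ul{C}\otimes K:=(C_1\otimes K, C_2\otimes K)$ and $\ul{C}^K:=(C_1^K,C_2^K)$. The required adjunction isomorphisms follow from the corresponding isomorphisms in $\mc{C}$ and $\mc{D}$ together with the definition of the morphism object from Lemma~\ref{lem:su} as the (categorical) product of the morphism objects in $\mc{C}$ and $\mc{D}$, pointed by the pair of basepoints. More precisely, for $\ul{C},\ul{D}\in\mc{C}\times\mc{D}$ and $K\in\mc{S}$,
\[ \mc{S}_{\mc{C}\times\mc{D}}(\ul{C}\otimes K,\ul{D})
  = \mc{S}_\mc{C}(C_1\otimes K,D_1)\times \mc{S}_\mc{D}(C_2\otimes K,D_2)
  \cong \map_\mc{S}\bigl(K,\mc{S}_{\mc{C}\times\mc{D}}(\ul{C},\ul{D})\bigr), \]
and similarly on the cotensor side, where pointings match because the basepoint on each factor corresponds to the zero map.

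For (iii), let $f\co A\to B$ be a cofibration in $\mc{S}$ and $g=(g_1,g_2)\co \ul{C}\to \ul{D}$ a cofibration in $\mc{C}\times\mc{D}$, so that each $g_i$ is a cofibration in its factor. Since pushouts in $\mc{C}\times\mc{D}$ are computed componentwise, the pushout-product $f\,\square\, g$ decomposes as
\[ f\,\square\, g = \bigl(f\,\square\, g_1,\; f\,\square\, g_2\bigr). \]
The $\mc{S}$-pushout-product axiom in each of $\mc{C}$ and $\mc{D}$ then implies that $f\,\square\, g$ is a cofibration in $\mc{C}\times\mc{D}$, and acyclic whenever $f$ or $g$ is acyclic (here ``$g$ is acyclic'' means both $g_1$ and $g_2$ are acyclic, which componentwise yields the corresponding acyclicity of each $f\,\square\, g_i$). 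The unit axiom is automatic since $S^0$ is cofibrant in $\mc{S}$, so the two factors are already $\mc{S}$-model categories.

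There is no real obstacle here beyond bookkeeping; the only subtle point is the passage between pointed and unpointed enrichments, but this is handled once we observe that the morphism object $\mc{S}_{\mc{C}\times\mc{D}}(\ul{K},\ul{L})$ of Notation~\ref{not:su} is canonically pointed and that each of its projections to $\mc{S}_\mc{C}(K_1,L_1)$ and $\mc{S}_\mc{D}(K_2,L_2)$ is a pointed map; the tensor/cotensor adjunctions then follow from the factorwise ones without additional work.
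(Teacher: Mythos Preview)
Your proof is correct and follows essentially the same approach as the paper: cite \cite[Example~1.1.6]{Hov:model} for the componentwise model structure, define tensor and cotensor componentwise, and verify the pushout product axiom by observing that it decomposes componentwise. You have simply written out in more detail what the paper dismisses as ``straightforward'' and ``immediate.''
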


\begin{proof}
  The category underlying the $\mc{S}$-category $\mc{C}\times \mc{D}$
  is simply the product category. Hence, the existence
  of the componentwise model structure follows from \cite[Ex. 1.1.6.]{Hov:model}.
  It remains to prove that $\mc{C}\times \mc{D}$ is tensored
  and cotensored over $\mc{S}$, and to verify the pushout product
  axiom. Tensor and cotensor are defined componentwise.
  It is straightforward to check that they constitute $\mc{S}$-functors
  which are part of $\mc{S}$-adjunctions. The pushout product
  axiom follows immediately.
\end{proof}

\begin{definition}\label{def:functor-p}
  Let $\mc{C}_1,\dotsc,\mc{C}_n$ be $\mc{S}$-categories. 
  The canonical functor
  \[ p\co\mc{C}_1\times\dotsm\times\mc{C}_n\to\mc{C}_1\wedge\dotsm\wedge\mc{C}_n\]
  being the identity on objects and the quotient map from the Cartesian product to the smash product on morphisms is an $\mc{S}$-functor. 
  If each $\mc{C}_i$ is small and $\mc{D}$ is another
  $\mc{S}$-category, $p$ induces an \mc{S}-adjoint pair
  \[ p_{*}\co\Fun(\mc{C}_1\times\dotsm\times\mc{C}_n,\mc{D})\,\rightleftarrows\,\Fun(\mc{C}_1\wedge\dotsm\wedge\mc{C}_n,\mc{D}):\!p^*. \]
\end{definition}

\begin{lemma}\label{lem:p-quillen}
  Let $\mc{D}$ be an $\mc{S}$-model category.
  The adjoint pair $(p_{*},p^*)$ is a Quillen pair of
  projective model structures. The functor $p^*$ preserves and 
  detects objectwise weak equivalences and objectwise fibrations.
\end{lemma}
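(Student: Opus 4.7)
The plan is to exploit the fact that $p\colon \mc{C}_1\times\dotsm\times\mc{C}_n\to\mc{C}_1\wedge\dotsm\wedge\mc{C}_n$ is the identity on objects, so that precomposition with $p$ changes nothing at the level of values of functors. Concretely, for any $\mc{S}$-functor $F\colon \mc{C}_1\wedge\dotsm\wedge\mc{C}_n\to \mc{D}$ and any tuple $\ul{K}=(K_1,\dotsc,K_n)$, one has $(p^*F)(\ul{K})=F(\ul{K})$, and for any $\mc{S}$-natural transformation $\alpha\colon F\to G$ the component $(p^*\alpha)_{\ul{K}}$ equals $\alpha_{\ul{K}}$.

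From this, the second assertion is immediate: since the projective model structures on both functor categories (which exist by Theorem~\ref{thm:proj-model-str.}, as $\mc{D}$ is an $\mc{S}$-model category and $\mc{S}$ consists of cofibrant objects, so the monoid axiom holds) have weak equivalences and fibrations defined objectwise, a morphism $\alpha$ in $\Fun(\mc{C}_1\wedge\dotsm\wedge\mc{C}_n,\mc{D})$ is a weak equivalence (respectively fibration) iff each $\alpha_{\ul{K}}$ is, iff $p^*\alpha$ is a weak equivalence (respectively fibration) in $\Fun(\mc{C}_1\times\dotsm\times\mc{C}_n,\mc{D})$. This is the preservation-and-detection statement.

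For the Quillen pair statement, I would invoke the standard criterion (e.g.\ Hovey~\cite{Hov:model}, Lemma 1.3.4): it suffices to check that the right adjoint $p^*$ preserves fibrations and acyclic fibrations. Both follow from the previous paragraph. The existence of the left adjoint $p_*$ was already recorded in Definition~\ref{def:functor-p} (it is the enriched left Kan extension along $p$).

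There is essentially no obstacle here; the only point worth pausing at is to be sure that ``objectwise'' is interpreted identically on both sides, which is justified because $p$ is bijective on objects and the projective model structures on each side use generators of the form $R^{\ul{K}}\otimes i$ indexed over exactly the same set of object-tuples (with the only difference being which $\mc{S}$-category structure is used to form $R^{\ul{K}}$, a distinction invisible at the level of objectwise weak equivalences and fibrations).
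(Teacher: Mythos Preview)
Your argument is correct and is exactly the approach the paper takes: the paper's proof is the single sentence ``This is immediate, since $p^\ast$ is precomposition with a functor being the identity on objects,'' and you have simply unpacked that sentence carefully. Nothing more is needed.
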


\begin{proof}
  This is immediate, since $p^\ast$ is precomposition with
  a functor being the identity on objects.
\end{proof}

For the sake of brevity, notational differences
between an $\mc{S}$-category and its associated $\mc{U}$-category
may be ignored in the following discussion.
We denote by $\{\ast\}=\mc{I}_{\mc{U}}$ the unpointed simplicial category with one object and no non-identity morphisms and by $\{S^0\}=\mc{I}_{\mc{S}}$ the corresponding \mc{S}-category.
There is exactly one \mc{U}-functor 
\[N\co\{\ast\}\to \{S^0\},\]
and it is given on underlying simplicial sets by sending $\ast$ to the 
non-basepoint. Let $\mc{A}$ and $\mc{B}$ be  
$\mc{S}$-categories (not necessarily containing a zero object). 
There is a canonical isomorphism of unpointed simplicial categories
\[ \pi_{\mc{A}}\co\mc{A}\stackrel{\cong}{\longrightarrow}\mc{A}\times\{\ast\} \]
and an analogous one with entries switched. 
The functors $N$ and $\pi_{\mc{A}}$ are unpointed simplicial but not pointed simplicial.
In particular, the functor
\diagr{ J\co\mc{A}\times\{\ast\} \ar[r]^-{(\id,N)} & \mc{A}\times\{S^0\} \ar[r]^{p} & \mc{A}\wedge\{S^0\}}
is unpointed simplicial.
For any $B$ in \mc{B} we obtain an unpointed simplicial functor
   $$ i_B\co\mc{A}\times\{\ast\}\to\mc{A}\times\mc{B} $$
given on objects by $(A,\ast)\mapsto (A,B)$ and on morphisms by $(f,\ast)\to(f,\id_B)$. Again, there are also functors $i_A$.
We hope no confusion with the analogous definition in the pointed case will arise from the indiscriminate notation.

For every object $B$ in \mc{B} the following diagram commutes
\diagram{ \mc{A} \ar@{=}[d]\ar[r]^-{\pi_{\mc{A}}}_-{\cong} & \mc{A}\times\{\ast\} \ar[d]^{J} \ar[r]^-{i_B} & \mc{A}\times\mc{B} \ar[d]^p\ar[r]^-{F} & \mc{D}\\
        \mc{A} \ar[r]^-{\gamma_{\mc{A}}}_-{\cong} & \mc{A}\wedge \{S^0\} \ar[r]^-{i_B} & \mc{A}\wedge\mc{B} \ar[r]^-{G} & \mc{D} }{(un)pointed-partial-functors}
where the upper row consists of $\mc{U}$-functors and the lower row consists of $\mc{S}$-functors. Obviously, there is an analogous commutative diagram for every object $A$ in \mc{A}.
Given a $\mc{U}$-functor $F$ and an object $B$ as above we define the {\it partial functor} $F_B$ by composing the upper row. 
Given an $\mc{S}$-functor $G$ and $B$ as above we define the {\it partial functor} $G_B$ by composing the lower row. 
Similarly, we define partial functors $F_A$ and $G_A$ for every $A$ in \mc{A}. 
The functor $F$ in (\ref{(un)pointed-partial-functors}) is $\mc{S}$-enriched if and only if $F(\ast_{\mc{A}},\ast_{\mc{B}})\cong\ast_{\mc{D}}$. The latter does not imply that $F$ is multireduced.

\begin{lemma}
Let \mc{A}, \mc{B} and \mc{D} be \mc{S}-categories.
For a $\mc{U}$-functor $F\co\mc{A}\times\mc{B}\to\mc{D}$,
the following are equivalent:
\begin{enumerate}
   \item
The functor $F$ is multireduced.
   \item
All partial functors of $F$ are reduced.
   \item
The functor $F$ is isomorphic to $p^*G$ for some \mc{S}-functor $G\co\mc{A}\wedge\mc{B}\to\mc{D}$.
\end{enumerate}
If these conditions hold, $F$ is in particular an \mc{S}-functor.
\end{lemma}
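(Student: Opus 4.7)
The equivalence $(1)\Leftrightarrow(2)$ is immediate from Definition~\ref{def:(multi-)red} together with the identities $F_A(\ast_{\mc{B}})=F(A,\ast_{\mc{B}})$ and $F_B(\ast_{\mc{A}})=F(\ast_{\mc{A}},B)$ coming from diagram~(\ref{(un)pointed-partial-functors}): reducedness of each partial functor $F_A$ and $F_B$ amounts exactly to multireducedness of $F$. For $(3)\Rightarrow(1)$, if $F\cong p^{\ast}G$ for an $\mc{S}$-functor $G\co\mc{A}\wedge\mc{B}\to\mc{D}$, then Remark~\ref{rem:multireduced}, applied to the pointed monoidal base $\mc{V}=\mc{S}$, yields that $G$ is multireduced; since $p$ is the identity on objects, $F$ inherits this property.

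The crucial step is $(2)\Rightarrow(3)$. I would set $G(A,B):=F(A,B)$ on objects (forced by $p$ being the identity on objects) and construct $G$ on morphism objects by factoring the induced simplicial map
\[ F_{\ast}\co \Vhom_{\mc{A}}(A_1,A_2)\times\Vhom_{\mc{B}}(B_1,B_2) \to \Vhom_{\mc{D}}\bigl(F(A_1,B_1),F(A_2,B_2)\bigr) \]
through the level-wise quotient to the smash product. Such a factorization exists precisely when $F_{\ast}$ sends the wedge $\Vhom_{\mc{A}}(A_1,A_2)\vee\Vhom_{\mc{B}}(B_1,B_2)$ to the basepoint. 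Writing $(f,g)=(f,\id_{B_2})\circ(\id_{A_1},g)$ in $\mc{A}\times\mc{B}$ gives $F_{\ast}(f,g)=F_{B_2}(f)\circ F_{A_1}(g)$, so it suffices to verify, for instance, that $F_{A_1}$ sends the basepoint of $\Vhom_{\mc{B}}(B_1,B_2)$ to the zero morphism. At simplicial degree zero, this basepoint is the composition $B_1\to\ast_{\mc{B}}\to B_2$, and reducedness of $F_{A_1}$ forces its image to factor through $F(A_1,\ast_{\mc{B}})\cong\ast_{\mc{D}}$, hence to be the zero morphism; composition with $F_{B_2}(f)$ on the left preserves this. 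In higher simplicial degrees, the basepoint is a degeneracy of the degree-zero basepoint, and since $F_{\ast}$ is simplicial (hence commutes with degeneracies), the property propagates. The symmetric argument takes care of the other factor.

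The remaining task is to check that the resulting morphism maps assemble into an $\mc{S}$-functor $G$. The unit and composition axioms for $G$ can be verified by precomposing with $p$ (which is the identity on objects and surjective at every simplicial degree on morphism objects), where they reduce to the known $\mc{U}$-functor axioms for $F$. By construction $p^{\ast}G=F$. For the final claim, once condition (3) holds, $F=p^{\ast}G$ is the precomposition of the $\mc{S}$-functor $G$ with the $\mc{S}$-functor $p$ of Definition~\ref{def:functor-p}, hence is itself an $\mc{S}$-functor. The main obstacle is the factorization step in the second paragraph: one must be careful with the interplay between the $\mc{U}$- and $\mc{S}$-enrichments on $\mc{A}\times\mc{B}$ and show that the reducedness hypothesis — a statement about $F$ on objects — translates to basepoint-preservation at every simplicial degree of the induced maps, which is what the degeneracy argument achieves.
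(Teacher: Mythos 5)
Your proposal is correct, and for the crucial implication $(2)\Rightarrow(3)$ it takes a genuinely different route from the paper's. The paper observes that reduced $\mc{U}$-functors between $\mc{S}$-categories are automatically $\mc{S}$-functors (cf.\ Remark~\ref{rem:waldhausen-pointed}), so the partial functors $F_A,F_B$ are $\mc{S}$-functors; it then invokes Proposition~\ref{EilKel} with $\mc{V}=\mc{S}$ to assemble these into an $\mc{S}$-functor $G\co\mc{A}\wedge\mc{B}\to\mc{D}$, and identifies $p^\ast G\cong F$ by matching partial functors via diagram~(\ref{(un)pointed-partial-functors}). You instead construct $G$ explicitly: fixing $G=F$ on objects and factoring the morphism maps of $F$ through the levelwise quotient from the product to the smash product, which requires verifying that $F_\ast$ annihilates the wedge. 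Your decomposition $(f,g)=(f,\id)\circ(\id,g)$ combined with reducedness of the partial functors handles degree zero, and the degeneracy argument propagates this to all simplicial degrees. In effect you are re-proving the relevant case of the Eilenberg--Kelly criterion rather than citing it, and you additionally obtain the on-the-nose equality $p^\ast G=F$ rather than only an isomorphism. The paper's route is shorter but leans on the abstract machinery (and leaves the verification of the compatibility square in Proposition~\ref{EilKel} implicit); yours is longer but elementary and self-contained. One small point worth making precise if you write this up: the identification of the basepoint $0$-simplex of $\Vhom_{\mc{B}}(B_1,B_2)$ with the composite $B_1\to\ast_{\mc{B}}\to B_2$ uses that $\ast_{\mc{B}}$ exists and is a zero object, which is implicitly assumed whenever reducedness is invoked (Definition~\ref{def:(multi-)red}).
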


\begin{proof}
Since all \mc{S}-functors $G\co\mc{A}\wedge\mc{B}\to\mc{D}$ are multireduced and $p^*$ is the identity on objects, (3) implies (1).
Obviously, (1) implies (2). Now, assume (2). Then, for all objects $A$ in \mc{A} and $B$ in \mc{B} the partial functors $F_A\co\mc{B}\to\mc{D}$ and $F_B\co\mc{A}\to\mc{D}$ are \mc{S}-functors. The partial functors $G_A:=F_A$ and $G_B:=F_B$ assemble by Proposition~\ref{EilKel} to an \mc{S}-functor $G\co\mc{A}\wedge\mc{B}\to\mc{D}$. Because the diagram (\ref{(un)pointed-partial-functors}) commutes, there are canonical isomorphisms
   \[ (p^*G)_A\cong G_A=F_A\quad \mathrm{and} \quad (p^*G)_B\cong G_B=F_B\]
for all $A$ and $B$. This gives (3).
\end{proof}

\section{Symmetric functors}\label{sec:symm-funct-cross}

A major tool in Goodwillie's theory is the cross effect,
a functorial construction
which measures to which extent a given homotopy functor deviates from being
excisive.
The purpose of this section is to interpret Goodwillie's cross
effect construction as a Quillen functor between appropriate model
categories.

\subsection{Symmetric functors}\label{sec:symmetric-functors}
\begin{definition}\label{def:symmetric}
  Let \mc{C} and \mc{D} be $\mc{V}$-categories.
  A \mc{V}-functor $\dgrm{X}\co \mc{C}\otimes \dotsm\otimes \mc{C}\to \mc{D}$ 
  in $n$ variables with values in $\mc{D}$ is {\it symmetric\/} if 
  it is equipped with a \mc{V}-natural isomorphism
  \[ \sigma_{\dgrm{X}}\co \dgrm{X}(K_1,\dotsc,K_n)\cong \dgrm{X}(K_{\sigma(1)},\dotsc,K_{\sigma(n)}) \]
  for every $\sigma\in\Sigma_n$, such that the equalities
  $\id_{\dgrm{X}}=\id$ and 
  $(\tau\sigma)_{\dgrm{X}}=\tau_{\dgrm{X}}\sigma_{\dgrm{X}}$ hold. 
  A {\it symmetric \mc{V}-natural transformation} is a \mc{V}-natural transformation 
  between two symmetric functors that respects the symmetry in the obvious way.
  Let $\Fun_{\mathrm{sym}}(\mc{C}^{\otimes n},\mc{D})$ denote the corresponding \mc{V}-category.
\end{definition}

The standard example of a symmetric \mc{V}-functor is the
$n$-fold monoidal product
\[ \bigotimes_{i=1}^n \co \mc{V}\otimes \dotsm \otimes \mc{V}\to \mc{V}.\]
The main example of interest here are cross effect
functors, to be described in the next section.
In order to introduce model structures for symmetric functors,
it will be convenient to describe them as a genuine functor category
instead of just a proper subcategory of a functor category.

\begin{convention}\label{conv2}
  Suppose that the closed symmetric monoidal category $(\mc{V},\otimes,I)$ 
  has finite coproducts, denoted as $\vee$.
\end{convention}

\begin{definition}\label{def:wreath product category}
  Let \mc{C} be a \mc{V}-category. 
  The {\em wreath product category\/} $\Sigma_n\wr\mc{C}^{\otimes n}$ 
  has as its objects
  $n$-tuples $(K_1,\dotsc,K_n)$ of objects in $\mc{C}$. 
  The morphisms from $\ul{K}=(K_1,\dotsc,K_n)$ to $\ul{L}=(L_1,\dotsc,L_n)$ 
  are given by
  \[ \Vhom_{\Sigma_n\wr\mc{C}^{\otimes n}}(\ul{K},\ul{L}):=
  \bigvee_{\sigma\in\Sigma_n}\bigotimes_{i=1}^n\Vhom_{\mc{C}}(K_i,L_{\sigma^{-1}(i)}).\]
  Composition is defined as it is in the wreath product of groups or, more generally, in a semi-direct product by the following formula:
  \begin{equation}\label{wreath comp.}
    \bigl(\tau,(g_1\otimes \dotsm \otimes g_n)\bigr)\circ
    \bigl(\sigma,(f_1\otimes \dotsm \otimes f_n)\bigr) = 
    \bigl(\sigma\tau, (g_{\sigma^{-1}(1)}f_1\otimes \dotsm 
    \otimes g_{\sigma^{-1}(n)}f_n)\bigr) 
  \end{equation}
  More formally, composition is a map as follows:
  \[ \Vhom_{\Sigma_n\wr\mc{C}^{\otimes n}}\bigl(\ul{L},\ul{M}\bigr)
  \otimes\Vhom_{\Sigma_n\wr\mc{C}^{\otimes n}}\bigl(\ul{K},\ul{L}\bigr) 
  \to \Vhom_{\Sigma_n\wr\mc{C}^{\otimes n}}\bigl(\ul{K},\ul{M})\bigr).\]  
  Observe that the source of the composition map is canonically
  isomorphic to the term
  \[ \bigvee_{(\tau,\sigma)\in\Sigma_n\times\Sigma_n}\left[ \bigotimes_{j=1}^n\Vhom_{\mc{C}}(L_j,M_{\tau^{-1}(j)})\otimes
    \bigotimes_{i=1}^n\Vhom_{\mc{C}}(K_i,L_{\sigma^{-1}(i)}) \right]\]
  while the target is given by
  \[ \bigvee_{\omega\in\Sigma_n}\bigotimes_{k=1}^n\Vhom_{\mc{C}}(K_k,M_{\omega^{-1}(k)}).\]
  Given $\sigma$ and $\tau$, the corresponding summand in the first term 
  is mapped to the summand corresponding to $\omega=\sigma\tau$, with the 
  map being the $n$-fold monoidal product of the $\mc{V}$-composition
  \[ \Vhom_{\mc{C}}(L_{\sigma^{-1}(k)},M_{\tau^{-1}(\sigma^{-1}(k))}) \otimes 
  \Vhom_{\mc{C}}(K_k,L_{\sigma^{-1}(k)})  \to \Vhom_{\mc{C}}(K_k,M_{\omega^{-1}(k)}) \]
  up to a permutation of monoidal factors.
  This amounts to the formula~(\ref{wreath comp.}). 
  Associativity and identity conditions are checked readily, implying
  that $\Sigma_n\wr\mc{C}^{\otimes n}$ is indeed a \mc{V}-category.
\end{definition}

\begin{remark}\label{unordered}
    Here are two interpretations of this construction.
  \begin{enumerate}
  \item
    The \mc{V}-category $\Sigma_n\wr\mc{C}^{\otimes n}$ is the 
    \mc{V}-category of unordered $n$-tuples. More precisely, in 
    $\Sigma_n\wr\mc{C}^{\otimes n}$ there is for every $\sigma\in\Sigma_n$ and every $n$-tuple $\ul{K}$ a canonical map
    \diagr{ \ul{K}=(K_1,\dotsc,K_n)\ar[rr]^-{(\sigma^{-1},\id\otimes\dotsm\otimes \id)} && (K_{\sigma(1)},\dotsc,K_{\sigma(n)}) , }
    that is an isomorphism with inverse $(\sigma,\id\otimes\dotsm\otimes \id)$. Moreover, any map in $\Sigma_n\wr\mc{C}^{\otimes n}$ can be written as the composition of such an isomorphism with a map from $\mc{C}^{\otimes n}$:
    \begin{align*}
      (\sigma,f_1\otimes\dotsm\otimes f_n) &=(\sigma,\id\otimes\dotsm\otimes \id)\circ(\id,f_1\otimes\dotsm\otimes  f_n) \\
      &=(\id,f_{\sigma^{-1}(1)}\otimes\dotsm\otimes  f_{\sigma^{-1}(n)})\circ(\sigma,\id\otimes\dotsm\otimes \id)
    \end{align*}
  \item
    The \mc{V}-category $\Sigma_n\wr\mc{C}^{\otimes n}$ is obtained as the 
    \mc{V}-Grothendieck construction or \mc{V}-category of elements of the 
    functor $\Sigma_n\to\mc{V}\mathrm{-Cat}$ sending the unique object to 
    $\mc{C}^{\otimes n}$ with the permutation action. 
  \end{enumerate}
\end{remark}

\begin{definition}\label{varepsilon}
  For every \mc{V}-category \mc{C} there is a functor
  \[ \varepsilon\co\mc{C}^{\otimes n}\to\Sigma_n\wr\mc{C}^{\otimes n} \]
  which is the identity on objects and the inclusion of
  the summand indexed by the identity in $\Sigma_n$ on morphisms: 
  $(f_1\otimes\dotsm\otimes f_n)\mapsto(\id,f_1\otimes\dotsm\otimes f_n)$.
\end{definition}

\begin{lemma}\label{isom}
Let \mc{C} and \mc{D} be \mc{V}-categories. Suppose also that \mc{C} is small.
Precomposition with $\varepsilon$ induces an equivalence
\[ \varepsilon^*\co\Fun(\Sigma_n\wr\mc{C}^{\otimes n},\mc{D})\to\Fun_{\mathrm{sym}}(\mc{C}^{\otimes n},\mc{D})\]
of $\mc{V}$-categories.
\end{lemma}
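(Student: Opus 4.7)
The plan is to construct an explicit inverse $\mc{V}$-functor $\Phi\co\Fun_{\mathrm{sym}}(\mc{C}^{\otimes n},\mc{D})\to\Fun(\Sigma_n\wr\mc{C}^{\otimes n},\mc{D})$ to $\varepsilon^\ast$. Both functors will be the identity on objects, so the content lies in the description of morphisms. The guiding picture, made precise by Remark~\ref{unordered}(1), is that $\Sigma_n\wr\mc{C}^{\otimes n}$ is obtained from $\mc{C}^{\otimes n}$ by freely adjoining, for each $\sigma\in\Sigma_n$ and each $\ul{K}$, a distinguished isomorphism $(\sigma,\id)\co\ul{K}\to(K_{\sigma(1)},\dotsc,K_{\sigma(n)})$ subject to the $\Sigma_n$-action relations. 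A $\mc{V}$-functor on the wreath product is therefore the same data as a $\mc{V}$-functor on $\mc{C}^{\otimes n}$ equipped with compatible natural isomorphisms indexed by $\Sigma_n$, which is exactly the definition of a symmetric $\mc{V}$-functor.

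Concretely, given a symmetric functor $\dgrm{X}$ with symmetries $\sigma_{\dgrm{X}}$, define $\Phi(\dgrm{X}):=\tilde{\dgrm{X}}$ to agree with $\dgrm{X}$ on objects and to send a morphism in the $\sigma$-summand of $\Vhom_{\Sigma_n\wr\mc{C}^{\otimes n}}(\ul{K},\ul{L})$, represented by $f_1\otimes\dotsm\otimes f_n$ with $f_i\co K_i\to L_{\sigma^{-1}(i)}$, to the composite
\[ \dgrm{X}(\ul{K})\xrightarrow{\dgrm{X}(f_1\otimes\dotsm\otimes f_n)}\dgrm{X}(L_{\sigma^{-1}(1)},\dotsc,L_{\sigma^{-1}(n)})\xrightarrow{\sigma_{\dgrm{X}}}\dgrm{X}(\ul{L}). \]
The key verification is compatibility with composition: using $\mc{V}$-naturality of $\sigma_{\dgrm{X}}$, one commutes $\dgrm{X}(g_1\otimes\dotsm\otimes g_n)$ past $\sigma_{\dgrm{X}}$ at the cost of permuting the tensor factors to $g_{\sigma^{-1}(1)}\otimes\dotsm\otimes g_{\sigma^{-1}(n)}$, which are precisely the morphisms appearing in the wreath composition~(\ref{wreath comp.}). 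The remaining two symmetry isomorphisms then combine, via the axiom $(\tau\sigma)_{\dgrm{X}}=\tau_{\dgrm{X}}\sigma_{\dgrm{X}}$, into the symmetry associated to the group-part of the composite wreath morphism, and identities are handled by $\id_{\dgrm{X}}=\id$.

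For the $\mc{V}$-categorical upgrade, one extends $\Phi$ to $\mc{V}$-natural transformations: the datum of a symmetric $\mc{V}$-natural transformation between $\dgrm{X}$ and $\dgrm{Y}$ is a family of component maps which is $\mc{V}$-natural on $\mc{C}^{\otimes n}$ and commutes with $\sigma_{\dgrm{X}},\sigma_{\dgrm{Y}}$, and an argument in the spirit of Proposition~\ref{EiKel2}---applied summand-by-summand in the wreath product---identifies this with the data of a $\mc{V}$-natural transformation between $\tilde{\dgrm{X}}$ and $\tilde{\dgrm{Y}}$. Mutual inverseness is then immediate on the nose: $\varepsilon^\ast\Phi(\dgrm{X})=\dgrm{X}$ since $\varepsilon$ is the inclusion of the identity summand; conversely, given $G\co\Sigma_n\wr\mc{C}^{\otimes n}\to\mc{D}$, the symmetries $\sigma_{\varepsilon^\ast G}:=G(\sigma,\id)$ satisfy the symmetric-functor axioms by functoriality of $G$, and the first decomposition of Remark~\ref{unordered}(1) forces $\Phi(\varepsilon^\ast G)=G$ on morphisms. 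The only real obstacle is the careful bookkeeping of permutation indices; the wreath composition formula has been engineered precisely to record the interplay of $\mc{V}$-naturality of the symmetries with composition in the target, so every required identity reduces to an unravelling of definitions.
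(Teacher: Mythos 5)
Your proposal is correct and takes essentially the same approach as the paper: the paper's proof is the terse statement ``Unravelling the definitions shows that a $\mc{V}$-functor $\mc{C}^{\otimes n}\to \mc{D}$ is symmetric precisely if its domain extends (via the extra data) to the wreath product category,'' and you have simply carried out that unravelling explicitly, constructing the inverse on objects by splicing the symmetry isomorphisms against the wreath composition law and on morphism $\mc{V}$-objects via Proposition~\ref{EiKel2}-style componentwise reasoning. (One small caution: the convention for indexing the wreath-product summands is delicate -- Definition~\ref{def:wreath product category} has summands $\Vhom_{\mc{C}}(K_i,L_{\sigma^{-1}(i)})$, while Remark~\ref{unordered}(1) labels the canonical ``shuffle'' $\ul{K}\to(K_{\sigma(1)},\dotsc,K_{\sigma(n)})$ with $(\sigma^{-1},\id)$, which is not consistent with that definition -- so whether the symmetry attached to $\varepsilon^\ast G$ should be $G(\sigma,\id)$ or $G(\sigma^{-1},\id)$ depends on which of these conventions you follow; this is an index-bookkeeping issue, not a mathematical gap.)
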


\begin{proof}
Precomposition with $\varepsilon$ defines a \mc{V}-functor
\[ \varepsilon^*\co\Fun(\Sigma_n\wr\mc{C}^{\otimes n},\mc{D})\to\Fun({\mc{C}^{\otimes n}},{\mc{D}}) \]
By construction, every $\mc{V}$-functor (\mc{V}-natural transformation)
in the image of
$\varepsilon^\ast$ is symmetric. The $\mc{V}$-functor
with target restricted to the category of symmetric functors
will also be denoted $\varepsilon^\ast$. Unravelling the definitions
shows that a $\mc{V}$-functor $\mc{C}^{\otimes n}\to \mc{D}$ is
symmetric precisely if its domain extends (via the extra data)
to the wreath product category $\Sigma_n\wr\mc{C}^{\otimes n}$,
which essentially completes the proof.
\end{proof}

\begin{definition}\label{symmetric (co-)tensors}
Let \mc{D} be a tensored and cotensored \mc{V}-category.
For an object $L$ in $\mc{V}^{\Sigma_n}$ and a functor \dgrm{X} in $\Fun(\Sigma_n\wr\mc{C}^{\otimes n},\mc{D})$ set
\[ (\dgrm{X}\otimes_{\Sigma_n} L)(\ul{K}):=\dgrm{X}(\ul{K})\otimes L\]
using the tensor $\mc{D}\otimes\mc{V}\to\mc{D}$. This is a functor in $\ul{K}$. 
The symmetry automorphisms
\[ \sigma_{\dgrm{X}\otimes L}\co\dgrm{X}(K_1,\dotsc,K_n)\otimes L\to\dgrm{X}(K_{\sigma^{-1}(1)},\dotsc,K_{\sigma^{-1}(n)})\otimes L\]
defined by $\sigma_{\dgrm{X}\otimes L}:=\sigma_{\dgrm{X}}\otimes\sigma_{L}$ 
for every permutation $\sigma\in\Sigma_n$ turn $\dgrm{X}\otimes_{\Sigma_n} L$ 
into a symmetric functor.
For fixed $L$ in $\mc{V}^{\Sigma_n}$, the functor 
$\dgrm{X}\mapsto\dgrm{X}\otimes_{\Sigma_n} L$ has as a \mc{V}-right adjoint 
$\dgrm{Y}\mapsto {\bf hom}_{\Sigma_n}(L,\dgrm{Y})$, where 
\[ {\bf hom}_{\Sigma_n}(L,\dgrm{Y})(\ul{K}):=\hom_{\mc{D}}\bigl(L,\dgrm{Y}(\ul{K})\bigr).\]
with symmetric structure obtained by the conjugation action.
\end{definition}

\subsection{The cross effect}\label{sec:cr, basics}

\begin{convention}\label{conv3}
  Suppose that $\mc{V}=\mc{S}$, so that Convention~\ref{conv2}
  is satisfied. Suppose further that 
  \mc{D} is a bicomplete \mc{S}-category, and that \mc{C} 
  is a small \mc{S}-category with finite coproducts
  and terminal object $\ast$. 
\end{convention}

\begin{notation}
  The \mc{S}-functor $\tr\co\mc{D}\to\mc{D}^{\Sigma_n}$, sending an object 
  to itself equipped with the trivial $\Sigma_n$-action, has a 
  \mc{S}-left adjoint given by the orbit functor $(\free)_{\Sigma_n}$. 
\end{notation}

\begin{definition}\label{P_0(ul{n})}
  Let $\ul{n}=\{1,\dotsc,n\}$, with associated power set $P(\ul{n})$, and let 
  \[ P_0(\ul{n}):=P(\ul{n})-\{\emptyset\} \]
  be the partially ordered set of non-empty subsets of $\ul{n}$.
  For every $n$-tuple $\ul{K}=\{K_1,\dotsc,K_n\}$ of objects in $\mc{C}$
  and every $S\in P_0(\ul{n})$ there is a map 
  \[ \bigvee_{i=1}^nK_i \to \bigvee_{i\in \ul{n}-S}K_i \]
  induced by the canonical inclusion $K_i \to \bigvee_{i\in \ul{n}-S}K_i$ if
  $i\notin S$ and the trivial map $K_i \to \ast$ if $i\in S$. 
\end{definition}

\begin{definition}\label{def:(ho)cross_n}
  The {\it $n$-th cross effect} 
  $\cross_n\co\mc{C}^{\wedge n}\to\mc{D}$ of a functor 
  $\dgrm{X}\co\mc{C}\to\mc{D}$  is given by the formula
  \begin{align*}
    \cross_n\dgrm{X}(K_1,\dotsc,K_n):&=\fib\left[ \dgrm{X}(\bigvee_{i=1}^nK_i)\to\lim_{S\in P_0(\ul{n})}\dgrm{X}(\bigvee_{i\in\ul{n}-S}K_i) \right] 
  \end{align*}
  where the map is induced by the maps described in
  Definition~\ref{P_0(ul{n})}. Here `$\fib$' refers to the
  strict fiber, the preimage of the basepoint. 
\end{definition}

\begin{remark}\label{rem:cross-effect}
  The $n$-th cross effect, as defined above, does not coincide with
  the construction (denoted by the same symbol) $\cross_n$ introduced
  in \cite{Goo:calc3}. Goodwillie's $\cross_n$ refers to the functor
  \[ \hocr_n\dgrm{X}(K_1,\dotsc,K_n):=\hofib\left[ \dgrm{X}(\bigvee_{i=1}^nK_i)\to\holim_{S\in P_0(\ul{n})}\dgrm{X}(\bigvee_{i\in\ul{n}-S}K_i) \right] \]
  that we call the {\it $n$-th homotopy cross effect}. 
  Section \ref{section:cr model str} supplies a model structure on 
  $\Fun(\mc{C},\mc{D})$ such that the homotopy cross effect becomes 
  the right derived functor of the strict cross effect. 
\end{remark}

  A permutation $\sigma\in \Sigma_n$ defines an automorphism of an 
  $n$-fold coproduct, whence $\cross_n$ produces symmetric functors: 
  \[ \cross_n\co\Fun(\mc{C},\mc{D})\to\Fun(\Sigma_n\wr\mc{C}^{\wedge n},\mc{D}).\]
  The main goal of this section is to construct a left adjoint
  to this functor.

\begin{definition}\label{phi}
  For every object $\ul{K}=\{K_1,\dotsc,K_n\}$ in 
  $\Sigma_n\wr\mc{C}^{\wedge n}$, the maps given in Definition~\ref{P_0(ul{n})} 
  induce a map  
  \[ \phi_{\ul{K}}\co\colim_{S\in P_0(\ul{n})}R^{\bigvee_{i\in\ul{n}-S}K_i}\to R^{\bigvee_{i=1}^nK_i} \]
  in $\Fun(\mc{C},\mc{S})$, functorial in $\ul{K}$.
\end{definition}

\begin{lemma}\label{cr-formula}
  For $\ul{K}$ in $\Sigma_n\wr\mc{C}^{\wedge n}$ and \dgrm{X} in $\Fun(\mc{C},\mc{D})$ there is a canonical isomorphism:
  \[ \cotensor{\bigwedge_{i=1}^nR^{K_i}}{\dgrm{X}}\cong\cross_n\dgrm{X}(K_1,\dotsc,K_n) \]
\end{lemma}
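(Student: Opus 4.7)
The plan is to exhibit $\bigwedge_{i=1}^n R^{K_i}$ as the cofiber in $\Fun(\mc{C},\mc{S})$ of the map $\phi_{\ul{K}}$ from Definition~\ref{phi}, then to apply $\cotensor{\free}{\dgrm{X}}$ to that cofiber sequence and invoke the Yoneda Lemma~\ref{twisted Yoneda}. The lemma will then follow by a direct comparison with Definition~\ref{def:(ho)cross_n}.

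For the first step, both source and target of the asserted sequence
\[ \colim_{S\in P_0(\ul{n})}R^{\bigvee_{i\in\ul{n}-S}K_i}\xrightarrow{\phi_{\ul{K}}}R^{\bigvee_{i=1}^n K_i}\to \bigwedge_{i=1}^n R^{K_i} \]
are computed pointwise in $\mc{S}$, so verification reduces to each object $C\in \mc{C}$. The enriched universal property of coproducts in $\mc{C}$ identifies $R^{\bigvee_{i\in T}K_i}(C)$ with the pointed product $\prod_{i\in T}R^{K_i}(C)$ naturally in $T\subseteq \ul{n}$, and under this identification the structure maps of the colimit become the basepoint-insertion maps $\prod_{i\in\ul{n}-S}A_i\to\prod_{i=1}^n A_i$ with $A_i:=R^{K_i}(C)$. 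The colimit over $P_0(\ul{n})$ of these basepoint-insertion maps is by inspection the fat wedge in $\prod_i A_i$, realised as the gluing of its codimension-one faces $\{a_j=\ast\}$ along their higher intersections, and the cofiber of the fat-wedge inclusion is by definition $\bigwedge_i A_i$. Hence $\bigwedge_i R^{K_i}$ is the cofiber of $\phi_{\ul{K}}$ pointwise, and therefore as functors.

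For the second step, fix $\dgrm{X}$ and regard $F\mapsto \cotensor{F}{\dgrm{X}}$ as a functor $\Fun(\mc{C},\mc{S})^{\op}\to \mc{D}$. The adjunction of Definition~\ref{some (co-)tensors} combined with the fact that $F\otimes D$ preserves colimits in $F$ implies that $\cotensor{\free}{\dgrm{X}}$ carries colimits of functors to limits in $\mc{D}$. Together with $\cotensor{\ast}{\dgrm{X}}\cong\ast_{\mc{D}}$, applying $\cotensor{\free}{\dgrm{X}}$ to the cofiber sequence above produces a strict fiber sequence in $\mc{D}$. The Yoneda Lemma~\ref{twisted Yoneda} then rewrites each $\cotensor{R^L}{\dgrm{X}}$ as $\dgrm{X}(L)$, yielding
\[ \cotensor{\bigwedge_{i=1}^n R^{K_i}}{\dgrm{X}}\cong \fib\Bigl[\dgrm{X}(\bigvee_{i=1}^n K_i)\to\lim_{S\in P_0(\ul{n})}\dgrm{X}(\bigvee_{i\in\ul{n}-S}K_i)\Bigr], \]
which is precisely $\cross_n\dgrm{X}(K_1,\dotsc,K_n)$ by Definition~\ref{def:(ho)cross_n}. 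The only genuinely combinatorial ingredient is the identification of the colimit over $P_0(\ul{n})$ with the fat wedge; everything else is formal, and naturality in $\ul{K}\in \Sigma_n\wr\mc{C}^{\wedge n}$ is automatic from the functoriality of $\phi_{\ul{K}}$ together with the enriched Yoneda identification.
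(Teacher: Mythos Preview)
Your proof is correct and follows essentially the same approach as the paper's own argument: both identify $\bigwedge_{i=1}^n R^{K_i}$ as the objectwise cofiber of $\phi_{\ul{K}}$ and then apply the enriched Yoneda lemma after passing through $\cotensor{\free}{\dgrm{X}}$. The paper's proof is terser, merely stating the cofiber sequence (with the $n=2$ case as motivation) and invoking Lemma~\ref{twisted Yoneda}, whereas you spell out the fat-wedge identification and the conversion of the cofiber sequence into a strict fiber sequence under the cotensor; these elaborations are correct and add no new ideas.
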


\begin{proof}
  In the case $n=2$, the map $\phi_{\ul{K}}$ corresponds to the
  canonical map 
  \[R^{K_1}\vee R^{K_2}\rightarrow R^{K_1\vee K_2} \cong R^{K_1}\times R^{K_2}\]
  whose objectwise 
  cofiber is the objectwise smash product $R^{K_1}\wedge R^{K_2}$ described in 
  Definition~\ref{objectwise tensor product}.
  For arbitrary $n$, 
  the map $\phi_{\ul{K}}$ given in Definition~\ref{phi} induces an
  objectwise cofiber sequence
  \[ \colim_{S\in P_0(\ul{n})}R^{\bigvee_{i\in\ul{n}-S}K_i}\stackrel{\phi_{\ul{K}}}{\longrightarrow} R^{\bigvee_{i=1}^nK_i} \to\bigwedge_{i=1}^nR^{K_i} \]
  in $\Fun(\mc{C},\mc{S})$, 
  where $\bigwedge_{i=1}^nR^{K_i}\co\mc{C}\to\mc{S}$ 
  is the $n$-fold objectwise smash product.
  The result then follows from the enriched Yoneda lemma \ref{twisted Yoneda}.
\end{proof}

\begin{lemma}\label{Lcross formula}
  The functor 
  \[ \cross_n\co\Fun(\mc{C},\mc{D})\to
  \Fun(\Sigma_n\wr\mc{C}^{\wedge n},\mc{D})\] has a left \mc{S}-adjoint
  $\Lcross_n$,
  sending the symmetric functor $\dgrm{X}$ to the functor
  \[ K \mapsto \bigl(\dgrm{X}(K,\dotsc,K)\bigr)_{\Sigma_n}\]
  where $\Sigma_n$ operates by permuting the entries
  in $(K,\dotsc,K)$.
\end{lemma}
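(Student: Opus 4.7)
The plan is to use Lemma~\ref{cr-formula} to recognize $\cross_n\dgrm{Y}$ as a cotensor, read off the left adjoint as a coend via the enriched tensor-cotensor adjunction, and then collapse that coend to the claimed formula by an enriched co-Yoneda calculation.

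By Lemma~\ref{cr-formula}, there is a natural isomorphism
\[ \cross_n\dgrm{Y}(\ul{K})\cong\cotensor{\bigwedge_{i=1}^n R^{K_i}}{\dgrm{Y}}. \]
The $\mc{S}$-adjunction of Definition~\ref{some (co-)tensors} thus identifies an $\mc{S}$-natural transformation $\dgrm{X}\to\cross_n\dgrm{Y}$ with an $\mc{S}$-natural family, indexed by $\ul{K}\in\Sigma_n\wr\mc{C}^{\wedge n}$, of maps $\bigwedge_{i=1}^n R^{K_i}\otimes \dgrm{X}(\ul{K})\to\dgrm{Y}$ in $\Fun(\mc{C},\mc{D})$. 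Coending over $\ul{K}$ exhibits $\cross_n$ as right $\mc{S}$-adjoint to
\[ \Lcross_n\dgrm{X}\,:=\int^{\ul{K}\in\Sigma_n\wr\mc{C}^{\wedge n}}\bigwedge_{i=1}^n R^{K_i}\otimes \dgrm{X}(\ul{K}). \]

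To recover the stated formula, I evaluate this coend at $L\in\mc{C}$ and compare with the enriched co-Yoneda presentation of $\dgrm{X}$ at $(L,\dotsc,L)\in \Sigma_n\wr \mc{C}^{\wedge n}$. By Definition~\ref{def:wreath product category},
\[ \Vhom_{\Sigma_n\wr\mc{C}^{\wedge n}}\bigl(\ul{K},(L,\dotsc,L)\bigr)\cong\bigvee_{\sigma\in\Sigma_n}\bigwedge_{i=1}^n\mc{S}_{\mc{C}}(K_i,L), \]
so co-Yoneda gives
\[ \dgrm{X}(L,\dotsc,L)\cong\int^{\ul{K}}\Bigl(\bigvee_{\sigma\in\Sigma_n}\bigwedge_{i=1}^n\mc{S}_{\mc{C}}(K_i,L)\Bigr)\otimes\dgrm{X}(\ul{K}), \]
while the coend defining $\Lcross_n$ reads
\[ \Lcross_n\dgrm{X}(L)\cong\int^{\ul{K}}\bigwedge_{i=1}^n\mc{S}_{\mc{C}}(K_i,L)\otimes\dgrm{X}(\ul{K}). \]

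The symmetry action of $\tau\in\Sigma_n$ on $\dgrm{X}(L,\dotsc,L)$ is induced by post-composition with the wreath-product automorphism $(\tau,\id)$ of $(L,\dotsc,L)$. By the composition formula~(\ref{wreath comp.}), post-composition by $(\tau,\id)$ sends the summand indexed by $\sigma$ to the summand indexed by $\sigma\tau$, so the induced $\Sigma_n$-action on the wedge inside the first coend is free and transitive on the indexing set $\Sigma_n$. Since coends commute with $\Sigma_n$-orbits, taking orbits of the first display produces the second, yielding the natural isomorphism $\Lcross_n\dgrm{X}(L)\cong\dgrm{X}(L,\dotsc,L)_{\Sigma_n}$. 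The main subtlety is verifying that the symmetry action on $\dgrm{X}(L,\dotsc,L)$ really is this free permutation of wedge summands; this is exactly what formula~(\ref{wreath comp.}) delivers once unwound. The $\mc{S}$-naturality of every step is automatic because the tensor-cotensor adjunction and the co-Yoneda lemma are $\mc{S}$-enriched throughout.
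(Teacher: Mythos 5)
Your proof is correct, and up to the final identification step it mirrors the paper's argument: both identify the candidate left adjoint as the coend $\int^{\ul{K}}\bigwedge_{i}R^{K_i}\otimes\dgrm{X}(\ul{K})$ by way of Lemma~\ref{cr-formula} and the enriched tensor--cotensor adjunction. Where the two diverge is in how this coend is shown to equal $K\mapsto\bigl(\dgrm{X}(K,\dotsc,K)\bigr)_{\Sigma_n}$. The paper packages the claim as the formula $\Lcross_n\cong\bigl(\Delta_n^*(\free)\bigr)_{\Sigma_n}=\LKan_{\mathrm{pr}_{\mc{C}}}\circ\Delta_n^*$ and verifies it by checking the values on representables $R^{\ul{K}}$ and invoking the universal property of the enriched left Kan extension (both sides commute with colimits). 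You instead evaluate the coend at $L$, expand $\dgrm{X}(L,\dotsc,L)$ by the enriched co-Yoneda lemma with weight $\Vhom_{\Sigma_n\wr\mc{C}^{\wedge n}}(\ul{K},(L,\dotsc,L))=\bigvee_{\sigma}\bigwedge_{i}\mc{S}_{\mc{C}}(K_i,L)$, and unwind the composition formula~(\ref{wreath comp.}) to see that the $\Sigma_n$-action on this weight is the free regular permutation of wedge summands, so that passing to orbits collapses it to $\bigwedge_i\mc{S}_{\mc{C}}(K_i,L)$ and matches the $\Lcross_n$-coend. This is a more elementary, computation-forward route that avoids the Kan extension formalism; the price is having to check the combinatorics of the wreath composition by hand, which you do correctly. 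Both arguments are valid and of comparable length.
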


  Lemma~\ref{cr-formula} implies for the case  $\mc{D}=\mc{S}$ that a left \mc{S}-adjoint
  of $\cross_n$ (if it exists) sends the functor represented by 
  $\ul{K}\in \Sigma_n\wr\mc{C}^{\wedge n}$ to the objectwise smash product
  $\bigwedge_{i=1}^nR^{K_i}$. This supplies a candidate for the definition of the
  left adjoint by the enriched Yoneda lemma~\ref{twisted Yoneda}.

\begin{proof}
  Let \dgrm{X} in $\Fun(\Sigma_n\wr\mc{C}^{\wedge n},\mc{D})$ be written as a 
  colimit of representable functors:
  \[ \dgrm{X}\cong\int^{\ul{K}}\dgrm{X}(\ul{K})\wedge R^{\ul{K}} \]
  Then the functor $\Lcross_n\co\Fun(\Sigma_n\wr\mc{C}^{\wedge n},\mc{D})\to
  \Fun(\mc{C},\mc{D})$ is defined by the following coend:
  \[
    \Lcross_n(\dgrm{X})\cong\int^{\ul{K}}\!\dgrm{X}(\ul{K})\wedge\Lcross_n \bigl(R^{\ul{K}}\bigr)\cong\int^{\ul{K}}\!\bigl(\dgrm{X}(\ul{K})\wedge\bigwedge_{i=1}^n R^{K_i}\bigr).\]
  For every \mc{S}-functor $\dgrm{Y}\co \mc{C}\to \mc{D}$,
  one obtains natural isomorphisms:
  \begin{align*}
    \mc{S}_{\Fun(\mc{C},\mc{D})}\bigl(\Lcross_n(\dgrm{X}),\dgrm{Y}\bigr)
    &\cong\mc{S}_{\Fun(\mc{C},\mc{D})}\bigl(\int^{\ul{K}}\!\bigl(\dgrm{X}(\ul{K})\wedge\bigwedge_{i=1}^n R^{K_i}\bigr),\dgrm{Y}\bigr) \\
    &\cong\int_{\ul{K}}\mc{S}_{\mc{D}}\bigl(\dgrm{X}(\ul{K}),\cotensor{\bigwedge_{i=1}^nR^{K_i}}{\dgrm{Y}}\bigr) \\
    &\cong\int_{\ul{K}}\mc{S}_{\mc{D}}\bigl(\dgrm{X}(\ul{K}),(\cross_n\dgrm{Y})(\ul{K})\bigr) \\
    &\cong\mc{S}_{\Fun(\Sigma_n\wr\mc{C}^{\wedge n},\mc{D})}\bigl(\dgrm{X},\cross_n\dgrm{Y}\bigr) 
  \end{align*}
  Lemma~\ref{cr-formula} is used for the third isomorphism.
  Hence, the functor 
  $\Lcross_n$ is \mc{S}-left adjoint to $\cross_n$.
  One identifies the functor $\Lcross_n$ explicitly by the following formula:
  \begin{equation}\label{formula for Lcross}
    \Lcross_n\cong\bigl(\Delta_n^*(\free)\bigr)_{\Sigma_n}=\LKan_{\mathrm{pr}_{\mc{C}}}\circ\, \Delta_n^* 
  \end{equation}
  Here $ \mathrm{pr}_{\mc{C}}\co\mc{C}\times\Sigma_n\to\mc{C} $ is the 
  projection onto the first factor, and
  $\Delta_n^\ast$ is precomposition
  with the symmetric diagonal 
  $\Delta_n\co\Sigma_n\times\mc{C}\to\Sigma_n\wr\mc{C}^{\wedge n} $
  sending an object $K$ to the $n$-tuple $\Delta_n(K)=(K,\dotsc,K)$ and a 
  morphism $(\sigma,f)$ to $\bigl(\sigma,(f,\dotsc,f)\bigr)$.
  A straightforward computation shows that the right hand side
  of~(\ref{formula for Lcross}) sends 
  the functor represented by 
  $\ul{K}\in \Sigma_n\wr\mc{C}^{\wedge n}$ to the objectwise smash product
  $\bigwedge_{i=1}^nR^{K_i}$. Since it also commutes with colimits, 
  the isomorphism~(\ref{formula for Lcross})
  holds by the universal property of the left Kan extension. This supplies the formula stated in the Lemma.
\end{proof}

\subsection{The cross effect model structure}\label{section:cr model str}

Suppose that \mc{D} 
is a cofibrantly generated \mc{S}-model category, so
that Theorem~\ref{thm:proj-model-str.} is applicable. 
In all interesting cases, the cross effect 
\[ \cross_n\colon \Fun(\mc{C},\mc{D})_\proj\to \Fun(\Sigma_n\wr(\mc{C})^{\wedge n},\mc{D})_\proj\]
is not a right Quillen functor of projective model structures.
One can deduce this from the behaviour of the second homotopy cross effect,
which measures the failure of linearity for reduced homotopy functors.
For example, the \mc{S}-functor 
$\dgrm{Q}:=\mathrm{Sing} \circ \lvert - \rvert \co \mc{S}\to \mc{S}$
sending a pointed simplicial set to the singular complex of its
geometric realization is
an objectwise fibrant replacement of the identity functor $\Id_{\mc{S}}$.
As the functor $\Id_{\mc{S}}$ is not linear, its homotopy cross effect 
$\hocr_2(\Id_{\mc{S}})\simeq \hocr_2(\dgrm{Q})$ is not contractible.
However, since the canonical map 
\[ \dgrm{Q}(K\vee L) \to \dgrm{Q}(K\times L)\cong 
\dgrm{Q}(K)\times \dgrm{Q}(L)\]
is injective for all $K,L\in \mc{S}$, one has $\cross_2\dgrm{Q} = \dgrm{\ast}$.
The purpose of this section is to supply a model structure on 
$\Fun(\mc{C},\mc{D})$ with objectwise weak equivalences,
such that the $n$-th cross effect is a right Quillen functor. 
The task will be accomplished by introducing more cofibrations.
The right derived functor of the cross effect turns out to be Goodwillie's 
homotopy cross effect, as promised in Remark~\ref{rem:cross-effect}.

\begin{definition}
  Let $n\ge 1$. If $\ul{K}=\{K_1,\dotsc,K_n\}$ is an $n$-tuple of objects in 
  $\mc{C}$, one says $|\ul{K}|=n$. Definition~\ref{phi} supplies a map 
  \[ \phi_{\ul{K}}\co\colim_{S\in P_0(\ul{n})}R^{\bigvee_{i\in\ul{n}-S}K_i}\to 
  R^{\bigvee_{i=1}^nK_i} \]
  in $\Fun(\mc{C},\mc{S})$ for every $\ul{K}$ with $\lvert \ul{K}\rvert =n$. 
  Let $\Phi_n:=\{\phi_{\ul{K}}\,|\,|\ul{K}|\le n\}$,
  and $\Phi:=\Phi_{\infty}:=\bigcup_{n\ge 1}\Phi_n$. 
\end{definition}

The argument with $\dgrm{Q}$ given above indicates that not
all of the maps in $\Phi_2$ are projective cofibrations.

\begin{definition}\label{gen-sets-for-cr}
  The tensor $\mc{S}\times\mc{D}\to\mc{D}$ induces a tensor $\Fun(\mc{C},\mc{S})\times\mc{D}\to\Fun(\mc{C},\mc{D})$. Hence, the pushout product $\square$ of Definition~\ref{def:pushout product} of a map in $\Fun(\mc{C},\mc{S})$ and a map in \mc{D} is defined. For $1\le n\le\infty$, the pushout product defines two sets of maps in $\Fun(\mc{C},\mc{D})$:
  \[ I^{\mathrm{cr}}_n =\left(\Phi_n\,\square\, I_{\mc{D}}\right) \quad \quad
  J^{\mathrm{cr}}_n =\left(\Phi_n\,\square\, J_{\mc{D}}\right) \]
  Set $I^{\mathrm{cr}}:=I^{\mathrm{cr}}_\infty$ and $J^{\mathrm{cr}}:=J^{\mathrm{cr}}_\infty$. 
  A map in $\Fun(\mc{C},\mc{D})$ is 
  \begin{enumerate}
  \item
    a {\it cross effect fibration} or {\it cr fibration} if it belongs to the class $J^{\mathrm{cr}}$-inj, i.e. it has the right lifting property with respect to all maps in $J^{\mathrm{cr}}$. 
  \item
    a {\it cross effect cofibration} or {\it cr cofibration} if it belongs to the class $I^{\mathrm{cr}}$-cof, i.e. it has the left lifting property with respect to all cr fibrations. 
  \end{enumerate}
  Weak equivalences are still given by objectwise weak equivalences.
\end{definition}

\begin{remark}\label{rem:proj-cross}
  Note that $I^{\mathrm{cr}}_1=I^{\proj}_{\Fun(\mc{C},\mc{D})}$ and 
  $J^{\mathrm{cr}}_1=J^{\proj}_{\Fun(\mc{C},\mc{D})}$. In particular, 
  every projective cofibration is a cr cofibration and 
  every acyclic projective cofibration is an acyclic cr cofibration.
\end{remark}

\begin{theorem}\label{thm:cross-model}
  The classes of objectwise weak equivalences, cross effect fibrations and 
  cross effect 
  cofibrations form a cofibrantly generated 
  $\mc{S}$-model 
  structure on the category $\Fun(\mc{C},\mc{D})$,
  which is as proper as $\mc{D}$. 
\end{theorem}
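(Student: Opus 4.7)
The plan is to invoke Kan's recognition theorem for cofibrantly generated model categories (\cite[Theorem~2.1.19]{Hov:model}) with $I^{\mathrm{cr}}$ and $J^{\mathrm{cr}}$ as the generating sets and the class $W$ of objectwise weak equivalences, which already satisfies two-of-three and is closed under retracts. This parallels the proof of Theorem~\ref{thm:proj-model-str.} and amounts to adapting the argument of \cite[Theorem~4.4]{DRO:enriched} to the enlarged generating sets.

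The routine verifications go as follows. Smallness of the sources of $I^{\mathrm{cr}}$ and $J^{\mathrm{cr}}$ is inherited from $\mc{D}$ because each such source is a finite colimit in $\Fun(\mc{C},\mc{D})$ of tensors $R^{K}\otimes D$ with $D$ small in $\mc{D}$, and $R^{K}\otimes(-)$ preserves smallness. The central acyclicity input is the observation that for every $C\in\mc{C}$ the evaluation $\phi_{\ul{K}}(C)$ is the inclusion of the fat wedge
\[ \bigcup_{j=1}^{n}\,\prod_{i\neq j}\mc{S}_{\mc{C}}(K_{i},C)\ \hookrightarrow\ \prod_{i=1}^{n}\mc{S}_{\mc{C}}(K_{i},C), \]
which is a monomorphism and therefore a cofibration in $\mc{S}$. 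By the $\mc{S}$-pushout product axiom in $\mc{D}$, $\phi\,\square\,i$ is objectwise a cofibration for every $i\in I_{\mc{D}}$ and $\phi\,\square\,j$ is objectwise an acyclic cofibration for every $j\in J_{\mc{D}}$. Closure of objectwise (acyclic) cofibrations under pushouts and transfinite compositions yields $J^{\mathrm{cr}}$-cell $\subseteq W$, while $J_{\mc{D}}\subseteq I_{\mc{D}}$-cof together with preservation of cellular classes by $\phi\,\square\,(-)$ yields $J^{\mathrm{cr}}\subseteq I^{\mathrm{cr}}$-cof. Remark~\ref{rem:proj-cross} provides $I^{\proj}_{\Fun(\mc{C},\mc{D})}\subseteq I^{\mathrm{cr}}$, hence $I^{\mathrm{cr}}$-inj is contained in the class of objectwise acyclic fibrations, and in particular in $W\cap J^{\mathrm{cr}}$-inj.

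I expect the hard step to be Kan's remaining compatibility condition, that every $f\in W\cap J^{\mathrm{cr}}$-inj lies in $I^{\mathrm{cr}}$-inj. By the pushout product adjunction, this reduces to showing that for each $\phi\in\Phi$ and $i\in I_{\mc{D}}$, the map $\phi$ has the left lifting property in $\Fun(\mc{C},\mc{S})$ against the pullback corner map $f^{\square i}\colon \dgrm{E}^{B}\to \dgrm{E}^{A}\times_{\dgrm{F}^{A}}\dgrm{F}^{B}$. The hypothesis supplies such lifts against $f^{\square j}$ for $j\in J_{\mc{D}}$, and a second application of the $\mc{S}$-pushout product axiom (using that $f$ is an objectwise acyclic fibration) shows that $f^{\square i}$ and $f^{\square j}$ are objectwise acyclic fibrations in $\mc{S}$. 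The lift against $f^{\square i}$ is then constructed by factoring $i$ in $\mc{D}$ via the $(J_{\mc{D}}\text{-cell},J_{\mc{D}}\text{-inj})$ small object argument and combining the resulting lifts against $f^{\square j}$ with the Yoneda-type description of natural maps out of the colimit of representables appearing in the source of $\phi_{\ul{K}}$; this is exactly where the argument from \cite[Theorem~4.4]{DRO:enriched} is imported.

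Finally, $\mc{S}$-enrichment is verified by checking the pushout product axiom on the generating sets, using that $\phi\,\square\,(-)$ interacts well with tensors by pointed simplicial sets. Properness is inherited from $\mc{D}$ as in \cite[Corollary~4.8]{DRO:enriched}: since every $I^{\mathrm{cr}}$-cofibration is objectwise a cofibration in $\mc{D}$, pushouts of objectwise weak equivalences along cross effect cofibrations are again objectwise weak equivalences when $\mc{D}$ is left proper, and the corresponding pullback statement gives right properness.
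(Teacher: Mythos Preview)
Your overall framework coincides with the paper's: both invoke the recognition theorem with $I^{\mathrm{cr}}$ and $J^{\mathrm{cr}}$, and your treatment of smallness, of $J^{\mathrm{cr}}\text{-cell}\subseteq W$ (via the objectwise cofibration property of $\phi_{\ul K}$), of $J^{\mathrm{cr}}\subseteq I^{\mathrm{cr}}\text{-cof}$, of the $\mc{S}$-enrichment, and of properness is essentially what the paper does in Lemmata~\ref{isaksen2}, \ref{J-cof are we}, and~\ref{lem:cr-proper}.

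The gap is in the hard step $W\cap J^{\mathrm{cr}}\text{-inj}\subseteq I^{\mathrm{cr}}\text{-inj}$. After adjunction, this amounts to showing that for $f\in W\cap J^{\mathrm{cr}}\text{-inj}$ the map
\[
\dgrm{X}\Bigl(\bigvee_{i=1}^{n}K_i\Bigr)\longrightarrow
\lim_{S\in P_0(\ul n)}\dgrm{X}\Bigl(\bigvee_{i\notin S}K_i\Bigr)
\ \x_{\ \lim\limits_{S}\dgrm{Y}(\bigvee_{i\notin S}K_i)}\
\dgrm{Y}\Bigl(\bigvee_{i=1}^{n}K_i\Bigr)
\]
is an \emph{acyclic} fibration in $\mc{D}$, knowing only that it is a fibration. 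Your proposed route---observe that $f^{\square i}$ is an objectwise acyclic fibration in $\Fun(\mc{C},\mc{S})$, then lift $\phi_{\ul K}$ against it by factoring $i$ through $J_{\mc{D}}$-cell and invoking \cite[Theorem~4.4]{DRO:enriched}---does not go through. Knowing that $f^{\square i}$ is an objectwise acyclic fibration is not enough: $\phi_{\ul K}$ is only an objectwise cofibration, not a projective one, and objectwise cofibrations do \emph{not} in general lift against objectwise acyclic fibrations in $\Fun(\mc{C},\mc{S})$. The factoring $i=p\circ j$ with $j\in J_{\mc{D}}$-cell and $p\in J_{\mc{D}}$-inj does not decompose the lifting problem for $\phi\,\square\, i$ into ones for $\phi\,\square\, j$ and $\phi\,\square\, p$, and the proof of \cite[Theorem~4.4]{DRO:enriched} concerns only the projective generators $R^K\otimes i$, not the enlarged set $\Phi\,\square\,I_{\mc{D}}$.

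What is actually needed, and what the paper supplies in Lemma~\ref{isaksen1}, is a structural fact about the indexing poset: $P_0(\ul n)$ is an \emph{inverse} category (graded by $S\mapsto |\ul n\smallsetminus S|$). The cr-fibration hypothesis on $f$ is precisely the matching-space condition making the $P_0(\ul n)$-diagram $S\mapsto \bigl(\dgrm{X}\to\dgrm{Y}\bigr)(\bigvee_{i\notin S}K_i)$ a Reedy fibration; since $f$ is also an objectwise weak equivalence, this diagram is a Reedy acyclic fibration, and the limit over $P_0(\ul n)$---being right Quillen for the Reedy structure---sends it to an acyclic fibration in $\mc{D}$. That yields acyclicity of the displayed map. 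Your argument does not use this inverse-category/Reedy input, and without it the step cannot be completed.
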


The idea for the proof of the theorem can be found in \cite{Isa:flasque}. 
It is an application of the recognition principle for 
cofibrantly generated model structures \cite[11.3.1]{Hir:loc}.
Replacing $\Phi$ with $\Phi_n$ for some $1\le n\le\infty$ leads to a model structure where only the $k$-th cross effects for $1\le k\le n$ become right Quillen functors. The model structure supplied by Theorem~\ref{thm:cross-model}
is called the {\it cross effect model structure} or simply 
{\it cr model structure} and denoted by $\Fun(\mc{C},\mc{D})_{\cross}$.

\begin{proof}
  Remark~\ref{rem:proj-cross} implies that every
  projective cofibration is a cr cofibration.
  Sources and targets of maps in $I^{\mathrm{cr}}$ and $J^{\mathrm{cr}}$ are as 
  small as the sources and targets of the maps in $I_\mc{D}$ and $I_\mc{D}$,
  thus allowing the small object argument. 
  Lemmata~\ref{isaksen1},~\ref{isaksen2} and~\ref{J-cof are we} below 
  conclude the proof of the existence of the cofibrantly generated 
  model structure. The model structure is an $\mc{S}$-model structure, 
  as one checks on the generators $I^{\mathrm{cr}}$ and $J^{\mathrm{cr}}$.
  The statement regarding properness is proved in \ref{lem:cr-proper}.
\end{proof}

\begin{lemma}\label{isaksen1}
  A map in $\Fun(\mc{C},\mc{D})$ is in \emph{$I^{\mathrm{cr}}$-inj} if and only if it is an objectwise weak equivalence and a cross effect fibration. 
\end{lemma}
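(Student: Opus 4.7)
The main tool is the enriched adjunction between pushout product and pullback corner: for a map $f\co\dgrm{X}\to\dgrm{Y}$ in $\Fun(\mc{C},\mc{D})$ and $\phi_{\ul{K}}\in\Phi$ with $|\ul{K}|=n$, the map $f$ lies in $(\phi_{\ul{K}}\,\square\, I_{\mc{D}})$-inj (respectively $(\phi_{\ul{K}}\,\square\, J_{\mc{D}})$-inj) if and only if the pullback-corner map
\[
\langle\phi_{\ul{K}},f\rangle \co \dgrm{X}\bigl(\bigvee_{i=1}^{n} K_i\bigr) \longrightarrow \dgrm{Y}\bigl(\bigvee_{i=1}^{n} K_i\bigr) \times_{\lim_{S\in P_0(\ul{n})}\dgrm{Y}(\bigvee_{i\notin S}K_i)} \lim_{S\in P_0(\ul{n})} \dgrm{X}\bigl(\bigvee_{i\notin S} K_i\bigr)
\]
is an acyclic fibration (respectively a fibration) in $\mc{D}$. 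The identification of source and target here uses the enriched Yoneda lemma~\ref{twisted Yoneda} together with the fact that the cotensor sends colimits in the $\Fun(\mc{C},\mc{S})$-variable to limits in $\mc{D}$.

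For the forward implication, Remark~\ref{rem:proj-cross} gives $I^{\proj}=I^{\mathrm{cr}}_1\subseteq I^{\mathrm{cr}}$, so any $f\in I^{\mathrm{cr}}\text{-inj}$ is an objectwise acyclic fibration, and in particular an objectwise weak equivalence. For the cross effect fibration condition, note that every map in $J_{\mc{D}}$ is a cofibration in $\mc{D}$; the standard pushout-product estimate then gives $J^{\mathrm{cr}}=\Phi\,\square\, J_{\mc{D}}\subseteq(\Phi\,\square\, I_{\mc{D}})\text{-cof}=I^{\mathrm{cr}}\text{-cof}$, whence $I^{\mathrm{cr}}\text{-inj}\subseteq J^{\mathrm{cr}}\text{-inj}$.

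For the reverse implication, assume $f$ is a cross effect fibration and an objectwise weak equivalence. By the adjunction it suffices to show that each $\langle\phi_{\ul{K}},f\rangle$ is an acyclic fibration in $\mc{D}$. The fibration half is immediate from $f\in J^{\mathrm{cr}}\text{-inj}$. For the weak equivalence half, the cross effect fibration condition at $|\ul{K}|=1$ (which coincides with the projective fibration condition) combined with the objectwise weak equivalence hypothesis makes each $f_{L}\co\dgrm{X}(L)\to\dgrm{Y}(L)$ into an acyclic fibration in $\mc{D}$. Acyclic fibrations in a model category are defined by a right lifting property, hence are preserved by arbitrary limits and by base change. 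Therefore $\lim_{S}f_{\bigvee_{i\notin S}K_i}$ is an acyclic fibration in $\mc{D}$, and its pullback along the canonical map $\dgrm{Y}(\bigvee K_i)\to\lim_{S}\dgrm{Y}(\bigvee_{i\notin S} K_i)$, i.e.\ the projection from the codomain of $\langle\phi_{\ul{K}},f\rangle$ to $\dgrm{Y}(\bigvee K_i)$, is an acyclic fibration as well. Since $f_{\bigvee K_i}$ factors as $\langle\phi_{\ul{K}},f\rangle$ composed with this projection, the two-out-of-three axiom forces $\langle\phi_{\ul{K}},f\rangle$ to be a weak equivalence.

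The only nontrivial step is the explicit identification of $\langle\phi_{\ul{K}},f\rangle$ as the map displayed above, which hinges on the enriched Yoneda lemma and the behavior of the cotensor on colimits of representables. Once that identification is in place, the argument is a direct application of standard closure properties of acyclic fibrations together with a single invocation of two-out-of-three, so no serious obstacle remains.
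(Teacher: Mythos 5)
Your forward direction is fine, and the final two-out-of-three manoeuvre in the reverse direction matches the paper. But the key step of the reverse direction has a real gap. You assert that ``acyclic fibrations in a model category are defined by a right lifting property, hence are preserved by arbitrary limits,'' and use this to conclude that the map
\[
\lim_{S\in P_0(\ul{n})}\dgrm{X}\bigl(\bigvee_{i\notin S}K_i\bigr)\;\longrightarrow\;\lim_{S\in P_0(\ul{n})}\dgrm{Y}\bigl(\bigvee_{i\notin S}K_i\bigr)
\]
is an acyclic fibration. That general principle is false. The class of maps with the right lifting property against a fixed set is closed under \emph{products} and \emph{base change}, not under arbitrary limits. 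Already for $n=2$ the limit over $P_0(\ul{2})$ is a pullback of a cospan, and the induced map on pullbacks factors through a base change of the diagonal $A_0\to A_0\times_{B_0}A_0$; that diagonal is a weak equivalence (being a section of an acyclic fibration) but is typically not a fibration, so the base change need not be a weak equivalence. Nothing in your argument rules this out, because you have only used that $f$ is an objectwise acyclic fibration at the wedge summands, not the stronger cr fibration condition.

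The paper's proof avoids this by exploiting that $P_0(\ul{n})$ is an \emph{inverse} category, so that the Reedy (matching) model structure on $P_0(\ul{n})$-diagrams in $\mc{D}$ exists and $\lim$ is a right Quillen functor for it. It then uses the cr fibration hypothesis in full strength: the matching maps of the induced diagram $f'$ at each $S$ are precisely the pullback corner maps of $f$ against $\phi_{\ul{K}'}$ for the sub-tuple $\ul{K}'=\{K_i : i\notin S\}$ of size $m=\lvert\ul{n}-S\rvert\le n$, and $f\in(\Phi\,\square\,J_{\mc{D}})$-inj makes these fibrations, so $f'$ is a Reedy fibration. Since Reedy weak equivalences are objectwise for inverse categories, $f'$ is a Reedy acyclic fibration, and then $\lim f'$ is an acyclic fibration because $\lim$ is right Quillen. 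In short: you need the cr fibration condition for \emph{all} sub-tuples, not just the top one, and you need the Reedy model structure on $P_0(\ul{n})$-diagrams, not a blanket claim about limits.
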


\begin{proof}
  Let $f\co\dgrm{X}\to\dgrm{Y}$ be in $I^{\mathrm{cr}}$-inj. 
  Then $f$ is in $I^{\proj}_{\Fun(\mc{C},\mc{D})}$-inj, and so 
  an acyclic projective fibration. In particular, 
  $f$ is an objectwise weak equivalence.
  The inclusion $I_{\mc{D}}\mathrm{-inj}\subset J_{\mc{D}}\mathrm{-inj}$ implies
  \[ (\Phi\,\square\, I_{\mc{D}})\mathrm{-inj}\subset (\Phi\,\square\, J_{\mc{D}})\mathrm{-inj},\] 
  whence $f$ is a cr fibration as well.

  Conversely, let $f$ be an objectwise equivalence and a cr fibration. 
  Then it is in particular an acyclic projective fibration, hence in 
  $I^{\proj}_{\Fun(\mc{C},\mc{D})}$-inj. By assumption, it is also in 
  $(\Phi\,\square\, J_{\mc{D}})$-inj, which means exactly that 
  the map 
  \begin{equation}\label{sp condition}
    \dgrm{X}(\bigvee_{i=1}^nK_i)\to
    \lim_{S\in P_0(\ul{n})}\dgrm{X}(\bigvee_{i\in\ul{n}-S}K_i)\x_{\lim\limits_{S\in P_0(\ul{n})}
      \dgrm{Y}(\bigvee\limits_{i\in\ul{n}-S}K_i)}\dgrm{Y}(\bigvee_{i=1}^nK_i)
  \end{equation}
  is a fibration for every possible choice of $n$ and $\ul{K}$. 
  It remains to show that $f$ is in $(\Phi\,\square\, I_{\mc{D}})$-inj. This is equivalent to the 
  map in~(\ref{sp condition}) being an acyclic fibration. By 
  assumption, the map
  \[ \dgrm{X}(\bigvee_{i=1}^nK_i) \to \dgrm{Y}(\bigvee_{i=1}^nK_i)\] 
  is a weak equivalence. 
  Thus it suffices to show that the map
  \[ \lim_{S\in P_0(\ul{n})}\dgrm{X}(\bigvee_{i\in\ul{n}-S}K_i)\to
  \lim_{S\in P_0(\ul{n})}\dgrm{Y}(\bigvee_{i\in\ul{n}-S}K_i)\]
  is an acyclic fibration. To conclude this, recall that 
  $P_0(\ul{n})$ is an inverse category by 
  the functor $\mathrm{deg}\co P_0(\ul{n})^{\mathrm{op}} \to \mathbb{N}$ 
  which sends $S$ to the number of elements in $\ul{n} \smallsetminus S$. By 
  \cite[Theorem 5.1.3]{Hov:model}, there is a model structure on the category of
  functors from $P_0(\ul{n})$ to any model category. It has objectwise weak 
  equivalences
  and cofibrations, and the fibrations are characterized by an appropriate
  matching space condition. The limit is thus a right Quillen 
  functor on this functor category, and
  in particular preserves acyclic fibrations. 
  Since $f\co \dgrm{X} \to \dgrm{Y}$
  is a cr fibration, the induced natural transformation $f^\prime$ of 
  functors on $P_0(\ul{n})$
  is a fibration. As $f$ is an objectwise equivalence, $f^\prime$ is
  an objectwise weak equivalence. The result follows.
\end{proof}

\begin{lemma}\label{isaksen2}
A map in $J^{\mathrm{cr}}$-cof is a cr cofibration.
\end{lemma}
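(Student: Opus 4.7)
The strategy is to observe that this is essentially a formal consequence of Lemma~\ref{isaksen1} together with the standard characterization of $I$-cof and $J$-cof via the small object argument. The plan is simply to compare the two lifting classes.

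First I would recall that, with respect to the two generating sets, a map $f$ belongs to $I^{\mathrm{cr}}$-cof precisely when it has the left lifting property against every map in $I^{\mathrm{cr}}$-inj, and it belongs to $J^{\mathrm{cr}}$-cof precisely when it has the left lifting property against every map in $J^{\mathrm{cr}}$-inj (the latter being, by definition, the class of cr fibrations). So the task reduces to establishing the class inclusion $I^{\mathrm{cr}}\text{-inj}\subseteq J^{\mathrm{cr}}\text{-inj}$; taking left-lifting-property classes then reverses the inclusion to give $J^{\mathrm{cr}}\text{-cof}\subseteq I^{\mathrm{cr}}\text{-cof}$, which is exactly the statement.

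Second, the inclusion $I^{\mathrm{cr}}\text{-inj}\subseteq J^{\mathrm{cr}}\text{-inj}$ is supplied directly by Lemma~\ref{isaksen1}: any map in $I^{\mathrm{cr}}$-inj is, in particular, a cr fibration, hence belongs to $J^{\mathrm{cr}}$-inj. Combining these two steps gives the claim.

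There is no substantial obstacle here: the content is the tautological fact that ``acyclic cr fibrations are cr fibrations,'' packaged via the $I$-inj/$J$-inj correspondence of the small object argument. The non-trivial work needed for the existence of the cr model structure is already done in Lemma~\ref{isaksen1} (identifying $I^{\mathrm{cr}}$-inj with the acyclic cr fibrations) and in the subsequent Lemma~\ref{J-cof are we} (showing that $J^{\mathrm{cr}}$-cof maps are weak equivalences); Lemma~\ref{isaksen2} itself is the purely formal bridge between these two substantive statements.
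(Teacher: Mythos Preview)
Your argument is correct and essentially identical to the paper's: both deduce $J^{\mathrm{cr}}\text{-cof}\subseteq I^{\mathrm{cr}}\text{-cof}$ from the inclusion $I^{\mathrm{cr}}\text{-inj}\subseteq J^{\mathrm{cr}}\text{-inj}$ supplied by Lemma~\ref{isaksen1}. The paper phrases it as ``has LLP against all cr fibrations, hence against those that are also objectwise weak equivalences, which by Lemma~\ref{isaksen1} are exactly $I^{\mathrm{cr}}$-inj,'' but this is the same formal step you describe.
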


\begin{proof}
  If a map is in $J^{\mathrm{cr}}$-cof, it has the left lifting property with respect to all cr fibrations. 
  In particular, it has the left lifting property with respect to all cr fibrations that are also 
  objectwise weak equivalences. So, by Lemma~\ref{isaksen1} it is an cr cofibration.
\end{proof}

\begin{lemma}\label{J-cof are we}
  A map in $J^{\mathrm{cr}}$-cof is an objectwise weak equivalence.
\end{lemma}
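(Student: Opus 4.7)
The strategy is to show that for each object $C \in \mc{C}$, the evaluation $g(C)$ is an acyclic cofibration in $\mc{D}$ for every $g \in J^{\mathrm{cr}}$-cof. Colimits in $\Fun(\mc{C},\mc{D})$ are formed objectwise, so $\ev_C$ preserves pushouts and transfinite compositions, and as a functor it preserves retracts; acyclic cofibrations in the model category $\mc{D}$ are closed under the same operations. It therefore suffices to verify that each generator $\phi_{\ul{K}}\,\square\, j \in J^{\mathrm{cr}}$ becomes an acyclic cofibration after evaluation at $C$. Because $\ev_C$ commutes with the tensor $\Fun(\mc{C},\mc{S}) \otimes \mc{D} \to \Fun(\mc{C},\mc{D})$, this evaluation equals the pushout product $\phi_{\ul{K}}(C)\,\square\, j$ of a map in $\mc{S}$ with a map in $\mc{D}$.

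The heart of the argument is to identify $\phi_{\ul{K}}(C)$ as a cofibration in $\mc{S}$, after which the pushout product axiom for the $\mc{S}$-model category $\mc{D}$ finishes the proof. The enriched Yoneda lemma identifies $R^{\bigvee_{i \in \ul{n}-S} K_i}(C) \cong \prod_{i \in \ul{n}-S} \mc{S}_{\mc{C}}(K_i, C)$; for $S \subseteq T$ the structural map, arising from the contravariant $R^{-}$ applied to the wedge projection killing $T \setminus S$, embeds the smaller product into the larger by inserting the basepoint in each coordinate of $T \setminus S$. Each term of the diagram thus embeds compatibly as the subspace $\{(x_j)_j : x_i = \ast \text{ for all } i \in S\}$ of $R^{\bigvee_{i=1}^n K_i}(C) = \prod_{j=1}^n \mc{S}_{\mc{C}}(K_j, C)$, and the colimit computing the source of $\phi_{\ul{K}}(C)$ is the union of these subspaces, the \emph{fat wedge}
\[ W \;=\; \bigcup_{i=1}^n \bigl\{(x_1,\dotsc,x_n) : x_i = \ast \bigr\} \;\hookrightarrow\; \prod_{j=1}^n \mc{S}_{\mc{C}}(K_j, C). \]
As a degreewise injection of pointed simplicial sets, this inclusion is a monomorphism, hence a cofibration in $\mc{S}$.

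The main obstacle will be the correct bookkeeping in this colimit identification: one must keep track of the variances in Definitions~\ref{P_0(ul{n})} and~\ref{phi}, and confirm compatibility with the cofiber description supplied by Lemma~\ref{cr-formula} (whose cofiber $\bigwedge_i R^{K_i}$ is precisely the quotient of the product by the fat wedge). Once $\phi_{\ul{K}}(C)$ is confirmed to be a cofibration in $\mc{S}$, the pushout product axiom produces $\phi_{\ul{K}}(C)\,\square\, j$ as an acyclic cofibration in $\mc{D}$ for every $j \in J_{\mc{D}}$, which together with the preliminary reductions completes the proof.
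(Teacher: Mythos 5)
Your proof is correct and follows the same approach as the paper's: the key step is that each $\phi_{\ul{K}}$ is an objectwise cofibration of pointed simplicial sets, which the paper simply asserts from $\mc{V}=\mc{S}$ and which you flesh out by identifying $\phi_{\ul{K}}(C)$ as the fat wedge inclusion into the product. Combined with the pushout product axiom and closure of acyclic cofibrations in $\mc{D}$ under pushouts, transfinite compositions, and retracts, this gives the result exactly as in the paper.
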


\begin{proof}
  By the small object argument, every map in $J^{\mathrm{cr}}$-cof is a 
  retract of a map in $J^{\mathrm{cr}}$-cell. Since $\mc{V}=\mc{S}$,
  every map in $\Phi$ is an objectwise cofibration
  in $\Fun(\mc{C},\mc{S})$. This implies that
  every map in $J^{\mathrm{cr}}$-cell 
  is an objectwise weak equivalence. 
\end{proof}

\begin{lemma}\label{lem:cr-proper}
  If $\mc{D}$ is right or left proper, then the cross effect model structure is 
  right or left proper, respectively. 
\end{lemma}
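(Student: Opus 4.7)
My plan is to reduce both properness claims to objectwise statements in $\mc{D}$, via two structural facts about the cr model structure.

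First, I would verify that every cr fibration is an objectwise fibration. This is a direct consequence of Remark~\ref{rem:proj-cross}: since $J^{\mathrm{cr}}_1 = J^{\proj}_{\Fun(\mc{C},\mc{D})}$ is contained in $J^{\mathrm{cr}}$, any map with the right lifting property against $J^{\mathrm{cr}}$ inherits the right lifting property against $J^{\mathrm{cr}}_1$, hence is a projective (objectwise) fibration.

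Next, I would show that every cr cofibration is an objectwise cofibration. By the small object argument it suffices to check this for generators $\phi_{\ul{K}}\,\square\, i$ with $\phi_{\ul{K}}\in\Phi$ and $i\in I_{\mc{D}}$, because pushouts, transfinite compositions, and retracts in $\Fun(\mc{C},\mc{D})$ are computed objectwise and cofibrations in $\mc{D}$ are closed under these operations. Evaluated at any $L\in\mc{C}$, such a generator becomes the pushout product $\phi_{\ul{K}}(L)\,\square\, i$ in $\mc{D}$. The map $\phi_{\ul{K}}(L)$ is a cofibration in $\mc{S}$ --- this is the objectwise cofibrancy of maps in $\Phi$ already invoked in the proof of Lemma~\ref{J-cof are we} --- and the pushout product axiom for the $\mc{S}$-model category $\mc{D}$ then renders $\phi_{\ul{K}}(L)\,\square\, i$ a cofibration in $\mc{D}$.

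With these two facts in hand, both properness statements follow at once. For right properness, a pullback in $\Fun(\mc{C},\mc{D})$ of an objectwise weak equivalence along a cr fibration evaluates at each $L$ to a pullback in $\mc{D}$ of a weak equivalence along a fibration, to which right properness of $\mc{D}$ applies. The left proper case is dual, the cofibration hypothesis being supplied by the second step. Here one should note that, by the remark following Definition~\ref{V-left-proper}, $\mc{S}$-left properness agrees with ordinary left properness in the present setting, so the input required from $\mc{D}$ is exactly left properness. The only nontrivial ingredient is Step 2, and since its crucial objectwise cofibrancy of maps in $\Phi$ is already established for Lemma~\ref{J-cof are we}, no additional technical work is required.
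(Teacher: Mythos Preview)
Your proof is correct and follows exactly the same strategy as the paper's own argument: reduce to objectwise statements by showing that cr fibrations are objectwise fibrations and cr cofibrations are objectwise cofibrations, then invoke (right or left) properness of $\mc{D}$. You have simply spelled out the details the paper leaves implicit, in particular the verification via generators and the pushout product axiom that cr cofibrations are objectwise cofibrations.
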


\begin{proof}
  Any cr fibration is an objectwise fibration and any cr cofibration is an 
  objectwise cofibration, again using $\mc{V}=\mc{S}$ for the latter
  statement. Since pullbacks and pushouts are formed objectwise, 
  the statement follows. 
\end{proof}

\begin{lemma}\label{cr-fibrant}
  If the functor $\dgrm{X}$ is cross effect fibrant, the canonical map
  \[ \cross_n\dgrm{X}\to\hocr_n\dgrm{X} \]
  is an objectwise weak equivalence.
\end{lemma}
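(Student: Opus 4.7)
The plan is to reduce the claim to two separate statements about the map
\[ f_{\ul{K}}\co \dgrm{X}(\bigvee_{i=1}^n K_i) \to \lim_{S\in P_0(\ul{n})} \dgrm{X}(\bigvee_{i\in \ul{n}-S}K_i), \]
whose strict fiber is by definition $\cross_n\dgrm{X}(K_1,\dotsc,K_n)$: namely, (a) that $f_{\ul{K}}$ is a fibration in $\mc{D}$ between fibrant objects, and (b) that the natural comparison $\lim_{S\in P_0(\ul{n})}\dgrm{X}(\bigvee_{i\in \ul{n}-S}K_i)\to \holim_{S\in P_0(\ul{n})}\dgrm{X}(\bigvee_{i\in \ul{n}-S}K_i)$ is a weak equivalence. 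Granting these, the strict fiber of $f_{\ul{K}}$ agrees with its homotopy fiber, and the latter is weakly equivalent to $\hocr_n\dgrm{X}(K_1,\dotsc,K_n)$, which is the homotopy fiber of the composite $\dgrm{X}(\bigvee_{i=1}^n K_i)\to \lim \to \holim$.

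For (a) I would reuse the adjunction argument already implicit in Lemma~\ref{isaksen1}: cross effect fibrancy of $\dgrm{X}$ says that $\dgrm{X}\to\ast$ has the right lifting property against $\phi_{\ul{K}}\,\square\, j$ for every $j\in J_{\mc{D}}$. By the enriched Yoneda lemma~\ref{twisted Yoneda} and the pushout-product/mapping-object adjunction, this translates into $f_{\ul{K}}$ having the right lifting property against $J_{\mc{D}}$, i.e., being a fibration. Fibrancy of source and target then follows from Remark~\ref{rem:proj-cross}, since cr fibrancy implies objectwise (projective) fibrancy.

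For (b) the plan is to show that the diagram $D\co P_0(\ul{n})\to \mc{D}$, $S\mapsto \dgrm{X}(\bigvee_{i\in \ul{n}-S}K_i)$, is Reedy fibrant with respect to the inverse category structure on $P_0(\ul{n})$ recalled in the proof of Lemma~\ref{isaksen1}; standard inverse Reedy theory then gives $\lim D\simeq \holim D$ via~\cite[Theorem~5.1.3]{Hov:model}. Reedy fibrancy requires, for every $S_0\in P_0(\ul{n})$, that the matching map
\[ D(S_0)=\dgrm{X}(\bigvee_{i\in \ul{n}-S_0}K_i)\to \lim_{T\supsetneq S_0}\dgrm{X}(\bigvee_{i\in \ul{n}-T}K_i) \]
be a fibration. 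The main bookkeeping point, which I expect to be the principal obstacle, is to identify this matching map with $f_{\ul{L}}$ for the restricted tuple $\ul{L}:=(K_i)_{i\in \ul{m}}$ indexed by $\ul{m}:=\ul{n}\smallsetminus S_0$, using the order isomorphism $\{T\in P_0(\ul{n})\co T\supsetneq S_0\}\cong P_0(\ul{m})$ given by $T\mapsto T\smallsetminus S_0$. Once this identification is in hand, $\phi_{\ul{L}}\in\Phi$ together with cr fibrancy of $\dgrm{X}$ yields that $f_{\ul{L}}$ is a fibration by exactly the argument from (a), completing Reedy fibrancy and hence (b).
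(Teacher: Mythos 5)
Your proposal is correct and follows essentially the same route as the paper's proof: both decompose the claim into (a) showing $f_{\ul{K}}$ is a fibration via RLP against $J_n^{\mathrm{cr}}$, and (b) showing the $P_0(\ul{n})$-diagram is Reedy/injectively fibrant via RLP against $J_m^{\mathrm{cr}}$ for $m<n$, so that $\lim\simeq\holim$. You simply make explicit the identification of the matching map with $f_{\ul{L}}$ for the restricted tuple, a detail the paper leaves to the reader with the phrase ``as in the proof of Lemma~\ref{isaksen1}''.
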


\begin{proof}
  If \dgrm{X} is cr fibrant, then -- as in the proof of Lemma \ref{isaksen1} -- 
  the $P_0(\ul{n})$-diagram 
  \[  S\mapsto \dgrm{X}\left( \bigvee_{i\in\ul{n}-S}K_i\right)\]
  is injectively fibrant. 
  This follows from the right lifting property of the map $\dgrm{X}\to\ast$ 
  with respect to $J_{m}^{\mathrm{cr}}$ for $1\le m<n$. 
  Thus, the map
  \[ \lim_{S\in P_0(\ul{n})}\dgrm{X}(\bigvee_{i\in\ul{n}-S}K_i)\to
  \holim_{S\in P_0(\ul{n})}\dgrm{X}(\bigvee_{i\in\ul{n}-S}K_i)\]
  is a weak equivalence. The right lifting property with respect to 
  $J_{n}^{\mathrm{cr}}$ implies that the map
  \[ \dgrm{X}(\bigvee_{i=1}^nK_i)\to\lim_{S\in P_0(\ul{n})}\dgrm{X}(\bigvee_{i\in\ul{n}-S}K_i)\]
is a fibration. The claim follows.
\end{proof}

\begin{proposition}\label{prop:cross-right-obj}
  The functor 
  \[\cross_n\colon \Fun(\mc{C},\mc{D})_{\cross}\to 
     \Fun(\Sigma_n\wr (\mc{C})^{\wedge n},\mc{D})_\proj\]
  is a right Quillen functor and $\hocr_n$ is its right derived functor.
\end{proposition}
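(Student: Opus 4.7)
The plan is to invoke the adjunction $(\Lcross_n,\cross_n)$ from Lemma~\ref{Lcross formula} and establish that $\cross_n$ preserves fibrations and acyclic fibrations, after which the right derived functor will be identified with $\hocr_n$ via Lemma~\ref{cr-fibrant}. The first half rests on the pushout-product characterization of (acyclic) cr fibrations via matching maps, and the second on the fact that $\hocr_n$ is homotopical.

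For the Quillen pair, observe that a map $f\co\dgrm{X}\to\dgrm{Y}$ is a cr fibration (respectively, an acyclic cr fibration, in view of Lemma~\ref{isaksen1}) precisely when, for every $\ul{K}$ with $|\ul{K}|=n$, the matching map
\[ m_f(\ul{K})\co\dgrm{X}\bigl(\bigvee_{i=1}^nK_i\bigr)\to P(\ul{K}) := \lim_{S\in P_0(\ul{n})}\dgrm{X}\bigl(\bigvee_{i\in\ul{n}-S}K_i\bigr) \x_{\lim_{S\in P_0(\ul{n})}\dgrm{Y}(\bigvee_{i\in\ul{n}-S}K_i)}\dgrm{Y}\bigl(\bigvee_{i=1}^nK_i\bigr) \]
is a fibration (respectively, acyclic fibration) in $\mc{D}$; this is the pushout-product adjunction applied to the generators $\phi_{\ul{K}}\,\square\, J_{\mc{D}}$, respectively $\phi_{\ul{K}}\,\square\, I_{\mc{D}}$. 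The first projection $q\co P(\ul{K})\to\lim_{S}\dgrm{X}(\bigvee_{i\in\ul{n}-S}K_i)$ satisfies $q\circ m_f(\ul{K})=p_{\dgrm{X}}$, the canonical map used to define $\cross_n\dgrm{X}(\ul{K})$. Taking strict fibers over the basepoint of $\lim_{S}\dgrm{X}(\bigvee_{i\in\ul{n}-S}K_i)$ identifies $\fib(p_{\dgrm{X}})$ with $\cross_n\dgrm{X}(\ul{K})$ and $\fib(q)$ with $\cross_n\dgrm{Y}(\ul{K})$, using that $f_\ast$ carries basepoints to basepoints. The map $\cross_n f(\ul{K})$ is then the pullback of $m_f(\ul{K})$ along the canonical inclusion $\cross_n\dgrm{Y}(\ul{K})\hookrightarrow P(\ul{K})$, and hence is itself an (acyclic) fibration in $\mc{D}$. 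This shows $\cross_n$ is right Quillen.

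For the derived functor statement, let $\dgrm{X}\to R\dgrm{X}$ denote a cr-fibrant replacement, so that $\mathbb{R}\cross_n\dgrm{X}=\cross_n R\dgrm{X}$. Lemma~\ref{cr-fibrant} provides a natural objectwise weak equivalence $\cross_n R\dgrm{X}\to\hocr_n R\dgrm{X}$, while $\hocr_n$ preserves objectwise weak equivalences between arbitrary functors since it is built from $\hofib$ and $\holim$; hence the natural map $\hocr_n\dgrm{X}\to\hocr_n R\dgrm{X}$ is also an objectwise weak equivalence. The resulting zig-zag exhibits $\hocr_n$ as the right derived functor of $\cross_n$. The main technical point, and essentially the only real obstacle, is the identification of $\fib(q)$ with $\cross_n\dgrm{Y}(\ul{K})$ in the Quillen argument, which uses basepoint compatibility between the matching limits for $\dgrm{X}$ and $\dgrm{Y}$; once that is in place, preservation of (acyclic) fibrations under pullback delivers the claim immediately.
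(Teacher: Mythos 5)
Your proposal is correct and, at its core, takes the same route as the paper: the generating cofibrations $\phi_{\ul{K}}\square I_{\mc{D}}$ and $\phi_{\ul{K}}\square J_{\mc{D}}$ identify cr (acyclic) fibrations with maps whose matching squares yield (acyclic) fibrations, and the cross effect is exactly the strict fiber of the map $\phi_{\ul{K}}$ is dualized to; the derived functor statement is Lemma~\ref{cr-fibrant} plus the fact that $\hocr_n$ is homotopical. The paper packages the first half more tersely by noting, via the cofiber sequence from Lemma~\ref{cr-formula}, that $\bigwedge_{i=1}^nR^{K_i}$ is cr cofibrant in $\Fun(\mc{C},\mc{S})$ and then appealing to $\cotensor{-}{\dgrm{X}}$ being right Quillen, whereas you work this out directly with the matching map $m_f(\ul{K})$ and the observation that $\cross_n f(\ul{K})$ is its pullback along $\cross_n\dgrm{Y}(\ul{K})\hookrightarrow P(\ul{K})$, a small computation using basepoint-preservation to identify $\fib(q)$. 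Your version is a bit more explicit and self-contained (it does not have to unwind what cotensoring a cr cofibrant functor against a cr fibration gives you), but the underlying mechanism is the same.
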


\begin{proof}
  Let $\ul{K}=(K_1,\dotsc,K_n)$ be an $n$-tuple of objects in \mc{C}. 
  The cofiber sequence
  \diagr{ \colim_{S\in P_0(\ul{n})}R^{\bigvee_{i\in\ul{n}-S}K_i}\ar[r]^-{\phi_{\ul{K}}} & R^{\bigvee_{i=1}^nK_i} \ar[r] & \bigwedge_{i=1}^nR^{K_i} }
  in $\Fun(\mc{C},\mc{S})$ implies that
  the functor $\bigwedge_{i=1}^nR^{K_i}$ is cr cofibrant, because
  the map $\phi_{\ul{K}}$ is a cr cofibration. 
  By the formula \ref{cr-formula}
  \[ \cotensor{\bigwedge_{i=1}^nR^{K_i}}{\dgrm{X}}\cong\cross_n\dgrm{X}(K_1,\dotsc,K_n),\] 
  the strict cross effect is a right Quillen functor.
  Its right derived functor $\hocr_n$ is identified by Lemma~\ref{cr-fibrant}.
\end{proof}

\section{Homotopy functors}\label{sec:homotopy-functors}

\begin{definition}\label{def-homotopy-functor}
  Suppose \mc{B} and \mc{D} are model categories and \mc{C} is a small 
  full subcategory of \mc{B}.
  A functor in $\Fun(\mc{C},\mc{D})$ is called a {\it homotopy functor\/} 
  if for every weak equivalence $A\to B$ in \mc{C} the image $F(A)\to F(B)$ 
  is a weak equivalence in \mc{D}.
\end{definition}

Homotopy functors are the main object of study in 
Goodwillie's calculus of functors. From the point of view of
model categories, the full subcategory of homotopy functors is
usually inadequate.
The aim of this section is to construct a model structure in
which every functor is a homotopy functor, up to weak equivalence.

\subsection{Homotopy functors and simplicial functors}\label{sec:homot-funct-simpl}

A preliminary goal is to show that every homotopy functor of
reasonable categories is objectwise weakly equivalent to
a simplicial functor. The following statement is a slight generalization of 
a lemma by Waldhausen~\cite[Lemma 3.1.2, 3.1.3]{Waldhausen:alg-k-theory-spaces}
to certain $\mc{U}$-model categories. 

\begin{lemma}\label{lem:waldhausen-general}
  Let \mc{C} be a small subcategory of a simplicial model category, 
  closed under cotensoring with finite simplicial sets, and let \mc{D} 
  be a \mc{U}-model category. Suppose that $X\colon \mc{C}\to \mc{D}$ is a 
  homotopy functor. If the simplicial object $n \mapsto X(A^{\Delta^n})$ 
  is Reedy cofibrant for every object $A\in \mc{C}$, then there exists a 
  \mc{U}-functor $\dgrm{X}\colon \mc{C}\to \mc{D}$ and a 
  natural objectwise weak equivalence $f\co X\to \dgrm{X}$.
\end{lemma}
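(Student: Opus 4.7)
My plan is to use Waldhausen's realization trick. Define $\dgrm{X}\co\mc{C}\to\mc{D}$ by
\[ \dgrm{X}(A):=\bigl\lvert [n]\mapsto X(A^{\Delta^n})\bigr\rvert = \int^{[n]\in\Delta} X(A^{\Delta^n})\otimes\Delta^n, \]
the realization in $\mc{D}$ of the simplicial object obtained by applying $X$ levelwise to the simplicial resolution $A^{\Delta^\bullet}$. A morphism $A\to B$ in $\mc{C}$ induces, by functoriality of the cotensor, a map of simplicial objects $X(A^{\Delta^\bullet})\to X(B^{\Delta^\bullet})$ and hence a map on realizations, so that $\dgrm{X}$ is at least an ordinary functor.

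The first main step is to enhance $\dgrm{X}$ to a $\mc{U}$-functor. An $n$-simplex of $\mc{C}(A,B)$ is precisely a morphism $A\to B^{\Delta^n}$ in the underlying category, which by cotensor functoriality yields compatible morphisms $X(A^{\Delta^m})\to X(B^{\Delta^n\times\Delta^m})$ for every $m$; these assemble via the realization coend into an $n$-simplex of $\map_{\mc{D}}(\dgrm{X}(A),\dgrm{X}(B))$. Checking compatibility with identities and composition then supplies the desired enrichment. Conceptually, although $X$ itself is not $\mc{U}$-enriched, cotensoring makes $A\mapsto A^{\Delta^\bullet}$ a $\mc{U}$-enriched assignment of simplicial objects in $\mc{C}$, and realization (a $\mc{U}$-weighted colimit against $\Delta^\bullet$) preserves that enrichment.

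The natural transformation $f\co X\to\dgrm{X}$ is defined by including $X(A)=X(A^{\Delta^0})$ as the zero-simplex piece of the realization. To verify that $f_A$ is a weak equivalence, observe that the map $A\to A^{\Delta^n}$ induced by $\Delta^n\to\Delta^0$ is a section of the vertex-zero acyclic fibration $A^{\Delta^n}\to A$ (using fibrancy of $A$ in the ambient simplicial model category, which one may arrange), and so is itself a weak equivalence by $2$-out-of-$3$. Since $X$ is a homotopy functor, each map $X(A)\to X(A^{\Delta^n})$ is then a weak equivalence, and in fact the augmentation $X(A^{\Delta^\bullet})\to X(A)$ is a simplicial homotopy equivalence via the evident splitting. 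The Reedy cofibrancy hypothesis guarantees that realization preserves this equivalence, so $f_A$ is a weak equivalence.

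The main obstacle is the $\mc{U}$-enrichment in the first step: because $X$ need not preserve any enrichment, it is not obvious that the realization produces an enriched functor. The key is that the cotensor structure encodes the enrichment at the level of simplicial objects in $\mc{C}$, while the coend with $\Delta^\bullet$ is a $\mc{U}$-functorial construction on simplicial objects in $\mc{D}$. Making this organization explicit and verifying the composition compatibility is where most of the technical care lies.
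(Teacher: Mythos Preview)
Your approach is the paper's: realize the simplicial object $[n]\mapsto X(A^{\Delta^n})$ as a coend, supply the $\mc{U}$-enrichment via the cotensor description of simplices in $\mc{C}(A,B)$, and compare with the constant simplicial object using Reedy cofibrancy together with the fact that realization is left Quillen on the Reedy model structure.

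One point needs correction. You cannot ``arrange'' fibrancy of $A$: the subcategory $\mc{C}$ is fixed by hypothesis, and nothing forces its objects to be fibrant, so the claim that $A^{\Delta^n}\to A$ is an acyclic fibration is unsupported. Fortunately fibrancy is irrelevant here. The map $\Delta^n\to\Delta^0$ is a simplicial homotopy equivalence, and cotensoring (a $\mc{U}$-functor in the first variable) carries it to a simplicial homotopy equivalence $A=A^{\Delta^0}\to A^{\Delta^n}$ in the ambient simplicial category, regardless of fibrancy. Simplicial homotopy equivalences are weak equivalences in any simplicial model category, so $X$ sends this map to a weak equivalence in $\mc{D}$. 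This is exactly the argument the paper uses, and it is what your remark about the augmentation splitting is reaching for; you should replace the fibrancy step with it.
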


\begin{proof}
  The value of the functor $\dgrm{X}$ at an object $A$ of \mc{C} is defined as the coend
  \[ \dgrm{X}(A):=\int^{n} X(A^{\Delta^n})\times \Delta^n, \]
  which is in fact the standard realization of a simplicial object in $\mc{D}$.
  Expressing $X(A)$ as the standard realization of a constant simplicial 
  object, one obtains a natural transformation 
  $f\colon X \to \dgrm{X}$ via $\Delta^n \to \Delta^0$: 
  \[ f(A)\co X(A)\cong\int^{n} X(A^{\Delta^0})\times \Delta^n\to\int^{n} X(A^{\Delta^n})\times \Delta^n=\dgrm{X}(A)  \]
  The map $A=A^{\Delta^0} \to A^{\Delta^n}$ is a simplicial homotopy equivalence, 
  since $\Delta^n \to \Delta^0$ is one. It follows that it is a weak 
  equivalence in \mc{C}. Hence, so is its image under $X$ by assumption. 
  The constant simplicial object $X(A)$ is cofibrant in the Reedy model 
  structure. Since by assumption the target of $f(A)$ is Reedy cofibrant 
  as well, $f$ is a natural weak equivalence. The reason is that realization 
  is a left Quillen functor on the Reedy model structure.

  It remains to prove that $\dgrm{X}$ is a $\mc{U}$-functor. 
  A map of simplicial sets
  \[ \mc{U}_{\mc{C}}(A,B)\to \mc{U}_{\mc{D}}\bigl(\dgrm{X}(A),\dgrm{X}(B)\bigr)\]
  will be given in simplicial degree $m$ as follows.
  An $m$-simplex $A\times \Delta^m \to B$ can equivalently be described 
  as a map $\alpha\colon A\to B^{\Delta^m}$. 
  Consider the simplicial objects 
  $[n]\mapsto F_n=X(A^{\Delta^n}), [n]\mapsto G_n=X(B^{\Delta^n})$. 
  An $m$-simplex $F_\bullet \to G_\bullet$ is the same as a natural transformation
  \[ \bigl(\gamma\colon [n]\to [m]\bigr) \mapsto (t_\gamma \colon F_n\to G_n).\]
  Set $t_\gamma$ to be the composition
  \[ X\bigl(A^{\Delta^n}\bigr) \xrightarrow{X\bigl(\alpha^{\Delta^n}\bigr)}    X\bigl(B^{\Delta^m\times \Delta^n}\bigr) \xrightarrow{X\bigl(B^{(\gamma,\id)}\bigr)}
    X\bigl(B^{\Delta^n}\bigr) \]
    which induces the desired map
    \[ \dgrm{X}(A)\times \Delta^m \to \dgrm{X}(B).\]
    The verification of the relevant axioms this map has to fulfill is 
    left to the reader.
\end{proof}

Note that the condition on Reedy cofibrancy is fulfilled automatically
in many cases, for example in the category of simplicial presheaves
with the injective model structure.

\begin{remark}\label{rem:waldhausen-pointed}
  Recall from Definition \ref{def:(multi-)red} that a functor $X$ between 
  pointed categories is {\it reduced} if $X(\ast)\cong\ast$.  
  A \mc{U}-functor is an $\mc{S}$-functor if and only it is reduced. 
  Hence, the analog of Lemma~\ref{lem:waldhausen-general} for 
  \mc{S}-model categories 
  and reduced homotopy functors holds as well. 
  In fact, one can replace a homotopy functor with $X(\ast)\simeq\ast$ by a weakly equivalent \mc{S}-functor. 
\end{remark}

\subsection{A model structure for simplicial homotopy functors}
\label{sec:simp-homot-funct}

The purpose of this section is to construct a model structure on a 
category of enriched functors in which every enriched functor is weakly 
equivalent to an enriched homotopy functor. 
For specific categories of enriched functors this has been obtained 
in \cite{Lydakis}, \cite{DRO:enriched} and \cite{BCR:calc}. Although 
more general results are possible, a restriction to simplicial functors
seems adequate, as Lemma~\ref{lem:waldhausen-general} suggests.
None of this is necessary if all functors are already homotopy functors, as it is the case if the source category is the category of finite CW complexes,
or more generally consists of bifibrant objects only.
This section is written in the pointed setting. 
All statements in this section and their proofs have unpointed variants,
whose formulation is left to the reader.

\begin{definition}\label{def:simp-fib-repl}
  An $\mc{S}$-model category $\mc{B}$ has a {\em decent fibrant replacement functor\/} if there exists a $\mc{S}$-natural transformation
  \[ \phi_\fibr\colon \Id_{\mc{B}} \to \fibr \] 
  of $\mc{S}$-functors satisfying the following conditions:
  \begin{enumerate}
  \item For every object $A\in \mc{B}$ the object $\fibr(A)$ is
    fibrant and $\phi_\fibr(A)$ is an acyclic cofibration.
  \item The functor $\fibr$ sends weak equivalences
    of cofibrant objects to simplicial homotopy equivalences.
  \item The functor $\fibr$ commutes with filtered colimits.
  \end{enumerate}
\end{definition}

\begin{example}
  In the case $\mc{B} =\mc{S}$ or \mc{U} one can use Kan's 
  $\mathrm{Ex}^\infty$, as well as the composition of the geometric 
  realization and the singular complex, as a decent fibrant replacement functor.
\end{example}

\begin{lemma}\label{lem:simp-fib-repl}
  Let $\mc{B}$ be an $\mc{S}$-model category. Suppose
  there exists a set \[ \{j\colon sj\to tj\}_{j\in J}\]
  of acyclic cofibrations
  in $\mc{B}$ with the following properties:
  \begin{enumerate}
    \item An object $A\in \mc{B}$ is fibrant if 
      $A\to \ast$ has the right lifting property with
      respect to $J$.
    \item The functor $\mc{S}_{\mc{B}}(sj,-)=R^{sj}\colon \mc{B}\to \mc{S}$
      commutes with filtered colimits for every $j\in J$.
  \end{enumerate}
  Then $\mc{B}$ has a decent fibrant replacement functor. 
\end{lemma}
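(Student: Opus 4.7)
The plan is to perform an enriched variant of the small object argument with respect to $J$, iterated $\omega$ times.

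First, I would define an $\mc{S}$-functor $L\co\mc{B}\to\mc{B}$ together with an $\mc{S}$-natural transformation $\Id_{\mc{B}}\to L$ via the pushout
\[
\xymatrix{
\bigvee_{j\in J} sj\wedge \mc{S}_{\mc{B}}(sj,A) \ar[r]^-{\mathrm{ev}} \ar[d] & A \ar[d] \\
\bigvee_{j\in J} tj\wedge \mc{S}_{\mc{B}}(sj,A) \ar[r] & L(A),
}
\]
where the top map is the enriched evaluation. This is $\mc{S}$-natural in $A$ because every constituent piece is built from $\mc{S}$-enriched universal constructions. Since $\mc{B}$ is an $\mc{S}$-model category and every pointed simplicial set is cofibrant, the pushout-product axiom makes $sj\wedge K\to tj\wedge K$ an acyclic cofibration for every $K\in \mc{S}$, so $A\to L(A)$ is an acyclic cofibration and $L$ preserves cofibrancy.

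I would then iterate: set $\fibr_0:=\Id_{\mc{B}}$, $\fibr_{n+1}:=L\circ \fibr_n$, and define
\[ \fibr(A):=\colim_{n\ge 0} \fibr_n(A),\qquad \phi_{\fibr}(A)\co A\to \fibr(A). \]
The map $\phi_{\fibr}(A)$ is an acyclic cofibration as a sequential composition of such. The functor $\fibr$ commutes with filtered colimits because $L$ does, using that $\mc{S}_{\mc{B}}(sj,-)$ commutes with filtered colimits by assumption~(2) and that tensors, coproducts, and pushouts preserve colimits in the appropriate variables. For fibrancy of $\fibr(A)$, any map $sj\to \fibr(A)$ factors through some $\fibr_n(A)$ by assumption~(2), and the pushout defining $\fibr_{n+1}(A)=L(\fibr_n(A))$ extends it to $tj\to \fibr_{n+1}(A)\to \fibr(A)$; hence $\fibr(A)\to\ast$ has the right lifting property with respect to $J$, and by assumption~(1) the object $\fibr(A)$ is fibrant.

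For condition~(2) of Definition~\ref{def:simp-fib-repl}, I would observe that $\fibr(A)$ is bifibrant whenever $A$ is cofibrant, since it is obtained from $A$ by iterated pushouts along acyclic cofibrations of cofibrant objects. A weak equivalence $A\to B$ of cofibrant objects is therefore sent by $\fibr$ to a weak equivalence of bifibrant objects (by two-out-of-three applied to the acyclic cofibrations $A\to \fibr(A)$ and $B\to \fibr(B)$), which in any $\mc{S}$-model category is automatically a simplicial homotopy equivalence by the classical Quillen argument. The main point to be careful about is the $\mc{S}$-naturality of the construction, which has to be checked via the enriched Yoneda lemma and the fact that coends of $\mc{S}$-functors are $\mc{S}$-functors; the remaining verifications are essentially formal consequences of the small object argument together with the pushout-product axiom.
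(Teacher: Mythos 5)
Your proof is correct and follows essentially the same route as the paper: the enriched small object argument with the $\mc{S}$-functorial pushout $L=\fibr_1$ (your $\bigvee_j sj\wedge\mc{S}_{\mc{B}}(sj,A)$ is the paper's $\bigvee_j R^{sj}\wedge sj$ evaluated at $A$), iterated and colimited, with fibrancy from finite presentability of each $sj$, preservation of filtered colimits from hypothesis~(2), and condition~(2) of Definition~\ref{def:simp-fib-repl} from the fact that weak equivalences of bifibrant objects in an $\mc{S}$-model category are simplicial homotopy equivalences.
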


\begin{proof}
  This is an enriched version of Quillen's small object argument, as 
  constructed in \cite{DRO:enriched}. Let $\fibr_1$
  be the $\mc{S}$-functor defined as the pushout of
  \[ \xymatrix{ \bigvee_{j\in J} R^{sj}\wedge tj &
           \bigvee_{j\in J} R^{sj}\wedge sj  \ar[l] 
            \ar[r] & \Id_{\mc{B}} }. \]
  It comes together with an $\mc{S}$-natural transformation
  $\phi_1 \colon \Id_{\mc{B}} \to \fibr_1$. For $n\geq 1 $ set $\fibr_{n+1} = \fibr_1 \circ \fibr_{n}$
  and let 
  \[ \fibr = \colim \bigl( \Id_{\mc{B}} \xrightarrow{\phi_1} \fibr_1 \xrightarrow{\phi_1\circ \fibr_1}
    \fibr_2 \to \dotsm\bigr). \]
  The natural transformation $A\to \fibr(A)$ is then an acyclic cofibration.
  Since every $sj$ is in particular finitely presentable, 
  a morphism $\alpha\colon sj \to \fibr(A)$ factors over $\fibr_n(A)$. The composition
  \[ tj \cong S^0\wedge tj \to \bigvee_{j \in J} \mc{S}_{\mc{B}}\bigl(sj,\fibr_n(A)\bigr)\wedge tj
      \to \fibr_{n+1}(A)\to \fibr(A) \]
  solves the lifting problem given by $\alpha$. Thus $\fibr(A)$ is fibrant.
  Weak equivalences of bifibrant objects in an \mc{S}-model category
  are simplicial homotopy equivalences by \cite[Section 9.5]{Hir:loc}. 
  Thus it remains to prove
  the third condition. It follows because $\fibr_1$ is a colimit
  of functors preserving filtered colimits by definition. 
\end{proof}

\begin{convention}\label{conv:hf}
  In addition to Convention~\ref{conv3},
  the following statements are assumed to be true:
  \begin{enumerate}
  \item\label{item:finpres} The category \mc{C} is a small full sub-\mc{S}-category of an \mc{S}-model category $\mc{B}$, containing only \mc{S}-finitely presentable cofibrant objects.
  \item\label{item:decfib} There exists a decent fibrant replacement functor $\Id_{\mc{B}} \to \fibr$ such that, for every $A\in \mc{C}$, the object $\fibr(A)\in\mc{B}$ is a filtered colimit of objects in $\mc{C}$.
  \item\label{item:smodel} The category \mc{D} is a right proper cofibrantly generated \mc{S}-model category.
  \item\label{item:fingen} In \mc{D}, weak equivalences, fibrations with fibrant codomain, and pullbacks are preserved under filtered colimits.
  \end{enumerate}
\end{convention}

A sufficient condition on the model category \mc{D} to 
satisfy \ref{conv:hf}(\ref{item:fingen}) is essentially
due to Voevodsky.

\begin{definition}\cite[3.4]{DRO:enriched} \label{weakly finitely generated}
  A cofibrantly generated model category \mc{D} is called {\it weakly finitely generated} if we can choose a set of generating cofibrations $I$ and of generating acyclic cofibrations $J$ such that the following conditions hold:
  \begin{enumerate}
  \item
    The domains and codomains of the maps in $I$ are finitely presentable.
  \item
    The domains of the maps in $J$ are small.
  \item
    There exists a subset $J'$ of $J$ of maps with finitely presentable domains and codomains such that a map in \mc{D} with fibrant codomain is a fibration if and only if it is in $J'$-inj.
  \end{enumerate}
\end{definition}

If \mc{D} is a locally finitely presentable category, then pullbacks are 
preserved under filtered colimits in \mc{D}. The 
remaining requirements listed in \ref{conv:hf}(\ref{item:fingen}) are met, 
as proved in~\cite[Lemma 3.5]{DRO:enriched}, if the model structure 
on \mc{D} is weakly finitely generated. 
The class of weakly finitely generated model structures is closed under 
left Bousfield localization with respect to a set of morphisms with finitely 
presentable cofibrant (co)domains.

Suppose from now on that Convention~\ref{conv:hf} holds. 
The first
condition of Convention~\ref{conv:hf} has the following consequence.

\begin{lemma}\label{lem:filt-colim}
  Let $i\colon \mc{C}\to \mc{B}$ be the inclusion
  functor. Every $\mc{S}$-functor $\dgrm{X}\colon \mc{C}
  \to \mc{D}$ admits a left $\mc{S}$-Kan extension
  $i_{*}(\dgrm{X})\colon \mc{B} \to \mc{D}$, and
  the latter
  preserves filtered colimits.
\end{lemma}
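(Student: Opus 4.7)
The plan is to construct $i_*(\dgrm{X})$ explicitly by the standard $\mc{S}$-enriched coend formula
\[ i_*(\dgrm{X})(B) := \int^{C \in \mc{C}} \mc{S}_{\mc{B}}(i(C), B) \wedge \dgrm{X}(C), \]
which is well-defined because \mc{C} is small (Convention~\ref{conv3}) and \mc{D} is bicomplete and tensored over $\mc{S}$ (Convention~\ref{conv:hf}(\ref{item:smodel})). The universal property identifying this functor with the left $\mc{S}$-Kan extension of $\dgrm{X}$ along $i$ is standard enriched category theory; see \cite{Kelly}.

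To verify preservation of filtered colimits, let $\{B_\alpha\}_\alpha$ be a filtered diagram in \mc{B}. The computation
\begin{align*}
i_*(\dgrm{X})\bigl(\colim_\alpha B_\alpha\bigr)
&\cong \int^{C \in \mc{C}} \mc{S}_{\mc{B}}\bigl(i(C), \colim_\alpha B_\alpha\bigr) \wedge \dgrm{X}(C) \\
&\cong \int^{C \in \mc{C}} \bigl(\colim_\alpha \mc{S}_{\mc{B}}(i(C), B_\alpha)\bigr) \wedge \dgrm{X}(C) \\
&\cong \colim_\alpha \int^{C \in \mc{C}} \mc{S}_{\mc{B}}(i(C), B_\alpha) \wedge \dgrm{X}(C) \\
&\cong \colim_\alpha i_*(\dgrm{X})(B_\alpha)
\end{align*}
then yields the claim. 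The second isomorphism invokes Convention~\ref{conv:hf}(\ref{item:finpres}): every object of \mc{C} is $\mc{S}$-finitely presentable in \mc{B}, so $\mc{S}_{\mc{B}}(i(C),-)$ commutes with filtered colimits as an $\mc{S}$-functor. The third isomorphism holds because smashing with the fixed object $\dgrm{X}(C)$ is a left adjoint and a coend is itself a colimit, hence commutes with the outer filtered colimit over $\alpha$.

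There is no substantial obstacle; the lemma amounts to unpacking the $\mc{S}$-finite presentability hypothesis built into Convention~\ref{conv:hf}(\ref{item:finpres}). The only point warranting explicit care is that $\mc{S}$-finite presentability in the enriched sense is precisely the property needed to license the commutation with filtered colimits at the level of pointed simplicial morphism objects, rather than merely at the level of the underlying hom sets.
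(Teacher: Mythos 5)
Your proof is correct and takes essentially the same route as the paper: the paper also observes that $i_*(\dgrm{X})$ is, by construction, a colimit of functors of the form $\mc{S}_{\mc{B}}(C,-)\wedge D$ for $C\in\mc{C}$, each of which preserves filtered colimits by the $\mc{S}$-finite presentability hypothesis of Convention~\ref{conv:hf}(\ref{item:finpres}), and then uses commutation of colimits. You have simply spelled out the coend computation explicitly rather than stating the argument in one line.
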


\begin{proof}
  Condition~\ref{conv:hf}(\ref{item:finpres}) implies
  that every \mc{S}-functor $\dgrm{Y}\colon \mc{B}\to \mc{D}$
  which is represented by an object of $\mc{C}$  
  preserves filtered colimits. Suppose $\dgrm{X}\colon \mc{C}\to \mc{D}$
  is an $\mc{S}$-functor. By construction, the
  left \mc{S}-Kan extension $i_\ast(\dgrm{X})$ 
  is a colimit of \mc{S}-functors represented by
  objects of $\mc{C}$, which gives the result.
\end{proof}

\begin{definition}
  Let $\dgrm{X}\colon \mc{C}\to \mc{D}$
  be an $\mc{S}$-functor, and let
  \begin{equation}\label{eq:hf}
    \phi_{\dgrm{X}}\colon \dgrm{X} \to \dgrm{X}^\hf:= i_{*}(\dgrm{X})\circ \fibr \circ i
  \end{equation}
  denote the canonical map to the composition. The
  composition $\dgrm{X}^\hf\co\mc{C}\to \mc{D}$ is again an $\mc{S}$-functor.
\end{definition}
\begin{definition}\label{def:hf}
  A map $f\colon \dgrm{X}\to \dgrm{Y}$ is 
  \begin{enumerate}
  \item
    an {\em hf equivalence\/} if the map $f^\hf  \colon \dgrm{X}^\hf \to \dgrm{Y}^\hf$ is an objectwise weak equivalence.
  \item  
    an {\em hf fibration\/} if it is an objectwise fibration $\dgrm{X}\to\dgrm{Y}$ such that the square
    \diagr{ \dgrm{X} \ar[r]\ar[d] & \dgrm{X}^\hf \ar[d] \\ 
      \dgrm{Y} \ar[r] & \dgrm{Y}^\hf  }
    
    is an objectwise homotopy pullback square.
  \end{enumerate}
  The hf cofibrations are the projective cofibrations. 
  Theorem~\ref{thm:homotopy-model} states that these classes form a
  model structure on $\Fun(\mc{C},\mc{D})$. It is called the 
  {\it homotopy functor model structure\/} or {\it hf model structure}
  for short, and denoted $\Fun(\mc{C},\mc{D})_{\mathrm{hf}}$. 
  Analogous definitions can be given starting from the cross effect model 
  structure instead of the projective model structure. The resulting
  model category is denoted $\Fun(\mc{C},\mc{D})_{\mathrm{hf}-\cross}$.
\end{definition}

\begin{remark}\label{rem:simp-hty-eq-weak-eq}
  In an \mc{S}-model category,
  any simplicial homotopy equivalence is in particular
  a weak equivalence. Simplicial homotopy equivalences -- 
  unlike weak equivalences -- are preserved by any \mc{S}-functor.
\end{remark}

\begin{remark}\label{remarks about hf}
  A few words about hf fibrant \mc{S}-functors:
\begin{enumerate} 
   \item
     A (cr) fibrant functor $\dgrm{X}$ is (cr) hf fibrant if and only if the map (\ref{eq:hf}) is an objectwise weak equivalence. 
   \item
     The functor $\dgrm{X}^\hf$ preserves weak equivalences. Thus a (cr) hf fibrant functor preserves weak equivalences. 
   \item
     A (cr) fibrant functor preserving weak equivalences is (cr) hf fibrant.
   \item
     The hf fibrant functors are exactly the objectwise fibrant homotopy functors.
     The hf cr fibrant functors are exactly the cr fibrant homotopy functors.
\end{enumerate}
\end{remark}

\begin{theorem}\label{thm:homotopy-model}
  Assume Convention~\emph{\ref{conv:hf}}. The classes of 
  maps given in Definition~\emph{\ref{def:hf}}, starting
  from the projective or the cross effect model 
  structure, constitute
  a right proper cofibrantly generated \mc{S}-model structure. 
  It is left proper if \mc{D} is left proper.
\end{theorem}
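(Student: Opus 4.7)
The plan is to promote the projective (respectively cross effect) model structure on $\Fun(\mc{C},\mc{D})$ to the hf model structure via a Bousfield–Friedlander style localization with respect to the endofunctor $Q := (-)^\hf$, where the map $\phi_{\dgrm{X}} \co \dgrm{X}\to \dgrm{X}^\hf$ from~(\ref{eq:hf}) plays the role of the homotopical localization unit. The three axioms to verify for this machinery are (A1) $Q$ preserves objectwise weak equivalences, (A2) the two natural maps $\dgrm{X}^\hf \rightrightarrows (\dgrm{X}^\hf)^\hf$ induced by $\phi$ are objectwise weak equivalences, and (A3) objectwise weak equivalences are preserved under pullback along hf fibrations. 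Once (A1)–(A3) are established, the existence and the usual characterizations of fibrations and weak equivalences follow directly.

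For (A1), the key input is that $\fibr(A)$ is a filtered colimit of objects of $\mc{C}$ by Convention~\ref{conv:hf}(\ref{item:decfib}), while Lemma~\ref{lem:filt-colim} ensures that $i_*(\dgrm{X})$ preserves filtered colimits. Hence $\dgrm{X}^\hf(A)$ is naturally a filtered colimit of values $\dgrm{X}(C)$ on objects $C\in \mc{C}$, and Convention~\ref{conv:hf}(\ref{item:fingen}) implies that objectwise weak equivalences are preserved. For (A2), Definition~\ref{def:simp-fib-repl}(2) says that $\fibr$ takes weak equivalences between cofibrant objects to simplicial homotopy equivalences; combined with the fact (Remark~\ref{rem:simp-hty-eq-weak-eq}) that any \mc{S}-functor preserves simplicial homotopy equivalences, this shows that $\dgrm{X}^\hf$ preserves weak equivalences, which is Remark~\ref{remarks about hf}(2). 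Feeding this back into the construction, both $\phi_{\dgrm{X}^\hf}$ and $\phi_{\dgrm{X}}^\hf$ become objectwise weak equivalences; the most delicate piece is checking the second map, which requires unwinding the naturality square for $\phi$ evaluated at $\fibr(A)$ and using that $\fibr$ applied to the acyclic cofibration $A\to \fibr(A)$ is a simplicial homotopy equivalence, followed again by preservation of simplicial homotopy equivalences by the \mc{S}-functor $i_*(\dgrm{X})$. Axiom (A3) is the ``cube lemma'' statement and follows from right properness of the projective / cr model structure (Theorems~\ref{thm:proj-model-str.} and~\ref{thm:cross-model}) together with the fact, again from Convention~\ref{conv:hf}(\ref{item:fingen}), that filtered colimits in $\mc{D}$ commute with pullbacks along fibrations into fibrant objects.

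For cofibrant generation the cofibrations are unchanged, so one retains $I^{\proj}$ (or $I^{\mathrm{cr}}$) as generating cofibrations. A set of generating acyclic cofibrations is obtained by taking $J^{\proj}$ (or $J^{\mathrm{cr}}$) together with the pushout products of the generating cofibrations in \mc{D} with the mapping cylinder factorizations of maps of the form $R^{\fibr(A)}\to R^A$ (or, more precisely, with the image under $R^{(-)}$ of suitable factorizations of the unit maps $A\to \fibr(A)$ restricted along the filtered system from Convention~\ref{conv:hf}(\ref{item:decfib})); this mimics the construction in~\cite{DRO:enriched}. Smallness of the domains is ensured by Convention~\ref{conv:hf}(\ref{item:finpres}) and the filtered colimit description of $\fibr$. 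Right properness of the hf model structure is inherited from \mc{D} since hf fibrations are objectwise fibrations and hf equivalences are detected after $Q$, which commutes with pullbacks along objectwise fibrations into fibrant objects. Left properness, when \mc{D} is left proper, follows from the fact that cofibrations are the projective (resp.\ cr) cofibrations and the hf equivalences contain the objectwise weak equivalences.

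The main obstacle will be step (A2): verifying that $\phi_{\dgrm{X}}^\hf$ is an objectwise weak equivalence requires combining enriched naturality with the simplicial homotopy equivalences provided by $\fibr$, and then tracking them through the left Kan extension $i_\ast$. A secondary, technical obstacle is setting up explicit generators for acyclic hf cofibrations in a way that correctly absorbs the $Q$-localization while remaining inside the smallness regime of Convention~\ref{conv:hf}; this is where the finite presentability clause and the filtered-colimit expression of $\fibr$ do the essential work.
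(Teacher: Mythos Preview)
Your strategy is the paper's: verify Bousfield's axioms (A1)--(A3) for the endofunctor $(-)^\hf$ on top of the projective (resp.\ cross effect) model structure, and your treatment of (A1), (A2), and (A3) matches the paper's essentially line for line.

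Two points deserve correction. First, your proposed generating acyclic cofibrations involve $R^{\fibr(A)}\to R^A$, but $\fibr(A)$ is not an object of $\mc{C}$, so $R^{\fibr(A)}$ is not an object of $\Fun(\mc{C},\mc{S})$; your parenthetical hedge does not resolve this, since the intermediate maps $A\to B_i$ in the filtered system need not be weak equivalences. The paper instead adjoins $\{w'\,\square\, i\}$ where $w'$ is a mapping-cylinder cofibration replacement of $R^B\to R^A$ for an \emph{arbitrary} weak equivalence $A\to B$ in $\mc{C}$ and $i\in I_{\mc{D}}$; that this set detects hf fibrations rests on Lemma~\ref{hf fibrations}, specifically the equivalence of (ii) and (iii), which you do not invoke. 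Second, your justification for left properness---that cofibrations are unchanged and hf equivalences contain the objectwise ones---does not establish closure of hf equivalences under cobase change along cofibrations; you need either that $(-)^\hf$ commutes with such pushouts up to weak equivalence, or simply that Bousfield's machinery in \cite[9.3, 9.4]{Bou:telescopic} automatically produces a left proper localized model structure once the base is left proper, which here is supplied by Theorems~\ref{thm:proj-model-str.} and~\ref{thm:cross-model} under the hypothesis that $\mc{D}$ is left proper.
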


\begin{proof}
  As in our previous article \cite{BCR:calc},
  it suffices to check that the natural 
  transformation $\phi_{\dgrm{X}}\colon \dgrm{X}\to \dgrm{X}^\hf$
  satisfies the axioms (A1), (A2), and (A3) given 
  by Bousfield in \cite[9.2]{Bou:telescopic}. Cofibrant generation 
  is delegated to Lemma \ref{lem:gen-cof-hf}.
  
  Axiom (A1): Let $f\colon \dgrm{X}\to \dgrm{Y}$ be
  an objectwise weak equivalence. To prove that 
  $f^\hf$ is an objectwise weak equivalence, let $A\in \mc{C}$
  and express $\fibr(A)$ as a filtered colimit of objects $B_i$
  in \mc{C} by condition~\ref{conv:hf}(\ref{item:decfib}).
  Lemma~\ref{lem:filt-colim} implies that
  $f^\hf(A)$ is the morphism induced on filtered colimits
  by the morphisms $f(B_i)$, which are weak equivalences.
  Condition~\ref{conv:hf}(\ref{item:fingen}) implies that
  $f^\hf(A)$ is itself a weak equivalence.

  Axiom (A2): The task is to identify the two
  natural transformations
  \[ \phi_{\dgrm{X}^\hf},\phi_{\dgrm{X}}^\hf \colon \dgrm{X}^\hf \to 
             \bigl(\dgrm{X}^\hf \bigr)^\hf\]
  as weak equivalences. The triangular identities and the
  natural isomorphism 
  \[\dgrm{X}\xrightarrow{\cong} i_{*}(\dgrm{X})\circ i \]
  reduce the problem to the value of the two natural maps
  \[ \fibr(A) \to \fibr(\fibr(A)) \]
  under the \mc{S}-functor $\dgrm{X}$. Both maps are
  simplicial homotopy equivalences, since $\fibr$ is a decent
  fibrant replacement functor and all objects in \mc{C} are
  cofibrant by Condition~\ref{conv:hf}(\ref{item:finpres}).
  Remark~\ref{rem:simp-hty-eq-weak-eq} then implies that
  the maps in question are objectwise weak equivalences.

  Axiom (A3): Let $f\colon \dgrm{X}\to \dgrm{Y}$ be
  an hf weak equivalence and let 
  $p\colon \dgrm{Z} \to \dgrm{Y}$ be an objectwise 
  fibration with $\dgrm{Y}$
  objectwise fibrant; cr fibrations are not necessary.
  Consider the pullback diagram
  \diagram{\dgrm{X}\times_{\dgrm{Y}}\dgrm{Z} \ar[r]^-g\ar[d] & \dgrm{Z} \ar[d]^-p \\
    \dgrm{X}\ar[r]^f & \dgrm{Y}}{eq:a6}
  The goal is to prove that $g^\hf$ is an objectwise weak equivalence.
  Pullbacks are computed objectwise. 
  Lemma~\ref{lem:filt-colim} and Condition~\ref{conv:hf}(\ref{item:fingen})
  imply that the diagram 
  \diagram{\bigl(\dgrm{X}\times_{\dgrm{Y}}\dgrm{Z}\bigr)^\hf \ar[r]^-{g^\hf}\ar[d] & 
    \dgrm{Z}^\hf \ar[d]^-{p^\hf} \\
    \dgrm{X}^\hf\ar[r]^{f^\hf} & \dgrm{Y}^\hf}{eq:a6-2}
  is a pullback diagram. Moreover, since fibrations in $\mc{D}$ 
  with fibrant target are closed
  under filtered colimits, $p^\hf$ is still an objectwise fibration.
  Now $f^\hf$ is an objectwise weak equivalence by assumption and 
  $\mc{D}$ is right
  proper. Thus, $g^\hf$ is an objectwise weak equivalence, which finishes the
  proof. 
\end{proof}

\begin{lemma}\label{hf fibrations}
  Let $p\co\dgrm{X}\to\dgrm{Y}$ be an objectwise fibration. 
  Then the following statements are equivalent:
  \begin{enumerate}
  \item[\emph{(i)}]
    The map $p$ is an hf fibration.
  \item[\emph{(ii)}]
    The induced square
    \diagr{ \dgrm{X} \ar[r]\ar[d] & \dgrm{X}^{\hf} \ar[d] \\ \dgrm{Y} \ar[r] & \dgrm{Y}^{\hf}}
    is an objectwise homotopy pullback.
  \item[\emph{(iii)}]
    For each weak equivalence $A\to B$ in \mc{C} the induced square
    \diagr{ \dgrm{X}(A) \ar[r]\ar[d] & \dgrm{X}(B) \ar[d] \\ \dgrm{Y}(A) \ar[r] & \dgrm{Y}(B)}
    is a homotopy pullback square.
  \end{enumerate}
  The corresponding statement for cr fibrations holds as well.
\end{lemma}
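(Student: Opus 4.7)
The equivalence (i) $\Leftrightarrow$ (ii) is immediate from Definition~\ref{def:hf}: given that $p$ is assumed to be an objectwise fibration, assertion (ii) is precisely the defining condition for $p$ to be an hf fibration.

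For (ii) $\Rightarrow$ (iii), I fix a weak equivalence $A \to B$ in $\mc{C}$ and consider the commutative cube whose front face is the square of (iii), whose back face is the analogous square with $\dgrm{X}^\hf, \dgrm{Y}^\hf$ replacing $\dgrm{X}, \dgrm{Y}$, and whose vertical faces are the naturality squares of $\phi_{\dgrm{X}}\co \dgrm{X}\to \dgrm{X}^\hf$ and $\phi_{\dgrm{Y}}\co \dgrm{Y}\to \dgrm{Y}^\hf$. The vertical faces at $A$ and at $B$ are homotopy pullbacks by hypothesis~(ii). Since $\dgrm{X}^\hf$ and $\dgrm{Y}^\hf$ preserve weak equivalences (Remark~\ref{remarks about hf}(2)), the horizontal maps of the back face are weak equivalences, making the back face a homotopy pullback. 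Pasting the front face with the vertical face at $B$ produces the same rectangle as pasting the vertical face at $A$ with the back face; the pasting lemma for homotopy pullbacks then forces the front face to be a homotopy pullback, which is (iii).

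For (iii) $\Rightarrow$ (ii), I fix $A \in \mc{C}$ and aim to verify that the square
\[ \xymatrix{ \dgrm{X}(A)\ar[r]\ar[d] & \dgrm{X}^\hf(A)\ar[d] \\ \dgrm{Y}(A)\ar[r] & \dgrm{Y}^\hf(A) } \]
is a homotopy pullback. Using Convention~\ref{conv:hf}(\ref{item:decfib}), write $\fibr(A) = \colim_j B_j$ as a filtered colimit with $B_j \in \mc{C}$; Lemma~\ref{lem:filt-colim} then yields $\dgrm{X}^\hf(A) \cong \colim_j \dgrm{X}(B_j)$ and $\dgrm{Y}^\hf(A) \cong \colim_j \dgrm{Y}(B_j)$. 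Convention~\ref{conv:hf}(\ref{item:fingen}) ensures that filtered colimits in $\mc{D}$ commute with the homotopy pullback construction, since weak equivalences, fibrations with fibrant target, and pullbacks are all preserved under filtered colimits. The plan is to exhibit the square above as a filtered colimit of squares in the form appearing in (iii), indexed by factorizations $A \to B \to \fibr(A)$ with $A \to B$ a weak equivalence in $\mc{C}$.

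The main obstacle lies in producing enough such factorizations. Finite presentability of $A$ (Convention~\ref{conv:hf}(\ref{item:finpres})) provides a factorization $A \to B_{j_0} \to \fibr(A)$ of the acyclic cofibration $A \to \fibr(A)$ through some $B_{j_0}$, but $A \to B_{j_0}$ need not be a weak equivalence in $\mc{C}$, so (iii) does not apply to it directly. To remedy this, one iterates the construction: since $\fibr$ commutes with filtered colimits and sends weak equivalences of cofibrant objects to simplicial homotopy equivalences (Definition~\ref{def:simp-fib-repl}), the secondary filtered colimits $\fibr(B_j) = \colim_k C_{j,k}$ allow one to refine the diagram and extract a cofinal subsystem $\{A \xrightarrow{\sim} B'_\alpha\}$ of weak equivalences in $\mc{C}$ whose colimit still computes $\dgrm{X}^\hf(A)$ and $\dgrm{Y}^\hf(A)$. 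Hypothesis (iii) then supplies homotopy pullback squares at each $B'_\alpha$, which assemble via the filtered colimit to the desired homotopy pullback. The corresponding statement for cr fibrations follows by the same argument, since every cr fibration is in particular an objectwise fibration and the hf construction depends only on the objectwise data.
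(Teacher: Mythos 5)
Your parts (i) $\Leftrightarrow$ (ii) and (ii) $\Rightarrow$ (iii) match the paper's reasoning. For (i) $\Leftrightarrow$ (ii) the paper cites Bousfield's characterization of fibrations in the localized model structure, while you read it off directly from Definition~\ref{def:hf}; both are fine since Theorem~\ref{thm:homotopy-model} establishes the model structure precisely so that Definition~\ref{def:hf}(2) describes the fibrations. For (ii) $\Rightarrow$ (iii) the paper pastes a $2\times3$ rectangle $\dgrm{X}(A)\to\dgrm{X}(B)\to\dgrm{X}^\hf(B)$, observing that the outer rectangle and the right square are homotopy pullbacks; your cube decomposition packages exactly the same information (the point is that $(-)^\hf$ produces homotopy functors, so the back face is a homotopy pullback for trivial reasons).

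Your discussion of (iii) $\Rightarrow$ (ii) identifies a real subtlety that the paper glosses over. The paper writes the factorization $A\to\dotsm\to B_i\to\dotsm\to\fibr(A)$ and applies (iii) to the maps $A\to B_i$, which requires them to be weak equivalences in $\mc{C}$; Convention~\ref{conv:hf}(\ref{item:decfib}) as literally stated only asserts that $\fibr(A)$ is a filtered colimit of objects of $\mc{C}$, not that the structure maps from $A$ are weak equivalences. You are right to flag this. However, your proposed repair by iterating $\fibr$ does not close the gap: the composites $A\to B_j\to C_{j,k}$ that the second application of the convention produces are no more likely to be weak equivalences than the original $A\to B_j$, since a non-equivalence followed by a weak equivalence is still a non-equivalence, and no cofinal subsystem of weak equivalences out of $A$ emerges from this process. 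The intended resolution is a slightly stronger reading of Convention~\ref{conv:hf}(\ref{item:decfib}), namely that the filtered system $\{B_j\}$ can be chosen so that each $A\to B_j$ is a weak equivalence; this is what holds in the motivating examples ($\fibr=\mathrm{Ex}^\infty$ on $\Sfin$, where $\mathrm{Ex}^n(A)$ is finite and $A\to\mathrm{Ex}^n(A)$ is an acyclic cofibration), and it is also what the phrasing ``this map factors through a colimit'' is implicitly asserting. With that reading, the paper's argument and yours both go through unchanged, including the cr variant.
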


\begin{proof}
  The equivalence of (i) and (ii) follows from the 
  characterization \cite[Theorem 9.3]{Bou:telescopic} of fibrations in the 
  localized model structure. The equivalence of (ii) and (iii) will be 
  shown now.
  Let $\dgrm{X}\to\dgrm{Y}$ satisfy (ii). 
  Since the functors $\dgrm{X}^{\hf}$ and $\dgrm{Y}^{\hf}$ are homotopy functors, the induced diagram
  \diagr{ \dgrm{X}^{\hf}(A) \ar[r]\ar[d] & \dgrm{X}^{\hf}(B) \ar[d] \\ \dgrm{Y}^{\hf}(A) \ar[r] & \dgrm{Y}^{\hf}(B) }
  is a homotopy pullback diagram for any weak equivalence $A\to B$ in \mc{C} for
  trivial reasons. This means that the 
  composed outer square and the right hand square in the following 
  diagram are homotopy pullbacks:
  \diagr{ \dgrm{X}(A) \ar[r]\ar[d] & \dgrm{X}(B) \ar[r]\ar[d] & \dgrm{X}^{\hf}(B) \ar[d] \\
    \dgrm{Y}(A) \ar[r] & \dgrm{Y}(B) \ar[r] & \dgrm{Y}^{\hf}(B) }
  It follows that the left hand square is a homotopy pullback. 

  Now let $\dgrm{X}\to\dgrm{Y}$ satisfy property (iii), and let $A\to B$ be a weak equivalence in \mc{C}. Consider a decent fibrant replacement $\phi_A\co A\to\fibr (A)$. By Convention~\ref{conv:hf}(\ref{item:decfib}), this map factors through a colimit 
  \[A\to\dotsm\to B_i\to B_{i+1}\to\dotsm\to\fibr (A) \]
  where all objects $B_i$ are in \mc{C}. By (iii) there are homotopy pullback diagrams:
  \diagr{ \dgrm{X}(A)\ar[r]\ar[d] & \dgrm{X}(B_i) \ar[d] \\ \dgrm{Y}(A) \ar[r] & \dgrm{Y}(B_i) }
  Their colimit yields the desired homotopy pullback in (ii) since homotopy pullbacks commute with filtered colimits in \mc{D} by Convention~\ref{conv:hf}(\ref{item:fingen}).
\end{proof}

\begin{lemma}\label{lem:gen-cof-hf}
  Assume Convention~\emph{\ref{conv:hf}}. Then the homotopy functor model structure is cofibrantly generated.
\end{lemma}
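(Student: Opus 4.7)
Since the hf cofibrations coincide by definition with the projective (resp.\ cross effect) cofibrations, the set $I^{\mathrm{proj}}_{\Fun(\mc{C},\mc{D})}$ from Theorem~\ref{thm:proj-model-str.} (resp.\ $I^{\mathrm{cr}}$ from Theorem~\ref{thm:cross-model}) already serves as a set of generating hf cofibrations. The remaining task is to produce a set of generating acyclic cofibrations, for which the plan is to augment $J^{\mathrm{proj}}_{\Fun(\mc{C},\mc{D})}$ (resp.\ $J^{\mathrm{cr}}$) with a set of test maps encoding the homotopy functor property.

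Since $\mc{C}$ is small, the class of morphisms in $\mc{C}$ that are weak equivalences in $\mc{B}$ is a set. For each such $w\co A\to B$, I apply the small object argument for $I^{\mathrm{proj}}_{\Fun(\mc{C},\mc{D})}$ to the induced map of representables $R^B\to R^A$ in $\Fun(\mc{C},\mc{S})$ to obtain a factorization $R^B\xrightarrow{j_w}M(w)\xrightarrow{q_w} R^A$ with $j_w$ a projective cofibration and $q_w$ an objectwise acyclic fibration. Let $F:=\{j_w\}$. By Remark~\ref{remarks about hf} every hf fibrant functor inverts $w$, so $R^B\to R^A$ is inverted after applying $(-)^{\hf}$; together with $q_w$ being an objectwise equivalence, two-out-of-three shows each $j_w$ is an hf equivalence. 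Set
\[ J^{\mathrm{hf}}:=J^{\mathrm{proj}}_{\Fun(\mc{C},\mc{D})}\cup \bigl(F\,\square\, I_{\mc{D}}\bigr),\]
formed with the tensor $\Fun(\mc{C},\mc{S})\otimes\mc{D}\to\Fun(\mc{C},\mc{D})$ of Definition~\ref{some (co-)tensors}; the parallel construction starting from $J^{\mathrm{cr}}$ handles the hf-cr case. Smallness of sources and targets of $J^{\mathrm{hf}}$ descends from that in $I^{\mathrm{proj}}$ and $I_{\mc{D}}$.

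The substantive verification is that $J^{\mathrm{hf}}$-inj agrees with the class of acyclic hf fibrations, at which point the recognition principle \cite[11.3.1]{Hir:loc} concludes the proof. The RLP against $J^{\mathrm{proj}}$ forces $p\co\dgrm{X}\to\dgrm{Y}$ to be an objectwise fibration. By the pushout-product adjunction for the tensor $\Fun(\mc{C},\mc{S})\otimes\mc{D}\to\Fun(\mc{C},\mc{D})$ and the Yoneda identification $\cotensor{R^B}{-}\cong -(B)$ from Lemma~\ref{twisted Yoneda}, the RLP against $F\,\square\, I_{\mc{D}}$ is equivalent, for each $w\co A\to B$, to the map
\[ \cotensor{M(w)}{\dgrm{X}}\longrightarrow \cotensor{M(w)}{\dgrm{Y}}\times_{\dgrm{Y}(B)}\dgrm{X}(B)\]
being an acyclic fibration in $\mc{D}$. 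Since $q_w$ is an objectwise acyclic fibration, this condition translates, once appropriate fibrancy is at hand, into the homotopy pullback square of Lemma~\ref{hf fibrations}(iii) characterizing hf fibrations.

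The main obstacle I foresee is this final bookkeeping step: the pushout-product adjunction mixes an $\mc{S}$-valued tensor with a $\mc{D}$-valued cotensor, and one must verify that the fibration status supplied by the RLP against $J^{\mathrm{proj}}$ suffices to propagate the equivalence $M(w)\to R^A$ to a weak equivalence of the cotensors $\cotensor{M(w)}{\dgrm{X}}\to\dgrm{X}(A)$ and $\cotensor{M(w)}{\dgrm{Y}}\to\dgrm{Y}(A)$, without extra fibrancy assumptions on $\dgrm{X}$ and $\dgrm{Y}$. Bousfield's characterization of fibrations in a localized model structure \cite[Theorem 9.3]{Bou:telescopic}, already invoked in the proof of Lemma~\ref{hf fibrations}, combined with the analogous argument from \cite{BCR:calc}, should close this gap and yield the desired identification of $J^{\mathrm{hf}}$-inj with the acyclic hf fibrations.
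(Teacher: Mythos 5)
Your proposal follows the same overall scheme as the paper: keep the projective (resp.\ cr) generating cofibrations, and augment the generating acyclic cofibrations by pushout products of the form (cofibration factor of $R^B\to R^A$) $\square$ $I_{\mc{D}}$, then identify the RLP class via Lemma~\ref{hf fibrations}. The one place you diverge is the choice of factorization of $w^\ast\co R^B\to R^A$. The paper deliberately uses the \emph{simplicial mapping cylinder}, producing $R^B\xrightarrow{w'}\cyl(w^\ast)\xrightarrow{\simeq}R^A$ where the second map is a \emph{simplicial homotopy equivalence}. The whole point of this choice is that $\cotensor{-}{\dgrm{X}}$ sends simplicial homotopy equivalences to simplicial homotopy equivalences for \emph{any} $\dgrm{X}$, with no fibrancy hypothesis, so $\cotensor{\cyl(w^\ast)}{\dgrm{X}}\to\dgrm{X}(A)$ is always a weak equivalence; this is exactly what is needed to translate the RLP into condition~(iii) of Lemma~\ref{hf fibrations}.

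You instead factor via the small object argument into a projective cofibration $j_w$ followed by an \emph{objectwise acyclic fibration} $q_w$. As you rightly observe in your final paragraph, this leaves a gap: there is no reason in general that $\cotensor{q_w}{\dgrm{X}}$ is a weak equivalence for arbitrary (non-fibrant) $\dgrm{X}$, since the cotensor is only a right Quillen functor against fibrant targets. Your hope that Bousfield's characterization will ``close this gap'' is too vague and doesn't actually do the work. The gap \emph{is} closeable, but via a different observation: $q_w$ is an acyclic fibration between \emph{projectively cofibrant} objects (both $R^B$-cell complexes are cofibrant, and $R^A$ is cofibrant), and any acyclic fibration with cofibrant domain and codomain in an $\mc{S}$-model category is in fact a simplicial homotopy equivalence --- lift $\ast\to R^A$ against $q_w$ to produce a section, then lift the resulting fiberwise homotopy using the pushout-product axiom. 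Once you record that observation, your $q_w$ becomes a simplicial homotopy equivalence and your argument goes through exactly as the paper's does. So: the route is valid, the gap you flagged is real, and the missing step is the cofibrant-acyclic-fibration-implies-simplicial-homotopy-equivalence lemma rather than anything about Bousfield localization. The paper's use of the mapping cylinder is precisely the device that makes this step unnecessary.
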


\begin{proof}
  In order to enlarge the set of generating acyclic cofibrations of the cr or
  projective model structure, respectively, 
  take an arbitrary weak equivalence $w\co A\stackrel{\simeq}{\to} B$ 
  in \mc{C}. It induces the map 
  \[ w^*\co R^B\to R^A \]
  which, using the simplicial mapping cylinder construction, 
  can be factored as a projective cofibration $w\prime$, followed by a 
  simplicial homotopy equivalence. The
  additional set of generating acyclic cofibrations is
  \[ \{ w^\prime \square\, i\} \] 
  where $i$ runs through a set $I_{\mc{D}}$ of generating cofibrations 
  of \mc{D} and $w$ runs through the set of weak equivalences
  in $\mc{C}$. The fact that hf fibrations are exactly those 
  objectwise fibrations with the right lifting property with respect to this 
  set follows from Lemma~\ref{hf fibrations}.
\end{proof}

\begin{remark}
It is now clear that the (cr) hf model structure on $\Fun(\mc{C},\mc{D})$ can also be viewed as the left Bousfield localization of the (cr) projective model structure with respect to the set 
  \[\{R^B\to R^A\,|\, A\to B \text{ is a weak equivalence in } \mc{C}\}.\]
\end{remark}

\subsection{Homotopy functors in several variables}\label{sec:homot-funct-sever}

Recall that in the product of $\mc{S}$-model categories
$\mc{B}_1\times\dotsm\times\mc{B}_n$ a morphism 
$f\colon \underline{K}\to \underline{L}$ is a {\em weak equivalence\/} 
(or {\em fibration\/}, or {\em cofibration\/}) if 
every component $K_i\to L_i$ is 
so in $\mc{C}_i$. This defines an \mc{S}-model structure.
In particular, 
Definition \ref{def-homotopy-functor} 
is applicable in 
$\Fun(\mc{C}_1\times\dotsm\times\mc{C}_n,\mc{D})$, where
$\mc{C}_k\subset \mc{B}_k$ is a full subcategory for every $k\in \ul{n}$. 
Note that a functor in the product category is a homotopy functor if
and only if all its partial functors are homotopy functors.
The results of Section~\ref{sec:simp-homot-funct} apply.

\begin{corollary}\label{cor:homot-funct-sever}
  Assume that for all $ k\in \ul{n}$ the categories 
  $\mc{C}_i\subset\mc{B}_i$ and $\mc{D}$ satisfy 
  Convention~\emph{\ref{conv:hf}}. Then the homotopy functor model structure 
  obtained from the projective model structure on the category 
  $\Fun(\mc{C}_1\times\dotsm\times\mc{C}_n,\mc{D})$ exists.
  Furthermore, it is a right proper cofibrantly generated \mc{S}-model 
  structure, and it is left proper if \mc{D} is left proper.
\end{corollary}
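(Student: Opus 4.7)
The strategy is to verify that Convention~\ref{conv:hf} holds for the pair $\mc{C}=\mc{C}_1\times\dotsm\times\mc{C}_n$ sitting inside $\mc{B}=\mc{B}_1\times\dotsm\times\mc{B}_n$ together with the target category $\mc{D}$, and then invoke Theorem~\ref{thm:homotopy-model} directly to produce the homotopy functor model structure with all stated properties. Iterating Lemma~\ref{lem:bitensored} shows that $\mc{B}$ is an $\mc{S}$-model category under the componentwise model structure, so the ambient data required by Convention~\ref{conv:hf}(\ref{item:finpres}) makes sense.

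For condition~\ref{conv:hf}(\ref{item:finpres}), smallness and fullness of $\mc{C}\subset\mc{B}$ are inherited componentwise, and an $n$-tuple of cofibrant objects is cofibrant in the product model structure. The morphism object in $\mc{B}$ is the pointed product $\mc{S}_{\mc{B}_1}(K_1,L_1)\times\dotsm\times\mc{S}_{\mc{B}_n}(K_n,L_n)$ (as in Lemma~\ref{lem:su}), and since each factor preserves filtered colimits in the $L_i$ variable by assumption and a finite product of filtered colimits of pointed simplicial sets is again a filtered colimit, every object of $\mc{C}$ is $\mc{S}$-finitely presentable as well.

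For condition~\ref{conv:hf}(\ref{item:decfib}), combine the decent fibrant replacement functors $\phi_i\co\Id_{\mc{B}_i}\to\fibr_i$ into $\fibr:=\fibr_1\times\dotsm\times\fibr_n\co\mc{B}\to\mc{B}$, with the product $\mc{S}$-natural transformation. All three defining conditions of a decent fibrant replacement functor (fibrancy of values plus being an acyclic cofibration, sending weak equivalences of cofibrant objects to simplicial homotopy equivalences, commutation with filtered colimits) are preserved by componentwise combination, since each of them is checked in each factor separately and a finite product of simplicial homotopy equivalences is again one. Given $\ul{A}=(A_1,\dotsc,A_n)\in\mc{C}$, write each $\fibr_i(A_i)=\colim_{j\in J_i}B^{(i)}_j$ as a filtered colimit of objects $B^{(i)}_j\in\mc{C}_i$. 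The product category $J_1\times\dotsm\times J_n$ is again filtered, and since filtered colimits in $\mc{B}$ are computed componentwise, the functor $(j_1,\dotsc,j_n)\mapsto (B^{(1)}_{j_1},\dotsc,B^{(n)}_{j_n})\in\mc{C}$ has colimit $\fibr(\ul{A})$, as required.

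Conditions~\ref{conv:hf}(\ref{item:smodel}) and~\ref{conv:hf}(\ref{item:fingen}) concern $\mc{D}$ alone and are imposed by hypothesis. Theorem~\ref{thm:homotopy-model} then yields a right proper cofibrantly generated $\mc{S}$-model structure on $\Fun(\mc{C}_1\times\dotsm\times\mc{C}_n,\mc{D})$, left proper when $\mc{D}$ is. The only step that requires a moment's thought is the filtered colimit step in~\ref{conv:hf}(\ref{item:decfib}): one must produce a filtered indexing category whose image lies in $\mc{C}$ itself rather than merely having componentwise entries in the $\mc{C}_i$, and this is precisely what the product-of-filtered-categories argument supplies.
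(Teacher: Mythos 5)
Your proposal is correct and follows the same route the paper intends: the paragraph preceding the corollary notes that the product $\mc{B}_1\times\dotsm\times\mc{B}_n$ carries a componentwise $\mc{S}$-model structure and that ``the results of Section~\ref{sec:simp-homot-funct} apply,'' which is exactly your argument of verifying Convention~\ref{conv:hf} componentwise and then invoking Theorem~\ref{thm:homotopy-model}. You simply make explicit what the paper leaves implicit, notably the product-of-filtered-indexing-categories step for condition~\ref{conv:hf}(\ref{item:decfib}), which is indeed the one point requiring a moment of care.
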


We want to obtain an hf model structure on the category $\Fun(\mc{C}_1\wedge\dotsm\wedge\mc{C}_n,\mc{D})$. Unlike the corresponding cartesian product category, the underlying category of $\mc{B}_1\wedge\dotsm\wedge\mc{B}_n$ is usually neither cocomplete, nor complete (Definition~\ref{def:functor-p} addresses their relation). In particular, we cannot directly apply the results from section \ref{sec:simp-homot-funct}. We first define homotopy functors.

\begin{definition}
A functor \dgrm{X} in $\Fun(\mc{C}_1\wedge\dotsm\wedge\mc{C}_n,\mc{D})$ is called a {\it homotopy functor} if for any object $(K_1,\dotsc,\wh{K}_i,\dotsc,K_n)$ in $\mc{C}_1\wedge\dotsm\wedge\wh{\mc{C}}_i\wedge\dotsm\wedge\mc{C}_n$ the associated partial functor
   $$ \dgrm{X}_{(K_1,\dotsc,\wh{K}_i,\dotsc,K_n)}\co\mc{C}_i\to\mc{D}$$
is a homotopy functor. The hat indicates that the corresponding entry is left out. 
\end{definition}

A coaugmented \mc{S}-functor $\fibr$ from $\mc{B}_1\wedge\dotsm\wedge\mc{B}_n$ to itself is defined by
   \[ \fibr(\ul{K}):=\bigl(\fibr_{\mc{B}_1}(K_1),\dotsc,\fibr_{\mc{B}_n}(K_n)\bigr), \]
using the decent fibrant replacement in each category $\mc{B}_i$.
Analogous to (\ref{eq:hf}), a coaugmented \mc{S}-functor 
  \[ (\free)^\hf\co\Fun(\mc{C}_1\wedge\dotsm\wedge\mc{C}_n,\mc{D})\to\Fun(\mc{C}_1\wedge\dotsm\wedge\mc{C}_n,\mc{D})\] 
is defined by
\begin{align*}
   \phi_{\dgrm{X}}\co\dgrm{X}\to(\dgrm{X})^\hf(\ul{K}):=\bigl((i^{\wedge n})_{*}\dgrm{X}\bigr)\circ\fibr\circ i^{\wedge n}
\end{align*}
where $i\co\mc{C}_1\wedge\dotsm\wedge\mc{C}_n\to\mc{B}_1\wedge\dotsm\wedge\mc{B}_n$ is the inclusion. These enriched functors are well defined by proposition \ref{EiKel2}.

\begin{definition}
A functor \dgrm{X} in $\Fun(\Sigma_n\wr\mc{C}^{\wedge n},\mc{D})$ is called a {\it homotopy functor} if $\varepsilon^*\dgrm{X}$ is a homotopy functor, where $\varepsilon\co\mc{C}^{\wedge n}\to\Sigma_n\wr\mc{C}^{\wedge n}$ was defined in \ref{varepsilon}.
\end{definition}

We observe that the previous constructions extend to the wreath product category. The inclusion $\mc{C}\to\mc{B}$ induces a symmetric inclusion $\Sigma_n\wr\mc{C}^{\wedge n}\to\Sigma_n\wr\mc{B}^{\wedge n}$ and a decent fibrant replacement functor of \mc{B} extends to a symmetric functor
    $$\fibr=(\fibr_{\mc{B}},\dotsc,\fibr_{\mc{B}})\co\Sigma_n\wr\mc{B}^{\wedge n}\to\Sigma_n\wr\mc{B}^{\wedge n},$$
which is a decent fibrant replacement functor for $\Sigma_n\wr\mc{B}^{\wedge n}$.
There is then a coaugmented \mc{S}-functor
  $$(\free)^\hf\co\Fun(\Sigma_n\wr\mc{C}^{\wedge n},\mc{D})\to\Fun(\Sigma_n\wr\mc{C}^{\wedge n},\mc{D})$$ 
as above and an associated \mc{S}-natural transformation $\phi_{\dgrm{X}}$.
In order to treat functors out of the smash product category and the wreath product category simultaneously,
the construction $(-)^{\hf}$ has to be interpreted appropriately
in the following statements.

\begin{definition}\label{def:multi-variable-hf}
  A map $f\co \dgrm{X}\to \dgrm{Y}$ in $\Fun(\mc{C}_1\wedge\dotsm\wedge\mc{C}_n,\mc{D})$ or $\Fun(\Sigma_n\wr\mc{C}^{\wedge n},\mc{D})$ is called
  \begin{enumerate}
  \item 
    an {\em hf equivalence\/} if the map $f^\hf\co \dgrm{X}^\hf \to \dgrm{Y}^\hf$ is an objectwise weak equivalence, and
  \item
    an {\em hf fibration\/} if it is an objectwise fibration such that the square
    \diagr{ \dgrm{X} \ar[r]\ar[d]_-{f} & \dgrm{X}^\hf \ar[d]^-{f^\hf} \\ 
      \dgrm{Y} \ar[r] & \dgrm{Y}^\hf  }
    is a homotopy pullback square in the objectwise model structure.
  \end{enumerate}
  The hf cofibrations are the projective cofibrations. 
\end{definition}

\begin{remark}
For any \dgrm{X} in $\Fun(\mc{C}_1\wedge\dotsm\wedge\mc{C}_n,\mc{D})$ or $\Fun(\Sigma_n\wr\mc{C}^{\wedge n},\mc{D})$ the functor $\dgrm{X}^\hf$ is a homotopy functor and analogues of Remark \ref{remarks about hf} hold.
\end{remark}

\begin{theorem}\label{hf-model-str-on-several-var}
  Assume Convention~\emph{\ref{conv:hf}}. The classes given in
  Definition~\emph{\ref{def:multi-variable-hf}} constitute 
  a right proper cofibrantly generated \mc{S}-model structure
  on the categories $\Fun(\mc{C}_1\wedge\dotsm\wedge\mc{C}_n,\mc{D})$
  and $\Fun(\Sigma_n\wr\mc{C}^{\wedge n},\mc{D})$, respectively. 
  It is left proper if \mc{D} is left proper.
\end{theorem}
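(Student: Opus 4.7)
The plan is to follow the proof of Theorem \ref{thm:homotopy-model} essentially verbatim, reducing the multi-variable situation to the one-variable case by working coordinatewise. I will verify Bousfield's axioms (A1), (A2), (A3) from \cite[9.2]{Bou:telescopic} for the coaugmented \mc{S}-functor $\phi_{\dgrm{X}}\co\dgrm{X}\to\dgrm{X}^\hf$, and then produce an explicit set of generating acyclic cofibrations analogous to Lemma \ref{lem:gen-cof-hf}. The treatment of $\Fun(\mc{C}_1\wedge\dotsm\wedge\mc{C}_n,\mc{D})$ and $\Fun(\Sigma_n\wr\mc{C}^{\wedge n},\mc{D})$ can be done simultaneously: in both cases the auxiliary fibrant replacement $\fibr$ is defined coordinatewise using the decent fibrant replacement in $\mc{B}$, and the left Kan extension along the inclusion $i^{\wedge n}$ (respectively its symmetric variant) still produces an \mc{S}-functor by Proposition \ref{EiKel2}.

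First I would establish the multi-variable analog of Lemma \ref{lem:filt-colim}: since each object of $\mc{C}_i$ is \mc{S}-finitely presentable and cofibrant in $\mc{B}_i$, a tuple $(K_1,\dotsc,K_n)$ is \mc{S}-finitely presentable and cofibrant in the smash (or wreath) product, so the left \mc{S}-Kan extension along $i^{\wedge n}$ preserves filtered colimits. Moreover, by Convention \ref{conv:hf}(\ref{item:decfib}) each $\fibr_{\mc{B}_i}(K_i)$ is a filtered colimit of objects from $\mc{C}_i$, and the product of filtered categories is filtered, so $\fibr(\ul{K})$ is itself a filtered colimit of objects from the source. With this in hand, axiom (A1) is immediate: if $f$ is an objectwise weak equivalence, then $f^{\hf}(\ul{K})$ is the filtered colimit of weak equivalences $f(\ul{B}_\alpha)$ with fibrant codomain in $\mc{D}$, hence a weak equivalence by Convention \ref{conv:hf}(\ref{item:fingen}).

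Next, axiom (A2) follows exactly as in Theorem \ref{thm:homotopy-model}: both natural maps $\dgrm{X}^{\hf}\to(\dgrm{X}^{\hf})^{\hf}$ reduce via triangular identities to the two maps $\fibr(\ul{K})\to\fibr(\fibr(\ul{K}))$ composed with $\dgrm{X}$, and these are coordinatewise simplicial homotopy equivalences by Definition \ref{def:simp-fib-repl}(2). Since $\dgrm{X}$ is an \mc{S}-functor it preserves simplicial homotopy equivalences, which are weak equivalences in $\mc{D}$ (Remark \ref{rem:simp-hty-eq-weak-eq}). For axiom (A3), given a pullback square with $p$ an objectwise fibration between objectwise fibrant targets and $f$ an hf equivalence, the fact that $(-)^{\hf}$ commutes with objectwise pullbacks---again because filtered colimits commute with pullbacks and preserve fibrations with fibrant target in $\mc{D}$ (Convention \ref{conv:hf}(\ref{item:fingen}))---combined with right properness of $\mc{D}$ gives that the induced map on pullbacks is also an hf equivalence, exactly as in diagrams (\ref{eq:a6}) and (\ref{eq:a6-2}).

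For cofibrant generation, I would enlarge the projective generating acyclic cofibrations by adding maps $w^{\prime}\,\square\, i$, where $i$ runs through $I_{\mc{D}}$ and $w^{\prime}$ is the mapping cylinder factorization of $R^{\ul{B}}\to R^{\ul{A}}$ for weak equivalences $\ul{A}\to\ul{B}$ in the source $\mc{C}_1\wedge\dotsm\wedge\mc{C}_n$ (respectively $\Sigma_n\wr\mc{C}^{\wedge n}$); by a direct multi-variable version of Lemma \ref{hf fibrations}, which reduces to checking the pullback condition one coordinate at a time, these maps detect exactly the hf fibrations among objectwise fibrations. Left properness when $\mc{D}$ is left proper follows because hf cofibrations are projective cofibrations and objectwise constructions behave well. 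The main obstacle I anticipate is the bookkeeping in axiom (A1) and in Lemma \ref{hf fibrations}: one has to check that the filtered-colimit factorization of $\fibr$ through $\mc{C}$ can be done simultaneously in all coordinates while remaining in the source subcategory, and that symmetry in the wreath product case is preserved throughout; both follow from the coordinatewise definition of $\fibr$ and the functoriality built into Proposition \ref{EiKel2}.
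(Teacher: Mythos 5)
Your proposal is correct and takes essentially the same route as the paper: the paper's proof is the single sentence that the arguments of Theorem~\ref{thm:homotopy-model} ``apply componentwise,'' and you have simply spelled out what that entails — the coordinatewise $\fibr$, the multi-variable analogue of Lemma~\ref{lem:filt-colim} via finite products of filtered diagrams, Bousfield's axioms (A1)--(A3), and the mapping-cylinder generating acyclic cofibrations as in Lemma~\ref{lem:gen-cof-hf}. One small wording slip: in your (A1) argument the codomains $\dgrm{Y}(\ul{B}_\alpha)$ need not be fibrant; you only use that filtered colimits in $\mc{D}$ preserve weak equivalences, which is all that Convention~\ref{conv:hf}(\ref{item:fingen}) supplies and all that is needed there.
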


\begin{proof}
  The arguments of the proof of Theorem~\ref{thm:homotopy-model} 
  showing the existence of the hf model structure 
  on $\Fun(\mc{C},\mc{D})$ apply
  componentwise.
\end{proof}

The model structure from Theorem~\ref{hf-model-str-on-several-var}
is called the {\it homotopy functor model structure} and is denoted 
by $\Fun(\mc{C}_1\wedge\dotsm\wedge\mc{C}_n,\mc{D})_{\hf}$ and
$\Fun(\Sigma_n\wr\mc{C}^{\wedge n},\mc{D})_{\hf}$, respectively.

\begin{proposition}
  Assume Convention~\emph{\ref{conv:hf}}. 
  The adjoint pair
  \[ p_{*}\co\Fun(\mc{C}_1\times\dotsm\times\mc{C}_n,\mc{D})_{\hf}\,\rightleftarrows\,\Fun(\mc{C}_1\wedge\dotsm\wedge\mc{C}_n,\mc{D})_{\hf}:\!p^* \]
  is a Quillen pair of homotopy functor model structures. 
  The functor $p^*$ preserves and detects weak equivalences and fibrations. 
  Also, the adjoint pair
  \[ \varepsilon_{*}\co\Fun(\mc{C}^{\wedge n},\mc{D})_{\hf}\,\rightleftarrows\,\Fun(\Sigma_n\wr\mc{C}^{\wedge n},\mc{D})_{\hf}:\!\varepsilon^* \]
  is a Quillen pair of homotopy functor model structures. 
  The functor $\varepsilon^*$ preserves and detects weak equivalences and 
  fibrations.
\end{proposition}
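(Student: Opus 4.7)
The two adjunctions can be treated in parallel, since in both cases the right adjoint ($p^*$ or $\varepsilon^*$) is precomposition with an $\mc{S}$-functor that is the identity on objects. I will spell out the plan for $p$; the case of $\varepsilon$ is handled identically, using the symmetric decent fibrant replacement on $\Sigma_n\wr\mc{B}^{\wedge n}$ constructed just above. The obvious analogue of Lemma~\ref{lem:p-quillen} holds in both cases: precomposition along an $\mc{S}$-functor that is the identity on objects preserves and detects objectwise weak equivalences and objectwise fibrations, and its left adjoint preserves projective cofibrations.

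The key point is to establish a natural isomorphism $(p^*\dgrm{X})^{\hf}\cong p^*(\dgrm{X}^{\hf})$ for every $\dgrm{X}$ in $\Fun(\mc{C}_1\wedge\dotsm\wedge\mc{C}_n,\mc{D})$. Evaluated at $\ul{K}$, both sides compute the value of a left $\mc{S}$-Kan extension --- $(i^{\wedge n})_*\dgrm{X}$ on one side, $(i^{\times n})_*(p^*\dgrm{X})$ on the other --- at the object $\fibr(\ul{K})=(\fibr_{\mc{B}_1}(K_1),\dotsc,\fibr_{\mc{B}_n}(K_n))$, using that $p$ is the identity on objects. By Convention~\ref{conv:hf}(\ref{item:decfib}) applied componentwise, $\fibr(\ul{K})$ is a filtered colimit of $n$-tuples $\ul{B}_{\ul{i}}$ from $\mc{C}_1\times\dotsm\times\mc{C}_n$. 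Using that every object in each $\mc{C}_j$ is $\mc{S}$-finitely presentable in $\mc{B}_j$ and that both the smash and the Cartesian product of pointed simplicial sets commute with filtered colimits, the defining coend formulas for the two Kan extensions reduce, on $\fibr(\ul{K})$, to the same filtered colimit $\colim_{\ul{i}}\dgrm{X}(\ul{B}_{\ul{i}})$. Naturality in $\ul{K}$ then yields the desired isomorphism.

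Once this identification is in place, the remaining conclusions are formal. The functor $p^*$ preserves and detects hf weak equivalences, because $f$ is an hf equivalence iff $f^{\hf}$ is an objectwise weak equivalence, and the latter condition is preserved and detected by $p^*$ under the identification above. The functor $p^*$ also preserves and detects hf fibrations: by Definition~\ref{def:multi-variable-hf} these are objectwise fibrations whose comparison square to $(-)^{\hf}$ is an objectwise homotopy pullback, and both of these properties are detected objectwise by $p^*$. Finally, $p_*$ preserves hf cofibrations (these are projective cofibrations, to which the multi-variable Lemma~\ref{lem:p-quillen} applies), and $p^*$ preserves acyclic hf fibrations; the latter coincide with objectwise acyclic fibrations, because the hf model structure is a left Bousfield localization of the projective one. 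Hence $(p_*,p^*)$ is a Quillen pair.

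The main technical obstacle is the natural isomorphism $(p^*\dgrm{X})^{\hf}\cong p^*(\dgrm{X}^{\hf})$, which requires comparing two left $\mc{S}$-Kan extensions along genuinely different $\mc{S}$-functors, one into the smash product category (whose underlying category is typically neither complete nor cocomplete) and one into the Cartesian product. The comparison nevertheless succeeds because the only objects at which we evaluate are in the image of $\fibr$, and there they decompose into filtered colimits of $n$-tuples which live in $\mc{C}_1\times\dotsm\times\mc{C}_n$ and coincide as objects with their image under $p$. Everything else reduces to standard properties of precomposition along $\mc{S}$-functors that are bijective on objects together with the cofibrant generation of the hf model structures supplied by Theorem~\ref{hf-model-str-on-several-var}.
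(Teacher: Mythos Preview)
Your proposal is correct and follows essentially the same approach as the paper: reduce to the projective Quillen pair via Lemma~\ref{lem:p-quillen} (and its analogue for $\varepsilon$), then invoke the natural isomorphism $p^*\circ(-)^{\hf}\cong (-)^{\hf}\circ p^*$ to transfer the hf conditions. The paper simply asserts this isomorphism as ``canonical'' without further comment, whereas you supply the argument via filtered colimits and Convention~\ref{conv:hf}(\ref{item:finpres}),(\ref{item:decfib}); this extra detail is welcome and accurate.
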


\begin{proof}
  The pair $(p_*,p^*)$ is a Quillen pair for the respective projective 
  model structures by Lemma~\ref{lem:p-quillen}. 
  The same holds for the pair 
  $(\varepsilon_*,\varepsilon^*)$. The claims above then
  follow from the canonical natural isomorphisms 
  $p^*\circ(-)^\hf\cong(-)^\hf\circ p^*$ and 
  $\varepsilon^*\circ(-)^\hf\cong(-)^\hf\circ \varepsilon^*$. 
\end{proof}

\begin{lemma}\label{lem:cross-right-hf}
  Consider the homotopy functor 
  model structure on $\Fun(\mc{C},\mc{D})$ obtained from the cross
  effect  model structure. 
  Then the $n$-th cross effect 
  \[\cross_n\colon \Fun(\mc{C},\mc{D})_{\mathrm{hf}\text{-}\mathrm{cr}}\to 
  \Fun(\Sigma_n\wr\mc{C}^{\wedge n},\mc{D})_{\mathrm{hf}}\]
  is a right Quillen functor.
\end{lemma}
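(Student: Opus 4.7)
\emph{Proof plan.}
The plan is to descend the Quillen pair from Proposition~\ref{prop:cross-right-obj} along the left Bousfield localizations supplied by the remark following Lemma~\ref{lem:gen-cof-hf}. That remark identifies $\Fun(\mc{C},\mc{D})_{\hf\text{-}\cross}$ as the left Bousfield localization of $\Fun(\mc{C},\mc{D})_{\cross}$ at the maps $\{R^B\to R^A\}$ indexed by weak equivalences $A\to B$ in $\mc{C}$; the same argument, applied componentwise as in Theorem~\ref{hf-model-str-on-several-var}, presents $\Fun(\Sigma_n\wr\mc{C}^{\wedge n},\mc{D})_{\hf}$ as the left Bousfield localization of $\Fun(\Sigma_n\wr\mc{C}^{\wedge n},\mc{D})_{\proj}$ at the analogous multi-variable maps. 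Proposition~\ref{prop:cross-right-obj} already provides the Quillen adjunction $\Lcross_n \dashv \cross_n$ between the two unlocalized structures.

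By the standard criterion for a Quillen pair to descend to left Bousfield localizations (see, e.g., \cite[Theorem~3.3.20]{Hir:loc}), it suffices to verify that $\cross_n$ sends every fibrant object of $\Fun(\mc{C},\mc{D})_{\hf\text{-}\cross}$ to a fibrant object of $\Fun(\Sigma_n\wr\mc{C}^{\wedge n},\mc{D})_{\hf}$. Using Remark~\ref{remarks about hf}(4) and its announced multi-variable analogue, the source-side fibrant objects are the cr fibrant homotopy functors, while the target-side fibrants are the projective fibrant functors that are homotopy functors in each variable. So let $\dgrm{X}$ be hf-cr fibrant; then $\cross_n\dgrm{X}$ is projective fibrant by Proposition~\ref{prop:cross-right-obj}, and it remains to show it preserves weak equivalences in each variable.

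Since $\dgrm{X}$ is cr fibrant, Lemma~\ref{cr-fibrant} provides a natural objectwise weak equivalence $\cross_n\dgrm{X}\to\hocr_n\dgrm{X}$. The functor $\hocr_n\dgrm{X}$ plainly preserves weak equivalences in each variable: every object of $\mc{C}$ is cofibrant by Convention~\ref{conv:hf}(1), so finite coproducts in $\mc{C}$ preserve weak equivalences (Ken Brown's lemma), whence each value $\dgrm{X}(\bigvee_{i\notin S}K_i)$ varies by a weak equivalence as $\ul{K}$ varies by a tuple of weak equivalences, and the defining homotopy limit and homotopy fiber inherit the property. Applying 2-out-of-3 to the naturality square
\[ \xymatrix{ \cross_n\dgrm{X}(\ul{A}) \ar[r] \ar[d]_{\simeq} & \cross_n\dgrm{X}(\ul{B}) \ar[d]^{\simeq} \\ \hocr_n\dgrm{X}(\ul{A}) \ar[r]^{\simeq} & \hocr_n\dgrm{X}(\ul{B}) } \]
for a tuple weak equivalence $\ul{A}\to\ul{B}$ then transfers this property to $\cross_n\dgrm{X}$, finishing the proof.

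The main obstacle is just verification of bookkeeping: confirming that the multi-variable analogue of Remark~\ref{remarks about hf}(4) is indeed available and that the descent criterion is applicable to the cellular/left proper setting at hand. Once these are granted, Lemma~\ref{cr-fibrant} collapses the strict cross effect onto the homotopy cross effect on fibrant inputs, and the argument becomes a short diagram chase.
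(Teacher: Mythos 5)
Your proof is correct and takes essentially the same route as the paper's: both reduce to showing that $\cross_n$ sends hf-cr fibrant objects to hf fibrant objects, both invoke Lemma~\ref{cr-fibrant} to replace $\cross_n\dgrm{X}$ by $\hocr_n\dgrm{X}$ when $\dgrm{X}$ is cr fibrant, and both conclude because $\hocr_n\dgrm{X}$ is visibly a homotopy functor. The only difference is that where the paper dispatches the last point with ``the latter is a homotopy functor by construction,'' you spell out the verification (cofibrancy of the objects in $\mc{C}$, Ken Brown's lemma for coproducts, invariance of homotopy limits and homotopy fibers), which is a harmless and accurate elaboration.
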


\begin{proof}
  Proposition~\ref{prop:cross-right-obj} implies that the left adjoint
  $\Lcross_n$ of 
  $\cross_n$ preserves cofibrations. In order to prove that
  $\Lcross_n$ also preserves acyclic cofibrations in the homotopy
  functor model structure, it suffices
  to prove that 
  $\cross_n$ maps fibrant objects to fibrant objects in the
  respective homotopy functor model structures. To do so,
  let $\dgrm{X}\co \mc{C}\to \mc{D}$ be a cross effect
  fibrant homotopy functor.
  Then $\cross_n(\dgrm{X})$ is objectwise weakly equivalent
  to $\hocr_n(\dgrm{X})$ by Lemma~\ref{cr-fibrant}, 
  and the latter is a homotopy functor
  by construction. 
\end{proof}

\section{Excisive functors}\label{sec:excisive-functors}

The goal of this section is to localize the
homotopy functor model structures
on the various functor categories further, such
that every functor is weakly equivalent to an $n$-excisive functor.
Fibrant replacement in such a model structure then serves as
$n$-excisive approximation.
Recall that a homotopy functor is {\it $n$-excisive} if it maps 
strongly homotopy cocartesian $(n+1)$-cubes to homotopy Cartesian ones.

\subsection{The excisive model structures}\label{sec:model-categ-vers}

Set $\ul{n}:=\{1,\hdots,n\}$, let $\mc{P}(\ul{n})$ be its power set, and 
let $\mc{P}_0(\ul{n}):=\mc{P}(\ul{n})-\{\emptyset\}$. 
\begin{definition}
  For an object $A$ in \mc{C}, let $CA$ be the simplicial cone over $A$. 
  This is the reduced or unreduced cone depending on whether 
  \mc{C} is a \mc{U}- or \mc{S}-category.
  For a finite set $U$, the {\it join} $A\join U$ is
  defined as
  \[ CA\sqcup_A\dotsm\sqcup_A CA, \]
  gluing $|U|$ many copies of $CA$ along their base $A$. 
\end{definition}

\begin{convention}\label{conv:n-exc}
  In addition to Convention~\ref{conv:hf}, suppose 
  that for any object $A$ in $\mc{C}$ and any finite set $U$ 
  the object $A\join U$ is also in \mc{C}.
\end{convention}

\begin{remark}
  There are other models for $A\join U$.
  In an ambient model category \mc{B}, the join $A\join U$
  is weakly equivalent to a homotopy colimit of the 
  asterisk-shaped 
  diagram given by $|U|$ copies of the map $A\to\ast$ out of a single 
  copy of $A$. 
  For instance, $A\join U$ is weakly equivalent to $|U|-1$  
  wedge summands of $\Sigma A$. Hence, the assumption that \mc{C} is closed 
  under suspensions and finite coproducts is an equally good convention. 
  Because \mc{C} is a full subcategory of \mc{B}, a reasonable sufficient 
  condition is to assume that \mc{C} is closed under finite pushouts 
  along cofibrations.
\end{remark}

The join is an enriched bifunctor and comes with a 
natural map $ A \to A\join U$ induced by the inclusion $\emptyset\subset U$. 
The $\mc{P}_0(\ul{n+1})$-diagram $U\mapsto R^{U\star A}$ of representable functors
yields the functor $\hocolim_{U\in \mc{P}_0(\ul{n+1})} R^{U\star A}$. Using 
repeated factorization by suitable simplicial mapping cylinder constructions 
supplies a cr cofibrant model, denoted as
\[ \xymatrix{\dgrm{A}_n \ar[r]^-{\simeq} & \hocolim\limits_{U\in \mc{P}_0(\ul{n+1})} R^{U\star A}}.\]
The induced natural transformation $\dgrm{A}_n\to R^A$ is factored 
via a simplicial mapping cylinder as a cr cofibration $\xi_{A,n}$, 
followed by a simplicial homotopy equivalence:
\[ \dgrm{A}_n \xrightarrow{\xi_{A,n}} 
\cyl(\xi_{A,n}) \xrightarrow{\simeq} R^A. \]

\begin{definition}\label{def:Pn}
  Goodwillie's construction $T_n$ \cite[p. 657]{Goo:calc3} on the category of 
  objectwise fibrant homotopy functors may be rewritten as
  \[ T_n\dgrm{X}(A) := {\bf hom}(\dgrm{A}_n,\dgrm{X}). \]
  Let $P_n$ be 
  the colimit of the following sequence:
  \[ \Id \to (-)^\hf\to T_n(-)^\hf \to T_n^2(-)^\hf\to\dotsm\to\colim_kT_n^k(-)^\hf\]
\end{definition}

Convention~\ref{conv:hf} implies that filtered colimits and 
filtered homotopy colimits are weakly equivalent in $\mc{D}$. Thus, 
Goodwillie's $n$-excisive approximation is 
weakly equivalent to $P_n$ as defined above.
The canonical inclusion $\ul{n}\to\ul{n+1}$  
induces a map $\mc{P}_0(\underline{n})\hookrightarrow \mc{P}_0(\underline{n+1})$ of posets. This induces natural transformations $T_n\to T_{n-1}$ 
and $q_n\co P_n\to P_{n-1}$ which commute with the coaugmentations 
from the identity. The map $q_n$ is constructed for categories
of spaces or spectra in \cite[p. 664]{Goo:calc3} 
and generalizes to this setup. 

\begin{lemma}\label{Tn and Pn commute with ho(co)lims}
The functor $T_n$ commutes up to natural weak equivalence with all homotopy limits. The functor $P_n$ commutes up to natural weak equivalence with finite homotopy limits. Both functors commute up to natural weak equivalence with filtered homotopy colimits.
\end{lemma}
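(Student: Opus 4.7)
The plan is to first make $T_n$ transparent as a finite homotopy limit, and then deduce everything else from the interplay between finite homotopy limits, filtered homotopy colimits, and the explicit filtered-colimit nature of $(-)^{\hf}$. More precisely, since $\dgrm{A}_n$ is a cr cofibrant replacement of $\hocolim_{U\in \mc{P}_0(\ul{n+1})} R^{U\star A}$ and the cotensor $\mathbf{hom}(-,\dgrm{X})$ converts colimits in the first variable to limits, the enriched Yoneda lemma~\ref{twisted Yoneda} together with the fact that cotensoring with a cofibrant object is homotopically well-behaved (as a right Quillen bifunctor in the \mc{S}-model structure on $\mc{D}$) supplies, for every objectwise fibrant homotopy functor $\dgrm{X}$, a natural weak equivalence
\[ T_n\dgrm{X}(A) \;=\; \mathbf{hom}(\dgrm{A}_n,\dgrm{X}) \;\simeq\; \holim_{U\in \mc{P}_0(\ul{n+1})} \dgrm{X}(U\star A). \]
Because $\mc{P}_0(\ul{n+1})$ is a finite poset, $T_n$ is essentially a finite homotopy limit, and thus commutes up to natural weak equivalence with any other homotopy limit by Fubini for homotopy limits. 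This yields the first claim.

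For the filtered homotopy colimit statement, I would invoke Convention~\ref{conv:hf}(\ref{item:fingen}): filtered colimits in $\mc{D}$ preserve weak equivalences, fibrations with fibrant codomain, and pullbacks. A standard Reedy argument then implies that finite homotopy limits in $\mc{D}$ commute with filtered homotopy colimits on objectwise fibrant diagrams; in particular $T_n$ commutes with filtered homotopy colimits. Next, by Convention~\ref{conv:hf}(\ref{item:decfib}), the homotopy functor replacement is a pointwise filtered colimit
\[ \dgrm{X}^{\hf}(A) \;\cong\; \colim_{j} \dgrm{X}(B_j), \qquad \fibr(A)=\colim_j B_j, \]
so $(-)^{\hf}$ commutes with filtered colimits (filtered colimits commute with filtered colimits) and, by the same preservation properties from Convention~\ref{conv:hf}(\ref{item:fingen}), also with finite homotopy limits of objectwise fibrant functors.

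The two statements about $P_n$ follow by combining these pieces. Each iterate $T_n^k(-)^{\hf}$ commutes (up to natural weak equivalence) with both finite homotopy limits and filtered homotopy colimits, by induction from the properties of $T_n$ and $(-)^{\hf}$ established above. Since $P_n$ is itself the sequential (hence filtered) colimit of these iterates, and filtered colimits commute with filtered colimits and, under Convention~\ref{conv:hf}(\ref{item:fingen}), with finite homotopy limits in $\mc{D}$, we conclude that $P_n$ commutes with finite homotopy limits and filtered homotopy colimits, as required.

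The main obstacle I anticipate is the bookkeeping needed to ensure that all diagrams in play are objectwise fibrant, so that the weak equivalences $\mathbf{hom}(\dgrm{A}_n,\dgrm{X}) \simeq \holim_U \dgrm{X}(U\star A)$ and the interchange of finite $\holim$ with filtered $\colim$ are valid without introducing further fibrant replacements. The point that saves us is precisely the explicit form of Convention~\ref{conv:hf}(\ref{item:fingen}): preservation of fibrations with fibrant target under filtered colimits is exactly what makes the filtered colimit of objectwise fibrant functors objectwise fibrant, which is what one needs to iterate $T_n$ and to move homotopy limits past the colimit defining $P_n$.
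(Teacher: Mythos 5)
Your proof is correct and follows essentially the same route as the paper: both arguments hinge on $\dgrm{A}_n$ being cr cofibrant to conclude $T_n$ is homotopically well-behaved (you phrase this as ``$T_n$ is a finite homotopy limit, apply Fubini,'' the paper says ``$T_n = \cotensor{\dgrm{A}_n}{-}$ is a right Quillen functor''; the two are interchangeable here), and both then feed Convention~\ref{conv:hf}(\ref{item:fingen}) together with the filtered-colimit construction of $P_n$ into the remaining claims. You spell out the $(-)^{\hf}$ step and the induction over $T_n^k$ more explicitly than the paper does, but the argument is the same.
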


\begin{proof}
  Since $\dgrm{A}_n$ is cr cofibrant, $T_n$ is a right Quillen functor
  and commutes with all homotopy limits. 
  Part (4) of 
  Convention~\ref{conv:hf} ensures that $T_n$ commutes
  with filtered colimits. It then follows from its definition
  that $P_n$ commutes with filtered colimits. Again Convention~\ref{conv:hf}
  implies that $P_n$ commutes at least with 
  finite homotopy limits.
\end{proof}

\begin{lemma}\label{lem:Pn-excisive}
  Let $\dgrm{X}$ be an objectwise fibrant homotopy functor. 
  \begin{enumerate}
  \item[\rm (1)]
    The functor $P_n\dgrm{X}$ is $n$-excisive. 
  \item[\rm (2)]
    The map $\dgrm{X}\to P_n\dgrm{X}$ is initial among maps in the objectwise homotopy category out of \dgrm{X} into an $n$-excisive functor. 
  \item[\rm (3)]
    Both maps $P_n(p_n\dgrm{X})$ and $p_n(P_n\dgrm{X})$ are objectwise weak equivalences and homotopic to each other.
  \end{enumerate}
\end{lemma}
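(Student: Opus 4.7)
I would prove the three parts in order, following Goodwillie's classical pattern \cite{Goo:calc3} (streamlined in \cite{Rezk:streamline}) and leveraging the constructions of Sections~\ref{sec:symm-funct-cross} and~\ref{sec:homotopy-functors}.

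For (1), I would first interpret $T_n\dgrm{X}(A)=\cotensor{\dgrm{A}_n}{\dgrm{X}}$ as a model for the homotopy limit $\holim_{U\in\mc{P}_0(\ul{n+1})}\dgrm{X}(U\star A)$, using that $\dgrm{A}_n$ is cr cofibrant and $\dgrm{X}$ is objectwise fibrant. Then, given a strongly cocartesian $(n+1)$-cube $\mathcal{U}\co\mc{P}(\ul{n+1})\to\mc{C}$, the strategy is to show that each application of $T_n$ strictly improves the Cartesian defect of $\dgrm{X}(\mathcal{U})$, so that in the filtered colimit the cube $P_n\dgrm{X}(\mathcal{U})$ becomes homotopy Cartesian. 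The model-categorical inputs are Lemma~\ref{Tn and Pn commute with ho(co)lims} and Convention~\ref{conv:hf}(\ref{item:fingen}), which permit commuting the finite homotopy limit over $\mc{P}(\ul{n+1})$ past the filtered colimit defining $P_n$; together with Goodwillie's inductive analysis this yields that $P_n\dgrm{X}$ is $n$-excisive.

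For (2), the plan is to establish the auxiliary fact: if $\dgrm{Y}$ is $n$-excisive and objectwise fibrant, then $p_n\dgrm{Y}\co\dgrm{Y}\to P_n\dgrm{Y}$ is an objectwise weak equivalence. This reduces to observing that $U\mapsto U\star A$ is the prototypical strongly cocartesian $(n+1)$-cube with initial vertex $A$, so $n$-excisiveness of $\dgrm{Y}$ forces $\dgrm{Y}(A)\to T_n\dgrm{Y}(A)$ to be a weak equivalence; iterating gives the claim for $P_n\dgrm{Y}$. Initiality in $\ho{\Fun(\mc{C},\mc{D})_\hf}$ is then formal: given $f\co\dgrm{X}\to\dgrm{Y}$ with $\dgrm{Y}$ $n$-excisive, composing $P_n(f)$ with a homotopy inverse of the equivalence $p_n\dgrm{Y}$ produces an extension $P_n\dgrm{X}\to\dgrm{Y}$ in the homotopy category, and uniqueness follows by applying the same argument to the difference of two candidate extensions.

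Part (3) is a formal consequence of (1) and (2). Since $P_n\dgrm{X}$ is $n$-excisive by (1), the auxiliary fact in (2) gives that $p_n(P_n\dgrm{X})\co P_n\dgrm{X}\to P_nP_n\dgrm{X}$ is an objectwise weak equivalence. Naturality of $p_n$ applied to $p_n\dgrm{X}$ yields
\[ p_n(P_n\dgrm{X})\circ p_n\dgrm{X}=P_n(p_n\dgrm{X})\circ p_n\dgrm{X}, \]
and since the target $P_nP_n\dgrm{X}$ is $n$-excisive, the universal property (2) forces $p_n(P_n\dgrm{X})$ and $P_n(p_n\dgrm{X})$ to be homotopic in $\ho{\Fun(\mc{C},\mc{D})_\hf}$; consequently $P_n(p_n\dgrm{X})$ is an objectwise weak equivalence as well. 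The main obstacle is step (1): adapting Goodwillie's quantitative improvement argument for $T_n$, originally phrased via connectivity, to the present setting where connectivity is unavailable. This demands careful cube-theoretic bookkeeping of the interaction of $T_n$ with the join operation and of cr cofibrancy of the intermediate models; once (1) is in place, (2) and (3) follow formally.
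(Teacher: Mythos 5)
Your plan is essentially the same as the paper's, which simply defers all three parts to Goodwillie~\cite[pp.~661--662]{Goo:calc3} and Rezk~\cite{Rezk:streamline} and notes that their arguments carry over. Your identification of the model-categorical inputs, namely Lemma~\ref{Tn and Pn commute with ho(co)lims} and Convention~\ref{conv:hf}(\ref{item:fingen}), as what makes the adaptation possible is correct. However, you mischaracterise the task in~(1). You describe Goodwillie's argument as ``originally phrased via connectivity'' and in need of adaptation ``to the present setting where connectivity is unavailable.'' This is backwards: the proofs in~\cite{Goo:calc3} and~\cite{Rezk:streamline} are precisely the connectivity-free reworkings of the earlier connectivity-based arguments from~\cite{Goo:calc2}, as the paper's Introduction emphasises (``clever diagram manipulations which apply in great generality''). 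The key lemma --- that for a strongly cocartesian $(n+1)$-cube $\mc{U}$ the map $\dgrm{X}(\mc{U})\to T_n\dgrm{X}(\mc{U})$ factors up to homotopy through a homotopy Cartesian cube --- is a pure diagram chase with no connectivity hypothesis; the only thing left to verify is that the ambient model category supports the requisite homotopy (co)limit manipulations, which Convention~\ref{conv:hf} supplies.

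There is also a logical circularity in your treatment of~(2) and~(3). Your uniqueness argument in~(2) is phrased as ``applying the same argument to the difference of two candidate extensions,'' which does not parse in a non-additive category; the actual uniqueness argument applies $P_n$ to the constraint $g_i\circ p_n\dgrm{X}\simeq f$ and then invokes that $P_n(p_n\dgrm{X})$ is a weak equivalence --- but that is part of statement~(3). Conversely, your proof of~(3) invokes the universal property from~(2) to conclude that the two maps to $P_nP_n\dgrm{X}$ are homotopic. The idempotency statement~(3) must therefore be established independently of~(2); indeed it is exactly the Bousfield axiom~(A2) needed in the proof of Theorem~\ref{thm:n-exc-model-str-exists}, and the universal property~(2) is then a formal consequence. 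Goodwillie's proof proceeds in that order, and your linear ordering~(2) then~(3) should be reversed.
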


\begin{proof}
  Part (1) follows by adapting Goodwillie's opaque \cite[pp. 662]{Goo:calc3} or Rezk's slightly less opaque proof \cite{Rezk:streamline}. Part (2) and (3) are as in \cite[pp. 661]{Goo:calc3}.
\end{proof}

\begin{definition}\label{def:n-exc-model}
  A map $\dgrm{X}\to\dgrm{Y}$ in $\Fun(\mc{C},\mc{D})$ is called 
   \begin{enumerate}
     \item
       an {\em $n$-excisive equivalence\/} if the induced map 
       $P_{n}\dgrm{X}\to P_{n}\dgrm{Y}$ is an objectwise weak equivalence.
     \item
       an {\em $n$-excisive fibration\/} if it is an hf fibration and the 
       following diagram
       \diagr{ \dgrm{X} \ar[r]\ar[d]_-f & P_{n}\dgrm{X} \ar[d]^-{P_n(f)} \\
	 \dgrm{Y} \ar[r] & P_{n}\dgrm{Y} }
       is a homotopy pullback square in the homotopy functor model structure.
   \end{enumerate}
   The $n$-excisive cofibrations are projective cofibrations. 
   Analogous definitions can be given starting from the cr model structure. 
\end{definition}

In the case of $\mc{D}=\mc{S}$ and $\mc{C}=\Sfin$, 
the following theorem was already obtained in~\cite{BCR:calc}.

\begin{theorem}\label{thm:n-exc-model-str-exists}
  Assume Convention~\emph{\ref{conv:n-exc}}. The classes described in
  Definition~\emph{\ref{def:n-exc-model}}
  form a right proper cofibrantly generated \mc{S}-model structure on 
  $\Fun(\mc{C},\mc{D})$, which is left proper if \mc{D} is left proper.
\end{theorem}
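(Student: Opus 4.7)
The plan is to apply Bousfield's framework~\cite[9.2]{Bou:telescopic} for localization along a coaugmented idempotent functor, precisely as in the proof of Theorem~\ref{thm:homotopy-model}, but now starting from the hf model structure (projective or cross effect variant) and using the coaugmentation $\dgrm{X}\to P_n\dgrm{X}$ in place of $\dgrm{X}\to \dgrm{X}^\hf$. What has to be supplied are (i) Bousfield's axioms (A1), (A2), (A3), and (ii) a set of generating (acyclic) cofibrations. Right properness follows from right properness of the hf model structure, and left properness is inherited because $n$-excisive cofibrations coincide with the projective (or cr) cofibrations.

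For (A1), $P_n$ must preserve objectwise weak equivalences. The functor $T_n(\dgrm{X})(A)=\cotensor{\dgrm{A}_n}{\dgrm{X}}$ is a right Quillen functor since $\dgrm{A}_n$ is cr cofibrant, so it preserves objectwise weak equivalences between objectwise fibrant functors; the functor $(-)^\hf$ preserves all objectwise weak equivalences by axiom (A1) established in the proof of Theorem~\ref{thm:homotopy-model}. Hence each $T_n^k(-)^\hf$ preserves objectwise weak equivalences, and passing to the filtered colimit defining $P_n$ does too by Convention~\ref{conv:hf}(\ref{item:fingen}). Axiom (A2) is exactly the content of Lemma~\ref{lem:Pn-excisive}(3), which asserts that $P_n(p_n\dgrm{X})$ and $p_n(P_n\dgrm{X})$ are objectwise weak equivalences.

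Axiom (A3) is the main obstacle. Given a pullback square
\[ \xymatrix{ \dgrm{X}\times_{\dgrm{Y}}\dgrm{Z} \ar[r]^-g \ar[d] & \dgrm{Z} \ar[d]^-p \\ \dgrm{X} \ar[r]^-f & \dgrm{Y} } \]
with $p$ an objectwise fibration and $\dgrm{Y}$ objectwise fibrant, $f$ an $n$-excisive equivalence, the task is to show $g$ is an $n$-excisive equivalence. Apply $P_n$; by Lemma~\ref{Tn and Pn commute with ho(co)lims}, $P_n$ commutes with finite homotopy limits, so the image square is homotopy Cartesian in the objectwise model structure. Since $P_n(f)$ is an objectwise weak equivalence, right properness of $\mc{D}$ (Convention~\ref{conv:hf}(\ref{item:smodel})) implies that $P_n(g)$ is one as well. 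Here one must be careful to replace $p$ first by an hf fibration (or check the statement in the form Bousfield requires, where the target is hf fibrant), using that hf fibrations are in particular objectwise fibrations with the homotopy-pullback property of Lemma~\ref{hf fibrations}.

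For cofibrant generation, enlarge the generating acyclic cofibrations of the hf model structure by the set
\[ \{\xi_{A,n}\,\square\, i \,\vert\, A\in\ob(\mc{C}),\ i\in I_{\mc{D}}\}, \]
where $\xi_{A,n}\co \dgrm{A}_n\to \cyl(\xi_{A,n})$ is the cr cofibration constructed before Definition~\ref{def:Pn}. Since $\cyl(\xi_{A,n})\simeq R^A$ and $\dgrm{A}_n\simeq\hocolim_{U\in \mc{P}_0(\ul{n+1})} R^{U\star A}$, the map $\xi_{A,n}$ detects the $n$-excision condition after cotensoring into a target $\dgrm{X}$, so objects right-orthogonal to this set (among hf fibrant objects) are exactly the $n$-excisive homotopy functors. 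A straightforward verification, using Lemma~\ref{hf fibrations} together with the characterization of $P_n$-local fibrations from~\cite[9.3]{Bou:telescopic}, identifies the $J$-injectives (with $J$ the enlarged set) as the $n$-excisive fibrations of Definition~\ref{def:n-exc-model}, completing the proof.
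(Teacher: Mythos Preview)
Your proposal is correct and follows essentially the same approach as the paper: verify Bousfield's axioms (A1)--(A3) for the coaugmented functor $P_n$ (with (A2) from Lemma~\ref{lem:Pn-excisive}(3) and (A3) from Lemma~\ref{Tn and Pn commute with ho(co)lims}), and then enlarge the generating acyclic cofibrations by $\{\xi_{A,n}\,\square\,i\}$. The only difference is emphasis: you spell out (A3) in more detail than the paper does, while the paper is more explicit in the verification that right lifting against $\{\xi_{A,n}\,\square\,i\}$ is equivalent, via adjunction and Lemma~\ref{lem:n-exc-fib}, to the $T_n$-homotopy pullback condition characterizing $n$-excisive fibrations.
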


\begin{proof}
  It suffices to show that the coaugmented functor $P_n$ satisfies the 
  axioms (A1), (A2), and (A3) given in~\cite[9.2]{Bou:telescopic}. 
  The functors $T_n$ and $(-)^\hf$ preserve objectwise weak equivalences
  by construction and the proof of Theorem~\ref{thm:homotopy-model}, 
  respectively. 
  Filtered colimits in \mc{D} preserve weak equivalences by 
  part~(4) of Convention~\ref{conv:hf}, whence $P_n$ also preserves objectwise weak equivalences. This implies (A1).  
  Property (A2) is verified in Lemma~\ref{lem:Pn-excisive}(3). 
  Property (A3) follows directly from 
  Lemma~\ref{Tn and Pn commute with ho(co)lims}. 
  The remaining task is to add further generating acyclic cofibrations for 
  the $n$-excisive model structure.
  With $I_{\mc{D}}$ being a set of generating cofibrations of $\mc{D}$, 
  let 
  \[ J_n:= \bigl\{ \xi_{A,n}\, \square \, i\}_{A\in \mc{C},i\in I_{\mc{D}}}. \]
  An objectwise fibration 
  $\dgrm{X}\to \dgrm{Y}$ has the right lifting property with respect to the set $J_n$ if and only if the morphism of pointed simplicial sets
  \begin{equation}\label{eq:1} 
    \mc{S}_{\mc{D}}(\cyl(\xi_{A,n}) ,\dgrm{X})
    \to \mc{S}_{\mc{D}}(\dgrm{A}_n,\dgrm{X})\times_{\mc{S}_{\mc{D}}(\dgrm{A}_n,\dgrm{Y})} \mc{S}_{\mc{D}}(\cyl(\xi_{A,n}) ,\dgrm{Y}) 
\end{equation}
  has the right lifting property with respect to $I_{\mc{D}}$. Since the map in~(\ref{eq:1}) 
  is a fibration in $\mc{D}$ anyway, $\dgrm{X}\to \dgrm{Y}$ 
  has the right lifting property with respect to $J_n$ if and only if~(\ref{eq:1}) is
  a weak equivalence. 
  The simplicial homotopy equivalence
  \[ \cyl(\xi_{A,n}) \xrightarrow{\simeq} R^A\]
  induces a simplicial homotopy equivalence on $\mc{D}$-mapping objects. 
  As $\mc{S}_{\mc{D}}(-,\dgrm{Z})$ transforms homotopy colimits to homotopy limits for $\dgrm{Z}$ objectwise
  fibrant, and $\dgrm{X}\to \dgrm{Y}$ is an objectwise fibration, the map $\dgrm{X}\to \dgrm{Y}$
  has the right lifting property with respect to $J_n$ if and only if the square
  \[\xymatrix{
    \dgrm{X}(A) \ar[r] \ar[d] & \holim \dgrm{X}(A\join U) \ar[d] \\
    \dgrm{Y}(A) \ar[r]  & \holim \dgrm{Y}(A\join U) }\]
  is a homotopy pullback square. Definition~\ref{def:Pn} of $T_n$ implies
  that there is a natural zig-zag of weak equivalences
  connecting this commutative diagram to the commutative diagram
  \diagr{ \dgrm{X}(A) \ar[r]\ar[d]_{f(A)} & T_{n}\dgrm{X}(A) \ar[d]^{T_nf(A)} \\
    \dgrm{Y}(A) \ar[r] & T_n\dgrm{Y}(A) }
  which is a homotopy pullback square by Lemma~\ref{lem:n-exc-fib}.
  Thus adding $J_n$ to a 
  suitable set of generating acyclic cofibrations for the hf 
  model structure yields a set of generating acyclic cofibrations 
  for $n$-excisive fibrations; analogously for their cr versions.
\end{proof}

The model structures provided by Theorem~\ref{thm:n-exc-model-str-exists}
are the {\it $n$-excisive model structures}. They are denoted 
$\Fun(\mc{C},\mc{D})_{n\text{-}\mathrm{exc}}$ and $\Fun(\mc{C},\mc{D})_{n\text{-}\mathrm{exc}\text{-}\mathrm{cr}}$,
respectively. 
The following
statement is analogous to  Lemma~\ref{hf fibrations}.

\begin{lemma}\label{lem:n-exc-fib}
  A map $f\co\dgrm{X}\to\dgrm{Y}$ is an $n$-excisive fibration if and only if it is an hf fibration and the following diagram
  \diagr{ \dgrm{X} \ar[r]\ar[d]_f & T_{n}\dgrm{X} \ar[d]^{T_n(f)} \\
    \dgrm{Y} \ar[r] & T_n\dgrm{Y} }
  is a homotopy pullback square in the hf model structure.
\end{lemma}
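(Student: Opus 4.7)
The plan is to exploit Bousfield's characterization of fibrations in a left Bousfield localization: since $\Fun(\mc{C},\mc{D})_{n\text{-}\exc}$ is the left Bousfield localization of the hf model structure along the coaugmentation of $P_n$, \cite[Theorem 9.3]{Bou:telescopic} asserts that $f$ is an $n$-excisive fibration if and only if $f$ is an hf fibration and the coaugmentation square
\[ \xymatrix{\dgrm{X}\ar[r]\ar[d]_f & P_n\dgrm{X} \ar[d]^{P_n(f)} \\ \dgrm{Y} \ar[r] & P_n\dgrm{Y}} \]
is an hf homotopy pullback. The Lemma therefore reduces to the statement that, assuming $f$ is an hf fibration, the $T_n$-square is an hf homotopy pullback if and only if the $P_n$-square is one. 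Moreover, since $f$ is an hf fibration, Lemma~\ref{hf fibrations} guarantees that the coaugmentation $\dgrm{X}\to \dgrm{X}^{\hf}$, $\dgrm{Y}\to \dgrm{Y}^{\hf}$ is already an objectwise homotopy pullback, which will let me work with objectwise homotopy pullbacks throughout (the targets being homotopy functors up to weak equivalence).

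For the implication ``$T_n$-square pullback $\Rightarrow$ $P_n$-square pullback'', I would iterate the following step. Apply $T_n$ to the objectwise homotopy pullback square $\dgrm{X}\to \dgrm{X}^{\hf}$, $\dgrm{Y}\to \dgrm{Y}^{\hf}$: by Lemma~\ref{Tn and Pn commute with ho(co)lims}, $T_n$ preserves homotopy pullbacks, so $T_n\dgrm{X}, T_n\dgrm{Y}, T_n\dgrm{X}^{\hf}, T_n\dgrm{Y}^{\hf}$ is a homotopy pullback. Pasting with the assumed $T_n$-square pullback yields that $\dgrm{X},\dgrm{Y},T_n\dgrm{X}^{\hf},T_n\dgrm{Y}^{\hf}$ is a homotopy pullback. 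Inductively, applying $T_n$ to the $k$-th such square and pasting again produces a homotopy pullback $\dgrm{X},\dgrm{Y},T_n^{k+1}\dgrm{X}^{\hf},T_n^{k+1}\dgrm{Y}^{\hf}$ for every $k\geq 0$. Finally, Convention~\ref{conv:hf}(4), which ensures that filtered colimits in $\mc{D}$ preserve finite homotopy limits, allows me to pass to the filtered colimit in $k$ to conclude that $\dgrm{X},\dgrm{Y},P_n\dgrm{X},P_n\dgrm{Y}$ is a homotopy pullback.

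For the converse ``$P_n$-square pullback $\Rightarrow$ $T_n$-square pullback'', I would apply $T_n$ to the $P_n$-square, obtaining a homotopy pullback $T_n\dgrm{X}, T_n\dgrm{Y}, T_nP_n\dgrm{X}, T_nP_n\dgrm{Y}$ (again by Lemma~\ref{Tn and Pn commute with ho(co)lims}). By Lemma~\ref{lem:Pn-excisive}(1), $P_n\dgrm{X}$ and $P_n\dgrm{Y}$ are $n$-excisive, so the coaugmentations $P_n\dgrm{X}\to T_nP_n\dgrm{X}$ and $P_n\dgrm{Y}\to T_nP_n\dgrm{Y}$ are objectwise weak equivalences. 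Substituting yields that $T_n\dgrm{X}, T_n\dgrm{Y}, P_n\dgrm{X}, P_n\dgrm{Y}$ is a homotopy pullback. This is the ``right'' half of the rectangle obtained by pasting it alongside the $T_n$-square to yield the $P_n$-square; the pasting lemma for homotopy pullbacks then gives the $T_n$-square as a homotopy pullback.

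The main obstacle is the careful handling of the distinction between ``hf homotopy pullback'' and ``objectwise homotopy pullback'': one must verify that for the relevant corners (homotopy functors like $P_n\dgrm{X}$, or $T_n$ applied to them), hf fibrant replacement does not alter the homotopy pullback condition. This is manageable because $P_n$ produces homotopy functors by construction, $T_n$ preserves homotopy functors up to objectwise weak equivalence, and the hf fibration hypothesis on $f$ upgrades the coaugmentation $\dgrm{X}\to \dgrm{X}^{\hf}$ to an objectwise homotopy pullback via Lemma~\ref{hf fibrations}.
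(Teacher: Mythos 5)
Your proof is correct and takes essentially the same route as the paper; the paper's proof is merely the single sentence ``This is straightforward using the fact that in $\mc{D}$ filtered colimits preserve homotopy pullbacks,'' leaving precisely the iteration-and-pasting bookkeeping that you spell out to the reader. The invocation of \cite[Theorem 9.3]{Bou:telescopic} at the start is a small detour, since the $P_n$-square characterization is already Definition~\ref{def:n-exc-model}; otherwise your decomposition into the forward direction (iterate $T_n$, paste, pass to the filtered colimit via Convention~\ref{conv:hf}(4)) and the converse (apply $T_n$ to the $P_n$-square, use $n$-excisiveness of $P_n\dgrm{X}$ from Lemma~\ref{lem:Pn-excisive}(1), then apply the pasting lemma) is exactly the content the paper's ``straightforward'' is suppressing.
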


\begin{proof}
  This is straightforward using the fact that in \mc{D} filtered
  colimits preserve homotopy pullbacks.
\end{proof}

\begin{remark}
  The proof of Theorem \ref{thm:n-exc-model-str-exists} shows that the $n$-excisive (cr) model structure is a left Bousfield localization of the hf (cr) model structure with respect to the set $\{\xi_{A,n}\,|\, A\in\mc{C}\}$ or equivalently the set 
  \[ \left\{ \hocolim\limits_{U\in \mc{P}_0(\ul{n+1})} R^{U\star A}\to R^A\,|\, A\in\mc{C}\right\}.\]
\end{remark}

\subsection{Excisive functors in several variables}

\begin{definition}\label{def:multi-excisive}
  A functor $\dgrm{X}\co\mc{C}_1\times\dotsm\times\mc{C}_n\to\mc{D}$ is 
  {\it multi-excisive} if for every object $(K_1,\dotsc,\wh{K}_i,\dotsc,K_n)$ 
  in $\mc{C}_1\times\dotsm\times\wh{\mc{C}}_i\times\dotsm\times\mc{C}_n$ 
  the associated partial functor
  \[ \dgrm{X}_{(K_1,\dotsc,\wh{K}_i,\dotsc,K_n)}\co\mc{C}_i\to\mc{D} \]
  is excisive. 
  A functor $\dgrm{X}$ is 
  {\it $(d_1,\dotsc,d_n)$-excisive} if, for every $1\leq i\leq n$,
  the associated partial functor 
  $\dgrm{X}_{(K_1,\dotsc,\wh{K}_i,\dotsc,K_n)}$ is $d_i$-excisive.
\end{definition}

The augmented functor $P_1$ from Definition \ref{def:Pn} can
be applied componentwise 
to functors in $\Fun(\mc{C}_1\times\dotsm\times\mc{C}_n,\mc{D})$.
More precisely, for each object \ul{K} in $\mc{C}_1\times\dotsm\times\mc{C}_n$ and each $1\le i< j\le n$, the diagram
\diagr{ \dgrm{X}(K_1,\dotsc,K_n)\ar[r]\ar[d] & (P_1^{\mc{C}_i}\dgrm{X}_{(K_1,\dotsc,\wh{K}_i,\dotsc,K_n)})(K_i) \ar[d] \\
        (P_1^{\mc{C}_j}\dgrm{X}_{(K_1,\dotsc,\wh{K}_j,\dotsc,K_n)})(K_j) \ar[r] & (P_1^{\mc{C}_j}P_1^{\mc{C}_i}\dgrm{X}_{(K_1,\dotsc,\wh{K}_i,\dotsc,\wh{K}_j,\dotsc,K_n)})(K_i,K_j) 
      }
in \mc{D} is natural in \ul{K} and \dgrm{X}. 
By definition, $P_1^{\mc{C}_i}$ and $P_1^{\mc{C}_j}$ commute with each other 
up to canonical isomorphism, and thus the order $i<j$ is purely cosmetic. 
These diagrams assemble into a \mc{U}-natural transformation
\begin{equation}\label{p_1...1,1}
     p_{1,\dotsc,1}(\dgrm{X})\co\dgrm{X}\to P_{1,\dotsc,1}\dgrm{X}:=P_1^{\mc{C}_1}\dotsm P_1^{\mc{C}_n}\dgrm{X} 
\end{equation}
of functors from $\mc{C}_1\times\dotsm\times\mc{C}_n$ by Propositions \ref{EilKel} and \ref{EiKel2}. If the functor \dgrm{X} is multireduced, then this natural transformation is in fact enriched over \mc{S}.

A functor \dgrm{X} is multi-excisive if and only if the natural transformation $p_{1,\dotsc,1}(\dgrm{X})$ is an objectwise weak equivalence.

\begin{definition}\label{def:excis-funct-sever}
  A map $f\co \dgrm{X}\to \dgrm{Y}$ in $\Fun(\mc{C}_1\times\dotsm\times\mc{C}_n,\mc{D})$ is
  \begin{enumerate}
  \item 
    a {\em multi-excisive equivalence\/} if the map $P_{1,\dotsc,1}(f)$ is an hf equivalence.
  \item
    a {\em multi-excisive fibration\/} if it is an {\em hf fibration\/} such that the square
    \diagr{ \dgrm{X} \ar[r]\ar[d]_-f & P_{1,\dotsc,1}\dgrm{X} \ar[d]^-{P_{1,\dotsc,1}(f)} \\ 
      \dgrm{Y} \ar[r] & P_{1,\dotsc,1}\dgrm{Y}  }
    is a homotopy pullback square in the hf model structure.
  \end{enumerate}
  The multi-excisive cofibrations are the projective cofibrations.
\end{definition}

\begin{theorem}\label{thm:multi-excisive-model-str-on-several-var}
The classes given in Definition~\ref{def:excis-funct-sever} are a right proper cofibrantly generated \mc{S}-model structure on $\Fun(\mc{C}_1\times\dotsm\times\mc{C}_n,\mc{D})$, which is left proper if \mc{D} is left proper.
\end{theorem}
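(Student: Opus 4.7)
The plan is to apply Bousfield's localization machinery \cite[9.2]{Bou:telescopic} to the homotopy functor model structure on $\Fun(\mc{C}_1\times\dotsm\times\mc{C}_n,\mc{D})$ established in Corollary~\ref{cor:homot-funct-sever}, with coaugmented functor $P_{1,\dotsc,1} = P_1^{\mc{C}_1}\circ\dotsm\circ P_1^{\mc{C}_n}$ from~(\ref{p_1...1,1}). This mirrors the proof of Theorem~\ref{thm:n-exc-model-str-exists}, the main difference being that localization is performed simultaneously in several variables.

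First I would verify Bousfield's axiom (A1): each factor $P_1^{\mc{C}_i}$, computed in the $i$-th variable with the remaining variables fixed, preserves objectwise weak equivalences, since $T_1$ does so by construction (it is $\cotensor{\dgrm{A}_1}{-}$ with cr cofibrant first argument), $(-)^{\hf}$ does by Theorem~\ref{thm:homotopy-model}, and filtered colimits preserve weak equivalences in $\mc{D}$ by part~(4) of Convention~\ref{conv:hf}. Hence the composite $P_{1,\dotsc,1}$ preserves objectwise equivalences. Axiom (A2) reduces to checking that for each $i$ the two natural maps
\[\dgrm{X}\to P_1^{\mc{C}_i}\dgrm{X} \to P_1^{\mc{C}_i}P_1^{\mc{C}_i}\dgrm{X}\]
obtained by applying Lemma~\ref{lem:Pn-excisive}(3) in the $i$-th variable with the others held fixed are objectwise weak equivalences; the $P_1^{\mc{C}_i}$ commute with each other up to canonical isomorphism, so one interleaves the factors freely. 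For (A3), an objectwise fibration $p\co \dgrm{Z}\to\dgrm{Y}$ with $\dgrm{Y}$ objectwise fibrant pulled back along an hf equivalence $f\co \dgrm{X}\to\dgrm{Y}$ becomes a homotopy pullback after applying $P_{1,\dotsc,1}$ because every factor $P_1^{\mc{C}_i}$ commutes with finite homotopy limits by Lemma~\ref{Tn and Pn commute with ho(co)lims}. Right properness is then inherited from the hf model structure, and left properness from $\mc{D}$, since the multi-excisive cofibrations agree with the projective cofibrations.

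For cofibrant generation, I would augment a generating set of acyclic cofibrations for the hf model structure by the following set: for each $i\in \ul{n}$ and each $(A_1,\dotsc,A_n)\in \mc{C}_1\times\dotsm\times\mc{C}_n$, form in $\Fun(\mc{C}_1\times\dotsm\times\mc{C}_n,\mc{S})$ the map
\[\xi^{(i)}_{\ul{A},1}\co R^{A_1}\otimes\dotsm\otimes \dgrm{(A_i)}_1\otimes\dotsm\otimes R^{A_n}\longrightarrow R^{A_1}\otimes\dotsm\otimes \cyl(\xi_{A_i,1})\otimes\dotsm\otimes R^{A_n},\]
and take the pushout products $\xi^{(i)}_{\ul{A},1}\,\square\, \iota$ as $\iota$ ranges over generating cofibrations in $\mc{D}$. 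Exactly as in the proof of Theorem~\ref{thm:n-exc-model-str-exists}, using that $\cyl(\xi_{A_i,1})\to R^{A_i}$ is a simplicial homotopy equivalence and invoking the mapping-space calculation, an hf fibration has the right lifting property with respect to $\xi^{(i)}_{\ul{A},1}\,\square\, \iota$ for all $\ul{A}$ and $\iota$ if and only if the square obtained by applying $T_1$ to the $i$-th variable is a homotopy pullback. Requiring this for every $i$ is precisely the multi-excisive fibration condition (modulo replacing $T_1$ by $P_1$ via a filtered colimit, compare Lemma~\ref{lem:n-exc-fib}).

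The main technical obstacle will be in axiom (A3): one must show that applying $P_{1,\dotsc,1}$, a composition of $n$ localization functors, preserves a homotopy pullback square. This is handled by iteratively invoking Lemma~\ref{Tn and Pn commute with ho(co)lims} one variable at a time, noting that part~(4) of Convention~\ref{conv:hf} ensures that filtered colimits commute with the relevant finite homotopy limits, and fibrancy of the base is preserved at each intermediate stage so that strict pullbacks model homotopy pullbacks throughout the iteration.
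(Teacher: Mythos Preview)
Your proposal is correct and follows essentially the same approach as the paper: verify Bousfield's axioms (A1)--(A3) for the coaugmented functor $P_{1,\dotsc,1}$ on the hf model structure, reducing to the single-variable case via the commutativity of the factors $P_1^{\mc{C}_i}$, and construct generating acyclic cofibrations variable-by-variable exactly as in Theorem~\ref{thm:n-exc-model-str-exists}. One small slip: in your treatment of (A3) the map $f$ should be a \emph{multi-excisive} equivalence (i.e., $P_{1,\dotsc,1}(f)$ is an objectwise equivalence), not merely an hf equivalence; with that correction the argument via Lemma~\ref{Tn and Pn commute with ho(co)lims} goes through as you describe.
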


\begin{proof}
  The fact that each functor $P_1^{\mc{C}_i}$ above satisfies properties 
  (A1), (A2) and (A3) by the proof of Theorem~\ref{thm:n-exc-model-str-exists} 
  implies that the composite functor $P_{1,\dotsc,1}$ satisfies them. 
  Generating acyclic cofibrations are constructed as in the proof
  of Theorem~\ref{thm:n-exc-model-str-exists}.
\end{proof}

The corresponding model structure on $\Fun(\mc{C}_1\times\dotsm\times\mc{C}_n,\mc{D})$ is called the {\it multi-excisive model structure} and is denoted
$\Fun(\mc{C}_1\times\dotsm\times\mc{C}_n,\mc{D})_{\mathrm{mlt}\text{-}\mathrm{exc}}$.

\subsection{Multilinear and symmetric multilinear functors}\label{sec:mult-symm-mult}

The natural transformation $p_{1,\dotsc,1}$ from (\ref{p_1...1,1}) 
for functors $\dgrm{X}\co\mc{C}_1\times\dotsm\times\mc{C}_n\to\mc{D}$
will be constructed differently
for functors in $\Fun(\mc{C}_1\wedge\dotsm\wedge\mc{C}_n,\mc{D})$ and $\Sigma_n\wr\mc{C}^{\wedge n}$.
Let $S^n$ denote the $n$-fold smash product of $S^1=\Delta^1/\partial \Delta^1$.
Adapting Convention~\ref{conv:n-exc} slightly, if necessary,
each of the categories $\mc{C}_k$ is closed under tensoring with $S^1$
as a sub-$\mc{S}$-category of the ambient model category $\mc{B}_k$.
In particular, every object $\ul{K}$ has
a functorial suspension 
$\ul{K}\wedge S^1=(K_1\wedge S^1,\dotsc,K_n\wedge S^1)$, allowing the
following observation.

\begin{definition}\label{T_n and P_n}
  For every object $\ul{K}$ in $\mc{C}_1\wedge\dotsm\wedge\mc{C}_n$ there is a natural map:
  \begin{align*}  
    \mc{S}_{\mc{C}_1\wedge\dotsm\wedge\mc{C}_n}(\ul{K}\wedge S^1,\ul{K}\wedge S^1)&
    = \bigwedge_{i=1}^n\mc{S}_{\mc{C}_i}(K_i\wedge S^1,K_{i}\wedge S^1)\bigr) \\
    & \cong\bigwedge_{i=1}^n\mc{S}\bigl(S^1,\mc{S}_{\mc{C}_i}(K_i,K_{i}\wedge S^1)\bigr) \\
    &\toh{f}\mc{S}\bigl(S^n,\bigwedge_{i=1}^n\mc{S}_{\mc{C}_i}(K_i,K_{i}\wedge S^1)\bigr) \\
    &= \mc{S}\bigl(S^n,\mc{S}_{\mc{C}_1\wedge\dotsm\wedge\mc{C}_n}(\ul{K},\ul{K}\wedge S^1)\bigr),
  \end{align*}
  The isomorphism uses that the ambient categories $\mc{B}_i$ are 
  tensored over \mc{S}. The map labelled $f$ is an $n$-fold version
  of the canonical map
  \[ \mc{S}(S^1,L)\wedge \mc{S}(S^1,M) \to \mc{S}(S^1\wedge S^1,L\wedge M). \]
  If \dgrm{X} is a functor in $\Fun(\mc{C}_1\wedge\dotsm\wedge\mc{C}_n,\mc{D})$, the composition above induces a map
  \[ S^n \to \mc{S}_{\mc{C}_1\wedge\dotsm\wedge\mc{C}_n}(\ul{K},\ul{K}\wedge S^1) \to 
  \mc{S}_{\mc{D}}\bigl(\dgrm{X}(\ul{K}),\dgrm{X}(\ul{K}\wedge S^1)\bigr). \]
  that has an adjoint map 
  \[ \dgrm{X}(\ul{K})\to \bigl(\dgrm{X}(\ul{K}\wedge S^1)\bigr)^{S^n} \]
  which is natural in $\ul{K}$. The result is a
  composite natural transformation
  \begin{equation}\label{eq:T_n}
    t_{\dgrm{X}}\colon \dgrm{X} \to T_{(1,\dotsc,1)}(\dgrm{X}) := 
    \Omega^{n}\bigl(\dgrm{X}(\free \wedge S^1)\bigr)
  \end{equation}
  of $\mc{S}$-functors. 
  Let $p_{(1,\dotsc,1)}(\dgrm{X})\colon \dgrm{X} \to P_{(1,\dotsc,1)}(\dgrm{X})$ denote the canonical map to the colimit
  \begin{equation}\label{eq:P_n}   
    \xymatrix{\dgrm{X}\to \dgrm{X}^\hf \ar[r]^-{t_{\dgrm{X}^\hf}} & T_{(1,\dotsc,1)}(\dgrm{X}^\hf) \ar[rr]^-{t_{T_{(1,\dotsc,1)}(\dgrm{X}^\hf)}} &&\dotsm\to\colim\limits_nT^n_{(1,\dotsc,1)}(\dgrm{X}^\hf).}
  \end{equation}
  One can check that $p^*\bigl(p_{(1,\dotsc,1)}(\dgrm{X})\bigr)$ is canonically weakly equivalent to the natural transformation obtained in (\ref{p_1...1,1}). Here, $p$ is the functor from Definition~\ref{def:functor-p}.
\end{definition}

\begin{remark}\label{rem:permutation-action}
  In order to repeat this for symmetric functors, observe
  that the $n$-fold smash product $S^n$ of the unit circle
  carries a natural $\Sigma_n$-action given by permutation.
\end{remark}

\begin{definition}\label{symmetric T_n and P_n}
  For every object $\ul{K}$ in $\Sigma_n\wr\mc{C}^{\wedge n}$ 
  there is a natural map
  \begin{align*}  
    \mc{S}_{\Sigma_n\wr\mc{C}^{\wedge n}}(\ul{K}\wedge S^1,\ul{K}\wedge S^1)&\cong\bigvee_{\sigma}\bigwedge_{i=1}^n\mc{S}\bigl(S^1,\mc{S}_{\mc{C}}(K_i,K_{\sigma^{-1}(i)}\wedge S^1)\bigr) \\
    &\toh{f}\bigvee_{\sigma}\mc{S}\bigl(S^n,\bigwedge_{i=1}^n\mc{S}_{\mc{C}}(K_i,K_{\sigma^{-1}(i)}\wedge S^1)\bigr) \\
    &\toh{g} \mc{S}\bigl(S^n,\mc{S}_{\Sigma_n\wr\mc{C}^{\wedge n}}(\ul{K},\ul{K}\wedge S^1)\bigr),
  \end{align*}
  where the first two maps are as in Definition~\ref{T_n and P_n},
  and the map $g$ is the natural map relating the compositions
  of coproduct and the functor $\mc{S}(S^n,-)$. 
  The induced map
  \[ S^n \to \mc{S}_{\Sigma_n\wr\mc{C}^{\wedge n}}(\ul{K},\ul{K}\wedge S^1) \to 
  \mc{S}_{\mc{D}}\bigl(\dgrm{X}(\ul{K}),\dgrm{X}(\ul{K}\wedge S^1)\bigr) \]
  for any functor \dgrm{X} in $\Fun(\Sigma_n\wr\mc{C}^{\wedge n},\mc{D})$ 
  has as adjoint the map 
  \[ \dgrm{X}(\ul{K})\to \bigl(\dgrm{X}(\ul{K}\wedge S^1)\bigr)^{S^n}, \]
  where the target has the correct $\Sigma_n$-action from
  Definition~\ref{symmetric (co-)tensors} and 
  Remark~\ref{rem:permutation-action}. 
  The resulting map of symmetric $\mc{S}$-functors
  is denoted
  \begin{equation}\label{eq:T_n-sym}
    t_{\dgrm{X}}\colon \dgrm{X} \to T_{(1,\dotsc,1)}(\dgrm{X}) = 
    \bigl(\dgrm{X}(\free \wedge S^1)\bigr)^{S^n}
  \end{equation}
  Let $p_{\dgrm{X}}\colon \dgrm{X} \to P_{(1,\dotsc,1)}(\dgrm{X})$ denote the canonical map to the colimit of the sequence
  \begin{equation}\label{eq:P_n-sym}   
    \xymatrix{\dgrm{X}\to\dgrm{X}^\hf \ar[r]^-{t_{\dgrm{X}^\hf}} &
      T_{(1,\dotsc,1)}(\dgrm{X}^\hf) \ar[rr]^-{t_{T_{(1,\dotsc,1)}(\dgrm{X}^\hf)}} &&\dotsm\to\colim\limits_nT^n(\dgrm{X}^\hf)}=: P_{(1,\dotsc,1)}\dgrm{X} 
  \end{equation}
\end{definition}

\begin{definition}\label{def:mult-symm}
  A functor $\dgrm{X}\co\mc{C}_1\wedge\dotsm\wedge\mc{C}_n\to\mc{D}$ 
  is called {\it multilinear} if it is multi-excisive in the sense 
  of Definition~\ref{def:multi-excisive}, that is,
  if all partial functors are excisive.
  A functor $\dgrm{X}\co\Sigma_n\wr\mc{C}^{\wedge n}\to\mc{D}$ is called 
  {\it multilinear} if $\varepsilon^*\dgrm{X}\co\mc{C}^{\wedge n}\to\mc{D}$ 
  is multilinear.
\end{definition}

In order to treat the categories $\Fun(\mc{C}_1\wedge\dotsm\wedge\mc{C}_n,\mc{D})$ and $\Fun(\Sigma_n\wr\mc{C}^{\wedge n},\mc{D})$ simultaneously, 
the notation in what follows has to be 
interpreted appropriately. 

\begin{remark}
  Let \dgrm{X} be a functor in $\Fun(\mc{C}_1\wedge\dotsm\wedge\mc{C}_n,\mc{D})$ or $\Fun(\Sigma_n\wr\mc{C}^{\wedge n},\mc{D})$. In particular,
  \dgrm{X} is multireduced. Hence, being multi-excisive
  is equivalent to being multilinear. In any case,
  the functor $P_{(1,\dotsc,1)}\dgrm{X}$ is componentwise excisive by 
  construction and the map $\dgrm{X}\to P_{(1,\dotsc,1)}\dgrm{X}$ is 
  initial among maps to multilinear functors in the objectwise 
  homotopy category.
\end{remark}

\begin{definition}\label{def:multilinear-model}
  A map $f\co \dgrm{X}\to \dgrm{Y}$ in $\Fun(\mc{C}_1\wedge\dotsm\wedge\mc{C}_n,\mc{D})$ or $\Fun(\Sigma_n\wr\mc{C}^{\wedge n},\mc{D})$ is
  \begin{enumerate}
  \item 
    a {\em multilinear equivalence\/} if the map $P_{1,\dotsc,1}(f)$ 
    is an hf equivalence, and 
  \item
    a {\em multilinear fibration\/} if it is an hf fibration such that 
    the square
    \diagr{ \dgrm{X} \ar[r]\ar[d]_-f & P_{1,\dotsc,1}\dgrm{X} \ar[d]^-{P_{1,\dotsc,1}(f)} \\ 
      \dgrm{Y} \ar[r] & P_{1,\dotsc,1}\dgrm{Y}  }
    is a homotopy pullback square in the hf model structure.
  \end{enumerate}
  The multilinear cofibrations are the projective ones.
\end{definition}

\begin{theorem}\label{thm:multilinear-model}
  Assume Convention~\emph{\ref{conv:hf}}. The classes described
  in Definition~\emph{\ref{def:multilinear-model}} constitute
  right proper cofibrantly generated \mc{S}-model structures
  on $\Fun(\Sigma_n\wr\mc{C}^{\wedge n},\mc{D})$ and 
  $\Fun(\mc{C}_1\wedge\dotsm\wedge\mc{C}_n,\mc{D})$, respectively. 
  They 
  are left proper if \mc{D} is left proper. The adjoint pairs
  \[ p_{*}\co\Fun(\mc{C}_1\times\dotsm\times\mc{C}_n,\mc{D})_{\mathrm{mlt}\text{-}\mathrm{exc}}\,\rightleftarrows\,\Fun(\mc{C}_1\wedge\dotsm\wedge\mc{C}_n,\mc{D})_{\mathrm{ml}}:\!p^* \]
  and
  \[ \varepsilon_{*}\co\Fun(\mc{C}^{\wedge n},\mc{D})_{\mathrm{mlt}\text{-}\mathrm{exc}}\,\rightleftarrows\,\Fun(\Sigma_n\wr\mc{C}^{\wedge n},\mc{D})_{\mathrm{ml}}:\!\varepsilon^* \]
  are Quillen pairs.
  The functors $p^*$ and $\varepsilon^*$ preserve and detect multilinear 
  equivalences and multilinear fibrations.
\end{theorem}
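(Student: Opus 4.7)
The plan is to apply Bousfield's localization machinery~\cite[Section 9]{Bou:telescopic} to the hf model structures provided by Theorem~\ref{hf-model-str-on-several-var}, using the coaugmented functor $P_{(1,\dotsc,1)}$ from Definitions~\ref{T_n and P_n} and~\ref{symmetric T_n and P_n}. The three axioms (A1)--(A3) are verified in the same spirit as in the proof of Theorem~\ref{thm:n-exc-model-str-exists}. Axiom (A1) holds because $T_{(1,\dotsc,1)}$ is the composition of precomposition by the suspension $\ul{K}\mapsto \ul{K}\wedge S^1$ (a weak equivalence of cofibrant objects) and the cotensor by $S^n$, both of which preserve objectwise weak equivalences, while the filtered colimit~(\ref{eq:P_n}) defining $P_{(1,\dotsc,1)}$ preserves objectwise weak equivalences by Convention~\ref{conv:hf}(\ref{item:fingen}). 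Axiom (A2) amounts to the fact recorded in the remark after Definition~\ref{def:mult-symm}: $P_{(1,\dotsc,1)}\dgrm{X}$ is componentwise excisive and, by multireducedness, therefore multilinear, and the coaugmentation is initial among maps to multilinear functors in the objectwise homotopy category. This reduces via Propositions~\ref{EilKel} and~\ref{EiKel2} to the single-variable statement of Lemma~\ref{lem:Pn-excisive} applied partial functor by partial functor. Axiom (A3) follows from $T_{(1,\dotsc,1)}$ commuting with all homotopy limits (it is a composition of a cotensor and a precomposition functor) together with the commutation of filtered colimits with finite homotopy limits in $\mc{D}$ afforded by Convention~\ref{conv:hf}(\ref{item:fingen}).

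For cofibrant generation, for every $\ul{K}$ I would factor the natural map $R^{\ul{K}\wedge S^1}\wedge S^n \to R^{\ul{K}}$ (whose cotensor with $\dgrm{X}$ reproduces the structural map $\dgrm{X}\to T_{(1,\dotsc,1)}\dgrm{X}$ by the Yoneda Lemma~\ref{twisted Yoneda}) through a simplicial mapping cylinder as a projective (resp.\ cr) cofibration $\zeta_{\ul{K}}$ followed by a simplicial homotopy equivalence, and add the pushout-product set $\{\zeta_{\ul{K}}\,\square\, i\mid i\in I_{\mc{D}}\}$ to a set of generating acyclic cofibrations of the hf model structure. The argument that this set detects exactly the multilinear fibrations parallels the passage around~(\ref{eq:1}) in Theorem~\ref{thm:n-exc-model-str-exists}. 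In particular, the multilinear model structure may be identified as the left Bousfield localization of the hf model structure at the set $\{R^{\ul{K}\wedge S^1}\wedge S^n\to R^{\ul{K}}\}_{\ul{K}}$. Right properness is inherited from the hf model structure since pullbacks are computed objectwise and localization preserves the class of fibrations; left properness when $\mc{D}$ is left proper follows because the localization shares its cofibrations with the left proper hf model structure.

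For the Quillen pair statements, both $p^*$ and $\varepsilon^*$ preserve and detect hf weak equivalences and hf fibrations by the proposition directly following Theorem~\ref{hf-model-str-on-several-var}. The key further input is the pair of canonical natural isomorphisms
\[ p^*\circ P_{(1,\dotsc,1)}\cong P_{(1,\dotsc,1)}\circ p^*, \qquad \varepsilon^*\circ P_{(1,\dotsc,1)}\cong P_{(1,\dotsc,1)}\circ \varepsilon^*, \]
identifying the multilinearization functor on the two sides (the iterated componentwise $P_1^{\mc{C}_i}$ from~(\ref{p_1...1,1}) on the product side, and the colimit~(\ref{eq:P_n}) on the smash-product or wreath-product side). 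Granting these, it is immediate that $p^*$ and $\varepsilon^*$ preserve and detect multilinear equivalences and multilinear fibrations, and the Quillen pair statements fall out from the adjunctions together with the already established Quillen pair structures in the hf setting.

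I expect the main obstacle to be verifying this compatibility. One must identify the simultaneous loop-suspension construction (which uses only the $\mc{S}$-enrichment of the smash-product or wreath-product category) with the iterated componentwise $P_1^{\mc{C}_i}$, which in each variable uses the join construction from Section~\ref{sec:model-categ-vers}. In the symmetric setting there is the additional bookkeeping that the permutation action on $S^n$ appearing in~(\ref{eq:T_n-sym}) must match the wreath-product action on tuples~$\ul{K}$; once settled for $T_{(1,\dotsc,1)}$, passage to the colimit~(\ref{eq:P_n-sym}) is immediate because $p^*$ and $\varepsilon^*$ are precomposition functors and automatically commute with filtered colimits.
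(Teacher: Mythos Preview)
Your proposal is correct and follows essentially the same route as the paper: verify Bousfield's axioms (A1)--(A3) for $P_{(1,\dotsc,1)}$ by reduction to the single-variable case, produce generating acyclic cofibrations by factoring $\tau_{\ul{K}}\colon R^{\ul{K}\wedge S^1}\wedge S^n\to R^{\ul{K}}$ through a mapping cylinder, and derive the Quillen pair statements from the compatibility of $P_{(1,\dotsc,1)}$ with $p^*$ and $\varepsilon^*$. One small correction: the compatibility $p^*\circ P_{(1,\dotsc,1)}\simeq P_{1,\dotsc,1}\circ p^*$ is only a natural \emph{weak equivalence} (as recorded at the end of Definition~\ref{T_n and P_n}), not an isomorphism---the loop-suspension and iterated-$P_1$ constructions are genuinely different models---but a natural weak equivalence is all that is needed for $p^*$ to preserve and detect multilinear equivalences and fibrations, so your argument goes through unchanged.
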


\begin{proof}
  The assertions about $p^*$ and $\varepsilon^*$ are straightforward 
  observations. Further, it is tedious but true that $\varepsilon^*$ 
  maps the symmetric version (\ref{eq:P_n-sym}) of the natural 
  transformation $p_{(1,\dotsc,1)}$ to the non-symmetric
  version (\ref{eq:P_n}), and 
  that $p^*$ maps the \mc{S}-enriched version (\ref{eq:P_n}) to the 
  unpointed version (\ref{p_1...1,1}). 
  The proof then proceeds as the proof of 
  Theorem \ref{thm:n-exc-model-str-exists}.
  In order to describe additional generating acyclic cofibrations,
  let $\ul{K}$ be an object in $\mc{C}$. By Lemma~\ref{lem:T_111-square} 
  below, the map $f$ is a multilinear fibration if and only if the diagram
  \diagram{\dgrm{X}(\ul{K}) \ar[r] \ar[d]& T_{(1,\dotsc,1)}(\dgrm{X})(\ul{K})\ar[d] \\
    \dgrm{Y}(\ul{K}) \ar[r]       & T_{(1,\dotsc,1)}(\dgrm{Y})(\ul{K})}{eq:square}
  is a homotopy pullback diagram in $\mc{D}$ for all $\ul{K}$. 
  The horizontal maps in this square are from~(\ref{eq:T_n}).
  Evaluated at $\ul{K}$, the upper horizontal
  map may be written as follows:
  \[ \cotensor{R^{\ul{K}}}{\dgrm{X}}\cong \dgrm{X}(\ul{K}) \to \bigl(\dgrm{X}(\ul{K}\wedge S^1)\bigr)^{S^n}\cong \cotensor{R^{\ul{K}\wedge S^1}\wedge S^{n}}{\dgrm{X}}\]
  By the $\mc{S}$-Yoneda lemma~\ref{twisted Yoneda}, 
  it is thus induced by a map
  \[ \tau_{\ul{K}}\co R^{\ul{K}\wedge S^1}\wedge S^{n} \to R^{\ul{K}}\]
  of simplicial functors. Hence, the square above
  is isomorphic to the following square:
  \diagr{\cotensor{R^{\ul{K}}}{\dgrm{X}} \ar[r] \ar[d] &
    \cotensor{\bigl(R^{\ul{K}\wedge S^1}\wedge S^{n}\bigr)}{ \dgrm{X}}\ar[d] \\
    \cotensor{R^{\ul{K}}}{\dgrm{Y}} \ar[r] &
    \cotensor{\bigl(R^{\ul{K}\wedge S^1}\wedge S^{n}\bigr)}{ \dgrm{Y}.}}
  Since $\tau_{\ul{K}}$ is a map between 
  projectively cofibrant objects, the simplicial mapping cylinder 
  yields a factorization
  as 
  \[ R^{\ul{K}\wedge S^1}\wedge S^{n}\xrightarrow{j(\ul{K})}
    \cyl(\tau_{\ul{K}}) \xrightarrow{q(\ul{K})}  R^{\ul{K}}\]
  where $j(\ul{K})$ is a projective cofibration, $q(\ul{K})$ 
  is a simplicial homotopy equivalence, and all objects in this 
  factorization are finitely presentable and projectively cofibrant. 
  The square above factors accordingly as follows:
  \diagr{\cotensor{R^{\ul{K}}}{\dgrm{X}} \ar[r] \ar[d] & 
    \cotensor{\cyl(\tau_{\ul{K}})}{\dgrm{X}} \ar[r] \ar[d] &
    \cotensor{\bigl(R^{\ul{K}\wedge S^1}\wedge S^{n}\bigr)}{ \dgrm{X}}\ar[d] \\
    \cotensor{R^{\ul{K}}}{\dgrm{Y}} \ar[r] & 
    \cotensor{\cyl(\tau_{\ul{K}})}{\dgrm{Y}} \ar[r] &
    \cotensor{\bigl(R^{\ul{K}\wedge S^1}\wedge S^{n}\bigr)}{ \dgrm{Y}}}
  Since the map $q(\ul{K})$ is a simplicial homotopy equivalence, 
  then so are the horizontal maps on the left hand square. 
  Since the map $j(\ul{K})$ is a projective cofibration and $f$ 
  is at least an objectwise fibration, the map
  \begin{equation}\label{eq:map} 
    \cotensor{\cyl(\tau_{\ul{K}})}{\dgrm{X}} \to 
    \cotensor{\cyl(\tau_{\ul{K}})}{\dgrm{Y}} 
    \x\limits_{\cotensor{(R^{\ul{K}\wedge S^1}\wedge S^{n})}{ \dgrm{Y}}}
    \cotensor{(R^{\ul{K}\wedge S^1}\wedge S^{n})}{ \dgrm{X}} 
  \end{equation}
  is a fibration in $\mc{D}$. Hence, this fibration is acyclic
  if and only if the square above is
  a homotopy pullback square.
  Because $\mc{D}$ is cofibrantly generated,
  the map in question is an acyclic fibration if and only if it
  has the right lifting property with respect to $I_\mc{D}$. By
  adjointness, the map~(\ref{eq:map}) is thus a weak equivalence
  if and only if $f$ has the right lifting property with respect to the
  set of maps $\{i\,\square\, j(\ul{K})\}_{i\in i_\mc{D}}$.
\end{proof}

The model structures provided by Theorem~\ref{thm:multilinear-model}
are referred to as the {\it  multilinear model structures}, and  
denoted $\Fun(\mc{C}_1\wedge\dotsm\wedge\mc{C}_n,\mc{D})_{\mathrm{ml}}$ 
and $\Fun(\Sigma_n\wr\mc{C}^{\wedge n},\mc{D})_{\mathrm{ml}}$, respectively.
The proof of Theorem~\ref{thm:multilinear-model} shows that they can be seen as left Bousfield localizations.

\begin{lemma}\label{lem:T_111-square}
  A map $f\co\dgrm{X}\to\dgrm{Y}$ is a multilinear fibration if and only if it is an hf fibration and the following diagram
  \diagr{ \dgrm{X} \ar[r]\ar[d]_f & T_{(1,\dotsc,1)}\dgrm{X} \ar[d]^{T_{(1,\dotsc,1)}(f)} \\
    \dgrm{Y} \ar[r] & T_{(1,\dotsc,1)}\dgrm{Y} }
  is an objectwise homotopy pullback square.
\end{lemma}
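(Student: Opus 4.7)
The plan is to mimic the proof of the analogous Lemma~\ref{lem:n-exc-fib}, exploiting the fact that $P_{(1,\dotsc,1)}\dgrm{X}$ is built as a filtered colimit of iterated applications of $T_{(1,\dotsc,1)}$ to $\dgrm{X}^\hf$. Filtered colimits in $\mc{D}$ preserve homotopy pullbacks by Convention~\ref{conv:hf}(\ref{item:fingen}), and so do both $T_{(1,\dotsc,1)}$ (essentially $\Omega^n$ of precomposition with $\free\wedge S^1$) and $(-)^\hf$ (built from filtered left Kan extension and the decent fibrant replacement $\fibr$). I will combine these preservation results with the standard two-out-of-three property of horizontally composed homotopy pullback squares.

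For the forward implication, assume $f$ is an hf fibration whose $T_{(1,\dotsc,1)}$-square is an objectwise homotopy pullback. By Lemma~\ref{hf fibrations}, the $(-)^\hf$-square for $f$ is also an objectwise homotopy pullback. Applying $T_{(1,\dotsc,1)}$ to the $(-)^\hf$-square produces another homotopy pullback square; composing it horizontally with the given $T_{(1,\dotsc,1)}$-square and invoking two-out-of-three shows that the $T_{(1,\dotsc,1)}$-square for $\dgrm{X}^\hf,\dgrm{Y}^\hf$ is an objectwise homotopy pullback. Iterating yields homotopy pullback squares for $T_{(1,\dotsc,1)}^k\dgrm{X}^\hf,T_{(1,\dotsc,1)}^k\dgrm{Y}^\hf$ for every $k\ge 1$, and passing to the sequential filtered colimit recovers the $P_{(1,\dotsc,1)}$-square as an objectwise homotopy pullback. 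Since $P_{(1,\dotsc,1)}\dgrm{X}$ and $P_{(1,\dotsc,1)}\dgrm{Y}$ are homotopy functors, this coincides with being a homotopy pullback in the hf model structure.

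For the converse, assume $f$ is a multilinear fibration, so the $P_{(1,\dotsc,1)}$-square is a homotopy pullback in the hf model structure, equivalently an objectwise homotopy pullback. Applying $T_{(1,\dotsc,1)}$ yields a homotopy pullback square with corners $T_{(1,\dotsc,1)}\dgrm{X}$ and $T_{(1,\dotsc,1)}P_{(1,\dotsc,1)}\dgrm{X}$ (and similarly for $\dgrm{Y}$). Because $P_{(1,\dotsc,1)}\dgrm{X}$ is multilinear, the canonical map $P_{(1,\dotsc,1)}\dgrm{X}\to T_{(1,\dotsc,1)}P_{(1,\dotsc,1)}\dgrm{X}$ is an objectwise weak equivalence, so the comparison square $T_{(1,\dotsc,1)}\dgrm{X}\to P_{(1,\dotsc,1)}\dgrm{X}$ versus $T_{(1,\dotsc,1)}\dgrm{Y}\to P_{(1,\dotsc,1)}\dgrm{Y}$ is an objectwise homotopy pullback. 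The $P_{(1,\dotsc,1)}$-square is the horizontal composition of the $T_{(1,\dotsc,1)}$-square with this comparison square, so two-out-of-three identifies the $T_{(1,\dotsc,1)}$-square as an objectwise homotopy pullback.

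The most delicate point is the identification of hf and objectwise homotopy pullbacks for the $P_{(1,\dotsc,1)}$-square, since only the right column consists \emph{a priori} of homotopy functors. This is settled by observing that passing from $\dgrm{X}$ to $\dgrm{X}^\hf$ (and similarly for $\dgrm{Y}$) is controlled by Lemma~\ref{hf fibrations}: under the hf fibration hypothesis on $f$, the $(-)^\hf$-square is an objectwise homotopy pullback, which precisely bridges the two notions.
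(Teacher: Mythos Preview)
Your overall strategy matches the paper's one-line proof, and your converse direction is fine. The forward direction, however, has a genuine gap: you deduce that the $T_{(1,\dotsc,1)}$-square for $\dgrm{X}^\hf,\dgrm{Y}^\hf$ is a homotopy pullback from the fact that the composite square $\dgrm{X}\to T_{(1,\dotsc,1)}\dgrm{X}^\hf$ over $\dgrm{Y}\to T_{(1,\dotsc,1)}\dgrm{Y}^\hf$ and its left factor (the $(-)^\hf$-square) are. But the pasting lemma for homotopy pullbacks only says that if the \emph{right} square is cartesian, then the left one is iff the composite is; the direction ``left $+$ composite $\Rightarrow$ right'' fails in general (for instance in spaces, insert the column $\ast\sqcup S^1\to S^0$, with $\ast\mapsto +$ and $S^1\mapsto -$, between two copies of the identity square on $\ast\to\ast$). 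Nothing in the hypotheses rules this out here, since $\dgrm{Y}(\ul{K})\to\dgrm{Y}^\hf(\ul{K})$ need not hit every component.

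The fix is to bypass that square and prove directly, by induction on $k\ge 0$, that the square $\dgrm{X}\to T_{(1,\dotsc,1)}^k\dgrm{X}^\hf$ over $\dgrm{Y}\to T_{(1,\dotsc,1)}^k\dgrm{Y}^\hf$ is an objectwise homotopy pullback. For $k=0$ this is the $(-)^\hf$-square. For $k\ge 1$, naturality of $t$ factors the horizontal map as $\dgrm{X}\xrightarrow{t_{\dgrm{X}}} T_{(1,\dotsc,1)}\dgrm{X}\to T_{(1,\dotsc,1)}^k\dgrm{X}^\hf$, the second arrow being $T_{(1,\dotsc,1)}$ applied to the map from step $k-1$. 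Both factor squares are now homotopy pullbacks (the left by hypothesis, the right since $T_{(1,\dotsc,1)}$ preserves them), and this is pasting in the valid direction. The filtered colimit over $k$ then gives the $P_{(1,\dotsc,1)}$-square, as you intended.
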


\begin{proof}
  This is straightforward using the fact that in \mc{D} filtered
  colimits preserve homotopy pullbacks.
\end{proof}

\subsection{Coefficient spectra}\label{sec:coefficient-spectra}

The aim of this section is to connect the multilinear
model category of symmetric functors with the model 
category of spectra with an action of a symmetric group.
 
\begin{definition}
  Let $\Sp(\mc{D})$ denote the category of {\it Bousfield-Friedlander spectra} in the $\mc{S}$-model category $\mc{D}$ as defined by Schwede \cite{Schwede:cotangent}.
  An object in $\Sp(\mc{D})$ is a sequence $(E_0,E_1,\dotsc)$ of objects in $\mc{D}$, together with structure maps $\sigma_n^E\colon \Sigma E_n\to E_{n+1}$. A morphism of such objects is a sequence of morphisms in $\mc{D}$ commuting strictly with the structure maps. 
  A map $f\co E\to F$ in $\Sp(\mc{D})$ is called
  \begin{enumerate}
  \item 
    a {\em stable equivalence\/} of spectra in $\mc{D}$ if $QE\to QF$ is a levelwise equivalence where $Q$ is a certain model for ``$\Omega$-spectrum'' given in \cite[p. 90]{Schwede:cotangent}.
  \item
    a {\em projective cofibration} if the map $E_0\to F_0$ and for all $n\ge 1$ the maps
    $$ E_n\vee_{E_{n-1}}F_{n-1}\to F_n $$
    are cofibrations in \mc{D}.
  \end{enumerate} 
\end{definition}

\begin{theorem}[Schwede]\label{thm:schwede-stable}
  Suppose that $\mc{D}$ satisfies parts~\emph{(3)} and~\emph{(4)} of 
  Convention~\emph{\ref{conv:hf}}. Then 
  there is an $\mc{S}$-model structure on $\Sp(\mc{D})$ with stable equivalences
  as weak equivalences and projective cofibrations as cofibrations.
  It satisfies itself Convention~\emph{\ref{conv:hf}}.
  The $\mc{S}$-functor $\Ev_0\co\Sp(\mc{D})\to \mc{D},\, E\mapsto E_0$ 
  is a right Quillen functor.
\end{theorem}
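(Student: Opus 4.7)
The bulk of this theorem is a direct appeal to Schwede's construction in \cite{Schwede:cotangent}. The plan is therefore to cite Schwede for the existence and basic properties of the stable model structure, and then to verify the additional items that are not explicit in \cite{Schwede:cotangent}, namely that the resulting model category is enriched over $\mc{S}$, that it satisfies Convention~\ref{conv:hf}, and that $\Ev_0$ is right Quillen.

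First I would invoke Schwede's main construction to get the existence of the cofibrantly generated model structure on $\Sp(\mc{D})$ with stable equivalences and projective cofibrations as indicated; generating cofibrations arise from $I_\mc{D}$ applied via the evaluation--free-spectrum adjunctions $F_n\dashv \Ev_n$, and the fact that $\mc{D}$ is right proper and cofibrantly generated gives right properness and cofibrant generation of $\Sp(\mc{D})$. For the $\mc{S}$-enrichment, the spectrum category $\Sp(\mc{D})$ is tensored and cotensored over $\mc{S}$ levelwise: for $K\in\mc{S}$ and $E\in\Sp(\mc{D})$, set $(K\wedge E)_n := K\wedge E_n$ with structure maps induced from those of $E$. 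The pushout-product axiom with respect to projective cofibrations and generating cofibrations in $\mc{S}$ reduces at each level to the pushout-product axiom in $\mc{D}$, and compatibility with stable equivalences follows because a levelwise acyclic cofibration in $\mc{D}$ remains so after smashing with any $K\in\mc{S}$ by $\mc{S}$-left properness, and Schwede's $Q$-construction commutes with smashing with a pointed simplicial set up to stable equivalence.

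Next I would verify that $\Sp(\mc{D})$ itself satisfies Convention~\ref{conv:hf}(3),(4). Part (3) is immediate: cofibrant generation comes from Schwede, and right properness is inherited from $\mc{D}$ since pullbacks, cofibrations, and fibrations have levelwise descriptions, and stable equivalences are preserved by pullback along stable fibrations because the $Q$-functor preserves homotopy pullbacks. For part (4), the key observation is that Schwede's $Q$ is built as a sequential colimit of a functor assembled from levelwise loops and shift; since filtered colimits in $\mc{D}$ commute with $\Omega$ (as they preserve fibrations with fibrant codomain and pullbacks by Convention~\ref{conv:hf}(4) applied to $\mc{D}$), $Q$ commutes up to weak equivalence with filtered colimits. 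Stable equivalences are thus detected by a filtered-colimit-preserving construction into levelwise equivalences, which shows they are closed under filtered colimits. The analogous argument handles fibrations with fibrant codomain, which are characterized by a level-fibration condition together with a homotopy pullback condition involving $\Omega$ and the structure maps.

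Finally, $\Ev_0$ has left adjoint $F_0\co\mc{D}\to\Sp(\mc{D})$ given by the suspension spectrum $F_0(X)_n = \Sigma^n X$ with the identity structure maps. By the description of projective cofibrations, $F_0$ sends cofibrations in $\mc{D}$ to projective cofibrations in $\Sp(\mc{D})$, and since $\mc{D}$ is $\mc{S}$-left proper and $\Sigma$ preserves acyclic cofibrations, $F_0$ sends acyclic cofibrations to levelwise acyclic cofibrations, hence to stable equivalences. This makes $(F_0,\Ev_0)$ a Quillen adjunction. The main technical obstacle I anticipate is the verification of Convention~\ref{conv:hf}(4) for $\Sp(\mc{D})$, since it requires tracking Schwede's $Q$-construction carefully enough to see that all the filtered-colimit-preservation properties transfer from $\mc{D}$ to $\Sp(\mc{D})$; everything else is either cited or straightforward levelwise verification.
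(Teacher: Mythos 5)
Your overall strategy --- invoke Schwede \cite{Schwede:cotangent} for existence and then verify the remaining claims --- matches the paper's approach in spirit, though the paper's own proof is much thinner: it simply cites Schwede, records that $\{\mathrm{Fr}_k(i)\}$ is a set of generating cofibrations, and notes that $\Ev_0$ commutes with all limits and colimits. The one substantive point in the paper's proof that your proposal does not address is the mismatch of hypotheses: \cite[Prop.~2.1.5]{Schwede:cotangent} assumes the underlying model category is \emph{proper}, whereas Convention~\ref{conv:hf}(3) only guarantees that $\mc{D}$ is right proper. The paper explicitly flags this and invokes \cite{Bou:telescopic} to relax Schwede's hypothesis to right properness. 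Your proposal asserts that right properness and cofibrant generation of $\mc{D}$ straightforwardly yield the same for $\Sp(\mc{D})$ without explaining why Schwede's Bousfield--Friedlander-type localization even exists under the weaker hypothesis; this is the main gap.

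A secondary inaccuracy: in two places you invoke ``$\mc{S}$-left properness'' to conclude that smashing with a pointed simplicial set $K$ preserves (acyclic) cofibrations. That already follows from the pushout-product axiom for an $\mc{S}$-model category, applied to the cofibration $\ast\to K$, and requires no properness hypothesis at all; the appeal to properness here is a red herring. The rest of your verification --- the levelwise $\mc{S}$-tensoring, the commutation of Schwede's $Q$ with filtered colimits via Convention~\ref{conv:hf}(4), and the Quillen adjunction $(\mathrm{Fr}_0,\Ev_0)$ --- is a reasonable and in fact more explicit elaboration of what the paper leaves implicit, and would be a useful supplement once the properness issue is addressed.
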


The proof can be found in \cite{Schwede:cotangent}, with the
modification that the assumption on properness appearing in
\cite[Prop.~2.1.5]{Schwede:cotangent} may be relaxed
to right properness by \cite{Bou:telescopic}. 
The following ingredients of the proof are relevant later: If $\{i\}_{i\in I_{\mc{D}}}$ is a set of generating cofibrations for $\mc{D}$, then $\{\mathrm{Fr}_k(i)\}_{k\in \mathbbm{N}, i\in I_{\mc{D}}}$ is a set of generating cofibrations for $\Sp(\mc{D})$. Here the functor $\mathrm{Fr}_k\co\mc{D}\to\Sp(\mc{D})$ is left adjoint to evaluating at the $k$-th level, explicitly
   $$ \bigl(\mathrm{Fr}_k(D)\bigr)_{\ell}=\left\{\begin{array}{cl} 
                                     \ast,&\text{ for }0\le\ell<k \\
                                     \Sigma^{\ell-k}D,&\text{ for }\ell\ge k
                                           \end{array}\right.$$
The functor $\Ev_0$ commutes with all limits and colimits. 
It is worth mentioning that $\Fun\bigl(\mc{C},\Sp(\mc{D})\bigr)$
is canonically isomorphic to $\Sp\bigl(\Fun(\mc{C},\mc{D})\bigr)$.

\begin{theorem}\label{thm:symm-ev-eq}
  Composing with $\mathrm{Fr}_0$ and $\Ev_0$ induces a Quillen equivalence: 
  \[ F\co\Fun(\Sigma_n\wr\mc{C}^n,\mc{D})_{\mathrm{ml}}\rightleftarrows\Fun(\Sigma_n\wr\mc{C}^n,\Sp(\mc{D}))_{\mathrm{ml}}\!:G \]
\end{theorem}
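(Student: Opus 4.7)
The plan is to apply Hovey's criterion \cite[Cor.~1.3.16]{Hov:model}: it suffices to verify that $G$ reflects weak equivalences between fibrant objects and that the derived unit is a multilinear weak equivalence for every cofibrant object. The engine of the argument is the identity $\Ev_0\circ\mathrm{Fr}_0=\Id_{\mc{D}}$, together with the fact that $\Ev_0$ preserves all limits, colimits and simplicial cotensors (loops in $\Sp(\mc{D})$ being computed levelwise). A direct consequence is that $\Ev_0$ strictly commutes with the homotopy functor construction $(-)^{\hf}$ (built from a left Kan extension and the decent fibrant replacement in the source $\mc{B}$) and with the multilinearization $P_{(1,\ldots,1)}$ (a filtered colimit of iterates of $T_{(1,\ldots,1)}\dgrm{X}=\bigl(\dgrm{X}(\free\wedge S^1)\bigr)^{S^n}$ composed with $(-)^{\hf}$). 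Using these commutations and Lemma~\ref{lem:T_111-square}, $G$ preserves multilinear fibrations and trivial fibrations, so $(F,G)$ is a Quillen pair.

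For the derived unit, let $\dgrm{X}$ be projectively cofibrant. A multilinear fibrant replacement $RF\dgrm{X}$ of $F\dgrm{X}=\mathrm{Fr}_0\circ\dgrm{X}$ is given up to weak equivalence by $P_{(1,\ldots,1)}(F\dgrm{X})$; applying $G$ and using the commutation properties together with $\Ev_0\circ\mathrm{Fr}_0=\Id_{\mc{D}}$ yields
\[ G(RF\dgrm{X})\simeq\Ev_0 P_{(1,\ldots,1)}(\mathrm{Fr}_0\circ\dgrm{X})\cong P_{(1,\ldots,1)}(\Ev_0\circ\mathrm{Fr}_0\circ\dgrm{X})=P_{(1,\ldots,1)}(\dgrm{X}). \]
The derived unit is thereby identified with the canonical multilinear weak equivalence $\dgrm{X}\to P_{(1,\ldots,1)}(\dgrm{X})$.

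The substance lies in verifying that $G$ reflects weak equivalences between fibrant objects. Let $f\co\dgrm{Y}_1\to\dgrm{Y}_2$ be a map of multilinear fibrant functors with $Gf$ a multilinear weak equivalence. Since multilinear weak equivalences between fibrant objects are objectwise, and since each $\dgrm{Y}_i(\ul{K})$ is a fibrant spectrum (an $\Omega$-spectrum in $\mc{D}$) by objectwise fibrancy in $\Sp(\mc{D})$, the hypothesis reads: $f(\ul{K})_0$ is a weak equivalence in $\mc{D}$ for every $\ul{K}$; one must upgrade this to $f(\ul{K})_\ell$ being a weak equivalence for every $\ul{K}$ and every $\ell\geq 0$. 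Iterating the multilinearity equivalence $\ell$ times supplies a natural weak equivalence
\[ \dgrm{Y}_i(\ul{K})_\ell\xrightarrow{\simeq}\Omega^{\ell n}\dgrm{Y}_i(\ul{K}\wedge S^\ell)_\ell, \]
while iterating the adjoint $\Omega$-spectrum structure maps $\ell$ times and then applying $\Omega^{\ell(n-1)}$ (permissible since $n\geq 1$) supplies a natural weak equivalence
\[ \Omega^{\ell(n-1)}\dgrm{Y}_i(\ul{K}\wedge S^\ell)_0\xrightarrow{\simeq}\Omega^{\ell n}\dgrm{Y}_i(\ul{K}\wedge S^\ell)_\ell. \]
Naturality in $\dgrm{Y}$ produces a commutative zig-zag of weak equivalences relating $f(\ul{K})_\ell$ to $\Omega^{\ell(n-1)}f(\ul{K}\wedge S^\ell)_0$; because $\Omega$ preserves weak equivalences between fibrant objects of $\mc{D}$ and each $\dgrm{Y}_i(\ul{K}\wedge S^\ell)_0$ is fibrant, the latter map is a weak equivalence by hypothesis, and two-out-of-three forces $f(\ul{K})_\ell$ to be one as well. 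This last step is the main obstacle: it requires blending the two ``stabilities'' present in $\Sp(\mc{D})$-valued multilinear functors---multilinearity in the source variables and the $\Omega$-spectrum structure in the target---into a coherent reduction to level zero, with the $\Sigma_n$-actions on $S^n$ requiring extra bookkeeping in the symmetric setup but not affecting the weak equivalence statement in $\mc{D}$.
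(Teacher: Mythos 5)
Your proof takes a genuinely different route from the paper's. The paper establishes the Quillen pair by observing that $\Ev_0$ commutes with $(-)^{\hf}$ and $T_{(1,\ldots,1)}$, and then simply cites the argument of Goodwillie's \cite[Prop.~3.7]{Goo:calc3} to conclude that the total derived functor of $G$ is an equivalence. You instead invoke Hovey's criterion and verify its two hypotheses directly. Your verification that $G$ reflects weak equivalences between multilinear fibrant objects is correct and well-organized: the interplay between multilinearity (which shifts level $\ell$ to $\Omega^{\ell n}$ of level $\ell$ at a suspended argument) and the $\Omega$-spectrum structure (which reduces to level $0$) is exactly the mechanism underlying Goodwillie's proof, and you have extracted it cleanly and without connectivity hypotheses. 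This is a worthwhile, self-contained alternative to the paper's citation.

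However, there is a gap in the derived unit step. You write
\[ G(RF\dgrm{X})\simeq\Ev_0 P_{(1,\ldots,1)}(\mathrm{Fr}_0\circ\dgrm{X}), \]
but this implicitly applies $\Ev_0$ to the objectwise stable equivalence $P_{(1,\ldots,1)}(F\dgrm{X})\to RF\dgrm{X}$. That map relates a functor whose values need not be $\Omega$-spectra to one whose values are, and $\Ev_0$ does not preserve stable equivalences in that generality: the level-$0$ space of $\mathrm{Fr}_0 D$ is $D$, whereas the level-$0$ space of a fibrant replacement is a $Q$-like stabilization of $D$. For $n=1$ one can check that the colimit defining $P_{(1,\ldots,1)}(F\dgrm{X})(\ul{K})$ reindexes so that its adjoint structure maps are isomorphisms at the colimit, but for $n>1$ this is not automatic, and in a general $\mc{D}$ one cannot appeal to Freudenthal-type connectivity. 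This is precisely the point the delooping argument of Goodwillie resolves. To repair the step, you would need either to verify explicitly that $P_{(1,\ldots,1)}(F\dgrm{X})$ is objectwise a weak $\Omega$-spectrum (so that $\Ev_0$ sees the correct homotopy type), or to replace the unit computation with a delooping-style argument in the spirit of the paper's Lemma~\ref{deloop:step3}, constructing directly a spectrum-valued functor $\mathbf{R}$ with $G\circ\mathbf{R}\simeq\Id$ on multilinear fibrant objects. As written, the claim ``$RF\dgrm{X}$ is given up to weak equivalence by $P_{(1,\ldots,1)}(F\dgrm{X})$, now apply $G$'' conflates a weak equivalence in the source of $G$ with one that $G$ is entitled to preserve.
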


\begin{proof}
  Since the functor $\Ev_0\co\Sp(\mc{D})\to\mc{D}$ preserves objectwise fibrations and objectwise acyclic fibrations, the same is true for $G$. Hence, $G$ is a right Quillen functor for the projective model structures. 
  The functor $\Ev_0$ commutes with all limits, colimits, and is a right Quillen functor. 
  Hence $\Ev_0\colon \Sp(\mc{D}) \to \mc{D}$ commutes up to natural weak equivalence with $\dgrm{X}\to\dgrm{X}^\hf$ and $T_{(1,\dotsc,1)}$. In particular, the induced functor $G$ is a right Quillen functor on the homotopy functor and the multilinear model structures.
  A right Quillen functor is a Quillen equivalence
  if and only if its total right derived functor is an equivalence.
  The proof of \cite[Prop.~3.7]{Goo:calc3}, which states
  that $\Ev_0$ induces an equivalence on the
  (naive) homotopy categories of multilinear functors,
  extends to the setup here, which
  concludes the proof.
\end{proof}

\begin{corollary}\label{ml-stable}
  The multilinear model structure on symmetric functors is stable. 
\end{corollary}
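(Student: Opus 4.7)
The strategy is to reduce stability of $\Fun(\Sigma_n\wr\mc{C}^n,\mc{D})_{\mathrm{ml}}$ to stability of $\Fun(\Sigma_n\wr\mc{C}^n,\Sp(\mc{D}))_{\mathrm{ml}}$ using the Quillen equivalence $(F,G)$ provided by Theorem~\ref{thm:symm-ev-eq}. Stability of a model category is an invariant of its homotopy category together with the derived suspension-loop adjunction, and a Quillen equivalence induces an equivalence of homotopy categories intertwining the suspension functors. Hence this reduction is legitimate, and it suffices to establish stability of the spectrum-valued side.

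For the spectrum-valued side I would exploit that Bousfield--Friedlander spectra $\Sp(\mc{D})$ are a stable model category by Theorem~\ref{thm:schwede-stable}: the suspension-loop adjunction on $\Sp(\mc{D})$ is a Quillen equivalence, so the unit $E\to \Omega\Sigma E$ and counit $\Sigma\Omega E\to E$ are stable equivalences for every stably fibrant spectrum $E$. Since suspension and looping on $\Fun(\Sigma_n\wr\mc{C}^n,\Sp(\mc{D}))$ are computed pointwise (they are the tensor and cotensor with $S^1$ coming from the $\mc{S}$-enrichment), the analogous unit and counit are pointwise stable equivalences whenever the source is pointwise stably fibrant. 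Every multilinear fibrant $\dgrm{X}$ is in particular pointwise stably fibrant, and pointwise stable equivalences are in particular multilinear equivalences because the multilinear model structure is a left Bousfield localization of the projective one.

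To promote this to a statement about Quillen equivalences one must check that $\Sigma\dashv \Omega$ is a Quillen pair on the multilinear model structure (not merely on the projective one). Both functors are already Quillen on the projective model structure; it remains to verify that $\Omega$ preserves multilinear fibrations between fibrant objects. This follows from Lemma~\ref{lem:T_111-square}: $\Omega$ commutes with $T_{(1,\dotsc,1)}$ up to natural isomorphism and sends objectwise homotopy pullback squares to objectwise homotopy pullback squares. Combined with the compatibility of filtered colimits with homotopy pullbacks in $\mc{D}$ (part~(4) of Convention~\ref{conv:hf}), one obtains a natural weak equivalence $\Omega\circ P_{(1,\dotsc,1)}\simeq P_{(1,\dotsc,1)}\circ \Omega$, so $\Omega$ preserves multilinear equivalences between fibrant objects.

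The main obstacle is precisely this last compatibility check between $\Omega$ and the localization functor $P_{(1,\dotsc,1)}$; everything else is formal transport through the Quillen equivalence of Theorem~\ref{thm:symm-ev-eq}. Once $(\Sigma,\Omega)$ is a Quillen adjunction on the multilinear model structure, the pointwise stable equivalences $\dgrm{X}\to \Omega\Sigma\dgrm{X}$ and $\Sigma\Omega\dgrm{X}\to \dgrm{X}$ on multilinear fibrant objects exhibit $(\Sigma,\Omega)$ as a Quillen equivalence, which is the desired stability.
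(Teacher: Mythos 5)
Your proposal is correct and follows the route the paper intends: Corollary~\ref{ml-stable} is stated without proof because it is meant to fall out of Theorem~\ref{thm:symm-ev-eq}, and you have correctly supplied the missing transport argument (a Quillen equivalence of pointed model categories preserves the derived $\Sigma\dashv\Omega$ adjunction, hence preserves stability) together with the observation that the spectrum-valued side is stable because $\Sp(\mc{D})$ is stable and $\Omega$, being a finite homotopy limit, commutes with $T_{(1,\dotsc,1)}$ and with the filtered colimit defining $P_{(1,\dotsc,1)}$ by Convention~\ref{conv:hf}(\ref{item:fingen}).
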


A suitable evaluation functor connects
symmetric functors directly with spectra having
a symmetric group action. In order to describe it, recall
from Notation~\ref{not:functor-cat} that
the category $\Sp(\mc{D})^{\Sigma_n}$ is the category of 
functors $\Sigma_n\to \Sp(\mc{D})$, where $\Sigma_n$ is viewed as a 
category with one object. In other words, an object in $\Sp(\mc{D})^{\Sigma_n}$ 
is a spectrum with a right $\Sigma_n$-action. Since the stable model 
structure on $\Sp(\mc{D})$ is cofibrantly generated, the category 
$\Sp(\mc{D})^{\Sigma_n}$ carries a cofibrantly generated model structure 
with fibrations and weak equivalences defined on underlying spectra. 
This is sometimes called the model structure for ``naive $\Sigma_n$-spectra''.

\begin{definition}\label{def:eval-spectra}
  Precomposition with the symmetric diagonal \mc{S}-functor 
  \[\Delta_n\co\Sigma_n\times\mc{C}\to\Sigma_n\wr\mc{C}^{\wedge n}\]
  as introduced in the proof of Lemma~\ref{Lcross formula}, defines a functor 
  \[ \Delta_n^*\co\Fun(\Sigma_n\wr\mc{C}^{\wedge n},\Sp(\mc{D}))\to\Fun(\Sigma_n\times\mc{C},\Sp(\mc{D}))\cong\Fun(\mc{C},\Sp(\mc{D})^{\Sigma_n}). \]
  Evaluating at an object $C$ in \mc{C} induces the functor
  \[ \ev_C\co\Fun(\mc{C},\Sp(\mc{D})^{\Sigma_n})\to\Sp(\mc{D})^{\Sigma_n}. \]
  The composition is denoted
  \[ \Ev_C=\ev_C\circ\Delta_n^*\co\Fun(\Sigma_n\wr\mc{C}^{\wedge n},\Sp(\mc{D}))\to\Sp(\mc{D})^{\Sigma_n}. \]
  As a composition of two right adjoint functors, the functor $\Ev_C$ has 
  a left \mc{S}-adjoint denoted by 
  \[ \LEv_{C}\colon \Sp(\mc{D})^{\Sigma_n} \to 
  \Fun\bigl(\Sigma_n\wr(\mc{C})^{\wedge n},\Sp(\mc{D})\bigr). \]
\end{definition}

\begin{theorem}\label{coefficient-spectra-equivalence}
  Suppose that \mc{C} is the category $\Sfin$ of finite pointed simplicial 
  sets.
  The functor
  \[  \LEv_{S^0}\colon \Sp(\mc{D})^{\Sigma_n} \to 
           \Fun\bigl(\Sigma_n\wr(\Sfin)^{\wedge n},\Sp(\mc{D})\bigr)_{\mathrm{ml}} \]
  is a left Quillen equivalence.
\end{theorem}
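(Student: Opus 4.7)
The plan is to verify the Quillen equivalence in three steps: establish the Quillen pair property for the multilinear model structure, identify $\LEv_{S^0}$ explicitly via the enriched Yoneda lemma, and then check the derived unit is a stable equivalence and that $\Ev_{S^0}$ reflects weak equivalences between multilinear fibrant objects.

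For the Quillen pair, imitating the argument of Lemma~\ref{Lcross formula} via the $\mc{S}$-enriched Yoneda lemma~\ref{twisted Yoneda} and the identity $\mc{S}_{\Sfin}(S^0,\free)=\Id_{\Sfin}$ gives
\[ \LEv_{S^0}(E)(K_1,\ldots,K_n)\cong K_1\wedge\cdots\wedge K_n\wedge E, \]
with $\Sigma_n$ acting diagonally by permuting the smash factors and on $E$. Since smashing with cofibrant finite pointed simplicial sets preserves cofibrations and stable equivalences in $\Sp(\mc{D})$, the functor $\LEv_{S^0}$ preserves projective cofibrations (hence multilinear cofibrations) and maps stable equivalences to objectwise stable equivalences; in particular $\LEv_{S^0}$ is left Quillen into the multilinear model structure.

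The derived unit at a cofibrant $E$ may be computed explicitly by iterating the construction $T_{(1,\ldots,1)}$ from Definition~\ref{symmetric T_n and P_n} applied to $\LEv_{S^0}(E)$ and evaluating at $(S^0,\ldots,S^0)$:
\[ T^k_{(1,\ldots,1)}\bigl(\LEv_{S^0}(E)\bigr)(S^0,\ldots,S^0)\cong\Omega^{kn}\bigl(S^{kn}\wedge E\bigr). \]
The derived unit $E\to \Ev_{S^0}(R\LEv_{S^0}(E))$ is thus canonically identified with the colimit of the spectrification sequence $E\to\Omega^n(S^n\wedge E)\to\Omega^{2n}(S^{2n}\wedge E)\to\dotsm\,$, which is a stable equivalence in $\Sp(\mc{D})^{\Sigma_n}$ because $\Sp(\mc{D})$ is already a model for stable $\mc{D}$-spectra.

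The main obstacle is the reflection property, which is the model-categorical incarnation of Goodwillie's classification of multilinear functors~\cite[Prop.~3.7]{Goo:calc3}: for any multilinear fibrant $\dgrm{Y}$ and every $\ul{K}\in \Sfin^{\wedge n}$, the value $\dgrm{Y}(K_1,\ldots,K_n)$ is stably equivalent, naturally in $\ul{K}$, to $K_1\wedge\cdots\wedge K_n\wedge \dgrm{Y}(S^0,\ldots,S^0)$. This is proved by induction on the cell structure of each $K_i$: the multilinearity equivalence $\dgrm{Y}(\ul{K})\simeq\Omega^{kn}\dgrm{Y}(\ul{K}\wedge S^k)$ handles the case where the $K_i$ are spheres, while multilinearity in each variable together with the associated homotopy cofibre sequences in $\Sp(\mc{D})$ handles cell attachments. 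Given this classification, any map $f\co\dgrm{Y}\to\dgrm{Z}$ between multilinear fibrant functors which becomes a stable equivalence after applying $\Ev_{S^0}$ is, by naturality of the classification isomorphism, objectwise a stable equivalence, hence a multilinear equivalence; combined with the derived unit calculation this completes the proof.
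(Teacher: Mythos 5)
Your proof is correct and follows essentially the same strategy as the paper's: an explicit identification of $\LEv_{S^0}$ as $E\mapsto E\wedge K_1\wedge\dotsm\wedge K_n$, the derived-unit computation by iterating $T_{(1,\dotsc,1)}$ to recover the spectrification sequence, and detection of multilinear equivalences by cell induction using linearity. The only cosmetic difference is that the paper verifies the Quillen pair by observing that $\Ev_{S^0}=\ev_{S^0}\circ\Delta_n^*$ is a composite of right Quillen functors, and it phrases the reflection step directly (reducing to $n=1$ and showing $f(S^0)$ a weak equivalence implies $f(K)$ a weak equivalence by cell induction) rather than via the stronger intermediate classification $\dgrm{Y}(\ul{K})\simeq K_1\wedge\dotsm\wedge K_n\wedge\dgrm{Y}(\ul{S^0})$, but the underlying mechanism is identical.
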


\begin{proof}
  Choosing  $S^0\in \mc{C}=\Sfin$ yields the functor
  \[ \Ev_{\ul{S^0}}\colon \Fun\bigl(\Sigma_n\wr(\Sfin)^{\wedge n},\Sp(\mc{D})\bigr)\to\Sp(\mc{D})^{\Sigma_n}. \]
  Explicitly, it is given by
  $ \dgrm{X}\mapsto\dgrm{X}(S^0,\dotsc,S^0)$
  with $\Sigma_n$-action induced by permuting 
  the $n$-tuple $(S^0,\dotsc,S^0)$. Its left adjoint $\LEv_{\ul{S^0}}$
  sends a $\Sigma_n$-spectrum $E$ to the symmetric functor
  \[ \xymatrix{\ul{K}=(K_1,\dotsc,K_n) \ar@{|->}[r] &\LEv_{\ul{S^0}}(\ul{K})= E\wedge K_1 \wedge \dotsm \wedge K_n}\]
  having the following effect on morphism spaces:
  \[ \xymatrix{
    \bigvee_{\sigma\in \Sigma_n} \bigwedge_{i=1}^n\mc{S}(K_i,L_{\sigma^{-1}(i)}) \ar[r] &
    \mc{S}(E\wedge K_1 \wedge \dotsm \wedge K_n,E\wedge L_1 \wedge \dotsm \wedge L_n)\\
    (\sigma,\ul{f}= (f_1,\dotsc,f_n)) \ar@{|->}[r] & \sigma_E \wedge \bigl(\sigma_\ast
    \circ (f_1\wedge \dotsm \wedge f_n)\bigr)} \]
  Here $\sigma_\ast$ denotes the permutation
  $ L_{\sigma^{-1}(1)}\wedge \dotsm \wedge L_{\sigma^{-1}(n)} \to
  L_1 \wedge \dotsm \wedge L_n $
  induced by $\sigma$. 

  The unit $E\to \Ev_{S^0}(\LEv_{S^0}(E))$ is the canonical isomorphism
  identifying $E$ with $E\wedge S^0\wedge \dotsm \wedge S^0$.
  The counit $\LEv_{S^0}(\Ev_{S^0}(\dgrm{X}))\to \dgrm{X}$ is the
  natural transformation
  \[ \dgrm{X}(S^0,\dotsc,S^0)\wedge K_1\wedge \dotsm \wedge K_n \to 
  \dgrm{X}(K_1, \dotsc, K_n) \] 
  which is a special case
  of the assembly map
  \[ \dgrm{X}(L_1,\dotsc,L_n)\wedge K_1\wedge \dotsm \wedge K_n \to 
  \dgrm{X}\bigl((K_1\wedge L_1), \dotsc ,(K_n\wedge L_n)\bigr). \] 
  The latter is adjoint to the natural map
  \[ \xymatrix{ K_1\wedge \dotsm \wedge K_n \ar[d] \\ \{\id\}\times  
    \mc{S}(L_1,K_1\wedge L_1)\wedge \dotsm \wedge \mc{S}(L_n,K_n\wedge L_n) \ar[d] \\
       {\Sigma_n\wr(\Sfin)^{\wedge n}}\bigl((L_1, \dotsc, L_n),(K_1\wedge L_1,\dotsc,K_n\wedge L_n)\bigr) \ar[d]\\
       {\Sp}\bigl(\dgrm{X}(\ul{L}),\dgrm{X}(K_1\wedge L_1,\dotsc,K_n\wedge L_n)\bigr).} \]
  Since $\Delta^\ast$ and $\ev_{S^0}$ are right Quillen functors for 
  projective model structures on functor categories, so is
  their composition. Hence, $\LEv_{S^0}$ is a left Quillen functor
  to the projective model structure, and to the multilinear
  model structure as well.

To show that the derived unit $E \to \Ev_{S^0}\bigl(P_{(1,\dotsc,1)}(\LEv_{S^0}(E))\bigr)$ is a weak equivalence for $E$ cofibrant, recall that the unit is an isomorphism. Further, the functor $\LEv_{S^0}(E)$ preserves weak equivalences and the canonical map
\[ E\wedge S^k\wedge \dotsm \wedge S^k \to \Omega^n\bigl( (E\wedge S^{k+1}
            \wedge \dotsm \wedge S^{k+1})^\fib\bigr) \]
is a weak equivalence in the stable model structure, where $(\free)^\fib$ denotes fibrant replacement in $\Sp^{\Sigma_n}$. It follows that $E \to P_{(1,\dotsc,1)}\bigl(\LEv_{S^0}(E)\bigr)(\ul{S}^0)$ is a weak equivalence.
It remains to prove that $\Ev_{{S^0}}$ detects weak equivalences 
of multilinear functors. As in the
proof of \cite[Prop.~5.8]{Goo:calc3}, the symmetry is irrelevant,
and the case $n=1$ is sufficient. If $f\co \dgrm{X}\to \dgrm{Y}$
is a map of linear functors with $f(S^0)$ a weak equivalence,
then $f(S^k)$ is a weak equivalence for every $k$, as one
deduces from the
natural weak equivalence~(\ref{eq:T_n}). It then follows
that $f(K)$ is a weak equivalence for every $K\in \Sfin$ by
induction on the cells in $K$, using that \dgrm{X} and
\dgrm{Y} are linear.
\end{proof}

For $\dgrm{X}\co\Sigma_n\wr(\Sfin)^{\wedge n}
\to \Sp(\mc{D})$, the 
$\Sigma_n$-spectrum  
$\Ev_{{S^0}}(\dgrm{X})=\dgrm{X}(S^0,\dotsc,S^0)$ in $\mc{D}$
is called the {\em coefficient spectrum of\/} \dgrm{X}. 
It has the correct homotopy type if $\dgrm{X}$ is
multilinear. Given a
functor $\dgrm{Y}\co\Sfin\to\mc{S}$, Goodwillie calls the coefficient 
spectrum of the multilinear functor 
$\hocr_nP_n\dgrm{Y}\simeq\hocr_nD_n\dgrm{Y}$ 
the $n$-{\em th derivative\/} of \dgrm{Y}.

\subsection{Goodwillie's theorem on multilinearized homotopy cross effects}

\begin{proposition}[Prop. 3.3 \cite{Goo:calc3}]\label{prop:Goo:3.3}
  Let $0\le m\le n$. For any $n$-excisive functor 
  \dgrm{X}, the 
  functor $\hocr_{m+1}\dgrm{X}$ is $(n-m)$-excisive in each variable. 
  In particular, the $n$-th homotopy cross effect is multilinear if 
  \dgrm{X} is $n$-excisive, and it is contractible if \dgrm{X} is 
  $(n-1)$-excisive.
\end{proposition}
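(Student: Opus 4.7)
The plan is to run a cube-combination argument in the spirit of Goodwillie's original proof of~\cite[Prop.~3.3]{Goo:calc3}. By the symmetry of $\hocr_{m+1}$ (Definition~\ref{def:symmetric}), it suffices to verify $(n-m)$-excisiveness in the first variable. Fix $K_2,\dotsc,K_{m+1}$ and consider the partial functor
\[ \Phi(K_1) := \hocr_{m+1}\dgrm{X}(K_1,K_2,\dotsc,K_{m+1}). \]
By Remark~\ref{rem:cross-effect}, $\Phi(K_1)$ is the total homotopy fiber of the $(m+1)$-cube $\mathcal{Z}_{K_1}\co\mc{P}(\underline{m+1})\to\mc{D}$ sending $U$ to $\dgrm{X}\bigl(\bigvee_{i\notin U}K_i\bigr)$, with $K_1$ entering as a summand precisely when $1\notin U$. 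The task is to take an arbitrary strongly cocartesian $(n-m+1)$-cube $\mathcal{K}\co\mc{P}(\underline{n-m+1})\to\mc{C}$ in the first variable and show that the $(n-m+1)$-cube $\Phi\circ\mathcal{K}$ is Cartesian.

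Since total homotopy fiber commutes with all homotopy limits, the total fiber of $\Phi\circ\mathcal{K}$ agrees with the total fiber of the combined $(n+2)$-cube
\[ \tilde{\mathcal{Y}}\co\mc{P}(\underline{n-m+1})\x\mc{P}(\underline{m+1})\to\mc{D},\qquad \tilde{\mathcal{Y}}(T,U) := \dgrm{X}\bigl(W(T,U)\bigr), \]
where $W(T,U):=\mathcal{K}(T)\vee\bigvee_{i\geq 2,\,i\notin U}K_i$ if $1\notin U$, and $W(T,U):=\bigvee_{i\geq 2,\,i\notin U}K_i$ if $1\in U$. The task thus reduces to showing that $\tilde{\mathcal{Y}}$ has contractible total fiber.

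I would then split $\tilde{\mathcal{Y}}$ along the coordinate ``is $1\in U$?'' into two $(n+1)$-dimensional sub-cubes $\tilde{\mathcal{Y}}_0$ (for $1\notin U$) and $\tilde{\mathcal{Y}}_1$ (for $1\in U$). The underlying space-level cube of $\tilde{\mathcal{Y}}_0$ is $(T,U')\mapsto\mathcal{K}(T)\vee\bigvee_{i\geq 2,\,i\notin U'}K_i$, which I would argue is strongly cocartesian by combining the strong cocartesianness of $\mathcal{K}$ with that of the standard ``erase-summand'' cube on $\{K_2,\dotsc,K_{m+1}\}$, using distributivity of the wedge over pushouts. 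The $n$-excisiveness of $\dgrm{X}$ then forces the total fiber of $\tilde{\mathcal{Y}}_0$ to be contractible. The sub-cube $\tilde{\mathcal{Y}}_1$ is constant in the $T$-direction (the $K_1$ summand simply does not appear), so its total fiber is trivially contractible. Since the total fiber of $\tilde{\mathcal{Y}}$ is the homotopy fiber of the induced map between the two contractible total fibers, the desired contractibility follows. The two ``in particular'' assertions are then obtained by specialization: taking $m=n-1$ produces a $1$-excisive, hence multilinear, $\hocr_n\dgrm{X}$, while applying the main statement to an $(n-1)$-excisive $\dgrm{X}$ with $m=n-1$ yields a $0$-excisive $\hocr_n\dgrm{X}$, which is contractible because $\hocr_n\dgrm{X}$ is multireduced by Remark~\ref{rem:multireduced}.

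The main obstacle will be verifying that the combined cube underlying $\tilde{\mathcal{Y}}_0$ is strongly cocartesian: the $2$-faces varying only in $T$ or only in $U'$ are handled by the hypothesis on $\mathcal{K}$ and by the fact that $\vee$ is the coproduct, but the ``mixed'' $2$-faces require an explicit distributivity argument for wedges over pushouts in $\mc{C}$. A secondary bookkeeping point is ensuring that the total-fiber/homotopy-limit commutation used in passing from $\Phi\circ\mathcal{K}$ to $\tilde{\mathcal{Y}}$ interacts correctly with the $\hofib$/$\holim$ definition of $\hocr_{m+1}$, which is ultimately just a matter of unpacking definitions.
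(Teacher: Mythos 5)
The paper's proof is a one-line deferral to Goodwillie \cite[Prop.~3.3]{Goo:calc3}, and your reconstruction does match Goodwillie's argument in its overall design: realize the partial functor as the total fiber of an $(m+1)$-cube, assemble it with a strongly (homotopy) cocartesian $(n-m+1)$-cube into an $(n+2)$-cube, and split the latter along the coordinate ``$1\in U$''. The observation that the $1\notin U$ half has an underlying strongly cocartesian $(n+1)$-cube (so that $n$-excisiveness applies) while the $1\in U$ half is degenerate in the $T$-directions is exactly Goodwillie's. The derivation of the two ``in particular'' statements is also fine.

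There is, however, a genuine gap in the final reduction. You reduce the assertion ``$\Phi\circ\mathcal{K}$ is Cartesian'' to ``$\mathrm{tfib}(\tilde{\mathcal{Y}})$ is contractible''. These are \emph{not} equivalent in the unstable setting: for a map $f\co X\to Y$ of pointed objects, $\hofib(f)\simeq\ast$ does not imply that $f$ is a weak equivalence (it may fail to hit all components of $Y$). Excisiveness is phrased in terms of the cube being homotopy Cartesian, not in terms of its total fiber, and since $\mc{D}$ is an arbitrary pointed model category you cannot discard this distinction. The good news is that your argument proves something stronger than you record: you show $\tilde{\mathcal{Y}}_0$ is Cartesian (because $n$-excisiveness of $\dgrm{X}$ yields Cartesianness, not merely a contractible total fiber) and $\tilde{\mathcal{Y}}_1$ is Cartesian (a cube degenerate in one direction is Cartesian, not just has contractible total fiber). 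From there one concludes that $\tilde{\mathcal{Y}}$ is Cartesian by the standard cube lemma: for a map $\tilde{\mathcal{Y}}_0\to\tilde{\mathcal{Y}}_1$ of $(n+1)$-cubes with $\tilde{\mathcal{Y}}_1$ Cartesian, the assembled $(n+2)$-cube is Cartesian if and only if $\tilde{\mathcal{Y}}_0$ is. One then observes that taking homotopy fibers over a chosen coordinate direction of a Cartesian cube again produces a Cartesian cube; iterating over the $(m+1)$ coordinates $U$ shows that the $(n-m+1)$-cube $\Phi\circ\mathcal{K}$ is Cartesian, which is what is required. Phrasing the whole argument in the language of Cartesianness --- as Goodwillie does --- removes the gap with no extra work.
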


\begin{proof}
  The proof by Goodwillie is
  again a variation on opaqueness and applies to the setup here.
\end{proof}

\begin{definition}\label{def:reduced-homogeneous}
  A functor \dgrm{X} is {\it $n$-reduced} if $P_{n-1}\dgrm{X}\simeq\ast$,
  and {\it $n$-homogeneous} if it is $n$-excisive and 
  $n$-reduced.
\end{definition}

\begin{definition}
  In order to distinguish a functor 
  $\dgrm{X}\co\mc{C}_1\times\dotsm\times\mc{C}_n\to\mc{D}$ in $n$ 
  variables $K_1,\dotsc,K_n$ notationally from the same functor \dgrm{X} 
  when viewed as a functor in one variable $\ul{K}=(K_1,\dotsc,K_n)$,
  the latter is denoted $\lambda\dgrm{X}$.
\end{definition}

The $n$-excisive approximation functor $P_n$ applies
to the functor $\lambda\dgrm{X}$. 
There is a commutative diagram:
\diagram{ \lambda\dgrm{X} \ar[d]\ar[r] & P_n\lambda \dgrm{X} \ar[d]^-{\beta} \\
          \lambda P_{1,\dotsc,1}\dgrm{X} \ar[r]_-{\alpha} & P_n\lambda(P_{1,\dotsc,1}\dgrm{X}) }{abcd}

\begin{lemma}\label{d1...dn}
  If a functor $\dgrm{X}\co\mc{C}_1\times\dotsm\times\mc{C}_n\to\mc{D}$ 
  is $(d_1,\dotsc,d_n)$-excisive, then 
  $\lambda \dgrm{X}$ is $(d_1+\dotsm+d_n)$-excisive. 
\end{lemma}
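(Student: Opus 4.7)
The plan is to induct on $n$, reducing to the case $n=2$, and then to invoke Goodwillie's cubical argument from \cite[Prop.~3.4]{Goo:calc3}.

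For the inductive step, I would regard $\dgrm{X}$ as a two-variable functor whose first variable ranges over the product $\mc{C}':=\mc{C}_1\times\dotsm\times\mc{C}_{n-1}$ and whose second variable ranges over $\mc{C}_n$. For each fixed $K_n$, the partial functor $\mc{C}'\to\mc{D}$ is $(d_1,\dotsc,d_{n-1})$-excisive in the sense of Definition~\ref{def:multi-excisive}, and hence, by the inductive hypothesis, $(d_1+\dotsm+d_{n-1})$-excisive once one passes through $\lambda$. Combined with $d_n$-excisiveness in $K_n$, this gives a two-variable functor that is $(d_1+\dotsm+d_{n-1}, d_n)$-excisive, which reduces the problem to the case $n=2$. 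The base case $n=1$ is a tautology.

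For the $n=2$ case, set $m:=d_1+d_2+1$ and let $\mc{K}\co \mc{P}(\ul{m})\to \mc{C}_1\times \mc{C}_2$ be a strongly cocartesian $m$-cube. Since homotopy pushouts in the product model category are computed componentwise, each projection $\mc{K}_i:=\mathrm{pr}_i\circ\mc{K}$ is strongly cocartesian in $\mc{C}_i$, and so is every sub-cube of $\mc{K}$. I would partition $\ul{m}=T_1\sqcup T_2$ with $|T_1|=d_1+1$ and $|T_2|=d_2$, and compute the total homotopy fiber of $\dgrm{X}\circ\mc{K}$ iteratively as the total fiber along $T_1$ of the $T_2$-cube of inner total fibers of $T_1$-slices. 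Each $T_1$-slice is a $(d_1+1)$-cube $S\mapsto \dgrm{X}\bigl(\mc{K}_1(S\cup V),\mc{K}_2(S\cup V)\bigr)$ for $V\subseteq T_2$, and the goal is to show that each such slice is cartesian.

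To establish slice cartesianness, I would compare the $T_1$-slice above to the ``frozen'' cube $S\mapsto \dgrm{X}\bigl(\mc{K}_1(S\cup V),\mc{K}_2(V)\bigr)$, which is cartesian by $d_1$-excisiveness of $\dgrm{X}$ in its first variable with second argument fixed at $\mc{K}_2(V)$, since its first projection is strongly cocartesian of dimension $d_1+1$. The hard part, which is the main obstacle, is rectifying the discrepancy between the original and frozen $T_1$-slices coming from the variation of $\mc{K}_2$ along $T_1$-directions: one must exploit $d_2$-excisiveness of $\dgrm{X}$ in its second variable on suitable auxiliary sub-cubes to absorb this variation. This cubical rectification is delicate but purely combinatorial, and follows Goodwillie's argument \cite[\S 3]{Goo:calc3}, which applies verbatim in the present model-categorical framework.
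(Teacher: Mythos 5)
Your inductive reduction to $n=2$ is sound. The $n=2$ argument, however, cannot be completed as you describe, because the sufficient condition you aim for --- that each $T_1$-slice of $\dgrm{X}\circ\mc{K}$ be cartesian --- is simply false in general, and the ``rectification'' you label as the hard part is not merely hard, it is an obstruction. Concretely, take $n=2$, $d_1=d_2=1$ (so $\dgrm{X}$ is bilinear), and let $\mc{K}$ be a strongly cocartesian $3$-cube, with $T_1=\{1,2\}$ and $T_2=\{3\}$. The $T_1$-slice at $V=\emptyset$ is the restriction of $\dgrm{X}\circ\mc{K}$ to the face $\mc{P}(T_1)$, i.e.\ $\lambda\dgrm{X}$ applied to a strongly cocartesian square in $\mc{C}_1\times\mc{C}_2$. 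Asking that slice to be cartesian is asking $\lambda\dgrm{X}$ to behave $1$-excisively on that square, which is strictly stronger than the $2$-excisiveness you are trying to prove; for a typical bilinear $\dgrm{X}$, so that $\lambda\dgrm{X}$ is $2$-homogeneous but not linear, this fails. What the iterated total fiber formula actually requires is only that the $T_2$-cube of total fibers of $T_1$-slices be cartesian, not that the slices themselves be so, and establishing that needs a different combinatorial decomposition from the one you propose.

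There is also a citation slip: \cite[Prop.~3.4]{Goo:calc3} is the statement that an $n$-excisive functor with contractible $n$-th cross effect is $(n-1)$-excisive, which is unrelated. The relevant source is Lemma~6.6 of \cite{Goo:calc3}, which defers to Lemma~3.4 of \cite{Goo:calc2}. That is precisely what the paper's own proof does --- it cites Goodwillie and notes the argument transfers to this setting. Given the delicacy of the cube combinatorics, either the argument of \cite[Lemma~3.4]{Goo:calc2} must be reproduced faithfully, or the proof should simply defer to it; the sketch you give steers toward a claim (slicewise cartesianness) that Goodwillie's argument is careful not to make.
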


\begin{proof}
  This is Goodwillie's Lemma 6.6 in \cite{Goo:calc3} whose proof refers to \cite[Lemma 3.4]{Goo:calc2}. The proof applies here.
\end{proof}

\begin{lemma}
  If a functor $\dgrm{X}\co\mc{C}_1\times\dotsm\times\mc{C}_n\to\mc{D}$ is multireduced, $\lambda \dgrm{X}$ is $n$-reduced. 
\end{lemma}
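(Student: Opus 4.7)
The plan is to prove that $G := P_{n-1}(\lambda\dgrm{X})$ is objectwise contractible, which is the definition of $n$-reducedness for $\lambda\dgrm{X}$. By Lemma~\ref{lem:Pn-excisive}, $G$ is an $(n-1)$-excisive homotopy functor. The argument combines two ingredients: (i) multireducedness propagates from $\dgrm{X}$ to $G$, in the sense that $G(\ul{L})\simeq\ast$ whenever some $L_i\simeq\ast$; and (ii) applying $(n-1)$-excisiveness to a well-chosen strongly cocartesian $n$-cube collapses $G(\ul{K})$ onto a homotopy limit of contractible terms.

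For (i), I would check preservation of multireducedness stagewise in the construction $G = \hocolim_k T_{n-1}^k\bigl((\lambda\dgrm{X})^{\hf}\bigr)$. The crucial elementary fact is that $\ast\star U=\ast$ for every finite $U$, since $C\ast=\ast$ and pushouts over $\ast$ remain $\ast$; hence if some $L_i=\ast$ then $(\ul{L}\star U)_i=\ast$ for all $U$, so $T_{n-1}$ strictly preserves multireducedness. The hf-replacement $(\lambda\dgrm{X})^{\hf}$ inherits multireducedness up to weak equivalence from $\lambda\dgrm{X}$, using $\fibr(\ast)\simeq\ast$ together with the filtered-colimit description of $\fibr$ in Convention~\ref{conv:hf}(2) and Lemma~\ref{lem:filt-colim}, so that $(\lambda\dgrm{X})^{\hf}(\ul{L})$ is identified with a filtered colimit of values $\dgrm{X}(B_1,\dotsc,\ast,\dotsc,B_n)=\ast$. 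Filtered hocolims preserve the property by Convention~\ref{conv:hf}(4), and since $G$ is a homotopy functor, multireducedness at objects with a terminal slot extends to objects with a weakly terminal slot.

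For (ii), fix $\ul{K}\in\mc{C}_1\times\dotsm\times\mc{C}_n$ and define the $\mc{P}(\ul{n})$-indexed cube $\mc{Z}\co\mc{P}(\ul{n})\to\mc{C}_1\times\dotsm\times\mc{C}_n$ by
\[ \mc{Z}(T)=(\mc{Z}_1(T),\dotsc,\mc{Z}_n(T)),\qquad \mc{Z}_i(T)=\begin{cases}CK_i,&i\in T,\\ K_i,&i\notin T.\end{cases}\]
Each $CK_i=K_i\star\{1\}$ belongs to $\mc{C}_i$ by Convention~\ref{conv:n-exc}. Since pushouts in the product category are formed componentwise, and every 2-face of $\mc{Z}$ decomposes componentwise into pushout squares (each slot being either a trivial identity square or a collapsing square with one edge $K_i\to CK_i$ and the opposite edge an identity), the cube $\mc{Z}$ is strongly cocartesian. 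As $G$ is $(n-1)$-excisive, the cube $G\circ\mc{Z}$ is cartesian, yielding
\[ G(\ul{K})=G(\mc{Z}(\emptyset))\simeq\holim_{T\in\mc{P}_0(\ul{n})}G(\mc{Z}(T)).\]
For every nonempty $T$, pick $i\in T$; then $\mc{Z}_i(T)=CK_i\simeq\ast$, and ingredient (i) gives $G(\mc{Z}(T))\simeq\ast$. The displayed homotopy limit is therefore contractible, completing the argument.

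The principal technical obstacle is tracking multireducedness through the enriched left Kan extension implicit in $(-)^{\hf}$; this is a routine but careful verification using Convention~\ref{conv:hf}(2) and (4) together with Lemma~\ref{lem:filt-colim}. The geometric heart of the proof lies in the choice of the asymmetric cube $\mc{Z}$, which exploits the product structure of the source category in a way that the symmetric ``join'' cube used to define $T_{n-1}$ cannot.
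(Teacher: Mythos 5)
Your overall strategy --- show $G=P_{n-1}(\lambda\dgrm{X})$ is an $(n-1)$-excisive homotopy functor that vanishes on tuples with a contractible slot, then hit $G$ with the strongly cocartesian $n$-cube $\mc{Z}$ obtained by coning each coordinate in turn --- is exactly what lies behind the paper's laconic citation of \cite[Lemmas 3.2, 6.7]{Goo:calc3}, and step (ii) is carried out correctly. The only thing I would flag as a genuine gap is the justification you give in step (i) for $(\lambda\dgrm{X})^{\hf}$ being (weakly) multireduced.

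You write that when $L_i=\ast$, the filtered-colimit description of $\fibr$ identifies $(\lambda\dgrm{X})^{\hf}(\ul{L})$ with ``a filtered colimit of values $\dgrm{X}(B_1,\dotsc,\ast,\dotsc,B_n)=\ast$.'' That is not what the construction gives: $\fibr(\ast)$ is a filtered colimit of objects $B_i^{(j)}\in\mc{C}_i$, and there is no reason any of them should be terminal. The terms in the colimit are $\dgrm{X}(B_1,\dotsc,B_i^{(j)},\dotsc,B_n)$, to which multireducedness does not apply. The correct route, which the paper prepares in Remark~\ref{rem:simp-hty-eq-weak-eq}, is to observe that each $L_j\to\fibr(L_j)$ is an acyclic cofibration between bifibrant objects (every $L_j$ is cofibrant by Convention~\ref{conv:hf}(1), the terminal object is fibrant, and cofibrancy of $\fibr(L_j)$ follows since it is the target of a cofibration from a cofibrant object), hence a \emph{simplicial} homotopy equivalence. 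Simplicial homotopy equivalences are preserved by every $\mc{S}$-functor, in particular by $i_*(\lambda\dgrm{X})$. Applying this to the componentwise map $\ul{L}\to\fibr(\ul{L})$ yields a simplicial homotopy equivalence, hence a weak equivalence,
\[
\ast\;\cong\;\lambda\dgrm{X}(\ul{L})\;\cong\;i_*(\lambda\dgrm{X})(\ul{L})\;\xrightarrow{\ \simeq\ }\;i_*(\lambda\dgrm{X})\bigl(\fibr(\ul{L})\bigr)\;=\;(\lambda\dgrm{X})^{\hf}(\ul{L}),
\]
where the first isomorphism uses strict multireducedness of $\dgrm{X}$. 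The rest of your stagewise propagation --- through $T_{n-1}$ (using $\ast\star U=\ast$) and through the filtered colimit defining $P_{n-1}$ (using Convention~\ref{conv:hf}(4)) --- is fine, as is the final extension from a terminal slot to a weakly contractible slot via the homotopy-functor property of $G$.
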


\begin{proof}
  The proofs of Lemma 3.2 and Lemma 6.7 in \cite{Goo:calc3} apply.
\end{proof}

\begin{lemma}\label{n-exc+multired=multilin}
  If $\dgrm{X}\co\mc{C}_1\times\dotsm\times\mc{C}_n\to\mc{D}$
  is multireduced and $\lambda \dgrm{X}$ is $n$-excisive,
  then $\dgrm{X}$ is multilinear. 
\end{lemma}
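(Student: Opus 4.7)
The plan is to derive multi-excisiveness of $\dgrm{X}$ from Proposition~\ref{prop:Goo:3.3} applied to the single-variable functor $\lambda\dgrm{X}$. First, I would observe that the hypothesis ``$\lambda\dgrm{X}$ is $n$-excisive'' together with the case $m=n-1$ of Proposition~\ref{prop:Goo:3.3} shows that $\hocr_n(\lambda\dgrm{X})$, viewed as a functor of $n$ variables in $\mc{C}_1\times\dotsm\times\mc{C}_n$, is $1$-excisive in each of those variables. This is the input that will be exploited.

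The core of the argument is to identify a particular ``axis'' slice of $\hocr_n(\lambda\dgrm{X})$ with $\dgrm{X}$ itself. For each $K_i\in\mc{C}_i$, let $\ul{K}^{(i)}:=(\ast,\dotsc,K_i,\dotsc,\ast)$ denote the object of $\mc{C}_1\times\dotsm\times\mc{C}_n$ whose $i$-th entry is $K_i$ and whose remaining entries are terminal. Since coproducts in the product category are componentwise, the $j$-th entry of $\bigvee_{i\in\ul{n}-S}\ul{K}^{(i)}$ equals $K_j$ when $j\notin S$ and $\ast$ when $j\in S$. For every non-empty $S\in P_0(\ul{n})$, at least one entry is $\ast$, so multireducedness of $\dgrm{X}$ forces $\lambda\dgrm{X}\bigl(\bigvee_{i\in\ul{n}-S}\ul{K}^{(i)}\bigr)\simeq\ast$. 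Consequently the homotopy limit appearing in Definition~\ref{def:(ho)cross_n} is contractible, and the canonical map
\[ \hocr_n(\lambda\dgrm{X})\bigl(\ul{K}^{(1)},\dotsc,\ul{K}^{(n)}\bigr) \longrightarrow \dgrm{X}(K_1,\dotsc,K_n) \]
is a weak equivalence, natural in each $K_i$.

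To finish, I would combine these two ingredients. Fix $i\in\ul{n}$ and objects $K_j\in\mc{C}_j$ for $j\neq i$, and let $Q$ be a homotopy pushout square in $\mc{C}_i$. Placing $Q$ in position $i$ and constants in the other positions yields a componentwise, hence a genuine, homotopy pushout square in $\mc{C}_1\times\dotsm\times\mc{C}_n$ along the $i$-th axis of $\ul{K}^{(i)}$-tuples. By the $1$-excision of $\hocr_n(\lambda\dgrm{X})$ in the $i$-th cross-effect variable, the corresponding square of values is a homotopy pullback, and by the naturality of the equivalence above, so is $\dgrm{X}(K_1,\dotsc,Q,\dotsc,K_n)$. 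Hence each partial functor of $\dgrm{X}$ is excisive, which together with multireducedness yields multilinearity.

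The main step to verify carefully is the identification of values in the second paragraph, since it requires unpacking componentwise coproducts in the product category and using multireducedness to collapse every term indexed by a non-empty $S$. Once that identification is in place, Proposition~\ref{prop:Goo:3.3} does the real work and the remaining argument is essentially formal.
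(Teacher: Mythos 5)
Your argument is correct and is essentially the proof that Goodwillie gives for \cite[Lemma~6.9]{Goo:calc3}, to which the paper simply defers: identify $\dgrm{X}(K_1,\dotsc,K_n)$ with the value of $\hocr_n(\lambda\dgrm{X})$ at the axis tuple $(\ul{K}^{(1)},\dotsc,\ul{K}^{(n)})$ — the multireducedness collapses every nonempty-$S$ term in the defining homotopy limit — and then transport the $1$-excision in each variable of $\hocr_n(\lambda\dgrm{X})$, furnished by Proposition~\ref{prop:Goo:3.3} with $m=n-1$, along the axis inclusions, which preserve homotopy pushouts because everything is computed componentwise in the product model category.
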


\begin{proof}
  The proof of \cite[Lemma 6.9]{Goo:calc3} applies here as well.
\end{proof}

\begin{corollary}\label{alpha-beta}
  The maps $\alpha$ and $\beta$ in diagram \emph{(\ref{abcd})} are 
  objectwise weak equivalences for every 
  functor in $\Fun(\mc{C}_1\wedge\dotsm\wedge\mc{C}_n,\mc{D})$.
\end{corollary}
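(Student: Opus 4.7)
The plan is to treat $\alpha$ and $\beta$ separately. The map $\alpha$ follows quickly from Lemma~\ref{d1...dn}, while $\beta$ requires a more delicate analysis via the homotopy fiber of the multilinearization map.

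For $\alpha$: by Remark~\ref{rem:multireduced}, every functor $\dgrm{X}\in\Fun(\mc{C}_1\wedge\dotsm\wedge\mc{C}_n,\mc{D})$ is multireduced, and so is $P_{1,\dotsc,1}\dgrm{X}$. Since $P_{1,\dotsc,1}\dgrm{X}$ is multilinear by construction, Lemma~\ref{d1...dn} with $d_1=\dotsm=d_n=1$ shows that $\lambda P_{1,\dotsc,1}\dgrm{X}$ is $n$-excisive, so the $n$-excisive approximation $\alpha$ is a weak equivalence by Lemma~\ref{lem:Pn-excisive}(2).

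For $\beta$: set $\dgrm{Y}:=\hofib\bigl(\dgrm{X}\to P_{1,\dotsc,1}\dgrm{X}\bigr)$. Since $P_{1,\dotsc,1}$ is a filtered colimit of iterations of a functor built from $\Omega^n$, it commutes with finite homotopy limits and is idempotent up to weak equivalence on multilinear functors, yielding $P_{1,\dotsc,1}\dgrm{Y}\simeq\ast$. By Lemma~\ref{Tn and Pn commute with ho(co)lims}, $P_n$ commutes with finite homotopy limits, producing a fiber sequence $P_n\lambda\dgrm{Y}\to P_n\lambda\dgrm{X}\to P_n\lambda P_{1,\dotsc,1}\dgrm{X}$. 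It therefore suffices to prove $P_n\lambda\dgrm{Y}\simeq\ast$.

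To establish this, I observe first that $\dgrm{Y}$ is multireduced. If some coordinate $K_i$ equals the basepoint, then $K_i\star U\cong\ast$ for every $U\in\mc{P}_0(\ul{n+1})$, so $\dgrm{Y}(\ul{K}\star U)\simeq\ast$, and the construction of $P_n$ from such homotopy limits forces $P_n\lambda\dgrm{Y}(\ul{K})\simeq\ast$ whenever some $K_i=\ast$. Hence $P_n\lambda\dgrm{Y}=\lambda\dgrm{Z}'$ for a multireduced multivariable functor $\dgrm{Z}'$; since $\lambda\dgrm{Z}'$ is $n$-excisive by construction, Lemma~\ref{n-exc+multired=multilin} compels $\dgrm{Z}'$ to be multilinear. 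The natural transformation $\dgrm{Y}\to\dgrm{Z}'$ adjoint to the $P_n$-unit $\lambda\dgrm{Y}\to P_n\lambda\dgrm{Y}$ has a multilinear (hence $P_{1,\dotsc,1}$-local-fibrant) target. Because mapping spaces from cofibrant sources to local-fibrant targets in a left Bousfield localization coincide with their underlying objectwise counterparts, and because in the multilinear localization such a map factors up to homotopy through $P_{1,\dotsc,1}\dgrm{Y}\simeq\ast$, the map $\dgrm{Y}\to\dgrm{Z}'$ is also null in the objectwise homotopy category. Applying $P_n$, the $P_n$-unit of $P_n\lambda\dgrm{Y}$ becomes both null (as $P_n$ of a null map) and an equivalence (as the $P_n$-unit applied to the already $n$-excisive object $P_n\lambda\dgrm{Y}$), forcing $P_n\lambda\dgrm{Y}\simeq\ast$. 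The main obstacle is this final nullity step: it depends on carefully transferring a homotopy computed in the multilinear Bousfield localization back to the objectwise category, exploiting that the target $\dgrm{Z}'$ is local-fibrant.
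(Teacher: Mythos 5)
Your treatment of $\alpha$ is correct and coincides with the paper's: $P_{1,\dotsc,1}\dgrm{X}$ is multilinear, so $\lambda P_{1,\dotsc,1}\dgrm{X}$ is $n$-excisive by Lemma~\ref{d1...dn}, and the $P_n$-unit $\alpha$ is therefore an objectwise weak equivalence.

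Your argument for $\beta$ has a genuine gap at the very first step after the setup. From the fiber sequence $P_n\lambda\dgrm{Y}\to P_n\lambda\dgrm{X}\xrightarrow{\beta}P_n\lambda P_{1,\dotsc,1}\dgrm{X}$ you claim it ``suffices to prove $P_n\lambda\dgrm{Y}\simeq\ast$.'' This inference -- contractible homotopy fiber implies weak equivalence -- fails in a general (unstable) target category $\mc{D}$. The simplest counterexample is the basepoint inclusion $\ast\to S^0$ in $\mc{S}$, whose homotopy fiber is $\ast$. In this paper the implication is valid for $\Sp(\mc{D})$-valued functors precisely because stability is invoked (see the proof of Corollary~\ref{n-hom-we2}); for a general $\mc{D}$ the analogous statement is Lemma~\ref{n-hom-we3}, whose proof relies on the delooping Theorem~\ref{thm:delooping}, which in turn sits downstream of Goodwillie's Theorem~\ref{Goo:thm 6.1}, which itself depends on Corollary~\ref{alpha-beta}. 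So even granting your (rather elaborate) derivation of $P_n\lambda\dgrm{Y}\simeq\ast$, the conclusion does not follow, and trying to repair it with the later machinery would be circular.

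The paper's argument for $\beta$ is considerably more direct and avoids the fiber entirely. Since $\lambda\dgrm{X}$ is multireduced, so is $P_n\lambda\dgrm{X}$; as $P_n\lambda\dgrm{X}$ is $n$-excisive, Lemma~\ref{n-exc+multired=multilin} gives that $P_n\lambda\dgrm{X}$ is already multilinear. Hence the $P_{1,\dotsc,1}$-unit $\gamma\co P_n\lambda\dgrm{X}\to P_{1,\dotsc,1}(P_n\lambda\dgrm{X})$ is an objectwise weak equivalence. One then identifies $\beta$ with $\gamma$ via the natural weak equivalence exchanging $P_n$ and $P_{1,\dotsc,1}$ (both are built from homotopy limits over joins and loops, which commute), and the claim follows. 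In effect, the key observation you are missing is that $P_n\lambda\dgrm{X}$ itself is multilinear -- once this is noted, no fiber argument is needed.
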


\begin{proof}
  In order to apply the Lemmata above, the functor in question
  is pulled back via 
  $p\co\mc{C}_1\times\dotsm\times\mc{C}_n\to\mc{C}_1\wedge\dotsm\wedge\mc{C}_n$.
  Then the map $\alpha$ is an objectwise weak equivalence by 
  Lemma \ref{d1...dn}. The analogous map 
  \[ \gamma \co P_n \lambda\dgrm{X} \to P_{1,\dotsc,1}(P_n\lambda \dgrm{X})\] 
  is an objectwise weak equivalence
  by Lemma \ref{n-exc+multired=multilin}. The maps $\beta$
  and $\gamma$ are related via a natural weak equivalence
  commuting the constructions $P_n$ and $P_{1,\dotsc,1}$, since
  homotopy limits, as well as joins, commute among themselves.
  Hence, $\beta$ is an objectwise weak equivalence as well. The proof 
  finishes by noting that the functor $p^*$ detects objectwise weak 
  equivalences, see Lemma~\ref{lem:p-quillen}.
\end{proof}

\begin{theorem}[Theorem 6.1 \cite{Goo:calc3}]\label{Goo:thm 6.1}
  For all \mc{S}-functors $\dgrm{X}\co\mc{C}\to\mc{D}$ there is an natural objectwise weak equivalence under $\hocr_n\dgrm{X}$:
  \[ \hocr_n P_n\dgrm{X}\simeq P_{(1,\dotsc,1)}\hocr_n\dgrm{X}.  \]
\end{theorem}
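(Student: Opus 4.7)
The plan is to build a natural comparison map by the universal property of multilinear approximation and then verify it is an objectwise weak equivalence by passing to single-variable functors through $\lambda$. First, the coaugmentation $\dgrm{X}\to P_n\dgrm{X}$ and functoriality of $\hocr_n$ yield a natural map $\hocr_n\dgrm{X}\to\hocr_n P_n\dgrm{X}$. Since $P_n\dgrm{X}$ is $n$-excisive, Proposition~\ref{prop:Goo:3.3} shows that $\hocr_n P_n\dgrm{X}$ is linear in each variable; together with the multireducedness of homotopy cross effects, this says $\hocr_n P_n\dgrm{X}$ is multilinear. The universal property of $P_{(1,\dotsc,1)}$ as a fibrant replacement in the multilinear model structure from Theorem~\ref{thm:multilinear-model} thus provides a natural comparison map
$$\gamma\colon P_{(1,\dotsc,1)}\hocr_n\dgrm{X}\longrightarrow\hocr_n P_n\dgrm{X}$$
under $\hocr_n\dgrm{X}$. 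The remaining task is to show that $\gamma$ is an objectwise weak equivalence.

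I would then apply $\lambda$ (composed, where necessary, with the functors $\varepsilon^{*}$ and $p^{*}$ that detect objectwise equivalences) in order to reinterpret $n$-variable functors as one-variable functors of the tuple $\ul{K}$. Applying Corollary~\ref{alpha-beta} to $\dgrm{Y}=\hocr_n\dgrm{X}$ produces a natural zig-zag
$$\lambda P_{(1,\dotsc,1)}\hocr_n\dgrm{X}\;\overset{\alpha}{\simeq}\;P_n\lambda P_{(1,\dotsc,1)}\hocr_n\dgrm{X}\;\overset{\beta}{\simeq}\;P_n\lambda\hocr_n\dgrm{X}$$
of objectwise weak equivalences. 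Since $\hocr_n P_n\dgrm{X}$ is multilinear, Lemma~\ref{d1...dn} tells us that $\lambda\hocr_n P_n\dgrm{X}$ is $n$-excisive, so the coaugmentation $\lambda\hocr_n P_n\dgrm{X}\to P_n\lambda\hocr_n P_n\dgrm{X}$ is already an objectwise weak equivalence. Tracing through these identifications, $\lambda\gamma$ is identified with the map
$$P_n\lambda\hocr_n\dgrm{X}\longrightarrow P_n\lambda\hocr_n P_n\dgrm{X}$$
induced by $\dgrm{X}\to P_n\dgrm{X}$. It therefore suffices to show that this map is an objectwise weak equivalence.

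This last step is the main obstacle and the combinatorial heart of the proof. Taking homotopy fibers, and using that $\hocr_n$ preserves fiber sequences because it is a finite homotopy limit and that $P_n$ commutes with finite homotopy limits by Lemma~\ref{Tn and Pn commute with ho(co)lims}, the claim reduces to proving that $P_n\lambda\hocr_n\dgrm{F}\simeq\ast$ whenever $P_n\dgrm{F}\simeq\ast$, applied to $\dgrm{F}=\hofib(\dgrm{X}\to P_n\dgrm{X})$. Here $\lambda\hocr_n\dgrm{F}$ is assembled from evaluations of $\dgrm{F}$ on the wedges $\bigvee_{i\in S}K_i$, while the $T_n$-step defining $P_n$ involves joins of $\ul{K}$; verifying the required cancellation is exactly the diagram manipulation that Goodwillie performs in the proof of \cite[Theorem~6.1]{Goo:calc3}. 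That argument uses only that $\hocr_n$ is a finite homotopy limit, that $P_n$ commutes with finite homotopy limits and with filtered colimits, and that $\dgrm{F}$ has contractible $n$-excisive approximation, all of which are available in the present setting by Convention~\ref{conv:hf} and Lemma~\ref{Tn and Pn commute with ho(co)lims}. Hence the original argument transfers verbatim, yielding the desired objectwise equivalence and finishing the proof.
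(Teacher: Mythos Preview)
Your first half matches the paper: both begin by applying Corollary~\ref{alpha-beta} to $\hocr_n\dgrm{X}$ to obtain a natural zig-zag $P_{(1,\dotsc,1)}\hocr_n\dgrm{X}\simeq P_n\lambda(\hocr_n\dgrm{X})$ under $\hocr_n\dgrm{X}$.

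The divergence comes in the second half, and there your argument has a genuine gap. You reduce to showing that $P_n\lambda\hocr_n\dgrm{F}\simeq\ast$ whenever $P_n\dgrm{F}\simeq\ast$, and then assert that this is ``exactly the diagram manipulation that Goodwillie performs.'' But that is not what Goodwillie (or this paper) does, and you never supply the actual argument. The paper instead proves directly that $P_n\lambda(\hocr_n\dgrm{X})\simeq\hocr_n P_n\dgrm{X}$, and the key observation is elementary: the join $(-)\star U$ commutes with finite coproducts in $\mc{C}$. Writing $J_U\dgrm{X}(K)=\dgrm{X}(K\star U)$, one gets
\[
J_U\lambda(\hocr_n\dgrm{X})(\ul{K})\;\simeq\;\hocr_n(J_U\dgrm{X})(\ul{K})
\]
because $\bigl(\bigvee_i K_i\bigr)\star U\simeq\bigvee_i(K_i\star U)$. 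Taking the homotopy limit over $U\in\mc{P}_0(\ul{n+1})$ yields $T_n\lambda(\hocr_n\dgrm{X})\simeq\hocr_n T_n\dgrm{X}$, and iterating and passing to the colimit gives $P_n\lambda(\hocr_n\dgrm{X})\simeq\hocr_n P_n\dgrm{X}$.

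Note that this single observation also closes your own reduction immediately: if $P_n\dgrm{F}\simeq\ast$ then $P_n\lambda\hocr_n\dgrm{F}\simeq\hocr_n P_n\dgrm{F}\simeq\ast$. So your detour through the fiber $\dgrm{F}$ is not wrong, just unnecessary; but the step you labeled ``the combinatorial heart'' is precisely the one you did not carry out, and its content is the join--coproduct commutation above rather than any separate cancellation argument.
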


\begin{proof}
  Let $\dgrm{X}$ be a functor in $\Fun(\mc{C},\mc{D})$.
  Substituting $\hocr_n\dgrm{X}$ in diagram (\ref{abcd}) supplies 
  a natural zig-zag of objectwise weak equivalence
  \[ P_{1,\dotsc,1}\hocr_n\dgrm{X}\simeq P_n\lambda(\hocr_n\dgrm{X}) \]
  under $\hocr_n\dgrm{X}$ by Corollary \ref{alpha-beta}.
  In order to prove that the functors $P_n\lambda(\hocr_n\dgrm{X})$ and 
  $\hocr_nP_n\dgrm{X}$ are naturally weakly equivalent under $\hocr_n\dgrm{X}$, 
  denote by $J_U(\dgrm{X})$ the functor
  \[ K\mapsto J_U(\dgrm{X})(K)=\dgrm{X}(K\star U). \]
  for each finite set $U$. The join with $U$ commutes with coproducts in \mc{C}.
  Thus, for each $\ul{K}$ in $\mc{C}^{\wedge n}$ there is a weak equivalence
  \begin{align*} 
    J_U\lambda(\hocr_n\dgrm{X})(\ul{K})    &\simeq\hofib\left[ \dgrm{X}\bigl(\bigvee_{i=1}^n(K_i\star U)\bigr)\to\holim_{S\in P_0{\ul{n}}}\dgrm{X}\bigl(\bigvee_{i\notin S}(K_i\star U)\bigr) \right] \\
    &\simeq\hofib\left[ \dgrm{X}\bigl((\bigvee_{i=1}^nK_i)\star U\bigr)\to\holim_{S\in P_0{\ul{n}}}\dgrm{X}\bigl((\bigvee_{i\notin S}K_i)\star U\bigr) \right] \\
    &\simeq\hocr_nJ_U\dgrm{X}(\ul{K})
  \end{align*}
  and therefore an objectwise weak equivalence
  $ T_n\lambda(\hocr_n\dgrm{X})\simeq\hocr_nT_n\dgrm{X}$. It
  induces the desired objectwise weak equivalence
  $ P_n\lambda(\hocr_n\dgrm{X})\simeq\hocr_nP_n\dgrm{X}$.
\end{proof}

\begin{corollary}\label{prop:cross-right-exc}
  The $n$-th cross effect
  \[\cross_n\colon \Fun(\mc{C},\mc{D})_{n\text{-}\mathrm{exc}\text{-}\mathrm{cr}}\to 
  \Fun(\Sigma_n\wr \mc{C}^{\wedge n},\mc{D})_{\mathrm{ml}}\]
  is a right Quillen functor.
\end{corollary}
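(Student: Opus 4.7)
The plan is to apply the standard criterion for lifting a Quillen adjunction through left Bousfield localizations on both sides. Since $\Fun(\mc{C},\mc{D})_{n\text{-}\mathrm{exc}\text{-}\mathrm{cr}}$ is obtained from $\Fun(\mc{C},\mc{D})_{\mathrm{hf}\text{-}\mathrm{cr}}$ by left Bousfield localization at the set $\{\xi_{A,n}\}$ (see the remark after Lemma~\ref{lem:n-exc-fib}), and $\Fun(\Sigma_n\wr\mc{C}^{\wedge n},\mc{D})_{\mathrm{ml}}$ is obtained from $\Fun(\Sigma_n\wr\mc{C}^{\wedge n},\mc{D})_{\mathrm{hf}}$ by left Bousfield localization (per the remark following Theorem~\ref{thm:multilinear-model}), and Lemma~\ref{lem:cross-right-hf} already provides the Quillen adjunction $\Lcross_n\dashv\cross_n$ between the \emph{un}localized (hf-cr and hf) model structures, it suffices to check that $\cross_n$ sends fibrant objects of the localized source to fibrant objects of the localized target. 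This is the standard criterion (Hirschhorn~\cite[Thm.~3.3.20]{Hir:loc}) for the descent of a Quillen adjunction to left Bousfield localizations.

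So let $\dgrm{X}$ be fibrant in $\Fun(\mc{C},\mc{D})_{n\text{-}\mathrm{exc}\text{-}\mathrm{cr}}$. Unpacking, $\dgrm{X}$ is cr fibrant, hf fibrant (hence an objectwise fibrant homotopy functor by Remark~\ref{remarks about hf}), and $n$-excisive. By Lemma~\ref{lem:cross-right-hf}, $\cross_n\dgrm{X}$ is already hf fibrant as a symmetric functor. It remains to show that $\cross_n\dgrm{X}$ is multilinear, i.e.\ that the coaugmentation $\cross_n\dgrm{X}\to P_{(1,\dotsc,1)}\cross_n\dgrm{X}$ is an objectwise weak equivalence.

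Because $\dgrm{X}$ is cr fibrant, Lemma~\ref{cr-fibrant} supplies an objectwise weak equivalence $\cross_n\dgrm{X}\simeq\hocr_n\dgrm{X}$. Since $\dgrm{X}$ is $n$-excisive, Proposition~\ref{prop:Goo:3.3} (the case $m=n-1$) implies that $\hocr_n\dgrm{X}$ is linear in each variable, i.e.\ multilinear in the sense of Definition~\ref{def:mult-symm}. Multilinearity is an objectwise-weak-equivalence invariant of hf fibrant functors, since it is characterized by the coaugmentation to $P_{(1,\dotsc,1)}$ being an objectwise weak equivalence and $P_{(1,\dotsc,1)}$ preserves objectwise weak equivalences (by its construction in~(\ref{eq:P_n-sym}) via $(-)^\hf$, $T_{(1,\dotsc,1)}$, and filtered colimits, all of which preserve weak equivalences under Convention~\ref{conv:hf}). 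Therefore $\cross_n\dgrm{X}$ is multilinear, hence fibrant in $\Fun(\Sigma_n\wr\mc{C}^{\wedge n},\mc{D})_{\mathrm{ml}}$.

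The only subtle point, and the step that requires the most care, is invoking Goodwillie's calculation (Proposition~\ref{prop:Goo:3.3}) in the present generality: one must know that the homotopy cross effect of an $n$-excisive functor is multilinear in this model-categorical setting, not just classically for spaces or spectra. But the proposition as stated in the excerpt already applies verbatim here, so the chain of implications closes and the proof is complete.
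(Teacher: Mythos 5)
Your proof is correct, but it takes a genuinely different route from the paper's. The paper verifies directly that $\cross_n$ preserves $n$-excisive cr fibrations: given such a fibration $f$, it constructs a commutative cube connecting the $\cross_n$-square to the $\hocr_n$-square, invokes Theorem~\ref{Goo:thm 6.1} ($\hocr_nP_n\simeq P_{(1,\dotsc,1)}\hocr_n$) to identify the far face, and uses stability of the multilinear model structure (Corollary~\ref{ml-stable}) to pass from the fiber comparison of Lemma~\ref{cr-fibrant} to a homotopy pullback statement. You instead treat both model structures as left Bousfield localizations of the hf ones, start from the unlocalized Quillen pair of Lemma~\ref{lem:cross-right-hf}, and reduce to showing that $\cross_n$ sends fibrant objects to fibrant objects. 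This fibrant-object check you discharge via Lemma~\ref{cr-fibrant} together with Proposition~\ref{prop:Goo:3.3} (that $\hocr_n$ of an $n$-excisive functor is multilinear) and the homotopy invariance of multilinearity among homotopy functors. The payoff of your approach is that you avoid Theorem~\ref{Goo:thm 6.1}, the cube manipulation, and the appeal to stability; it trades a hands-on fibration check for a slicker formal argument plus the (simpler) Goodwillie input~\ref{prop:Goo:3.3}. Two minor comments: the citation to \cite[Thm.~3.3.20]{Hir:loc} is not quite the criterion you are invoking --- Hirschhorn~3.3.20 governs localizing the source at a set $\mathcal{C}$ and the target at $LF(\mathcal{C})$, whereas you want the statement that a Quillen pair between the unlocalized structures descends to the localizations whenever the right adjoint preserves fibrant objects. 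That criterion is correct and standard (it follows, for instance, by checking $F$ on generating acyclic cofibrations with cofibrant domain via the adjoint mapping-space argument, or by Dugger's criterion that a left-adjoint preserving cofibrations and a right-adjoint preserving fibrations between fibrant objects form a Quillen pair), but the right reference is not 3.3.20. Second, when you say ``multilinearity is an objectwise-weak-equivalence invariant of hf fibrant functors,'' it is worth noting that what you actually use is that multilinearity (being a condition that the coaugmentation to $P_{(1,\dots,1)}$ is an objectwise weak equivalence) transports along the objectwise weak equivalence $\cross_n\dgrm{X}\to\hocr_n\dgrm{X}$ by two-out-of-three, using that $P_{(1,\dots,1)}$ preserves objectwise weak equivalences; this is fine, but $\hocr_n\dgrm{X}$ need not itself be hf fibrant, so the phrasing ``of hf fibrant functors'' is a little misleading --- the relevant invariance is among homotopy functors.
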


\begin{proof}
  The left adjoint preserves cofibrations by 
  Lemma~\ref{lem:cross-right-hf}. It suffices to show that 
  $\cross_n$ preserves fibrations. 
  Let $f\co\dgrm{X}\to\dgrm{Y}$ be an $n$-excisive cr fibration, hence
  an hf cr fibration such that the diagram
  \diagr{ \dgrm{X} \ar[r]\ar[d]_-f & P_n\dgrm{X} \ar[d]^-{P_nf} \\
    \dgrm{Y} \ar[r] & P_n\dgrm{Y} }
  is a homotopy pullback square. 
  Lemma \ref{lem:cross-right-hf} implies that
  the map $\cross_n f$ is an hf fibration. 
  It remains to check that the diagram
  \diagr{ \cross_n\dgrm{X} \ar[r]\ar[d]_-{\cross_nf} & P_{1,\dotsc,1}\cross_n\dgrm{X} \ar[d]^-{P_{1,\dotsc,1}\cross_nf} \\
    \cross_n\dgrm{Y} \ar[r] & P_{1,\dotsc,1}\cross_n\dgrm{Y} }
  is a homotopy pullback. This square is the front of a commutative cube,
  whose sides are induced by the natural map
  $\cross_n \dgrm{X} \to \hocr_n \dgrm{X}$.
  The back of the cube is
  the following diagram:
  \diagr{ \hocr_n \dgrm{X} \ar[r]\ar[d] & \hocr_n P_n\dgrm{X} \ar[d]\ar[r]^-{\simeq} & P_{(1,\dotsc,1)}\hocr_n\dgrm{X} \ar[d]
\\ 
        \hocr_n \dgrm{Y} \ar[r] & \hocr_n P_n\dgrm{Y} \ar[r]^-{\simeq} & P_{(1,\dotsc,1)}\hocr_n\dgrm{Y}  }
  The horizontal maps on the right are objectwise weak equivalences
  by Goodwillie's Theorem~\ref{Goo:thm 6.1}. The square on the
  left hand side is the image of a homotopy
  pullback square under $\hocr_n$. 
  Thus, $\cross_n f$ is a multilinear fibration, once the sides of the commutative cube are proven to be homotopy pullback squares as well. In fact,
  it suffices to check that the square
  \diagram{ \cross_n \dgrm{X} \ar[r] \ar[d] & \hocr_n \dgrm{X} \ar[d] \\
    \cross_n \dgrm{Y} \ar[r] & \hocr_n \dgrm{Y} }{stabil1}
  is a homotopy pullback square, because the opposite side of the cube is obtained by applying $P_{1,\hdots,1}$ and inherits the homotopy pullback property.
  Let $\dgrm{F}$ be the fiber of $f$. It is cr fibrant, thus by Lemma~\ref{cr-fibrant} the canonical map
  \[ \cross_n\dgrm{F}\to\hocr_n\dgrm{F} \]
  of vertical (homotopy) fibers in diagram~(\ref{stabil1}) 
  is an objectwise weak equivalence. 
  As the multilinear model structure is stable by Corollary~\ref{ml-stable},
  diagram (\ref{stabil1}) is a homotopy pullback square, which
  completes the proof.
\end{proof}

\section{Homogeneous functors}\label{sec:homogeneous-functors}

As recalled in Definition~\ref{def:reduced-homogeneous},
a functor $\dgrm{X}\co \mc{C}\to \mc{D}$
is $n$-homogeneous if it is $n$-excisive and
$P_{n-1}\dgrm{X}$ is contractible. In this section,
$\Fun(\mc{C},\mc{D})$
will be equipped via right Bousfield localization
with a model structure in
which every functor is weakly equivalent
to an $n$-homogeneous functor. As shown in 
Theorem~\ref{thm:cross-equiv-on-spectra},
this model category is 
Quillen equivalent to the multilinear model structure
on $\Fun(\Sigma_n\wr\mc{C}^{\wedge n},\mc{D})$.
Hence the $n$-homogeneous model category is
also Quillen equivalent, by Theorem~\ref{coefficient-spectra-equivalence},
to the model category of $\Sigma_n$-spectra in $\mc{D}$,
provided $\mc{C}$ is the category of finite pointed
simplicial sets. This yields a construction of
derivatives for functors in $\Fun(\Sfin,\mc{D})$ on the level
of model categories.

\begin{convention}\label{conv:n-homog}
  Suppose in addition to Convention~\ref{conv:n-exc} that \mc{D} admits a set of generating cofibrations with cofibrant domains.
\end{convention}

\subsection{The homogeneous model structure}
\label{sec:homog-model-struct}

\begin{definition}\label{def:homog-model-str}
  Consider the following set of objects of $\Fun(\mc{C},\mc{S})$:
  \[  \Lambda_n=\Bigl\lbrace\bigwedge_{i=1}^nR^{K_i}\,|\,K_1,\dotsc,K_n\in\ob(\mc{C})\Bigr\rbrace.\]
  More generally, let $I_{\mc{D}}$ be a set of generating cofibrations 
  with cofibrant domains in $\mc{D}$, which exists by 
  Convention~\ref{conv:n-homog}. Let $\cd(I_{\mc{D}})$ denote the set of 
  domains and codomains of all morphisms $i\in I$, and set 
  \[  \Lambda_{n,I_{\mc{D}}}= 
  \bigl\lbrace\bigl(R^{K_1}\wedge \dotsm \wedge R^{K_n}\bigr) \wedge  D\,|\,K_1,\dotsc,K_n\in\ob(\mc{C}), D\in \cd(I_{\mc{D}})\bigr\rbrace\]
  A map $f$ in $\Fun(\mc{C},\mc{D})_{n\text{-}\mathrm{exc}\text{-}\mathrm{cr}}$ is
  \begin{enumerate}
  \item
    an $n$-{\em homogeneous equivalence\/} 
    if it is a $\Lambda_{n,I_{\mc{D}}}$-colocal equivalence.
  \item
    an $n$-{\em homogeneous cofibration\/} if it has 
    the left lifting property with respect to all $n$-excisive 
    cr fibrations that are also $n$-homogeneous equivalences.
  \end{enumerate}
  The $n$-homogeneous fibrations are the $n$-excisive cr fibrations. 
\end{definition}

The notion of colocal equivalence is taken from Hirschhorn~\cite[3.1.4(b)]{Hir:loc}. 
Choosing a different set of generating cofibrations with cofibrant domains 
in \mc{D} yields the same classes, due to the following well-known lemma,
whence the choice of generating cofibrations of \mc{D} will be omitted 
from the notation.

\begin{lemma}\label{lem:gen-weq}
  Let $\mc{D}$ be an $\mc{S}$-model category and $f$ a morphism of fibrant objects. Suppose $\mc{D}$ admits a set $I_{\mc{D}}$ of generating cofibrations with cofibrant domains. The morphism $f$ is a weak equivalence if and only if for every domain and every codomain $D$ appearing in $\cd(I_{\mc{D}})$, the map ${\mc{D}}(D,f)$ is a weak equivalence of simplicial sets.
\end{lemma}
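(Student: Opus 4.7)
The plan is to reduce both implications to the pushout--product axiom for the $\mc{S}$-enrichment of $\mc{D}$, exploiting the fact that every object in $\cd(I_{\mc{D}})$ is cofibrant: domains by assumption, and codomains because $\emptyset\to B$ factors through the generating cofibration $A\to B$ whenever $A$ is cofibrant.

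For the forward direction, suppose $f\co X\to Y$ is a weak equivalence between fibrant objects. For every $D\in \cd(I_{\mc{D}})$ the object $D$ is cofibrant, so $\mc{S}_{\mc{D}}(D,\free)$ is a right Quillen functor from $\mc{D}$ to $\mc{S}$. By Ken Brown's lemma, it takes weak equivalences between fibrant objects to weak equivalences, yielding the desired claim on $\mc{S}_{\mc{D}}(D,f)$.

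For the backward direction, factor $f$ as $X\xrightarrow{j} Z \xrightarrow{p} Y$ with $j$ an acyclic cofibration and $p$ a fibration. Since $X$ and $Y$ are fibrant, so is $Z$. For every $D\in \cd(I_{\mc{D}})$ the map $\mc{S}_{\mc{D}}(D,j)$ is an acyclic cofibration of simplicial sets by the pushout--product axiom, in particular a weak equivalence. Combined with the assumption that $\mc{S}_{\mc{D}}(D,f)$ is a weak equivalence, two-out-of-three shows that $\mc{S}_{\mc{D}}(D,p)$ is a weak equivalence. It is moreover a fibration of simplicial sets, again by the pushout--product axiom applied to $D$ cofibrant and $p$ a fibration; hence $\mc{S}_{\mc{D}}(D,p)$ is an acyclic fibration.

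It remains to deduce that $p$ is an acyclic fibration. For this it suffices to verify the right lifting property of $p$ against every generator $i\co A\to B$ in $I_{\mc{D}}$. The pushout--product axiom turns this lifting problem into the question whether the pullback corner map
\[ \mc{S}_{\mc{D}}(B,Z)\to \mc{S}_{\mc{D}}(A,Z)\times_{\mc{S}_{\mc{D}}(A,Y)} \mc{S}_{\mc{D}}(B,Y) \]
is surjective on $0$-simplices. This corner map is a fibration of simplicial sets by the pushout--product axiom; it is also a weak equivalence, because the projection from the pullback to $\mc{S}_{\mc{D}}(B,Y)$ is the base change of the acyclic fibration $\mc{S}_{\mc{D}}(A,p)$, and $\mc{S}_{\mc{D}}(B,p)$ is itself an acyclic fibration, whence two-out-of-three applies. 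An acyclic fibration of simplicial sets is surjective on $0$-simplices, so every lifting problem admits a solution. Therefore $p$ is an acyclic fibration, and $f = p\circ j$ is a weak equivalence by two-out-of-three. The main bookkeeping hurdle is simply the SM7 juggling with pushout corner maps; once the cofibrancy of both domains and codomains of $I_{\mc{D}}$ is in place, everything else is formal.
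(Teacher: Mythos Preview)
The paper does not actually prove this lemma; it is introduced as ``well-known'' and stated without proof. Your argument is essentially complete and correct, so there is nothing to compare against.

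One small slip: you claim that $\mc{S}_{\mc{D}}(D,j)$ is an acyclic \emph{cofibration} of simplicial sets ``by the pushout--product axiom''. The SM7 axiom, in any of its equivalent forms, does not say this; the mapping-space functor $\mc{S}_{\mc{D}}(D,-)$ for cofibrant $D$ is right Quillen, not left Quillen, and there is no reason for it to preserve cofibrations. What you actually need is only that $\mc{S}_{\mc{D}}(D,j)$ is a weak equivalence, and that follows from the forward direction you already proved (or directly from Ken Brown), since $j$ is a weak equivalence between the fibrant objects $X$ and $Z$. With this adjustment the argument goes through unchanged: two-out-of-three gives that $\mc{S}_{\mc{D}}(D,p)$ is a weak equivalence, SM7 gives that it is a fibration, and the rest of your pullback-corner analysis is correct as written.
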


\begin{theorem}\label{thm:n-hom-model-exists}
Assume Convention \emph{\ref{conv:n-homog}}.
The classes described in Definition~\emph{\ref{def:homog-model-str}} form a right proper $\mc{S}$-model structure on $\Fun(\mc{C},\mc{D})$. 
\end{theorem}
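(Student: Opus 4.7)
The plan is to exhibit this model structure as the right Bousfield localization of the $n$-excisive cr model structure from Theorem~\ref{thm:n-exc-model-str-exists} with respect to $\Lambda_{n,I_\mc{D}}$, and then invoke Hirschhorn's existence theorem \cite[Thm.~5.1.1]{Hir:loc}. Three hypotheses are needed: (a) the base model structure is right proper; (b) it is cellular; (c) every element of $\Lambda_{n,I_\mc{D}}$ is cofibrant. Once these are verified, Hirschhorn's theorem produces a model structure whose weak equivalences are the $\Lambda_{n,I_\mc{D}}$-colocal equivalences, whose fibrations are the $n$-excisive cr fibrations, and whose cofibrations are characterized by the left lifting property with respect to fibrations that are colocal equivalences; this exactly reproduces Definition~\ref{def:homog-model-str}. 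Right properness of the localization follows automatically from right properness of the base via \cite[Thm.~5.1.5]{Hir:loc}.

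Hypothesis (a) is part of Theorem~\ref{thm:n-exc-model-str-exists}. For (c), the cofiber sequence displayed in the proof of Proposition~\ref{prop:cross-right-obj} shows that $\bigwedge_{i=1}^n R^{K_i}$ is cr cofibrant in $\Fun(\mc{C},\mc{S})$. Convention~\ref{conv:n-homog} supplies $I_\mc{D}$ with cofibrant domains, whence each codomain of a map in $I_\mc{D}$ is also cofibrant, as the target of a cofibration out of a cofibrant object. The pushout product axiom applied to the $\mc{S}$-tensor then yields cr cofibrancy of $\bigl(\bigwedge_{i=1}^n R^{K_i}\bigr)\wedge D$ in $\Fun(\mc{C},\mc{D})$, and since the $n$-excisive cr cofibrations are by definition (the cr analogue of Definition~\ref{def:n-exc-model}) the cr cofibrations, this gives cofibrancy in the base.

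The main obstacle is hypothesis (b). Cellularity must be inherited from $\mc{D}$ (which is a tacit strengthening of the working conventions, used implicitly for all the left Bousfield localizations of earlier sections) and then propagated through the cr modification of Theorem~\ref{thm:cross-model}, the homotopy functor localization of Theorem~\ref{thm:homotopy-model}, and the $n$-excisive localization of Theorem~\ref{thm:n-exc-model-str-exists}. At each stage, the additional generating (acyclic) cofibrations are pushout products of existing maps with generating (acyclic) cofibrations of $\mc{D}$, which preserves the required smallness and effective-monomorphism conditions. Once cellularity is established, Hirschhorn's machinery yields the desired right proper $\mc{S}$-model structure, and the classes of maps read off from its output coincide with those of Definition~\ref{def:homog-model-str}.
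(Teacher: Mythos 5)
Your overall strategy — present $\Fun(\mc{C},\mc{D})_{n\text{-}\mathrm{hom}}$ as a right Bousfield localization of the $n$-excisive cr model structure at the set $\Lambda_{n,I_{\mc{D}}}$ — matches the paper's. But you invoke the wrong existence theorem, and this creates a real gap. You appeal to Hirschhorn's Theorem 5.1.1, which requires the base model structure to be cellular, and you acknowledge that you cannot verify this from the stated hypotheses, suggesting instead that cellularity is ``a tacit strengthening of the working conventions, used implicitly for all the left Bousfield localizations of earlier sections.'' That claim is false: the left Bousfield localizations in Sections~4 and~5 are all constructed by verifying Bousfield's axioms (A1)--(A3) from \cite[9.2]{Bou:telescopic}, a method that needs only cofibrant generation and right properness, never cellularity. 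So no cellularity assumption is floating around implicitly, and you cannot lean on one.

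The paper avoids your difficulty entirely by citing Christensen--Isaksen \cite[Theorem~2.6]{Christensen-Isaksen}, which produces the right Bousfield localization for \emph{any} cofibrantly generated right proper model category with respect to a set of objects, with no cellularity hypothesis. Both of the required inputs — cofibrant generation and right properness of $\Fun(\mc{C},\mc{D})_{n\text{-}\mathrm{exc}\text{-}\mathrm{cr}}$ — are already supplied by Theorem~\ref{thm:n-exc-model-str-exists}, so the result follows in one line. Your verification of cofibrancy of the elements of $\Lambda_{n,I_{\mc{D}}}$ is correct (and is the reason Convention~\ref{conv:n-homog} asks for generating cofibrations with cofibrant domains), but it is not needed to trigger the existence theorem; it is instead what makes the colocal equivalences computable via Lemma~\ref{lem:gen-weq}. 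Replacing Hirschhorn's theorem by the Christensen--Isaksen result closes the gap and removes the unsubstantiated cellularity hypothesis from your argument.
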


\begin{proof} 
  This follows from \cite[Theorem~2.6]{Christensen-Isaksen}, which applies 
  to any cofibrantly generated right proper model category. In our case 
  this is the $n$-excisive model structure on $\Fun(\mc{C},\mc{D})$.
\end{proof}

The right Bousfield localization of the $n$-excisive cross effect
model structure on the category $\Fun(\mc{C},\mc{D})$ with respect 
to the set $\Lambda_{n,I_{\mc{D}}}$
is the {\it $n$-homogeneous model structure} and is denoted by 
$\Fun(\mc{C},\mc{D})_{n\text{-}\mathrm{hom}}$.
Theorem~\ref{thm:schwede-stable} implies that the $n$-homogeneous model 
structure on $\Fun(\mc{C},\Sp(\mc{D}))$ exists. 

\begin{definition}
The {\it $n$-homogeneous part} of a functor $\dgrm{X}$ is defined as
  \[ D_n\dgrm{X}:=\hofib[q_n\co P_n\dgrm{X}\to P_{n-1}\dgrm{X}]. \]
By construction, $D_n\dgrm{X}$ is indeed $n$-homogeneous.
\end{definition}

\begin{lemma}\label{hocr-D-to-hocr-P}
  For every functor \dgrm{X} the map $D_n\dgrm{X}\to P_n\dgrm{X}$ induces an objectwise equivalence
  $\hocr_nD_n\dgrm{X}\to \hocr_nP_n\dgrm{X}$.
\end{lemma}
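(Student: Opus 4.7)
The plan is to observe that, by definition, $D_n\dgrm{X}$ sits in a homotopy fiber sequence
\[ D_n\dgrm{X} \longrightarrow P_n\dgrm{X} \xrightarrow{q_n} P_{n-1}\dgrm{X}, \]
so it suffices to show that applying $\hocr_n$ preserves this fiber sequence and that $\hocr_n P_{n-1}\dgrm{X}$ is objectwise contractible.

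For the first point, recall from Definition~\ref{def:(ho)cross_n} and Remark~\ref{rem:cross-effect} that, for any functor $\dgrm{Y}\co\mc{C}\to\mc{D}$ and any $n$-tuple $(K_1,\dotsc,K_n)$,
\[ \hocr_n\dgrm{Y}(K_1,\dotsc,K_n)=\hofib\!\left[\dgrm{Y}\bigl(\bigvee_{i=1}^n K_i\bigr)\to\holim_{S\in P_0(\ul{n})}\dgrm{Y}\bigl(\bigvee_{i\in\ul{n}-S}K_i\bigr)\right]. \]
This is a finite homotopy limit of evaluations of $\dgrm{Y}$ on a fixed diagram of objects of $\mc{C}$. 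In particular, $\hocr_n$ commutes up to natural objectwise weak equivalence with any finite homotopy limit of functors in the variable $\dgrm{Y}$. Applied to the fiber sequence above, this yields a homotopy fiber sequence
\[ \hocr_n D_n\dgrm{X}\longrightarrow \hocr_n P_n\dgrm{X}\xrightarrow{\hocr_n q_n} \hocr_n P_{n-1}\dgrm{X}\]
in which the first map is the one under consideration.

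For the second point, $P_{n-1}\dgrm{X}$ is $(n-1)$-excisive by Lemma~\ref{lem:Pn-excisive}(1), so Proposition~\ref{prop:Goo:3.3} (the case $m=n-1$ in Goodwillie's Proposition 3.3) asserts directly that its $n$-th homotopy cross effect is objectwise contractible, i.e.\ $\hocr_n P_{n-1}\dgrm{X}\simeq \ast$. Combined with the fiber sequence above, this gives the desired objectwise weak equivalence $\hocr_n D_n\dgrm{X}\xrightarrow{\simeq} \hocr_n P_n\dgrm{X}$.

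The only mildly delicate point is the interchange of $\hocr_n$ with the homotopy fiber in the first step: one must verify that the ingredients of $\hocr_n$, namely the strict fiber replaced by a homotopy fiber and the finite homotopy limit over $P_0(\ul{n})$, may be commuted past an ambient homotopy fiber. Since $\mc{D}$ is right proper and homotopy pullbacks of homotopy pullbacks are homotopy pullbacks (and finite homotopy limits commute with each other), this is routine; no use of Convention~\ref{conv:hf}(\ref{item:fingen}) is needed here because the limits involved are finite. This is where the argument really lives, but it is purely formal once one has the explicit description of $\hocr_n$ and Proposition~\ref{prop:Goo:3.3}.
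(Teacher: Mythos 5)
Your proof is correct and takes essentially the same route as the paper: exploit the homotopy fiber sequence defining $D_n\dgrm{X}$, note that $\hocr_n$ commutes with homotopy fibers since it is itself a finite homotopy limit construction, and invoke Proposition~\ref{prop:Goo:3.3} to conclude that $\hocr_n P_{n-1}\dgrm{X}$ is contractible. The paper's proof is exactly this chain of weak equivalences, stated more tersely; your extra remarks on the interchange of finite homotopy limits are a reasonable elaboration of what the paper leaves implicit.
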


\begin{proof}
  Proposition \ref{prop:Goo:3.3} implies that $\hocr_nP_{n-1}\dgrm{X}\simeq\ast$.
  The chain
  \begin{align*}
    \hocr_nD_n\dgrm{X} &\simeq\hocr_n\hofib[P_n\dgrm{X}\to P_{n-1}\dgrm{X}]\simeq \hofib[\hocr_nP_n\dgrm{X}\to \hocr_nP_{n-1}\dgrm{X}] \\
    & \simeq\hocr_nP_n\dgrm{X}
  \end{align*}
  of natural objectwise equivalences completes the proof.
\end{proof}

\begin{lemma}\label{lem:n-hom-we1}
  For $f$ in $\Fun(\mc{C},\mc{D})$ the following statements are equivalent:
  \begin{enumerate}
  \item[{\rm (1)}]
    The map $f$ is an $n$-homogeneous equivalence.
  \item[{\rm (2)}]
    The induced map $\hocr_nP_n(f)$ is an objectwise equivalence.
  \item[{\rm (3)}]
    The induced map $\hocr_nD_n(f)$ is an objectwise equivalence.
  \item[{\rm (4)}]
    The induced map $\hocr_n(f)$ is a multilinear equivalence.
  \end{enumerate}
\end{lemma}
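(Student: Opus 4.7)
The plan is to prove the four-way equivalence by establishing (2)$\Leftrightarrow$(3), (2)$\Leftrightarrow$(4), and (1)$\Leftrightarrow$(2); the first two are immediate consequences of earlier results, and the third is the main content.

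For (2)$\Leftrightarrow$(3), I would simply invoke Lemma~\ref{hocr-D-to-hocr-P}: applied to both $\dgrm{X}$ and $\dgrm{Y}$, it produces a natural commutative square whose horizontal arrows $\hocr_n D_n \dgrm{X} \to \hocr_n P_n \dgrm{X}$ and $\hocr_n D_n \dgrm{Y} \to \hocr_n P_n \dgrm{Y}$ are objectwise weak equivalences, so one vertical map is an objectwise equivalence if and only if the other is. For (2)$\Leftrightarrow$(4), I would apply Goodwillie's Theorem~\ref{Goo:thm 6.1}, which yields a natural zig-zag $\hocr_n P_n(-) \simeq P_{(1,\dotsc,1)}\hocr_n(-)$ under $\hocr_n(-)$. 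Naturality gives a commutative diagram relating $\hocr_nP_n(f)$ to $P_{(1,\dotsc,1)}\hocr_n(f)$, and the latter being an objectwise equivalence is precisely the definition of $\hocr_n(f)$ being a multilinear equivalence.

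The main obstacle is (1)$\Leftrightarrow$(2), because this unpacks the abstract notion of $\Lambda_{n,I_{\mc{D}}}$-colocal equivalence into the concrete condition on $\hocr_nP_n$. The strategy is to compute, for each generator $A=(R^{K_1}\wedge\dotsm\wedge R^{K_n})\wedge D \in \Lambda_{n,I_{\mc{D}}}$ and each $\dgrm{X}$, the derived mapping space out of $A$ in the $n$-excisive cr model structure. Let $\dgrm{X}^{\mathrm{fib}}$ be a fibrant replacement of $\dgrm{X}$ in $\Fun(\mc{C},\mc{D})_{n\text{-}\mathrm{exc}\text{-}\mathrm{cr}}$; then $\dgrm{X}^{\mathrm{fib}}$ is cr fibrant, $n$-excisive, and naturally objectwise equivalent to $P_n\dgrm{X}$. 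The cofiber sequence appearing in the proof of Lemma~\ref{cr-formula} together with the pushout-product axiom show that $A$ is cofibrant in the $n$-excisive cr model structure, so the derived mapping space is the honest simplicial mapping space $\mc{S}_{\Fun(\mc{C},\mc{D})}(A, \dgrm{X}^{\mathrm{fib}})$. By the tensor/cotensor adjunction this is isomorphic to $\mc{S}_{\mc{D}}\bigl(D, \cotensor{\bigwedge_i R^{K_i}}{\dgrm{X}^{\mathrm{fib}}}\bigr)$, which by Lemma~\ref{cr-formula} equals $\mc{S}_{\mc{D}}\bigl(D, \cross_n\dgrm{X}^{\mathrm{fib}}(K_1,\dotsc,K_n)\bigr)$, and by Lemma~\ref{cr-fibrant} this is objectwise weakly equivalent to $\mc{S}_{\mc{D}}\bigl(D, \hocr_n\dgrm{X}^{\mathrm{fib}}(\ul{K})\bigr) \simeq \mc{S}_{\mc{D}}\bigl(D, \hocr_n P_n\dgrm{X}(\ul{K})\bigr)$.

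Consequently, $f$ is a $\Lambda_{n,I_{\mc{D}}}$-colocal equivalence if and only if for every $\ul{K}$ and every $D\in\cd(I_{\mc{D}})$ the induced map $\mc{S}_{\mc{D}}\bigl(D, \hocr_n P_n\dgrm{X}(\ul{K})\bigr) \to \mc{S}_{\mc{D}}\bigl(D, \hocr_n P_n\dgrm{Y}(\ul{K})\bigr)$ is a weak equivalence of simplicial sets. Since $\hocr_n P_n\dgrm{X}(\ul{K})$ and $\hocr_nP_n\dgrm{Y}(\ul{K})$ are fibrant in $\mc{D}$ (as homotopy fibers of maps of fibrant objects, built from objectwise fibrant replacements in the construction of $P_n$), Lemma~\ref{lem:gen-weq} identifies this condition with the statement that $\hocr_nP_n\dgrm{X}(\ul{K}) \to \hocr_nP_n\dgrm{Y}(\ul{K})$ is a weak equivalence in $\mc{D}$ for every $\ul{K}$, i.e.\ that $\hocr_nP_n(f)$ is an objectwise equivalence. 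This establishes (1)$\Leftrightarrow$(2) and completes the proof.
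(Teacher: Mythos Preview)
Your proof is correct and follows essentially the same approach as the paper: both arguments unpack the $\Lambda_{n,I_{\mc{D}}}$-colocal condition via the adjunction and Lemma~\ref{cr-formula}, use Lemma~\ref{cr-fibrant} to pass from $\cross_n$ to $\hocr_n$, invoke Theorem~\ref{Goo:thm 6.1} for (2)$\Leftrightarrow$(4), and Lemma~\ref{hocr-D-to-hocr-P} for (2)$\Leftrightarrow$(3). The only organizational difference is that the paper first passes to cr fibrant replacements of $\dgrm{X}$ and $\dgrm{Y}$ and then applies $P_n$, whereas you take an $n$-excisive cr fibrant replacement directly; these are equivalent moves.
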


\begin{proof}
  A map $f\co\dgrm{X}\to\dgrm{Y}$ is an $n$-homogeneous equivalence if and only if an appropriate map between cr fibrant approximations is one. Assume that \dgrm{X} and \dgrm{Y} are  cr fibrant.
  Lemma~\ref{lem:gen-weq} implies that $f$ is an $n$-homogeneous equivalence if and only if for every $D$ in $\cd(I_D)$ the induced map 
  \[ \mc{S}_{\Fun(\mc{C},\mc{D})}\left(\Bigl(\bigwedge_{i=1}^nR^{K_i}\Bigr)\wedge D,P_n(f)\right) \cong \mc{S}_{\mc{D}}\bigl(D,\cross_nP_n(f)(K_1,\dotsc,K_n)\bigr) \]
  is a weak equivalence. 
  Since \dgrm{X} is cr fibrant it follows by Lemma~\ref{cr-fibrant} and Theorem~\ref{Goo:thm 6.1} that
  \[\cross_nP_n\dgrm{X}\simeq\hocr_nP_n\dgrm{X}\simeq P_{1,\hdots,1}\hocr_n\dgrm{X} \]
  and similarly for \dgrm{Y}. This shows that assertion (1), (2) and (4) are equivalent to each other if one observes, that $\hocr_nP_n(f)$ is a multilinear equivalence between multilinear functors by Proposition~\ref{prop:Goo:3.3}. Statements (2) and (3) are equivalent by Lemma~\ref{hocr-D-to-hocr-P}.
\end{proof}

\begin{corollary}\label{n-hom-equ-to-n-hom}
  Every functor \dgrm{X} is $n$-homogeneously equivalent to $D_n\dgrm{X}$. 
\end{corollary}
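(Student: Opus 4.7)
The plan is to construct an explicit zig-zag of $n$-homogeneous equivalences
\[ \dgrm{X} \longrightarrow P_n\dgrm{X} \longleftarrow D_n\dgrm{X} \]
and to verify each leg using the equivalent characterizations of $n$-homogeneous equivalences supplied by Lemma~\ref{lem:n-hom-we1}.

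For the left leg $p_n\co \dgrm{X}\to P_n\dgrm{X}$, I would use criterion~(2) of Lemma~\ref{lem:n-hom-we1}: it suffices to show that the induced map $\hocr_nP_n(p_n)\co \hocr_nP_n\dgrm{X}\to \hocr_nP_nP_n\dgrm{X}$ is an objectwise weak equivalence. By Lemma~\ref{lem:Pn-excisive}(3), the canonical maps $P_n(p_n\dgrm{X})$ and $p_n(P_n\dgrm{X})$ are objectwise weak equivalences. Applying $\hocr_n$, which preserves objectwise weak equivalences, the claim follows.

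For the right leg $D_n\dgrm{X}\to P_n\dgrm{X}$, the cleanest argument uses criterion~(4) of Lemma~\ref{lem:n-hom-we1}, which demands that $\hocr_n(D_n\dgrm{X}\to P_n\dgrm{X})$ be a multilinear equivalence. But Lemma~\ref{hocr-D-to-hocr-P} tells us precisely that this induced map $\hocr_nD_n\dgrm{X}\to \hocr_nP_n\dgrm{X}$ is an objectwise weak equivalence, which is in particular a multilinear equivalence. This concludes the proof.

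I do not expect any serious obstacle here; all the nontrivial input has already been assembled. The reasoning is entirely a bookkeeping exercise combining Lemma~\ref{lem:n-hom-we1}, Lemma~\ref{hocr-D-to-hocr-P}, and Lemma~\ref{lem:Pn-excisive}, using that both the homotopy cross effect and $P_n$ preserve objectwise weak equivalences when applied to appropriate inputs.
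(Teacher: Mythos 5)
Your proof is correct and takes essentially the same approach as the paper: construct the zig-zag $\dgrm{X}\to P_n\dgrm{X}\leftarrow D_n\dgrm{X}$ and check each leg is an $n$-homogeneous equivalence via Lemma~\ref{lem:n-hom-we1} and Lemma~\ref{hocr-D-to-hocr-P}. The only minor difference is in the left leg, where the paper simply notes that $\dgrm{X}\to P_n\dgrm{X}$ is an $n$-excisive equivalence and hence (since the $n$-homogeneous model structure is a right Bousfield localization of the $n$-excisive cr structure) automatically an $n$-homogeneous equivalence, whereas you unfold this through criterion (2) of Lemma~\ref{lem:n-hom-we1} and Lemma~\ref{lem:Pn-excisive}(3) — a slightly longer but equally valid route.
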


\begin{proof}
  The map $\dgrm{X}\to P_n\dgrm{X}$ is an $n$-excisive equivalence, hence
  an $n$-homogeneous equivalence. 
  By Lemma~\ref{lem:n-hom-we1} and Lemma~\ref{hocr-D-to-hocr-P}, 
  the map $D_n\dgrm{X}\to P_n\dgrm{X}$ is an $n$-homogeneous equivalence.
\end{proof}

The next statement also holds for functors to an unstable category \mc{D},
but it will be shown later in \ref{n-hom-we3}, after some auxiliary
statements.
\begin{corollary}\label{n-hom-we2}
  A map $f$ in $\Fun(\mc{C},\Sp(\mc{D}))$ is an $n$-homogeneous equivalence 
  if and only if the induced map $D_n(f)$ is an objectwise equivalence.
\end{corollary}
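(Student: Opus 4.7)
The easy direction is immediate: if $D_n(f)$ is an objectwise weak equivalence, then so is $\hocr_n D_n(f)$, and Lemma~\ref{lem:n-hom-we1} forces $f$ to be an $n$-homogeneous equivalence.

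For the converse, suppose $f\co \dgrm{X}\to \dgrm{Y}$ is an $n$-homogeneous equivalence, so that Lemma~\ref{lem:n-hom-we1} furnishes $\hocr_n D_n(f)$ as an objectwise equivalence. The task is to promote this to $D_n(f)$ itself being an objectwise equivalence. Stability of $\Sp(\mc{D})$ is decisive here: a map between spectrum-valued functors is an objectwise weak equivalence if and only if its objectwise homotopy fiber is objectwise contractible. Both $D_n$, by Lemma~\ref{Tn and Pn commute with ho(co)lims}, and $\hocr_n$, being itself a homotopy fiber, commute with objectwise homotopy fibers in the target. Setting $\dgrm{H}:=D_n\bigl(\hofib(f)\bigr)$, which is $n$-homogeneous because $n$-excisive and $n$-reduced functors are closed under homotopy limits, the problem reduces to the assertion: \emph{if $\dgrm{H}\co\mc{C}\to\Sp(\mc{D})$ is $n$-homogeneous and $\hocr_n\dgrm{H}$ is objectwise contractible, then $\dgrm{H}$ is objectwise contractible.}

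This last assertion is the stable (spectrum-valued) form of Goodwillie's classification of homogeneous functors: there is a natural objectwise equivalence
\[ \dgrm{H}(K)\simeq \bigl(\hocr_n\dgrm{H}(K,\dotsc,K)\bigr)_{h\Sigma_n}, \]
paralleling~(\ref{intro:Dn}), though without an $\Omega^{\infty}$ since $\dgrm{H}$ already takes values in spectra. Spectrum-level homotopy orbits preserve objectwise contractibility, so the hypothesis forces $\dgrm{H}\simeq\ast$ as desired.

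The main obstacle is justifying the displayed formula within the model-categorical framework of the paper. A natural route is via the derived counit of the adjunction $(\Lcross_n,\cross_n)$ from Lemma~\ref{Lcross formula}, combined with Corollary~\ref{prop:cross-right-exc}; the key point is that in the stable setting, $\hocr_n$ reflects weak equivalences between $n$-homogeneous functors, or equivalently that the counit $L\Lcross_n\hocr_n\dgrm{H}\to\dgrm{H}$ is an objectwise equivalence for $n$-homogeneous $\dgrm{H}$. Stability of $\Sp(\mc{D})$ is what makes this work; as the authors remark, the unstable analogue~\ref{n-hom-we3} requires further auxiliary statements not yet available at this point in the paper.
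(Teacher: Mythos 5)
Your easy direction and the reduction step are fine and match the paper's structure: by Corollary~\ref{n-hom-equ-to-n-hom} and Lemma~\ref{lem:n-hom-we1}, one reduces the converse to showing that an $n$-homogeneous functor to $\Sp(\mc{D})$ with objectwise contractible $n$-th homotopy cross effect is itself objectwise contractible (your $\dgrm{H}$ argument, using stability of $\Sp(\mc{D})$ and that $D_n$ and $\hocr_n$ commute with objectwise homotopy fibers). The problem is what comes next.

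You propose to deduce the key assertion from the classification formula $\dgrm{H}(K)\simeq\bigl(\hocr_n\dgrm{H}(K,\dotsc,K)\bigr)_{h\Sigma_n}$, or equivalently from the derived counit of the $(\Lcross_n,\cross_n)$ adjunction being an equivalence on $n$-homogeneous objects. But you acknowledge yourself that you cannot justify this at this point, and indeed it is out of reach here: that the counit (or unit) is an equivalence is precisely the substance of Theorem~\ref{thm:cross-equiv-on-spectra}, which appears later in the paper, and the statement that $\hocr_n$ reflects objectwise equivalences between $n$-homogeneous spectrum-valued functors is essentially equivalent to what you are trying to prove. So your suggested route is either circular or depends on machinery not yet established. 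The paper avoids this entirely by invoking Goodwillie's \cite[Prop.~3.4]{Goo:calc3}, a direct cube-manipulation argument showing that an $n$-excisive functor to spectra with contractible $n$-th homotopy cross effect is $(n-1)$-excisive; the paper then checks that the two properties that argument uses (strongly homotopy cocartesian cubes of cofibrant objects admit weak equivalences from pushout cubes, and the two-out-of-three property for homotopy Cartesian cubes) hold in any model category. Combining with $n$-reducedness then gives contractibility, without appealing to any classification. You should replace the appeal to the classification formula with this direct argument.
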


\begin{proof}
  By Corollary~\ref{n-hom-equ-to-n-hom}, it remains
  to show that $D_n(f)$ is an 
  objectwise equivalence if and only if $\hocr_nD_n(f)$ is an objectwise 
  equivalence. This is the content of \cite[Prop.~3.4]{Goo:calc3} 
  where Goodwillie actually shows that a functor to $\Sp(\mc{D})$ is 
  $(n-1)$-excisive if it is $n$-excisive with contractible $n$-th homotopy 
  cross effect. In the proof, the following two properties are used: 
 \begin{enumerate}
  \item Every strongly homotopy cocartesian cube of cofibrant
    objects admits a weak equivalence 
    from a pushout cube \cite[Prop. 2.2]{Goo:calc2}. This holds in any 
    model category.
  \item If a map $\mc{X}\to\mc{Y}$ of $n$-cubes is homotopy Cartesian 
    as an $(n+1)$-cube and $\mc{Y}$ is homotopy Cartesian, then 
    \mc{X} is homotopy Cartesian. This holds in every model category.
\end{enumerate} 
  This implies the statement for stable model categories, because 
  in such a map is a weak equivalence if and only if its homotopy fiber 
  is contractible.
\end{proof}

\begin{theorem}\label{thm:cross-equiv-on-spectra}
  Assume Convention \emph{\ref{conv:n-homog}}. The functors
  \[ \Lcross_n\co\Fun(\Sigma_n\wr\mc{C}^{\wedge n},\Sp(\mc{D}))_{\mathrm{ml}}\rightleftarrows\Fun(\mc{C},\Sp(\mc{D}))_{n\text{-}\mathrm{hom}}\!:\cross_n \]
  form a Quillen equivalence.
\end{theorem}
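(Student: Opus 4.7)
The plan proceeds in two stages. First, verify that $(\Lcross_n, \cross_n)$ is a Quillen pair between $\Fun(\Sigma_n\wr\mc{C}^{\wedge n},\Sp(\mc{D}))_{\mathrm{ml}}$ and $\Fun(\mc{C},\Sp(\mc{D}))_{n\text{-}\mathrm{hom}}$. Then verify the Quillen equivalence via derived unit and counit.

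For the Quillen adjunction, observe that the $n$-homogeneous and the $n$-excisive cross effect model structures on $\Fun(\mc{C},\Sp(\mc{D}))$ share the same fibrations by Definition~\ref{def:homog-model-str}, and therefore share the same acyclic cofibrations (characterized by left lifting against fibrations). Corollary~\ref{prop:cross-right-exc} then immediately gives that $\cross_n$ preserves fibrations and $\Lcross_n$ preserves acyclic cofibrations. To check that $\Lcross_n$ preserves multilinear (projective) cofibrations, it suffices to check on generators: by the formula in the proof of Lemma~\ref{Lcross formula}, $\Lcross_n(R^{\ul{K}}\wedge i)=\bigl(\bigwedge_{j=1}^n R^{K_j}\bigr)\wedge i$ for any generating projective cofibration $R^{\ul{K}}\wedge i$ in the multilinear model structure. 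The domain and codomain of this map lie in $\Lambda_{n,I_{\Sp(\mc{D})}}$, identifying it as a cofibration of the right Bousfield localization defining $\Fun(\mc{C},\Sp(\mc{D}))_{n\text{-}\mathrm{hom}}$.

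For the Quillen equivalence, note that the fibrant objects in $\Fun(\mc{C},\Sp(\mc{D}))_{n\text{-}\mathrm{hom}}$ are the cr fibrant $n$-excisive functors. For such $\dgrm{X}$, Lemma~\ref{cr-fibrant} gives $\cross_n \dgrm{X}\simeq\hocr_n \dgrm{X}$, which is multilinear by Proposition~\ref{prop:Goo:3.3}. The derived counit $\Lcross_n(\cross_n\dgrm{X})^c\to \dgrm{X}$ is an $n$-homogeneous equivalence if and only if its $\hocr_n$-image is a multilinear equivalence, by Lemma~\ref{lem:n-hom-we1}. For the derived unit $\dgrm{Y}\to \cross_n(\Lcross_n \dgrm{Y})^{\fib}$ on a cofibrant $\dgrm{Y}$ in the multilinear model structure, Lemma~\ref{cr-fibrant} together with Theorem~\ref{Goo:thm 6.1} yields
\[ \cross_n\bigl(P_n\Lcross_n\dgrm{Y}\bigr)\simeq\hocr_n P_n \Lcross_n\dgrm{Y}\simeq P_{(1,\dotsc,1)}\hocr_n \Lcross_n\dgrm{Y}, \]
reducing the task to showing that the natural map $\dgrm{Y}\to P_{(1,\dotsc,1)}\hocr_n\Lcross_n\dgrm{Y}$ is a multilinear equivalence.

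This final reduction is the main obstacle. I would verify it first on representables $\dgrm{Y}=R^{\ul{K}}$: by the formula in the proof of Lemma~\ref{Lcross formula}, $\Lcross_n R^{\ul{K}}\cong\bigwedge_{j=1}^n R^{K_j}$, and a direct computation of the $n$-th homotopy cross effect of this $n$-fold objectwise smash product of representables recovers $R^{\ul{K}}$ after multilinear approximation, together with the natural symmetric group action. The general case would then follow because cofibrant objects in the multilinear model structure are built from representables by suitable homotopy colimits, and both $\dgrm{Y}\mapsto\dgrm{Y}$ and $\dgrm{Y}\mapsto P_{(1,\dotsc,1)}\hocr_n\Lcross_n\dgrm{Y}$ commute with these up to weak equivalence, using stability of the multilinear model structure (Corollary~\ref{ml-stable}) and Lemma~\ref{Tn and Pn commute with ho(co)lims}.
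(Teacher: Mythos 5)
Your overall decomposition into a Quillen-adjunction step followed by a derived-unit/counit step matches the paper's, but the two halves diverge in execution, and each has an issue.

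For the Quillen adjunction you verify that the \emph{left} adjoint preserves (acyclic) cofibrations, whereas the paper checks that the \emph{right} adjoint $\cross_n$ preserves (acyclic) fibrations. Your observation that the $n$-homogeneous and the $n$-excisive cr model structures share fibrations, hence also acyclic cofibrations, is correct and gives the acyclic-cofibration case cleanly from Corollary~\ref{prop:cross-right-exc}. However, the cofibration case is where a gap appears: having domain and codomain in the colocalizing set $\Lambda_{n,I_{\Sp(\mc{D})}}$ does \emph{not} by itself ``identify'' a map as a cofibration of the right Bousfield localization --- colocality of objects is about cofibrancy of objects, not about the map being a cofibration. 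To repair this you would need to observe that the map $(\bigwedge_j R^{K_j})\wedge i$ is a cr cofibration between $\Lambda$-colocal (hence $n$-homogeneously cofibrant) objects and then invoke the general fact (Hirschhorn, in the right proper cellular setting) that a cofibration in the ambient structure between cofibrant objects of the colocalization is a colocal cofibration; or alternatively appeal to Lemma~\ref{n-hom cof} (a cr cofibration is $n$-homogeneous iff it is an $(n-1)$-excisive equivalence, which holds here since both ends are $n$-reduced), although that lemma is only established later in the paper. The paper sidesteps this entirely: since fibrations of the right Bousfield localization are unchanged, $\cross_n$ preserves them automatically, and preservation of acyclic fibrations follows by identifying the fiber as $\cross_n$ of a contractible cr fibrant functor and invoking stability of the multilinear model structure (Corollary~\ref{ml-stable}). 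That route is shorter and avoids any analysis of colocal cofibrations.

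For the Quillen equivalence your reductions via Lemma~\ref{cr-fibrant}, Theorem~\ref{Goo:thm 6.1}, and Lemma~\ref{lem:n-hom-we1} are sound, but the crucial step --- that $R^{\ul{K}}\to P_{(1,\dotsc,1)}\hocr_n(\bigwedge_j R^{K_j})$ is a multilinear equivalence --- is asserted as a ``direct computation'' without being carried out, and it is not obviously direct: $\hocr_n$ applied to the $n$-fold objectwise smash of representables involves a total homotopy fiber of $\bigwedge_j\mc{S}(K_j,\bigvee_{i\notin S}L_i)$, which does not visibly collapse to $R^{\ul{K}}(L_1,\dotsc,L_n)$. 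This is precisely the nontrivial content of Goodwillie's argument that the paper cites from pages 678 of \cite{Goo:calc3}, so the gap is genuine. The homotopy-colimit propagation step you sketch (using stability so that $\hocr_n$, a finite homotopy limit, commutes with homotopy colimits) is reasonable, but it only matters once the representable case is actually established. The paper also makes essential use of the fact, derived from Lemma~\ref{cr-fibrant} and Lemma~\ref{lem:n-hom-we1}, that $\cross_n$ \emph{detects} $n$-homogeneous equivalences between cr fibrant objects; your write-up records that the counit is an equivalence iff its $\hocr_n$-image is, but does not close the loop by noting that this detection property, combined with the unit equivalence, is exactly what promotes the Quillen adjunction to a Quillen equivalence.
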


\begin{proof}
  The $n$-th cross effect is a right Quillen functor from
  the $n$-excisive cr model structure by Corollary~\ref{prop:cross-right-exc}.
  In particular, $\cross_n$ preserves fibrations.
  Lemma~\ref{cr-fibrant} and Lemma~\ref{lem:n-hom-we1} show that $\cross_n$ preserves and detects 
  $n$-homogeneous equivalences on cr fibrant objects. Hence if 
  $p$ is an acyclic fibration in the $n$-homogeneous cr model
  structure with fiber $\dgrm{F}$, the map $\cross_n(p)$ is a fibration
  with contractible fiber $\cross_n(\dgrm{F})$. As the multilinear
  model structure is stable by Corollary~\ref{ml-stable},
  $\cross_n(p)$ is an acyclic fibration. Thus $\cross_n$ is a 
  right Quillen functor on the $n$-homogeneous model structure.
  The argument from \cite[pp. 678]{Goo:calc3} extends to show that
  the derived unit map $\dgrm{X}\to\hocr_n\Lcross_n\dgrm{X}$ is an equivalence.
  The already mentioned fact that $\cross_n$ detects 
  $n$-homogeneous equivalences on cr fibrant objects implies
  it is a Quillen equivalence.
\end{proof}

\subsection{Goodwillie's delooping theorem}

\begin{theorem}\label{thm:delooping}
  Suppose that $\mc{C}$ and $\mc{D}$ satisfy 
  Convention~\emph{\ref{conv:n-homog}}.
  The pair of adjoint functors obtained by composing with the 
  functors 
  \[\mathrm{Fr}_0\co\mc{D}\to\Sp(\mc{D})\ \text{ and }\ \Ev_0\co\Sp(\mc{D})\to\mc{D}\]
is a Quillen equivalence:
  \[ F\co \Fun(\mc{C},\mc{D})_{n\text{-}\mathrm{hom}}\rightleftarrows\Fun(\mc{C},\Sp(\mc{D}))_{n\text{-}\mathrm{hom}}\!:G\] 
\end{theorem}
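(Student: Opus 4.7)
The plan is to exploit the square
\[
\xymatrix{
\Fun(\mc{C},\mc{D})_{n\text{-}\mathrm{hom}} \ar[d]_-{\cross_n} &
\Fun(\mc{C},\Sp(\mc{D}))_{n\text{-}\mathrm{hom}} \ar[l]_-{G} \ar[d]^-{\cross_n} \\
\Fun(\Sigma_n\wr\mc{C}^{\wedge n},\mc{D})_{\mathrm{ml}} &
\Fun(\Sigma_n\wr\mc{C}^{\wedge n},\Sp(\mc{D}))_{\mathrm{ml}} \ar[l]_-{G_{\Sigma}}
}
\]
of right adjoints, which commutes strictly since both $\cross_n$ and $\Ev_0$ preserve all limits. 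By Theorem~\ref{thm:cross-equiv-on-spectra}, the right vertical is a Quillen equivalence, and by Theorem~\ref{thm:symm-ev-eq}, the bottom $G_{\Sigma}$ is the right adjoint of a Quillen equivalence. The strict commutativity also yields $\hocr_n\circ G\cong G_{\Sigma}\circ\hocr_n$ up to natural weak equivalence.

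First I would verify that $F\dashv G$ is a Quillen pair at the $n$-homogeneous level. The evaluation $\Ev_0$ preserves objectwise weak equivalences and objectwise fibrations and commutes with $(-)^{\hf}$, $T_n$, and hence $P_n$ up to natural weak equivalence, so $G$ is a right Quillen functor on the underlying projective, cross-effect, hf, and $n$-excisive cr model structures. Since $n$-homogeneous fibrations coincide with $n$-excisive cr fibrations, $G$ still preserves fibrations after right Bousfield localization. Acyclic $n$-homogeneous fibrations are then those $n$-excisive cr fibrations that are $n$-homogeneous equivalences; by Lemma~\ref{lem:n-hom-we1} together with $\Ev_0$ commuting with $\hocr_n$ and $P_n$, the functor $G$ preserves them as well.

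To upgrade this to a Quillen equivalence, I would check the two standard criteria. For $G$ reflecting weak equivalences between fibrant objects, let $f$ be a map between $n$-excisive cr fibrant objects in $\Fun(\mc{C},\Sp(\mc{D}))$ with $Gf$ an $n$-homogeneous equivalence. Lemma~\ref{lem:n-hom-we1} forces $\hocr_n(Gf)\cong G_{\Sigma}(\hocr_n f)$ to be a multilinear equivalence. Since $G_{\Sigma}$ is the right adjoint of a Quillen equivalence, it reflects multilinear equivalences between fibrant objects, so $\hocr_n f$ is one. A final application of Lemma~\ref{lem:n-hom-we1} in the spectra-valued setting yields that $f$ is an $n$-homogeneous equivalence.

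For the derived unit at a cofibrant $\dgrm{X}$, note that $\Ev_0\circ\mathrm{Fr}_0=\id_{\mc{D}}$, so the underived unit $\dgrm{X}\to GF\dgrm{X}$ is the identity and the derived unit is the natural map $\dgrm{X}\to G((F\dgrm{X})^{\fib})$ induced by fibrant replacement in $\Fun(\mc{C},\Sp(\mc{D}))_{n\text{-}\mathrm{hom}}$. Applying $\hocr_n$ and combining Goodwillie's Theorem~\ref{Goo:thm 6.1} with the strict commutativity of the square identifies this map, up to natural weak equivalence, with the derived unit of $F_{\Sigma}\dashv G_{\Sigma}$ applied to a multilinear replacement of $\hocr_n\dgrm{X}$; this is a multilinear equivalence by Theorem~\ref{thm:symm-ev-eq}. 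Lemma~\ref{lem:n-hom-we1} then concludes that the original derived unit is an $n$-homogeneous equivalence. The main obstacle will be precisely this last identification: producing a natural multilinear equivalence between $\hocr_n((F\dgrm{X})^{\fib})$ and a multilinear replacement of $F_{\Sigma}(\hocr_n\dgrm{X})$, which requires threading through the filtered colimit defining $P_n$ and leveraging the stability of the multilinear model structure (Corollary~\ref{ml-stable}) to ensure that $\mathrm{Fr}_0$ commutes with $\hocr_n$ after stabilization.
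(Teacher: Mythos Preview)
Your argument that $(F,G)$ is a Quillen pair is essentially the paper's Lemma~\ref{deloop:step1}, and your reflection argument for $G$ is sound and parallels Lemma~\ref{deloop:step2}, though the paper argues more directly using that $\Ev_0$ detects weak equivalences of $\Omega$-spectra.

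The genuine gap is the derived unit. You want to identify $\hocr_n$ of the derived unit of $(F,G)$ with the derived unit of $(F_\Sigma,G_\Sigma)$ at $\hocr_n\dgrm{X}$. Unwinding, this requires a multilinear equivalence between $F_\Sigma(\hocr_n\dgrm{X})$ and $\hocr_n\bigl((F\dgrm{X})^{\fib}\bigr)$, i.e.\ that $\mathrm{Fr}_0$ commutes with $\hocr_nP_n$ after stabilization. But $\mathrm{Fr}_0$ is a left adjoint and $\hocr_n$ is built from homotopy limits; there is no direct comparison map in the required direction, and stability of the multilinear model structure does not manufacture one. More conceptually: from your square you know the composite $R\cross_n\circ RG\simeq RG_\Sigma\circ R\cross_n$ is an equivalence of homotopy categories, and that $R\cross_n$ on the $\mc{D}$-valued side is conservative (Lemma~\ref{lem:n-hom-we1}). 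But conservativity of a functor out of a category does not let you deduce essential surjectivity of a functor into it, so $RG$ cannot be shown to be an equivalence from this alone. What is missing is precisely that every $n$-homogeneous $\mc{D}$-valued functor lifts to a spectrum-valued one.

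The paper supplies this missing ingredient directly rather than via the square: it invokes Goodwillie's delooping construction (Lemma~\ref{lem:delooping}), which for any $n$-reduced functor $\dgrm{X}$ produces an explicit spectrum-valued functor $\mathbf{R}_n\dgrm{X}$ with $G(\mathbf{R}_n\dgrm{X})\simeq\dgrm{X}$ (Lemma~\ref{deloop:step3}). This establishes essential surjectivity and fullness of $RG$ outright (Lemma~\ref{deloop:step4}). The $\mc{D}$-valued cross-effect Quillen equivalence---your left vertical---is then obtained only \emph{afterward}, as Corollary~\ref{n-hom we}, via two-out-of-three. So your strategy inverts the paper's logical order and, to close the gap you correctly flagged as the ``main obstacle,'' would need the delooping anyway.
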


\begin{proof}
  The proof is divided into several steps. 
  The pair is a Quillen adjunction by Lemma~\ref{deloop:step1}. 
  The total right derived functor of $G$ is faithful by 
  Lemma~\ref{deloop:step2}, and essentially surjective and
  full by Lemma~\ref{deloop:step4}.
\end{proof}

\begin{lemma}\label{deloop:step1}
The functors $(F,G)$ form a Quillen pair for the $n$-homogeneous model structures on both sides.
\end{lemma}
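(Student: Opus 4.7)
The plan is to promote the Quillen adjunction $(\mathrm{Fr}_0,\Ev_0)$ of Theorem~\ref{thm:schwede-stable} through the chain of localizations that culminates in the $n$-homogeneous model structure. Applied objectwise, $(\mathrm{Fr}_0,\Ev_0)$ yields a Quillen pair $(F,G)$ between the projective model structures on $\Fun(\mc{C},\mc{D})$ and $\Fun(\mc{C},\Sp(\mc{D}))$. The first task is to verify, step by step, that $G$ remains a right Quillen functor after successively passing to the cross effect, hf-cr, and $n$-excisive cr model structures. The key inputs are that $\Ev_0$ commutes with all small limits and colimits and preserves filtered colimits, and that it is itself a right Quillen functor. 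Consequently, $\Ev_0$ commutes strictly with $\cross_n$ (a strict fiber) and up to natural weak equivalence with $(\free)^{\hf}$, $T_n$, $\hocr_n$ and therefore with $P_n$. Combining this with the characterizations of the various fibrations as objectwise fibrations with a homotopy pullback condition (Lemmata~\ref{hf fibrations} and~\ref{lem:n-exc-fib}), $G$ takes $n$-excisive cr fibrations to $n$-excisive cr fibrations, and $(F,G)$ is a Quillen pair of $n$-excisive cr model structures. This is the exact pattern already carried out in the proof of Theorem~\ref{thm:symm-ev-eq} for the multilinear case.

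For the final step, the $n$-homogeneous model structure is a right Bousfield localization of the $n$-excisive cr model structure, so its fibrations coincide with the $n$-excisive cr fibrations. In particular, the previous paragraph shows $G$ preserves fibrations. It remains to show that $G$ preserves acyclic fibrations. Let $f\co\dgrm{X}\to\dgrm{Y}$ be an $n$-excisive cr fibration that is an $n$-homogeneous equivalence. By Lemma~\ref{lem:n-hom-we1}, the induced map $\hocr_n P_n(f)$ is an objectwise weak equivalence of multilinear functors with values in $\Sp(\mc{D})$. The commutation of $\Ev_0$ with $P_n$ and $\hocr_n$ gives a natural objectwise weak equivalence
\[ \hocr_n P_n\bigl(G(f)\bigr) \simeq G\bigl(\hocr_n P_n(f)\bigr),\]
and the right-hand side is an objectwise weak equivalence because the spectra $\hocr_n P_n(\dgrm{X})(\ul{A})$ are $\Omega$-spectra, by multilinearity in each variable, so $\Ev_0$ preserves the relevant stable equivalences on them. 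Applying Lemma~\ref{lem:n-hom-we1} once more shows that $G(f)$ is an $n$-homogeneous equivalence, so $G(f)$ is an acyclic fibration in $\Fun(\mc{C},\mc{D})_{n\text{-}\mathrm{hom}}$.

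The main obstacle is this last subtle point: $\Ev_0\co\Sp(\mc{D})\to\mc{D}$ does not preserve general stable equivalences, only those between $\Omega$-spectra. One must therefore check that the spectra appearing as values of $\hocr_n P_n$ on $\Sp(\mc{D})$-valued functors already have $\Omega$-spectrum structure up to levelwise equivalence, which is an instance of the stability of multilinear functors implicit in Theorem~\ref{thm:symm-ev-eq} and~\ref{coefficient-spectra-equivalence}. Beyond this, the proof amounts to routine bookkeeping along the chain of localizations.
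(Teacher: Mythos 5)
Your proposal follows the same overall route as the paper: check, step by step, that $G=\Ev_0\circ(-)$ remains right Quillen through the cross effect, hf, and $n$-excisive cr localizations by exploiting that $\Ev_0$ preserves limits, colimits, and hence commutes up to weak equivalence with $(-)^{\hf}$, $T_n$, $P_n$ and $\hocr_n$; then, for the right Bousfield localization, reduce to showing that $G$ preserves $n$-homogeneous equivalences of $n$-excisive cr fibrations via Lemma~\ref{lem:n-hom-we1} and the commutation $\hocr_nP_nG(f)\simeq G(\hocr_nP_nf)$. The concern you raise at the end -- that $\Ev_0$ is only a right Quillen functor, so a priori it preserves stable equivalences of fibrant spectra, not arbitrary ones -- is a legitimate subtlety that the paper's own proof states but does not explicitly discharge. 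However, your proposed resolution is shaky: multilinearity of a functor $\dgrm{Z}\colon\Sigma_n\wr\mc{C}^{\wedge n}\to\Sp(\mc{D})$ means that each partial functor is excisive, i.e.\ $\dgrm{Z}(K_1,\dotsc,K_n)\simeq\Omega\dgrm{Z}(\Sigma K_1,\dotsc,K_n)$ in the spectrum-level loop sense, which by itself does not force the values $\dgrm{Z}(\ul{K})$ to be $\Omega$-spectra. The correct justification is more elementary: $\hocr_n$ is defined via derived homotopy limits and fibers, so its values on objectwise-fibrant input are already fibrant in $\Sp(\mc{D})$ (compare Lemma~\ref{cr-fibrant}, where cr fibrancy forces the strict construction to agree with the derived one), and one may assume $\dgrm{X},\dgrm{Y}$ cr fibrant without loss of generality when testing $n$-homogeneous equivalence.

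You also missed the paper's second, cleaner argument, which sidesteps the fibrancy issue entirely: the left adjoint $F=\mathrm{Fr}_0\circ(-)$ sends the colocalizing set $\Lambda_{n,I_{\mc{D}}}$ of Definition~\ref{def:homog-model-str} into the corresponding set for $\Sp(\mc{D})$, since $F\bigl((R^{K_1}\wedge\dotsm\wedge R^{K_n})\wedge D\bigr)=(R^{K_1}\wedge\dotsm\wedge R^{K_n})\wedge\mathrm{Fr}_0(D)$ and $\mathrm{Fr}_0(D)\in\cd(I_{\Sp(\mc{D})})$. Since $(F,G)$ is already a Quillen pair for the $n$-excisive cr model structures and $F$ preserves the colocalizing objects, general right Bousfield localization theory immediately yields the Quillen pair for the $n$-homogeneous model structures. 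This avoids any analysis of the interaction between $\Ev_0$ and stable equivalences.
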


\begin{proof}
  The functor $F$ maps the generating sets $I^{\mathrm{cr}}$ and $J^{\mathrm{cr}}$ on the left hand side into the corresponding ones on the right hand side. 
  This implies that $F$ is a left Quillen functor on cr model structures.
  The functor $\Ev_0$ commutes with all colimits, limits and homotopy limits. 
  In particular, it commutes with the functors $(-)^\hf$ and $P_n$ up to natural weak equivalence. Thus, the characterization of fibrations in the hf model structure \ref{hf fibrations} and in the $n$-excisive model structure \ref{def:n-exc-model} yields that $G$ is a right Quillen functor on hf model structures and the $n$-excisive model structure. The acyclic fibrations agree in all these model structures. 

The fibrations in the $n$-homogeneous and the $n$-excisive model structure agree. Suppose that $f$ is an $n$-homogeneous acyclic fibration. 
Since $\Ev_0$ commutes with $P_n$ and with homotopy fibers, we have $\hocr_nP_nG(f) \simeq G(\hocr_nP_n f)$. It follows from Lemma~\ref{lem:n-hom-we1} that $G(f)$ is an $n$-homogeneous acyclic fibration, whence $G$ is a right Quillen functor also for the $n$-homogeneous model structure. 
Alternatively, the left adjoint $F$ preserves the set of objects that define the right Bousfield localization. 
\end{proof}

\begin{lemma}\label{deloop:step2}
  The functor $G$ preserves and detects $n$-homogeneous equivalences of bifibrant functors, and its total right derived functor is faithful on morphisms.
\end{lemma}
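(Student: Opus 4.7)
The plan is to reduce both claims to Theorem~\ref{thm:symm-ev-eq} via the characterization of $n$-homogeneous equivalences given in Lemma~\ref{lem:n-hom-we1}, together with the fact that $\Ev_0\co \Sp(\mc{D})\to \mc{D}$ preserves all limits and commutes with filtered colimits.

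First I would record that the $\mc{S}$-functor $G$ commutes, up to natural objectwise weak equivalence, with the constructions $(-)^{\hf}$, $T_n$, $P_n$, $\cross_n$, and $\hocr_n$: for $\cross_n$ this is strict since $\Ev_0$ preserves all limits in $\Sp(\mc{D})$; for $\hocr_n$ and $P_n$ this uses in addition that $\Ev_0$ commutes with filtered colimits. In particular, for every map $f\co \dgrm{X}\to \dgrm{Y}$ in $\Fun(\mc{C},\Sp(\mc{D}))$ there is a natural zig-zag of objectwise weak equivalences
\[ \hocr_n P_n(G f)\simeq G(\hocr_n P_n f). \]
By Lemma~\ref{lem:n-hom-we1}(2), $f$ is an $n$-homogeneous equivalence precisely when $\hocr_n P_n f$ is an objectwise weak equivalence, and likewise for $G f$. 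Both $\hocr_n P_n \dgrm{X}$ and $\hocr_n P_n\dgrm{Y}$ are symmetric multilinear functors by Proposition~\ref{prop:Goo:3.3}, and are fibrant in the multilinear model structure by construction. Theorem~\ref{thm:symm-ev-eq} asserts that $G$ induces a Quillen equivalence of multilinear model structures, so it preserves and reflects weak equivalences between fibrant multilinear functors. Combined with the zig-zag above this proves that $G$ preserves and detects $n$-homogeneous equivalences between bifibrant functors.

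For faithfulness of the total right derived functor $RG$, I would consider the square of derived functors
\[ \xymatrix{ \ho{\Fun(\mc{C},\Sp(\mc{D}))_{n\text{-}\mathrm{hom}}} \ar[r]^-{\hocr_n} \ar[d]_{RG} & \ho{\Fun(\Sigma_n\wr\mc{C}^{\wedge n},\Sp(\mc{D}))_{\mathrm{ml}}} \ar[d]^{RG} \\ \ho{\Fun(\mc{C},\mc{D})_{n\text{-}\mathrm{hom}}} \ar[r]_-{\hocr_n} & \ho{\Fun(\Sigma_n\wr\mc{C}^{\wedge n},\mc{D})_{\mathrm{ml}}}} \]
in which the horizontal arrows are the derived $n$-th cross effects, which are right Quillen by Corollary~\ref{prop:cross-right-exc}. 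The top arrow is an equivalence of homotopy categories by Theorem~\ref{thm:cross-equiv-on-spectra}, and the right arrow is an equivalence by Theorem~\ref{thm:symm-ev-eq}. The diagram commutes up to natural isomorphism because $\Ev_0$ commutes with $\cross_n$ as noted above, and because multilinear fibrancy of $\hocr_n P_n\dgrm{X}$ makes passage through derived functors unambiguous. Consequently the top-right composite is an equivalence, hence fully faithful on morphisms; since this composite equals $\hocr_n\circ RG$, the latter is faithful on morphisms, and therefore so is $RG$.

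The main obstacle I anticipate is verifying that the square above commutes at the level of total right derived functors, which requires checking that the fibrant replacements entering the computation of $\hocr_n$ on either side are compatible under $G$ and that $\hocr_n P_n\dgrm{X}$ represents the derived cross effect without further (co)fibrant replacement. Both checks should reduce to the already noted facts that $G$ preserves objectwise fibrations and commutes with the finite homotopy limits defining $\cross_n$ and with the filtered colimits defining $P_n$.
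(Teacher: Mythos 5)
Your proof is correct in outline but takes a genuinely different, and rather heavier, route than the paper's. The paper argues directly: between bifibrant objects in a right Bousfield localization, local equivalences coincide with the underlying weak equivalences; a bifibrant functor $\dgrm{X}\colon\mc{C}\to\Sp(\mc{D})$ is in particular an objectwise $\Omega$-spectrum, and for such functors the higher levels $\dgrm{X}_j$ are recovered from $\dgrm{X}_0$ (via the unique delooping $R_n$ of $n$-homogeneous functors), so $\Ev_0$ both detects weak equivalences and is faithful on homotopy classes of maps, immediately yielding both assertions. Your approach instead routes everything through the characterization of $n$-homogeneous equivalences via $\hocr_n P_n$ (Lemma~\ref{lem:n-hom-we1}), the compatibility of $\Ev_0$ with $\hocr_n P_n$, and the already-established Quillen equivalences for multilinear functors (Theorem~\ref{thm:symm-ev-eq}) and for cross effects on spectrum-valued functors (Theorem~\ref{thm:cross-equiv-on-spectra}). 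This is logically sound and non-circular — those two theorems are proved independently of the delooping theorem — but it trades one short bifibrancy argument for a commutative square of derived right Quillen functors, whose commutativity and whose fibrancy hypotheses on the $\hocr_n P_n$-objects you correctly flag as requiring verification. Concretely: to invoke Theorem~\ref{thm:symm-ev-eq} as ``preserves and reflects equivalences between fibrant objects'' you must know $\hocr_n P_n\dgrm{X}$ is multilinearly \emph{fibrant}, not merely multilinear; this holds only after suitable cr-fibrant replacement of $P_n\dgrm{X}$, and you need $G$ to interchange with that replacement. These are fillable gaps, but the paper's more elementary argument avoids them entirely. In short: same conclusion, a genuinely different decomposition; the paper's version is shorter and buys directness, yours buys a uniform reduction to the already-proved Quillen equivalences in the stable setting.
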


\begin{proof}
  General localization theory of model categories implies 
  that a map between $n$-homogeneously bifibrant functors is an 
  $n$-homogeneous equivalence if and only if it is an objectwise equivalence.
  The functor $\Ev_0$ preserves and detects weak equivalences of stably fibrant spectra. 
  Thus, the functor $G$ preserves and detects $n$-homogeneous equivalences of bifibrant functors.

  Moreover, the structure maps $\dgrm{X} \rightarrow \Omega \dgrm{X}$ of an 
  $n$-homogeneously bifibrant functor $\dgrm{X}\co \mc{C}\rightarrow \Sp(\mc{D})$ are objectwise weak equivalences.
  Hence, if $f$ and $g$ are maps of bifibrant objects in the $n$-homogeneous model structure on $\Sp(\mc{D})$-valued functors such that $f_0$ and $g_0$ are homotopic, then $f$ and $g$ are homotopic. In particular, the  total right derived functor of $G$ is faithful on morphisms.
\end{proof}

For the next lemma recall that all $\mc{S}$-functors are reduced.

\begin{lemma}\label{lem:delooping}
  Let $n>0$ and $\dgrm{X}$ a functor. Then there is a natural commutative diagram
  \[ \xymatrix{ & \dgrm{X} \ar[d]\ar[rd] & \\ K_n\dgrm{X} \ar[d] & \wh{P}_n\dgrm{X} \ar[r]^-\sim \ar[l] \ar[d] & P_n\dgrm{X} \ar[d] \\
    R_n\dgrm{X} & P_{n-1}^\prime \dgrm{X} \ar[r]^-\sim \ar[l] & P_{n-1}\dgrm{X}}\]
  in which the left hand square is a homotopy pullback square, the functor $K_n\dgrm{X}$ is contractible and the functor $R_n\dgrm{X}$ is $n$-homogeneous. 
\end{lemma}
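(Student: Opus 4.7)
The plan is to adapt Goodwillie's delooping construction from \cite[Section~2]{Goo:calc3} to the model-categorical framework developed here. Since $K_n\dgrm{X}$ will be weakly contractible, the homotopy pullback property of the left square amounts to a natural weak equivalence $\wh{P}_n\dgrm{X}\simeq\hofib\bigl(P'_{n-1}\dgrm{X}\to R_n\dgrm{X}\bigr)$, which combined with the right-column equivalences extends the canonical fiber sequence $D_n\dgrm{X}\to P_n\dgrm{X}\to P_{n-1}\dgrm{X}$ one step to the right by an $n$-homogeneous delooping $R_n\dgrm{X}$ of $D_n\dgrm{X}$.

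For the construction, I would first replace $\dgrm{X}$ by its homotopy functor model (without change of notation) and factor the canonical map $q_n\colon P_n\dgrm{X}\to P_{n-1}\dgrm{X}$ in the hf model structure, producing $\wh{P}_n\dgrm{X}\xrightarrow{\sim} P_n\dgrm{X}$ and $P'_{n-1}\dgrm{X}\xrightarrow{\sim} P_{n-1}\dgrm{X}$ together with a strict fibration $\wh{P}_n\dgrm{X}\to P'_{n-1}\dgrm{X}$ modelling $q_n$. For each $A\in\mc{C}$, Convention~\ref{conv:n-exc} allows us to form the strongly cocartesian $(n+1)$-cube $\mc{Y}_A\colon S\mapsto\wh{P}_n\dgrm{X}(A\star S)$, $S\subseteq\ul{n+1}$. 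Split $\mc{Y}_A$ along the coordinate $n+1$, and define $K_n\dgrm{X}(A):=\holim_{\{n+1\}\subseteq S\subseteq\ul{n+1}}\wh{P}_n\dgrm{X}(A\star S)$; this is weakly contractible because $\{n+1\}$ is initial in the indexing poset and $\wh{P}_n\dgrm{X}(A\star\{n+1\})=\wh{P}_n\dgrm{X}(CA)\simeq\ast$, using that $\wh{P}_n\dgrm{X}$ is a reduced homotopy functor and $CA$ is contractible. Set $R_n\dgrm{X}(A):=\holim_{\emptyset\neq S\subseteq\ul{n}}\wh{P}_n\dgrm{X}\bigl(A\star(S\cup\{n+1\})\bigr)$, assembled so that the canonical restriction makes the left square a homotopy pullback.

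For the verification: $K_n\dgrm{X}$ is contractible by construction, and the homotopy pullback property of the left square is the standard cube-splitting lemma applied to $\mc{Y}_A$, which is homotopy cartesian since $\wh{P}_n\dgrm{X}\simeq P_n\dgrm{X}$ is $n$-excisive. To show $R_n\dgrm{X}$ is $n$-homogeneous, $n$-excisiveness follows from Lemma~\ref{Tn and Pn commute with ho(co)lims} together with the construction of $R_n\dgrm X$ as a finite homotopy limit of $n$-excisive values. For $(n-1)$-triviality $P_{n-1}R_n\dgrm{X}\simeq\ast$, I would apply the right Quillen functor $\hocr_n$ of Corollary~\ref{prop:cross-right-exc} to the fiber sequence $\wh{P}_n\dgrm{X}\to P'_{n-1}\dgrm{X}\to R_n\dgrm{X}$: Proposition~\ref{prop:Goo:3.3} gives $\hocr_n P'_{n-1}\dgrm{X}\simeq\ast$, so stability of the multilinear model structure (Corollary~\ref{ml-stable}) forces $\hocr_n R_n\dgrm{X}\simeq\Sigma\hocr_n \wh{P}_n\dgrm{X}$ in the stable category of multilinear functors; combined with Theorem~\ref{Goo:thm 6.1} identifying $\hocr_n P_k$ with $P_{(1,\dotsc,1)}\hocr_n$, a diagram chase then yields $P_{n-1}R_n\dgrm{X}\simeq\ast$.

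The principal obstacle is this last step: transferring the stability argument in the multilinear model category back to the conclusion $P_{n-1}R_n\dgrm{X}\simeq\ast$ in $\Fun(\mc{C},\mc{D})$ itself. Unlike the spectrum-valued case covered by Theorem~\ref{thm:cross-equiv-on-spectra}, the model category $\Fun(\mc{C},\mc{D})$ need not be stable, so the implication ``$\hocr_n R_n\dgrm{X}$ is equivalent to a suspension of $\hocr_n P_n\dgrm{X}$ $\Rightarrow$ $R_n\dgrm{X}$ is $n$-homogeneous'' requires a careful combination of Theorem~\ref{Goo:thm 6.1} with Goodwillie's cube manipulations from \cite[Section~2]{Goo:calc3}, whose notorious opaqueness is where the bulk of the technical work resides.
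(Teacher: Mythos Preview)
Your outline follows the paper's own proof in spirit: the paper simply records that this is diagram~(2.3) of \cite{Goo:calc3} and that Goodwillie's argument in Section~2 there carries over unchanged. So the substance of the comparison is between your sketch and Goodwillie's actual construction.

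Two points deserve attention. First, your one-step cube construction does not produce the diagram as stated. Splitting the cube $S\mapsto \wh{P}_n\dgrm{X}(A\star S)$ along the last coordinate and taking punctured homotopy limits yields, in the bottom-middle position, the functor $T_{n-1}\wh{P}_n\dgrm{X}$, not a model for $P_{n-1}\dgrm{X}$. You declared $P'_{n-1}\dgrm{X}$ to be a replacement of $P_{n-1}\dgrm{X}$ obtained by factoring $q_n$, but the cube-splitting square you invoke does not land there. Goodwillie's construction in \cite[\S 2]{Goo:calc3} is an iterated one: he builds a ladder of such squares indexed by the $T_n$-tower and passes to the colimit, so that the bottom-middle column stabilises to $P_{n-1}$ while the outer columns stabilise to the desired $K_n$ and $R_n$. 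Without that iteration your square connects the wrong objects.

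Second, your proposed route to $P_{n-1}R_n\dgrm{X}\simeq\ast$ via $\hocr_n$ and stability of the multilinear model structure is not how Goodwillie proceeds, and the gap you flag is real: knowing $\hocr_n R_n\dgrm{X}$ in the stable multilinear category says nothing directly about $P_{n-1}R_n\dgrm{X}$. Goodwillie instead argues directly from the explicit formula for $R_n$ (as a finite homotopy limit of values of $P_n\dgrm{X}$ on joins containing a cone coordinate): he shows that applying $T_{n-1}$ to $R_n\dgrm{X}$ reproduces a functor of the same shape, and a short cofinality argument then gives $P_{n-1}R_n\dgrm{X}\simeq\ast$. That direct manipulation is what you should import, rather than the cross-effect detour.
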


\begin{proof}
  The diagram above is diagram (2.3) in \cite{Goo:calc3} and Goodwillie's 
  proof of its existence in Section 2 of that article applies. 
\end{proof}

\begin{lemma}\label{deloop:step3}
  Let $\dgrm{X}\co\mc{C}\to\mc{D}$ be an $n$-reduced functor.
  Then there exists a spectrum-valued functor 
  ${\mathbf R}_n\dgrm{X}\co \mc{C}\to \Sp(\mc{D})$ and a natural
  objectwise equivalence
  $G({\mathbf R}_n\dgrm{X})\simeq\dgrm{X}$. 
\end{lemma}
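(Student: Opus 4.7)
The plan is to iterate Goodwillie's delooping construction from Lemma~\ref{lem:delooping} and rectify the result to a Bousfield--Friedlander spectrum. First, I would set $(\mathbf{R}_n\dgrm{X})_0 := \dgrm{X}$ and define inductively $(\mathbf{R}_n\dgrm{X})_{k+1} := R_n((\mathbf{R}_n\dgrm{X})_k)$ for $k \geq 0$. The key observation is that $R_n\dgrm{Y}$ is $n$-homogeneous for any $\dgrm{Y}$, so each level with $k \geq 1$ is both $n$-reduced and $n$-excisive. This makes the induction self-sustaining, since Lemma~\ref{lem:delooping} applies to any $n$-reduced input.

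Next, I would produce the structure maps via the natural delooping. Given an $n$-reduced functor $\dgrm{Y}$, one has $P_{n-1}\dgrm{Y} \simeq \ast \simeq P'_{n-1}\dgrm{Y}$, so the homotopy pullback square in Lemma~\ref{lem:delooping} (together with contractibility of $K_n\dgrm{Y}$) identifies $\wh{P}_n\dgrm{Y}$ with $\Omega R_n\dgrm{Y}$. The natural map $\dgrm{Y} \to \wh{P}_n\dgrm{Y}$ then furnishes a natural zig-zag $\dgrm{Y} \to \Omega R_n\dgrm{Y}$, which is an objectwise equivalence for every $\dgrm{Y}$ that is additionally $n$-excisive, i.e.~for every level with $k \geq 1$. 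Adjointing supplies maps $\Sigma (\mathbf{R}_n\dgrm{X})_k \to (\mathbf{R}_n\dgrm{X})_{k+1}$, a priori only as a zig-zag. Note that at level zero the map $\dgrm{X} \to \Omega R_n\dgrm{X}$ need not be an objectwise equivalence, but this is acceptable because Bousfield--Friedlander spectra allow arbitrary structure maps.

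Finally, I would strictify the zig-zag to a genuine spectrum. By Theorem~\ref{thm:schwede-stable}, $\Sp(\mc{D})$ itself satisfies Convention~\ref{conv:hf}, so the techniques of Sections~\ref{sec:simp-homot-funct} and~\ref{sec:homot-funct-sever} apply verbatim to $\Fun(\mc{C},\Sp(\mc{D}))$. Functorial cofibrant--fibrant replacement in the stable projective model structure, combined with the fact that $\Fun(\mc{C},\Sp(\mc{D})) \cong \Sp(\Fun(\mc{C},\mc{D}))$, lets us replace the zig-zag by a strict Bousfield--Friedlander spectrum-valued functor $\mathbf{R}_n\dgrm{X}$. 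Crucially, the zeroth level is kept (naturally objectwise equivalent to) $\dgrm{X}$ throughout, so $G(\mathbf{R}_n\dgrm{X}) = (\mathbf{R}_n\dgrm{X})_0 \simeq \dgrm{X}$ naturally and objectwise.

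The principal technical obstacle is the rectification step: turning the zig-zag of deloopings into strictly composable structure maps while preserving naturality in $\dgrm{X}$ and without disturbing the objectwise equivalence at level zero. This relies both on the functoriality of the construction $R_n$ extracted from Lemma~\ref{lem:delooping} and on functorial (co)fibrant replacement in $\Fun(\mc{C},\Sp(\mc{D}))$, which is furnished by the small object argument applied to the cofibrantly generated stable model structure of Theorem~\ref{thm:schwede-stable}.
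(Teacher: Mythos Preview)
Your overall strategy---iterate $R_n$ and then rectify the resulting tower of deloopings into a strict Bousfield--Friedlander spectrum---matches the paper's, and your identification of the rectification step as the crux is correct. However, your proposed rectification has a genuine gap. Functorial (co)fibrant replacement in $\Sp\bigl(\Fun(\mc{C},\mc{D})\bigr)$ acts on objects that are \emph{already} spectra; it cannot be applied to the data you have, which is a sequence of functors $R_n^k\dgrm{X}$ connected only by zig-zags
\[ R_n^k\dgrm{X} \longrightarrow U_n(R_n^k\dgrm{X}) \longleftarrow \Omega R_n^{k+1}\dgrm{X} \]
rather than by genuine structure maps. No object of $\Sp(\mc{D})$ has been produced yet, so there is nothing for the small object argument to replace. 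You must first turn the zig-zags into honest maps, coherently for all $k$ simultaneously, and that is exactly the step you have not supplied. Your remark that ``the zeroth level is kept throughout'' is also unjustified: stable fibrant replacement changes every level, and there is no reason its effect on level zero should be controllable without further argument.

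The paper handles this with a concrete trick from \cite{Goo:calc1}: define $(\mathbf{R}_n\dgrm{X})_j$ as the homotopy \emph{limit} of the one-sided infinite zig-zag
\[ R_n^j\dgrm{X} \to U_n(R_n^j\dgrm{X}) \leftarrow \Omega R_n^{j+1}\dgrm{X} \to \Omega U_n(R_n^{j+1}\dgrm{X}) \leftarrow \Omega^2 R_n^{j+2}\dgrm{X} \to \dotsm \]
obtained by splicing together looped copies of~(\ref{eq:zig-zag-rn}). Since the diagram at level $j$ is obtained from the one at level $j{+}1$ by applying $\Omega$ and prepending two maps, there is a genuine natural map $(\mathbf{R}_n\dgrm{X})_j \to \Omega(\mathbf{R}_n\dgrm{X})_{j+1}$, so one lands directly in $\Sp(\mc{D})$ with no rectification needed. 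Because all arrows in each zig-zag are objectwise equivalences, the projection $(\mathbf{R}_n\dgrm{X})_j \to R_n^j\dgrm{X}$ is an equivalence for every $j$, giving $G(\mathbf{R}_n\dgrm{X})\simeq \dgrm{X}$ at $j=0$.
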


\begin{proof}
  If a functor $\dgrm{X}\co\mc{C}\to\mc{D}$ is $n$-reduced, not only
  the upper left 
  corner $K_n\dgrm{X}$, but also 
  the lower right corner $P_{n-1}\dgrm{X}$ of the 
  homotopy pullback square from Lemma~\ref{lem:delooping}
  is contractible. Let $U_n\dgrm{X}$ denote 
  the homotopy pullback of that square. The 
  natural objectwise weak equivalences
  \begin{equation}\label{eq:zig-zag-rn}
    \dgrm{X}  \xrightarrow{\simeq} \wh{P}_n \dgrm{X} \xrightarrow{\simeq} U_n\dgrm{X}
    \xleftarrow{\simeq} \Omega R_n \dgrm{X},
  \end{equation}
  do not form a direct map which can be iterated to obtain a 
  spectrum-valued functor ${\mathbf R}_n\dgrm{X}$. However, a trick
  by Goodwillie \cite{Goo:calc1} given at the end of the introduction
  works. For $j\ge 0$, let $(\mathbf{R}_n\dgrm{X})_j$ denote the 
  homotopy limit (not colimit) of the diagram
  \[ R_n^j\dgrm{X}\xrightarrow{\simeq} U(R_n^j\dgrm{X})\xleftarrow{\simeq} \Omega R_n^{j+1}\dgrm{X}\xrightarrow{\simeq} \Omega U(R_n^{j+1}\dgrm{X})\xleftarrow{\simeq} \Omega^2 R_n^{j+2}\dgrm{X}\xrightarrow{\simeq}\dotsm,\]
  starting with $R_n^0\dgrm{X}=\dgrm{X}$. Then 
  $(\mathbf{R}_n\dgrm{X})_j\xrightarrow{\simeq}R_n^j\dgrm{X}$ for all $j\geq 0$,
  and there are structure maps 
  $(\mathbf{R}_n\dgrm{X})_j\xrightarrow{\simeq}\Omega(\mathbf{R}_n\dgrm{X})_{j+1}$ defining a spectrum-valued functor $\mathbf{R}_n\dgrm{X}$ such that $G(\mathbf{R}_n\dgrm{X})\simeq\dgrm{X}$.
\end{proof}

\begin{lemma}\label{deloop:step4}
  The total right derived functor of $G$ is essentially surjective and full.
\end{lemma}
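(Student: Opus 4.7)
The plan is to use the construction $\mathbf{R}_n$ from Lemma~\ref{deloop:step3} as a homotopical pseudo-inverse to $G$, giving both essential surjectivity and fullness; faithfulness has already been secured by Lemma~\ref{deloop:step2}.

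For essential surjectivity, I would start with a bifibrant $\dgrm{X}$ in $\Fun(\mc{C},\mc{D})_{n\text{-}\mathrm{hom}}$. Such an $\dgrm{X}$ is $n$-homogeneous, hence $n$-reduced, so Lemma~\ref{deloop:step3} produces a spectrum-valued functor $\mathbf{R}_n\dgrm{X}\co\mc{C}\to\Sp(\mc{D})$ together with a natural objectwise equivalence $G(\mathbf{R}_n\dgrm{X})\simeq\dgrm{X}$. By construction each level $(\mathbf{R}_n\dgrm{X})_j$ is objectwise weakly equivalent to $R_n^j\dgrm{X}$, which is $n$-homogeneous by iterated application of Lemma~\ref{lem:delooping}. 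Moreover the structure maps $(\mathbf{R}_n\dgrm{X})_j\to\Omega(\mathbf{R}_n\dgrm{X})_{j+1}$ are objectwise weak equivalences by construction, so $\mathbf{R}_n\dgrm{X}$ is a level-wise $n$-homogeneous $\Omega$-spectrum. Taking its bifibrant replacement $\dgrm{Y}$ in $\Fun(\mc{C},\Sp(\mc{D}))_{n\text{-}\mathrm{hom}}$ then yields $G(\dgrm{Y})\simeq G(\mathbf{R}_n\dgrm{X})\simeq\dgrm{X}$, where the first equivalence uses that $G=\Ev_0$ commutes with fibrant replacement up to weak equivalence on objects that are already $\Omega$-spectra.

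For fullness, let $\dgrm{Y}_1,\dgrm{Y}_2$ be bifibrant in $\Fun(\mc{C},\Sp(\mc{D}))_{n\text{-}\mathrm{hom}}$ and let $[f]\co G\dgrm{Y}_1\to G\dgrm{Y}_2$ be a morphism in the homotopy category. The strategy is to apply $\mathbf{R}_n$ to $f$. Since the sequential homotopy limit defining $(\mathbf{R}_n\dgrm{Z})_j$ depends on $\dgrm{Z}$ through the functorial $R_n$ and canonical natural equivalences, the assignment $\dgrm{Z}\mapsto \mathbf{R}_n\dgrm{Z}$ descends to a functor on the relevant homotopy categories, producing a morphism $\mathbf{R}_n f\co\mathbf{R}_n G\dgrm{Y}_1\to\mathbf{R}_n G\dgrm{Y}_2$. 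The task then reduces to constructing a natural zigzag $\dgrm{Y}_i\simeq\mathbf{R}_n G\dgrm{Y}_i$ that recovers the identity at level $0$. Since $\dgrm{Y}_i$ is bifibrant, its structure maps are objectwise weak equivalences by Lemma~\ref{deloop:step2}, so each level $(\dgrm{Y}_i)_j$ is an $n$-homogeneous iterated delooping of $G\dgrm{Y}_i=(\dgrm{Y}_i)_0$; compared with $R_n^j G\dgrm{Y}_i\simeq(\mathbf{R}_n G\dgrm{Y}_i)_j$, which is another $n$-homogeneous iterated delooping of $G\dgrm{Y}_i$, the universal property of $R_n$ yields the desired zigzag compatible with structure maps.

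The main obstacle is this final identification, which reduces to showing that $R_n$ is a homotopical inverse to $\Omega$ on the full subcategory of $n$-homogeneous functors: for every $n$-homogeneous $\dgrm{W}$, there is a natural weak equivalence $\dgrm{W}\simeq R_n(\Omega\dgrm{W})$. This is the homotopical counterpart of Goodwillie's delooping result for homogeneous functors, and the proof in Section~2 of~\cite{Goo:calc3} adapts verbatim: the homotopy pullback square of Lemma~\ref{lem:delooping} collapses for $n$-homogeneous $\dgrm{X}$ because both $K_n\dgrm{X}$ and $P_{n-1}\dgrm{X}$ are contractible, leaving $R_n\dgrm{X}$ as the essentially unique $n$-homogeneous delooping of $\dgrm{X}$.
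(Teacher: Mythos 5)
Your essential surjectivity argument coincides with the paper's: replace an arbitrary functor by $D_n\dgrm{X}$ via Lemma~\ref{n-hom-equ-to-n-hom}, observe it is $n$-reduced, and invoke Lemma~\ref{deloop:step3}. For fullness the paper only writes ``the statement follows,'' so you are filling in a step the authors leave implicit, and your strategy --- use the functoriality of $\mathbf{R}_n$ and produce a natural zigzag $\dgrm{Y}\simeq\mathbf{R}_nG\dgrm{Y}$ for bifibrant $\dgrm{Y}$ --- is the natural one and correctly locates where the content is.

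However, the justification you give for that zigzag has a gap. You reduce it to the assertion $\dgrm{W}\simeq R_n(\Omega\dgrm{W})$ for $n$-homogeneous $\dgrm{W}$ and back this by a ``universal property of $R_n$'' and the collapse of the square in Lemma~\ref{lem:delooping}. The collapse only yields $\dgrm{W}\simeq\Omega R_n\dgrm{W}$, i.e.\ $\Omega R_n\simeq\Id$ on $n$-reduced functors; there is no universal property of $R_n$ stated in the paper, and passing from $\Omega R_n\simeq\Id$ to $R_n\Omega\simeq\Id$ requires knowing that $\Omega$ reflects weak equivalences of (bifibrant) $n$-homogeneous functors. That is precisely stability of the $n$-homogeneous model structure (Corollary~\ref{n-hom stable}), which in the paper is deduced from the delooping theorem you are proving, so invoking it as ``essentially unique delooping'' risks circularity. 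The gap can be closed without circularity: Lemma~\ref{lem:n-hom-we1} characterizes $n$-homogeneous equivalences via $\hocr_n$, the multilinear model structure is already known to be stable by Corollary~\ref{ml-stable}, and $\hocr_n$ commutes with $\Omega$; together these show $\Omega$ is conservative on $n$-homogeneous bifibrant functors, and then your inductive level-by-level comparison goes through. As written, though, the appeal to ``adapts verbatim'' and an unstated universal property does not discharge the key step.
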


\begin{proof}
  By Lemma \ref{n-hom-equ-to-n-hom}, any functor in $\Fun(\mc{C},\mc{D})$ is $n$-homogeneously equivalent to an $n$-homogeneous functor \dgrm{X}. Lemma~\ref{deloop:step3} supplies a functor $\mathbf{R}_n\dgrm{X}$ with $G(\mathbf{R}_n\dgrm{X})\simeq\dgrm{X}$. The statement follows.
\end{proof}

\subsection{The Quillen equivalences}\label{sec:quillen-equivalences}

There is a commutative diagram of Quillen pairs: 
\diagram{ \Fun(\Sigma_n\wr\mc{C}^{\wedge n},\mc{D})_{\mathrm{ml}} \ar@<4pt>[rr]^-{\Lcross_n}\ar@<-4pt>[d]_-{F} & & \Fun(\mc{C},\mc{D})_{n\text{-}\mathrm{hom}} \ar@<4pt>[ll]^-{\cross_n} \ar@<-4pt>[d]_-{F} \\
   \Fun(\Sigma_n\wr(\mc{C})^{\wedge n},\Sp(\mc{D}))_{\mathrm{ml}} \ar@<4pt>[rr]^-{\Lcross_n}\ar@<-4pt>[u]_-{G} & & \Fun(\mc{C},\Sp(\mc{D}))_{n\text{-}\mathrm{hom}} \ar@<4pt>[ll]^-{\cross_n}\ar@<-4pt>[u]_-{G} }{equivalences}
The left vertical Quillen pair was shown to be a Quillen equivalence in Theorem \ref{thm:symm-ev-eq}, the lower horizontal one in Theorem \ref{thm:cross-equiv-on-spectra}, and the right vertical pair in Theorem \ref{thm:delooping}. The 2-out-of-3 property of Quillen equivalences yields the following statement.

\begin{corollary}\label{n-hom we}
  Suppose Convention \ref{conv:n-homog} holds. Then the pair
   $$ \Lcross_n\co\Fun(\Sigma_n\wr\mc{C}^{\wedge n},\mc{D})_{\mathrm{ml}} \rightleftarrows \Fun(\mc{C},\mc{D})_{n\text{-}\mathrm{hom}}\!:\cross_n $$
  is a Quillen equivalence.
\end{corollary}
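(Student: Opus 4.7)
The plan is to verify that the square of Quillen pairs~(\ref{equivalences}) commutes up to natural isomorphism, and then invoke the 2-out-of-3 property of Quillen equivalences applied to the remaining three edges, which are Quillen equivalences by the theorems cited in the preceding paragraph.

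First I would check commutativity of the diagram. The right adjoint $G$ is postcomposition with $\Ev_0\co \Sp(\mc{D})\to \mc{D}$, and $\Ev_0$ commutes with all limits. Since $\cross_n$ is defined objectwise as a strict fiber followed by a strict limit over $P_0(\ul n)$, postcomposition with $\Ev_0$ commutes with $\cross_n$ on the nose: $G\circ \cross_n\cong \cross_n\circ G$. Dually, the left adjoint $F$ is postcomposition with $\mathrm{Fr}_0\co \mc{D}\to \Sp(\mc{D})$, which is enriched and preserves coends and orbits. Using the explicit formula from Lemma~\ref{Lcross formula} for $\Lcross_n$ as the enriched left Kan extension $\LKan_{\mathrm{pr}_{\mc{C}}}\circ \Delta_n^*$, this gives $F\circ \Lcross_n\cong \Lcross_n\circ F$. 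Thus the square commutes on both sides of each adjunction.

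Next, three of the four edges of the square are Quillen equivalences: the left vertical by Theorem~\ref{thm:symm-ev-eq}, the bottom horizontal by Theorem~\ref{thm:cross-equiv-on-spectra}, and the right vertical by Theorem~\ref{thm:delooping}. Compose the left vertical Quillen equivalence with the bottom horizontal Quillen equivalence to obtain a Quillen equivalence
\[ \Lcross_n\circ F\co \Fun(\Sigma_n\wr\mc{C}^{\wedge n},\mc{D})_{\mathrm{ml}}\rightleftarrows \Fun(\mc{C},\Sp(\mc{D}))_{n\text{-}\mathrm{hom}}\!:G\circ \cross_n. \]
By the commutativity established above, this composite is naturally isomorphic to $F\circ \Lcross_n$ (with right adjoint $\cross_n\circ G$). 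Since the right vertical pair $(F,G)$ is itself a Quillen equivalence, the 2-out-of-3 property for Quillen equivalences (if two of $\Phi$, $\Psi$, $\Psi\circ\Phi$ are Quillen equivalences, so is the third) forces $(\Lcross_n,\cross_n)\co \Fun(\Sigma_n\wr\mc{C}^{\wedge n},\mc{D})_{\mathrm{ml}}\rightleftarrows \Fun(\mc{C},\mc{D})_{n\text{-}\mathrm{hom}}$ to be a Quillen equivalence as well.

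The only genuine obstacle is the commutativity check; once that is settled the rest is a purely formal diagram chase. Everything else — that the relevant functors are Quillen adjunctions, that the model structures involved exist, and that the three comparison theorems are available — has already been done in the body of the paper. In particular, it is worth noting that $\cross_n\co \Fun(\mc{C},\mc{D})_{n\text{-}\mathrm{exc}\text{-}\mathrm{cr}}\to \Fun(\Sigma_n\wr\mc{C}^{\wedge n},\mc{D})_{\mathrm{ml}}$ is already known to be right Quillen by Corollary~\ref{prop:cross-right-exc}, and this persists after right Bousfield localizing the source along $\Lambda_{n,I_{\mc{D}}}$, since the left adjoint $\Lcross_n$ manifestly sends the generating sets for multilinear cofibrations to $n$-homogeneous cofibrations; the Quillen pair itself therefore presents no difficulty independently of the 2-out-of-3 argument.
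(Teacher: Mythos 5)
Your proof is correct and follows the same route the paper takes: the paper simply asserts that the square~(\ref{equivalences}) is a commutative diagram of Quillen pairs, cites Theorems~\ref{thm:symm-ev-eq}, \ref{thm:cross-equiv-on-spectra}, and \ref{thm:delooping} for three of the edges, and invokes 2-out-of-3 for Quillen equivalences. The extra details you supply (the commutativity check via $\Ev_0$/$\mathrm{Fr}_0$ (co)limit preservation, and the independent verification that the top edge is a Quillen pair by tracking where $\Lcross_n$ sends generating cofibrations, together with the observation that acyclic cofibrations in the $n$-excisive cr and $n$-homogeneous model structures coincide since the fibrations do) are implicit in the paper but worth making explicit.
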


The associated diagram of total derived functors on homotopy categories yields 
Goodwillie's diagram of equivalences of homotopy categories (as displayed in the introduction).
If $\mc{C}=\Sfin$, evaluation at $S^0$ prolongs this Quillen equivalence
to the category of $\Sigma_n$-spectra in \mc{D}, by Theorem~\ref{coefficient-spectra-equivalence}.

\begin{corollary}\label{n-hom stable}
The $n$-homogeneous model structure on $\Fun(\mc{C},\mc{D})$ is stable, if Convention \emph{\ref{conv:n-homog}} holds.
\end{corollary}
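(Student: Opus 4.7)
The approach is immediate from the machinery already in place. By Corollary~\ref{n-hom we}, the adjoint pair $\Lcross_n \dashv \cross_n$ provides a Quillen equivalence between $\Fun(\Sigma_n\wr\mc{C}^{\wedge n},\mc{D})_{\mathrm{ml}}$ and $\Fun(\mc{C},\mc{D})_{n\text{-}\mathrm{hom}}$, and Corollary~\ref{ml-stable} asserts that the multilinear model structure on the left is stable. The plan is therefore to transfer stability across this Quillen equivalence.

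To justify the transfer, recall that a pointed model category is stable precisely when the derived suspension-loop adjunction $(\Sigma,\Omega)$ on its homotopy category is an adjoint equivalence. A Quillen equivalence between pointed model categories induces an equivalence of pointed homotopy categories that commutes with both the derived suspension and the derived loop functor, because these are computed as homotopy cofibers and homotopy fibers of maps to and from the zero object, and the zero object, together with homotopy (co)limits, is preserved under the induced equivalence. Consequently, if $(\Sigma,\Omega)$ is an adjoint equivalence on one side, it is an adjoint equivalence on the other, yielding the claim. There is no serious obstacle here; all substantive work was already done in establishing Corollary~\ref{n-hom we} and Corollary~\ref{ml-stable}, and the remaining argument is a formal property of Quillen equivalences between pointed model categories.
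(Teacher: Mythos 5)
Your proposal is correct and follows the same logic the paper evidently intends: the corollary is stated without an explicit proof, appearing immediately after Corollary~\ref{n-hom we} (the Quillen equivalence with the multilinear model structure) and drawing on Corollary~\ref{ml-stable} (stability of the multilinear model structure), so the intended argument is precisely the transfer of stability across a Quillen equivalence of pointed model categories that you describe. The justification you give is sound: left (resp.\ right) derived functors preserve the zero object and homotopy pushouts (resp.\ pullbacks), hence the derived equivalence intertwines the suspension--loop adjunctions, so stability on one side forces stability on the other.
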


\subsection{More on the homogeneous model structures}

The following assertion was proved for $\Sp(\mc{D})$ as target category already in Corollary~\ref{n-hom-we2}. The validity of 
Convention~\ref{conv:n-homog} is assumed for the remainder
of this article.
 
\begin{lemma}\label{n-hom-we3}
  A map $f$ in 
  $\Fun(\mc{C},\mc{D})$ is an $n$-homogeneous equivalence if and only 
  if the induced map $D_n(f)$ is an objectwise weak equivalence.
\end{lemma}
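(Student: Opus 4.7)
The plan is to settle the ``if'' direction by a direct application of Corollary~\ref{n-hom-equ-to-n-hom} combined with 2-out-of-3, and to reduce the ``only if'' direction to the stable case already treated in Corollary~\ref{n-hom-we2} via the Quillen equivalence of Theorem~\ref{thm:delooping}. In both directions the key observation is that Corollary~\ref{n-hom-equ-to-n-hom} provides a natural zig-zag $\dgrm{X}\to P_n\dgrm{X}\leftarrow D_n\dgrm{X}$ of $n$-homogeneous equivalences.

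For the ``if'' direction, the naturality of this zig-zag produces a commutative ladder relating $f$ to $D_n(f)$. Since every objectwise weak equivalence is in particular an $n$-homogeneous equivalence, 2-out-of-3 for $n$-homogeneous equivalences immediately delivers the claim. For the ``only if'' direction, the same ladder shows that $D_n(f)$ itself is an $n$-homogeneous equivalence, so the task reduces to verifying that an $n$-homogeneous equivalence $g\co\dgrm{A}\to\dgrm{B}$ between $n$-homogeneous functors (with $\dgrm{A}=D_n\dgrm{X}$ and $\dgrm{B}=D_n\dgrm{Y}$) is in fact an objectwise weak equivalence.

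To finish, the strategy is to promote $g$ to a map between spectrum-valued functors and then to invoke Corollary~\ref{n-hom-we2}. Applying Lemma~\ref{deloop:step3} to $\dgrm{A}$ and $\dgrm{B}$ yields $n$-homogeneous spectrum-valued functors $\mathbf{R}_n\dgrm{A}$ and $\mathbf{R}_n\dgrm{B}$ together with natural objectwise equivalences $G(\mathbf{R}_n\dgrm{A})\simeq\dgrm{A}$ and $G(\mathbf{R}_n\dgrm{B})\simeq\dgrm{B}$, where $G = \Ev_0\circ(-)$ is the right adjoint in Theorem~\ref{thm:delooping}. Full faithfulness of the total right derived functor of $G$, provided by Lemmas~\ref{deloop:step2} and~\ref{deloop:step4}, will then lift $g$ to a map $\tilde g\co\mathbf{R}_n\dgrm{A}\to\mathbf{R}_n\dgrm{B}$ in the $n$-homogeneous homotopy category which remains an $n$-homogeneous equivalence. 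Corollary~\ref{n-hom-we2} applied to $\tilde g$ gives that $D_n(\tilde g)$ is an objectwise equivalence; since $\mathbf{R}_n\dgrm{A}$ and $\mathbf{R}_n\dgrm{B}$ are already $n$-homogeneous, the zig-zag $\tilde g\to P_n\tilde g\leftarrow D_n(\tilde g)$ forces $\tilde g$ itself to be objectwise. The functor $\Ev_0$ preserves objectwise equivalences, whence $G(\tilde g)\simeq g$ is objectwise as well, completing the argument. The main obstacle is the lifting step: one must pass to bifibrant replacements on both sides of the Quillen equivalence in order to realize $\tilde g$ by an honest map of spectrum-valued functors before applying Corollary~\ref{n-hom-we2}.
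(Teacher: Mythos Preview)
Your proposal is correct, and the ``if'' direction agrees with the paper's (both are immediate from Lemma~\ref{lem:n-hom-we1} or, equivalently, from Corollary~\ref{n-hom-equ-to-n-hom} plus 2-out-of-3). The ``only if'' direction, however, is handled more directly in the paper. You invoke full faithfulness of the total right derived functor of $G$ to lift $g=D_n(f)$ to a homotopy class $\tilde g$ in $\Fun(\mc{C},\Sp(\mc{D}))_{n\text{-}\mathrm{hom}}$, and then chase through bifibrant replacements to realize $\tilde g$ by an honest map. The paper avoids this detour entirely by observing that the construction $\mathbf{R}_n$ of Lemma~\ref{deloop:step3} is \emph{natural} in its input (the word ``natural'' in the statement of that lemma is doing real work): one obtains a genuine map $g=\mathbf{R}_n(D_n f)$ of spectrum-valued functors with $G(g)\simeq D_n(f)$ directly, with no lifting or replacement needed. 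Then Lemma~\ref{deloop:step2} (detection, not full faithfulness) shows $g$ is an $n$-homogeneous equivalence, Corollary~\ref{n-hom-we2} shows $g\simeq D_n(g)$ is objectwise, and applying $G$ finishes. Your route reaches the same destination but incurs the overhead of working in the homotopy category; the paper's route stays at the point-set level throughout.
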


\begin{proof}
By Lemma~\ref{lem:n-hom-we1} and Theorem~\ref{Goo:thm 6.1}, the map $f$ is an $n$-homogeneous equivalence if and only if $\hocr_nP_n(f)\simeq\hocr_nD_n(f)$ is an objectwise equivalence. This shows immediately that if $D_n(f)$ is an objectwise equivalence, then $f$ is an $n$-homogeneous equivalence. 

Conversely, let $f$ be an $n$-homogeneous equivalence. By Corollary~\ref{n-hom-equ-to-n-hom} it can be replaced by the induced $D_n(f)\co D_n\dgrm{X}\to D_n\dgrm{Y}$.
Lemma~\ref{deloop:step3} states that there exists $n$-homogeneous functors $\mathbf{R}_n\dgrm{X}$ and $\mathbf{R}_n\dgrm{Y}$ to $\Sp(\mc{D})$ and a map $g=\mathbf{R}_n(f)$ between them such that $G(g)=D_n(f)$. The functor $G$ detects $n$-homogeneous equivalences by Lemma~\ref{deloop:step2}. Thus, $g$ is an $n$-homogeneous equivalence. Corollary~\ref{n-hom-we2} shows that $g=D_n(g)$ is an objectwise equivalence of symmetric multilinear functors to $\Sp(\mc{D})$. Hence, $G(g)=D_n(f)$ is an objectwise weak equivalence.
\end{proof}

\begin{corollary}\label{cor:delooping}
  For any functor $\dgrm{X}\co\mc{C}\to\mc{D}$ and $n>0$ there exists a 
  functor $R_n\dgrm{X}\co\mc{C}\to\mc{D}$ and a zig-zag of natural weak equivalences
  $D_n\dgrm{X}\simeq \Omega R_n\dgrm{X}$.
\end{corollary}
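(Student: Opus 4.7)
The plan is to invoke the spectrum-valued delooping constructed in Lemma~\ref{deloop:step3}, applied to the $n$-homogeneous functor $D_n\dgrm{X}$ rather than to $\dgrm{X}$ itself. Since $D_n\dgrm{X}$ is by construction $n$-homogeneous, it is in particular $n$-reduced, so Lemma~\ref{deloop:step3} applies and yields a spectrum-valued functor $\mathbf{R}_n(D_n\dgrm{X})\co\mc{C}\to\Sp(\mc{D})$ together with a natural objectwise weak equivalence
\[ G\bigl(\mathbf{R}_n(D_n\dgrm{X})\bigr) = \bigl(\mathbf{R}_n(D_n\dgrm{X})\bigr)_0 \simeq D_n\dgrm{X}. \]

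Next, I would read off the required delooping directly from the construction in Lemma~\ref{deloop:step3}. The spectrum $\mathbf{R}_n(D_n\dgrm{X})$ is assembled with structure maps $(\mathbf{R}_n(D_n\dgrm{X}))_j \simeq \Omega (\mathbf{R}_n(D_n\dgrm{X}))_{j+1}$ that are objectwise weak equivalences for all $j\geq 0$ (this is exactly what Goodwillie's homotopy-limit trick at the end of the proof of Lemma~\ref{deloop:step3} achieves). In particular, for $j=0$ one obtains a natural zig-zag of objectwise weak equivalences
\[ D_n\dgrm{X} \;\simeq\; \bigl(\mathbf{R}_n(D_n\dgrm{X})\bigr)_0 \;\simeq\; \Omega\bigl(\mathbf{R}_n(D_n\dgrm{X})\bigr)_1. \]
I then simply define $R_n\dgrm{X} := \bigl(\mathbf{R}_n(D_n\dgrm{X})\bigr)_1$, which is a functor $\mc{C}\to \mc{D}$, to conclude.

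An alternative, equally short, argument would apply Lemma~\ref{lem:delooping} directly to $D_n\dgrm{X}$: the lower right corner $P_{n-1}(D_n\dgrm{X})$ becomes contractible because $D_n\dgrm{X}$ is $n$-reduced, the upper left corner $K_n(D_n\dgrm{X})$ is contractible by construction, and the upper right corner $\wh{P}_n(D_n\dgrm{X})\simeq P_n(D_n\dgrm{X})\simeq D_n\dgrm{X}$ because $D_n\dgrm{X}$ is already $n$-excisive. The left-hand homotopy pullback square therefore collapses to an equivalence $D_n\dgrm{X}\simeq\Omega R_n(D_n\dgrm{X})$, after which one sets $R_n\dgrm{X}:=R_n(D_n\dgrm{X})$.

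I do not expect a genuine obstacle here: both arguments are essentially bookkeeping on top of previously established lemmata. The only point requiring mild care is naturality of the zig-zag, which follows because every constituent ($D_n$, $\mathbf{R}_n$, $\Ev_j$, $\Omega$, or alternatively $\wh{P}_n$, $K_n$, $P'_{n-1}$, $R_n$) is a natural construction on $\mc{S}$-functors, and the required weak equivalences have been shown natural in the cited statements.
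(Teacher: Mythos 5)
Your proposal is correct. Both of your routes work, and both circle back to Lemma~\ref{lem:delooping}, which is also what the paper's one-line proof cites. The small difference is the input: the paper applies Lemma~\ref{lem:delooping} directly to $\dgrm{X}$, so that $R_n\dgrm{X}$ in the corollary is literally the functor produced by that lemma. Taking vertical homotopy fibers in the lemma's diagram, the middle and right columns give $\hofib(\wh{P}_n\dgrm{X}\to P'_{n-1}\dgrm{X})\simeq\hofib(P_n\dgrm{X}\to P_{n-1}\dgrm{X})=D_n\dgrm{X}$, while the left column gives $\hofib(K_n\dgrm{X}\to R_n\dgrm{X})\simeq\Omega R_n\dgrm{X}$ since $K_n\dgrm{X}\simeq\ast$; the homotopy pullback property of the left square identifies these, with no need to replace $\dgrm{X}$ by $D_n\dgrm{X}$ first. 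Your alternative argument substitutes $D_n\dgrm{X}$ for $\dgrm{X}$ in the lemma and then uses $n$-excisiveness and $n$-reducedness of $D_n\dgrm{X}$ to collapse the upper right and lower right corners; that is a valid variant costing one extra observation. Your primary argument routes through Lemma~\ref{deloop:step3}, which is strictly heavier machinery (it is itself built on Lemma~\ref{lem:delooping} and Goodwillie's homotopy-limit trick), though again entirely correct, and it also has the small feature of producing a delooping by a \emph{spectrum} rather than a single space-level functor, so one must extract level $1$ as you do. In short: no gaps, but the paper's route is the leanest of the three.
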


\begin{proof}
  This follows from Lemma \ref{lem:delooping}.
\end{proof}

\begin{corollary}
  Let $n>0$. A map $f\co\dgrm{X}\to\dgrm{Y}$ in $\Fun(\mc{C},\mc{D})$ induces 
  an objectwise weak equivalence $D_n(f)$ if and only if the diagram 
  \diagram{ P_n\dgrm{X} \ar[r]\ar[d] & P_{n-1}\dgrm{X} \ar[d] \\ P_{n}\dgrm{Y} \ar[r] & P_{n-1}\dgrm{Y} }{neues Pullback1}
  is an objectwise homotopy pullback square.
\end{corollary}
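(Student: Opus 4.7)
The approach is to identify the horizontal homotopy fibers of the displayed square: by the defining identity $D_n = \hofib(P_n \to P_{n-1})$, these are $D_n\dgrm{X}$ (top) and $D_n\dgrm{Y}$ (bottom), with induced map $D_n(f)$. The statement thus reduces to the general principle that a commutative square is an objectwise homotopy pullback if and only if the induced map on horizontal homotopy fibers is an objectwise weak equivalence.

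The forward direction is immediate, since taking homotopy fibers of an objectwise homotopy pullback produces a weak equivalence. For the converse, assume $D_n(f)$ is an objectwise weak equivalence. By Lemma~\ref{Tn and Pn commute with ho(co)lims}, both $P_n$ and $P_{n-1}$ commute with finite homotopy limits, so the vertical homotopy fibers of the square identify naturally with $P_n(\hofib(f))$ and $P_{n-1}(\hofib(f))$, and the induced map on them is the natural Goodwillie tower comparison. Iterating homotopy fibers in either direction yields a natural weak equivalence
\[ \hofib(D_n(f)) \simeq D_n(\hofib(f)), \]
so that the total homotopy fiber of the square is objectwise contractible under our hypothesis, whence the square is an objectwise homotopy pullback.

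The principal delicate point is the step where contractible total homotopy fiber is used to conclude that the square is a homotopy pullback (equivalently, that the vertical homotopy fiber map is a weak equivalence) in the pointed but not necessarily stable category $\mc{D}$. Naively this can fail through $\pi_0$-pathologies. The resolution is Corollary~\ref{cor:delooping}: the homogeneous values $D_n\dgrm{X}$, $D_n\dgrm{Y}$, and $D_n(\hofib(f))$ are all weakly equivalent to loop objects, hence their components form groups, and the standard homotopy-fiber characterization of weak equivalences applies without $\pi_0$-obstructions, making the final deduction rigorous.
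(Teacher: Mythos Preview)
Your forward direction is fine and matches the paper. The converse, however, has a genuine gap that your proposed fix does not close.

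You correctly recognize that in an unstable $\mc{D}$ the implication ``total homotopy fiber contractible $\Rightarrow$ homotopy pullback'' fails in general. Your remedy is to observe via Corollary~\ref{cor:delooping} that the objects $D_n\dgrm{X}$, $D_n\dgrm{Y}$, $D_n(\hofib f)$ are loop objects. But these are the \emph{fibers}; the deduction you need concerns the \emph{corners}. Concretely, the comparison map
\[
  c\colon P_n\dgrm{X} \longrightarrow P_{n-1}\dgrm{X}\times^h_{P_{n-1}\dgrm{Y}} P_n\dgrm{Y}
\]
has contractible homotopy fiber over the basepoint, and you want $c$ to be an objectwise weak equivalence. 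For that one needs contractible fiber over \emph{every} basepoint (equivalently, $\pi_0$-surjectivity together with contractible fiber), and neither $P_n\dgrm{X}$ nor the pullback is a loop object in general---they are merely $n$-excisive functors. The same objection applies to your intermediate step: knowing $D_n(\hofib f)\simeq\ast$ does not force $P_n(\hofib f)\to P_{n-1}(\hofib f)$ to be an equivalence, since $P_{n-1}(\hofib f)$ may have components not hit by this map. The loop-space structure on $D_n(\hofib f)$ gives no control over $\pi_0 P_{n-1}(\hofib f)$.

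The paper's argument avoids this entirely by working on the \emph{right} of the fiber sequence rather than the left. Lemma~\ref{lem:delooping} provides natural fiber sequences $P_n\dgrm{X}\to P_{n-1}\dgrm{X}\to R_n\dgrm{X}$ and $P_n\dgrm{Y}\to P_{n-1}\dgrm{Y}\to R_n\dgrm{Y}$. If $R_n(f)$ is an objectwise weak equivalence, then a direct homotopy-pullback manipulation (or the pasting lemma) shows the square is a pullback---no $\pi_0$ subtlety arises because one is comparing fibers of maps to weakly equivalent targets. The remaining point, that $D_n(f)$ an equivalence forces $R_n(f)$ to be one, uses that $R_n\dgrm{X}$ and $R_n\dgrm{Y}$ are themselves $n$-homogeneous and hence sit as level~$1$ of the natural $\Omega$-spectra $\mathbf{R}_n$ from Lemma~\ref{deloop:step3}; the full classification (equivalently, stability of the $n$-homogeneous model structure, Corollary~\ref{n-hom stable}) then lets one pass from $\Omega R_n(f)\simeq D_n(f)$ being an equivalence to $R_n(f)$ being one. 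This is precisely the extra input your argument is missing.
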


\begin{proof}
  If the diagram is a homotopy pullback square, then its homotopy fibers 
  are weakly equivalent via $D_n(f)$. For the converse note that $R_n(f)$ 
  is a weak equivalence if $D_n(f)$ is by Corollary~\ref{cor:delooping}. The 
  horizontal maps are part of homotopy fiber sequences
  \[ P_n\dgrm{X} \to P_{n-1}\dgrm{X} \to R_n\dgrm{X} \quad \mathrm{and} \quad
  P_n\dgrm{Y} \to P_{n-1}\dgrm{Y} \to R_n\dgrm{Y} \]
  by Lemma~\ref{lem:delooping}. So the square is a homotopy pullback.
\end{proof}

\begin{lemma}\label{n-hom fib}
  A map is an $n$-homogeneous acyclic fibration if and only if it is an $(n-1)$-excisive cr fibration and an $n$-homogeneous equivalence.
\end{lemma}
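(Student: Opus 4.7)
The plan is to reduce both implications to the characterization of $n$-homogeneous equivalences given in Lemma~\ref{n-hom-we3}, combined with the homotopy fiber sequence $D_n\dgrm{X}\to P_n\dgrm{X}\to P_{n-1}\dgrm{X}$ supplied by the definition of $D_n$. Recall that an $n$-homogeneous acyclic fibration is, by the very definition of the right Bousfield localization, an $n$-excisive cr fibration which is also an $n$-homogeneous equivalence.

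For the $(\Rightarrow)$ direction, suppose $f\co\dgrm{X}\to\dgrm{Y}$ is an $n$-homogeneous acyclic fibration. The second condition in the claim is immediate from the definition. For the first, I would stack the natural maps
\[ \dgrm{X}\to P_n\dgrm{X}\to P_{n-1}\dgrm{X}\quad\text{and}\quad \dgrm{Y}\to P_n\dgrm{Y}\to P_{n-1}\dgrm{Y} \]
into a pair of adjacent squares. The left square is a homotopy pullback because $f$ is an $n$-excisive fibration (Definition~\ref{def:n-exc-model}). The right square is a homotopy pullback iff the induced map on horizontal homotopy fibers is a weak equivalence, i.e.\ iff $D_n(f)$ is an objectwise equivalence, which holds by Lemma~\ref{n-hom-we3}. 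Pasting then shows the outer square (with $P_{n-1}$) is a homotopy pullback, so $f$ is an $(n-1)$-excisive cr fibration by Definition~\ref{def:n-exc-model}.

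For the $(\Leftarrow)$ direction, I would run the same diagram in reverse. Assume $f$ is an $(n-1)$-excisive cr fibration and an $n$-homogeneous equivalence. Again one only has to promote $f$ to an $n$-excisive cr fibration, since the hf cr fibration condition is already part of the $(n-1)$-excisive hypothesis. The outer square above is a homotopy pullback by the $(n-1)$-excisive hypothesis, and the right square is a homotopy pullback because Lemma~\ref{n-hom-we3} supplies an objectwise weak equivalence $D_n(f)$, hence a weak equivalence between the horizontal homotopy fibers $D_n\dgrm{X}\to D_n\dgrm{Y}$. The pasting lemma for homotopy pullbacks then yields that the left square is a homotopy pullback, so $f$ satisfies Definition~\ref{def:n-exc-model} at level $n$.

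The only subtle point I would be careful about is that the pasting arguments take place in the homotopy functor model structure, which is right proper by Theorem~\ref{thm:homotopy-model}, so that the pasting lemma for homotopy pullbacks is legitimate; and that Lemma~\ref{n-hom-we3} is indeed available, since the discussion preceding this lemma announced that Convention~\ref{conv:n-homog} is in force throughout the remainder. No genuinely new ingredient is needed beyond these two observations.
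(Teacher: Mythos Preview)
Your proof follows the same two-square pasting argument as the paper's, and the overall structure is correct. There is, however, one step you treat as automatic that is not: the biconditional ``the right square is a homotopy pullback iff $D_n(f)$ is an objectwise equivalence''. The direction ``pullback $\Rightarrow$ fiber equivalence'' is standard, but the converse is generally false for an unstable target $\mc{D}$: a weak equivalence on homotopy fibers of the rows does not force the square
\[
\xymatrix{ P_n\dgrm{X} \ar[r]\ar[d] & P_{n-1}\dgrm{X} \ar[d] \\ P_n\dgrm{Y} \ar[r] & P_{n-1}\dgrm{Y} }
\]
to be a homotopy pullback. Both of your pasting arguments use precisely this converse.

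The paper handles this via the Corollary immediately preceding Lemma~\ref{n-hom fib}, which in turn rests on Goodwillie's delooping: by Lemma~\ref{lem:delooping} the rows extend to homotopy fiber sequences $P_n\dgrm{X}\to P_{n-1}\dgrm{X}\to R_n\dgrm{X}$, and $D_n(f)\simeq\Omega R_n(f)$ being an equivalence forces $R_n(f)$ to be one (since $R_n\dgrm{X}$ is again $n$-homogeneous), whence the square is a homotopy pullback. Once you insert a reference to that Corollary in place of the bare ``iff'', your argument is complete and coincides with the paper's.
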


\begin{proof}
  By Definition~\ref{def:n-exc-model}, a map is an $(n-1)$-excisive 
  cr fibration if and only if it is a fibration $f\co\dgrm{X}\to\dgrm{Y}$ in 
  the hf cr model structure that induces the following objectwise homotopy 
  pullback diagram:
  \diagr{ \dgrm{X} \ar[r]\ar[d]_f & P_{n-1}\dgrm{X} \ar[d]^{P_{n-1}(f)} \\ \dgrm{Y} \ar[r] & P_{n-1}\dgrm{Y} }
  Then it follows that in the following diagram
  \diagram{ \dgrm{X} \ar[r]\ar[d]_f & P_n\dgrm{X} \ar[r]\ar[d]^{P_{n}(f)} & P_{n-1}\dgrm{X} \ar[d]^{P_{n-1}(f)} \\ \dgrm{Y} \ar[r] & P_n\dgrm{Y} \ar[r] & P_{n-1}\dgrm{Y} }{neues Pullback2}
  the outer square and the right hand square are homotopy pullbacks. 
  Therefore, the left hand square is a homotopy pullback, and $f$ is an 
  $n$-excisive cr fibration which is the same as an $n$-homogeneous fibration. 
  Because it is an $n$-homogeneous equivalence by assumption, it is an 
  $n$-homogeneous acyclic fibration.

  Suppose now that $f$ is an $n$-homogeneous acyclic fibration.
  An $n$-homogeneous fibration is the same as an 
  $n$-excisive cr fibration, hence the left hand square of diagram 
  (\ref{neues Pullback2}) is a homotopy pullback. Because 
  (\ref{neues Pullback1}) is also a homotopy pullback, the 
  combined outer square is so. Hence, $p$ is an $(n-1)$-excisive cr fibration. 
  It is an $n$-homogeneous equivalence by assumption.
\end{proof}

\begin{lemma}\label{n-hom fib 2}
  Let \dgrm{X} and \dgrm{Y} be $n$-excisively cr fibrant.
  Then a map $p\co\dgrm{X}\to\dgrm{Y}$ is an $n$-homogeneous acyclic fibration if and only if it is an $(n-1)$-excisive cr fibration.
\end{lemma}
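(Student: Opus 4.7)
The forward implication is essentially immediate: if $p$ is an $n$-homogeneous acyclic fibration, then Lemma \ref{n-hom fib} applied in one direction tells us that $p$ is in particular an $(n-1)$-excisive cr fibration, without needing any fibrancy assumption on $\dgrm{X}$ or $\dgrm{Y}$.

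For the converse, the plan is to appeal to Lemma \ref{n-hom fib} again: it suffices to verify that the $(n-1)$-excisive cr fibration $p\co\dgrm{X}\to\dgrm{Y}$ is an $n$-homogeneous equivalence, since being an $(n-1)$-excisive cr fibration implies it is an hf cr fibration. The key observation is that because $\dgrm{X}$ and $\dgrm{Y}$ are $n$-excisively cr fibrant, the canonical maps $\dgrm{X}\to P_n\dgrm{X}$ and $\dgrm{Y}\to P_n\dgrm{Y}$ are objectwise weak equivalences; indeed $n$-excisive cr fibrancy of $\dgrm{X}$ means (by Definition \ref{def:n-exc-model}) that $\dgrm{X}\to *$ is an hf cr fibration and the square with $P_n$ is a homotopy pullback, forcing $\dgrm{X}\to P_n\dgrm{X}$ to be an objectwise equivalence.

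Now the $(n-1)$-excisive cr fibration hypothesis on $p$ supplies the homotopy pullback square
\[ \xymatrix{ \dgrm{X} \ar[r]\ar[d]_-p & P_{n-1}\dgrm{X} \ar[d] \\
               \dgrm{Y} \ar[r] & P_{n-1}\dgrm{Y} } \]
in the hf model structure. Using the previous paragraph, this square is objectwise weakly equivalent to
\[ \xymatrix{ P_n\dgrm{X} \ar[r]\ar[d] & P_{n-1}\dgrm{X} \ar[d] \\
               P_n\dgrm{Y} \ar[r] & P_{n-1}\dgrm{Y} } \]
which is therefore an objectwise homotopy pullback. By the Corollary immediately preceding Lemma \ref{n-hom fib} (characterizing when $D_n(p)$ is an objectwise equivalence), this means $D_n(p)$ is an objectwise weak equivalence. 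Lemma \ref{n-hom-we3} then yields that $p$ is an $n$-homogeneous equivalence. Combined with the fact that $p$ is already an $(n-1)$-excisive cr fibration, Lemma \ref{n-hom fib} identifies $p$ as an $n$-homogeneous acyclic fibration, which completes the argument.

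No step looks like a real obstacle here; the entire proof is a formal consequence of assembling the characterizations of $n$-excisive fibrations, $n$-homogeneous equivalences, and Lemma \ref{n-hom fib}. The only point that requires a moment's care is extracting from $n$-excisive cr fibrancy of $\dgrm{X}$ and $\dgrm{Y}$ the fact that $P_n$ acts as the identity on them up to objectwise equivalence, since this is what allows the homotopy pullback square provided by $(n-1)$-excisive fibrancy to be converted into the $P_n$/$P_{n-1}$ square needed to apply the comparison with $D_n(p)$.
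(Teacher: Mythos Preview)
Your proof is correct and follows essentially the same route as the paper's: both directions hinge on Lemma~\ref{n-hom fib}, the $n$-excisive cr fibrancy of $\dgrm{X}$ and $\dgrm{Y}$ is used to identify them with $P_n\dgrm{X}$ and $P_n\dgrm{Y}$ up to objectwise equivalence, and the $(n-1)$-excisive fibration condition then forces the $P_n/P_{n-1}$ square to be a homotopy pullback, giving that $D_n(p)$ is an objectwise equivalence. The paper's version is just more terse, not citing the preceding Corollary or Lemma~\ref{n-hom-we3} explicitly but using them implicitly.
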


\begin{proof}
  According to Lemma~\ref{n-hom fib}, it suffices to show that an $(n-1)$-excisive cr fibration between $n$-excisively cr fibrant objects is already an $n$-homogeneous equivalence. In this case the outer square of diagram \ref{neues Pullback2} is a homotopy pullback and the horizontal maps in the left hand square are weak equivalences. Thus, the square on the right hand side is a homotopy pullback square, and the induced map of fibers $D_n\dgrm{X}\to D_n\dgrm{Y}$ is a weak equivalence.
\end{proof}

\begin{lemma}\label{n-hom cof}
  A cr cofibration is an $n$-homogeneous cofibration if and only if it
  is an $(n-1)$-excisive equivalence.
\end{lemma}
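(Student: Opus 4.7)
The plan is to prove both implications of Lemma~\ref{n-hom cof} separately. For the ``if'' direction, suppose $f$ is a cr cofibration and an $(n-1)$-excisive equivalence. Then $f$ is an acyclic cofibration in the $(n-1)$-excisive cr model structure, so it has the left lifting property with respect to every $(n-1)$-excisive cr fibration. Since every $n$-homogeneous acyclic fibration is such a fibration by Lemma~\ref{n-hom fib}, $f$ lifts against all of them, hence is an $n$-homogeneous cofibration.

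For the ``only if'' direction, suppose $f\co\dgrm{X}\to\dgrm{Y}$ is a cr cofibration and an $n$-homogeneous cofibration. The strategy is to construct a specific $n$-homogeneous acyclic fibration $q$ that $f$ must lift against, and then read off from the resulting lift that $P_{n-1}(f)$ is an objectwise weak equivalence. I would apply a functorial fibrant replacement $L$ in the $n$-excisive cr model structure, producing coaugmentations $\eta_{\dgrm{X}}\co\dgrm{X}\to L\dgrm{X}$ and $\eta_{\dgrm{Y}}\co\dgrm{Y}\to L\dgrm{Y}$ that are cr cofibrations and $n$-excisive (hence also $(n-1)$-excisive) equivalences, together with $L(f)\co L\dgrm{X}\to L\dgrm{Y}$ between $n$-excisively cr fibrant objects. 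Next, factor $L(f)=q\circ i$ in the $(n-1)$-excisive cr model structure, with $i\co L\dgrm{X}\to\dgrm{Z}$ an $(n-1)$-excisive acyclic cr cofibration and $q\co\dgrm{Z}\to L\dgrm{Y}$ an $(n-1)$-excisive cr fibration. Because every $(n-1)$-excisive cr fibration is in particular an $n$-excisive cr fibration, composing $q$ with $L\dgrm{Y}\to\ast$ shows that $\dgrm{Z}$ is itself $n$-excisively cr fibrant, and Lemma~\ref{n-hom fib 2} then certifies $q$ as an $n$-homogeneous acyclic fibration.

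The square with top edge $i\circ\eta_{\dgrm{X}}$ and bottom edge $\eta_{\dgrm{Y}}$ formed by $f$ and $q$ commutes by naturality of $L$, and the assumption on $f$ yields a lift $\sigma\co\dgrm{Y}\to\dgrm{Z}$ with $\sigma\circ f=i\circ\eta_{\dgrm{X}}$ and $q\circ\sigma=\eta_{\dgrm{Y}}$. Applying $P_{n-1}$ and invoking that $P_{n-1}(\eta_{\dgrm{X}})$, $P_{n-1}(\eta_{\dgrm{Y}})$, and $P_{n-1}(i)$ are all objectwise weak equivalences, both composites $P_{n-1}(\sigma)\circ P_{n-1}(f)$ and $P_{n-1}(q)\circ P_{n-1}(\sigma)$ become isomorphisms in the objectwise homotopy category. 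The elementary fact that any morphism which is simultaneously split monic and split epic is an isomorphism then makes $P_{n-1}(\sigma)$ invertible, and hence $P_{n-1}(f)$ invertible as well, so that $f$ is an $(n-1)$-excisive equivalence. The main obstacle is arranging the factorization so that Lemma~\ref{n-hom fib 2} actually applies: one cannot factor $f$ directly, since one needs both source and target of $q$ to be $n$-excisively cr fibrant, which is precisely why $L$ must be applied before the $(n-1)$-excisive factorization.
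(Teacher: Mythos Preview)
Your proof is correct. The ``if'' direction matches the paper's approach exactly (via Lemma~\ref{n-hom fib}). For the ``only if'' direction, the paper takes a shorter abstract route: it cites \cite[Prop.~13.2.1]{Hir:loc} to reduce the characterization of $n$-homogeneous cofibrations to lifting against $n$-homogeneous acyclic fibrations \emph{between fibrant objects}, and then invokes Lemma~\ref{n-hom fib 2} directly to identify that class with $(n-1)$-excisive cr fibrations between $n$-excisively cr fibrant objects. Your argument is essentially an explicit unpacking of what that citation buys: you build by hand precisely the sort of lifting problem (an $(n-1)$-excisive cr fibration between $n$-excisively cr fibrant objects) that Hirschhorn's result guarantees suffices, and then extract the $(n-1)$-excisive equivalence via the split mono/split epi trick after applying $P_{n-1}$. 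Your version is more self-contained and makes transparent exactly where the $n$-excisive fibrant replacement $L$ is needed (so that Lemma~\ref{n-hom fib 2} applies to $q$), while the paper's version is terser but leans on the reader knowing the cited result. Both approaches rest on the same key input, namely Lemma~\ref{n-hom fib 2}.
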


\begin{proof}
  Right properness implies that a map is an acyclic cofibration in the $n$-homogeneous model structure if and only if it has the left lifting property with respect to all $n$-homogeneous fibrations between fibrant objects \cite[Prop.~13.2.1]{Hir:loc}. The fibrant objects are exactly the $n$-excisively cr fibrant functors. Thus, the stated equivalence follows from Lemma~\ref{n-hom fib 2}.
\end{proof}

\begin{corollary}\label{lem:cof-n-hom}
  A functor is cofibrant in the $n$-homogeneous model structure if and only if it is cr cofibrant and $n$-reduced. 
\end{corollary}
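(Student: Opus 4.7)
The plan is to apply Lemma~\ref{n-hom cof} directly to the unique map from the initial object. By definition, $\dgrm{X}$ is cofibrant in $\Fun(\mc{C},\mc{D})_{n\text{-}\mathrm{hom}}$ if and only if the initial map $\ast \to \dgrm{X}$ is an $n$-homogeneous cofibration. Since $\ast \to \dgrm{X}$ is always a cr cofibration precisely when $\dgrm{X}$ is cr cofibrant (the initial object is cr cofibrant, and cofibrancy of an object is equivalent to cofibrancy of the map from the initial object), Lemma~\ref{n-hom cof} reduces the question to: $\dgrm{X}$ is $n$-homogeneously cofibrant if and only if $\dgrm{X}$ is cr cofibrant and $\ast \to \dgrm{X}$ is an $(n-1)$-excisive equivalence.

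It remains to identify the condition that $\ast \to \dgrm{X}$ is an $(n-1)$-excisive equivalence with $n$-reducedness. By Definition~\ref{def:n-exc-model}, the map $\ast \to \dgrm{X}$ is an $(n-1)$-excisive equivalence precisely when $P_{n-1}(\ast) \to P_{n-1}\dgrm{X}$ is an objectwise weak equivalence. The functor $P_{n-1}$ preserves terminal objects: by inspection of Definition~\ref{def:Pn}, both $(-)^\hf$ and $T_{n-1}$ send a constant functor at $\ast$ to a constant functor at $\ast$, and filtered colimits then give $P_{n-1}(\ast) \simeq \ast$. Hence the map $P_{n-1}(\ast) \to P_{n-1}\dgrm{X}$ is an objectwise weak equivalence if and only if $P_{n-1}\dgrm{X} \simeq \ast$, which is exactly the definition of $n$-reducedness (Definition~\ref{def:reduced-homogeneous}).

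Combining these two equivalences yields the claim. There is no genuine obstacle here; the corollary is a bookkeeping consequence of Lemma~\ref{n-hom cof} together with Definition~\ref{def:reduced-homogeneous}, and the only minor point requiring care is the observation that $P_{n-1}$ preserves the terminal object, which follows immediately from its construction.
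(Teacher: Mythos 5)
Your proof is correct and takes the same route as the paper, which simply cites Lemma~\ref{n-hom cof}; you helpfully make explicit both that $P_{n-1}$ carries the zero functor to the zero functor (so that the $(n-1)$-excisive equivalence condition for $\ast\to\dgrm{X}$ is precisely $n$-reducedness) and the reduction to that lemma via the initial map. The only step you use implicitly, and might state, is that every $n$-homogeneous cofibration is in particular a cr cofibration (because a right Bousfield localization has fewer acyclic fibrations, hence more acyclic fibrations, hence fewer cofibrations than the underlying model structure), which is what lets Lemma~\ref{n-hom cof} apply in the forward direction.
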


\begin{proof} 
  This follows from Lemma~\ref{n-hom cof}.
\end{proof}

In particular, one can view the $n$-homogeneous model structure
as the ``fiber'' model structure of the left Quillen functor
\[ \mathrm{Id} \co \Fun(\mc{C},\mc{D})_{n\text{-exc-cr}}\to\Fun(\mc{C},\mc{D})_{(n-1)\text{-exc-cr}}\]
of excisive model structures. 
This supplies a concrete instance
where the homotopy fiber model category of \cite[Theorem~3.1]{Bergner:hofib}
exists.

\end{document}